\newtheorem{thm}{Theorem}[section]
\newtheorem{prop}[thm]{Proposition}
\newtheorem{lem}[thm]{Lemma}
\newtheorem{cor}[thm]{Corollary}
\newtheorem{remark}[thm]{Remark}
\newtheorem{definition}[thm]{Definition}
\newtheorem{problem}{Problem}
\newcommand {\qd} {\quad}
\newcommand \pt {\partial}
\newcommand \grad{\nabla}
\newcommand \gam{\gamma}
\newcommand \R{\mathbb{R}}
\newcommand \Om{\Omega}
\newcommand \vphi{\varphi}
\newcommand \Gam{\Gamma}
\newcommand \tx{\text}
\newcommand \bu {{\mathbf u}}
\newcommand \bU {{\bm U}}
\newcommand \be {{\bf e}}
\newcommand \til{\tilde}
\newcommand \p {\prime}
\newcommand \ol{\overline}
\newcommand \mc {\mathcal }
\newcommand \mf {\mathfrak }
\newcommand \Div{{\rm div}}
\newcommand \vtheta {\vartheta}
\newcommand \ra {\to}
\newcommand \ov {\overline}
\newcommand \ngrad {\grad^\perp}
\newcommand \eps {\varepsilon}
\newcommand \n {\mathcal N}
\newcommand \ul {\underline }
\newcommand \ff {{f(0)+f_s}}
\newcommand \qdon {\qd\tx{on}\qd}
\newcommand \qdin {\qd \tx{in}\qd}
\newcommand \qdand {\qd\tx{and}\qd}
\newcommand \fsp {f_s^\p}
\newcommand \rtp {(r,\theta,\vphi)}
\newcommand \bW {{\bm W}}
	\title[3-D axisymmetric transonic shock] { 3-D axisymmetric Transonic shock solutions of the full Euler system in Divergent nozzles}
	\author{Yong Park}
	\address{Yong Park, Department of Mathematics\\
		POSTECH\\
		San 31, Hyojadong, Namgu, Pohang, Gyungbuk, Republic of Korea 37673
	}
	\email{pipablue@postech.ac.kr}
	\keywords{transonic shock, full Euler system, stream function, free boundary problem,  inviscid compressible flow, axisymmetric flow, divergent nozzle, elliptic  system, exit pressure, nonzero vorticity}
	\subjclass[2010]{
		35A01, 35A02, 35J57, 35J62, 35M10, 35Q31, 35R35, 76H05, 76N10}
\numberwithin{equation}{subsection}
\begin{document}
		\begin{abstract}
			We establish the 
			stability of 3-D axisymmetric transonic shock solutions of the steady full Euler system in divergent nozzles under 
			small perturbations of an incoming radial supersonic flow and a constant pressure at the exit of the nozzles. 
			To study 3-D axisymmetric transonic shock solutions of the full Euler system, 
			we use a stream function formulation of the full Euler system for a 3-D axisymmetric flow. 
			We resolve the singularity issue arising in stream function formulations of the full Euler system for a 3-D axisymmetric flow. 
			We develop a new scheme to determine a shock location of a transonic shock solution of the steady full Euler system based on the stream function formulation. 
	    \end{abstract}
         
         \maketitle
        
        \section{Introduction}
        In \cite[Chapter 147]{MR0421279}, authors, using an approximate model, 
        describe 
        a transonic shock phenomenon for a compressible invicid flow of an ideal polytropic gas in a convergent-divergent type nozzle called de Laval nozzle:  If a subsonic flow accelerating as it passes through the convergent part of the nozzle reaches the sonic speed at the throat of the nozzle, then it becomes a supersonic flow right after the throat of the nozzle. It further accelerates as it passes through the divergent part of the nozzle. If an appropriately large exit pressure $p_e$ is imposed 
        at the exit of the nozzle, then at a certain place of the divergent part of the nozzle, a shock front intervenes, the flow is compressed and slowed down to subsonic speed. The position and strength of the shock front are automatically adjusted so that the end pressure at the exit becomes $p_e$. This phenomenon was rigorously studied using
        radial solutions of the full Euler system  in \cite{MR2382380}
        (it was shown that in a divergent nozzle, for given a constant supersonic data on the entrance of the nozzle and an appropriately large constant pressure on the exit of the nozzle, 
        there exists a unique radial transonic shock solution satisfying these conditions).  
        Motivated by this phenomenon, there were many studies on 
        the stability of transonic shock solutions in divergent nozzles (structural stability of radial transonic shock solutions in divergent nozzles under 
        multi-dimensional perturbations of an entrance supersonic data and exit pressure) and related problems. 

        The stability of one-dimensional transonic shock solutions in flat nozzles was first studied. This subject was studied using the potential flow model in \cite{MR1969202,MR2020107,MR2299761,MR2143525,MR2398992} and further studied 
        using the full Euler system in \cite{MR2098096,MR2274487,MR2265619,MR2348768,MR2533922,MR2307052,MR2415074,MR2372813}. These results showed that
        one-dimensional transonic shock solutions in flat nozzles are not stable under a perturbation of a physical boundary condition (supersonic data on the entrance or density, pressure or normal velocity on the exit) and, even if one-dimensional transonic shock solutions in flat nozzles are stable, their shock locations are not uniquely determined unless there exists 
        the assumption that a shock location passes through some point on the wall of the nozzle, as it can be expected from the behavior of one-dimensional transonic shock solutions in flat nozzles (as a shock location changes,
        the value of the subsonic part of an one-dimensional transonic shock solution in a flat nozzle does not change).  
        After that, the stability of radial transonic shock solutions in divergent nozzles was studied. 
        This subject was first studied using the full Euler system in \cite{MR2530157,MR2427405}. 
        In these results, authors, by considering a perturbation of radial transonic shock solutions in divergent nozzles, could show that a shock location is uniquely determined for given an exit pressure without the assumption that a shock location passes through some point of the nozzle but 
        they only had the result under the assumption that the tip angle of the nozzle is sufficiently small. 
        After that, this subject
        without  
        restriction on the tip angle of the nozzle was studied. 
        In \cite{MR2824466}, the authors studied this subject using the non-isentropic potential model introduced in \cite{MR2824466}. And they 
        obtained the stability result for radial transonic shock solutions in divergent nozzles. 
        This subject was also studied using the full Euler system. 
        This 
        for the
        2-D case was done 
        in \cite{MR2420002,MR2576697,MR3005323}. In these papers, the authors had the stability result for radial transonic shock solutions in divergent nozzles. Especially, the authors in \cite{MR3005323} had the result
        for flows having $C^{1,\alpha}$ interior and $C^\alpha$ up to boundary regularity, so that they could consider 
        a general perturbation of a nozzle. 
        This 
        for the
        3-D case for axisymmetric flows with zero angular momentum components was done 
        in \cite{MR2557901}. The authors in this paper also had the same result. 
        This 
        for the general 3-D case was done 
        in \cite{MR3060893,MR3459028}. The authors in \cite{MR3060893,MR3459028} also had the same result but under S-condition introduced in \cite{MR3060893,MR3459028}.
        Recently, this subject for the general 3-D case for flows having some friction term was studied in \cite{YZQfric}.

        In this paper, we study the stability of radial transonic shock solutions in divergent nozzles under small perturbations of an incomming radial supersonic solution and a constant exit pressure using the full Euler system for the 3-D case for axisymmetric flows. We consider axisymmetric flows with non-zero angular momentum components. (This is a difference from \cite{MR2557901}.) We consider a divergent nozzle having no restriction on the tip angle of the nozzle and do not have any assumption on an incomming supersonic solution. 
        
        The main new feature in this paper is to develop a new iteration scheme to determine a shock location for a transonic shock solution of the steady full Euler system in a divergent nozzle
        and resolve the singularity issue arising in the stream function formulations of the full Euler system using an elliptic system approach.

        To deal with the stability of 3-D axisymmetric transonic shock solutions of the full Euler system, we use a stream function formulation for the full Euler system for an axisymmetric flow. This formulation shows the fact that an initial shock position and a shape of a shock location (see the definitions below the proof of Theorem \ref{eulerthm}) are determined in different mechanisms 
        clearly. 
        Based on this formulation and using the fact that the entropy of the downstream subsonic solution of a radial transonic shock solution on a shock location 
        monotonically increases as a shock location moves toward the exit of the nozzle (see Lemma \ref{Slem}), 
        we develop a new scheme to determine a shock location of a transonic shock solution of the full Euler system in a divergent nozzle: 1. Pseudo Free Boundary Problem 
        2. Determination of a shape of a shock location (see below the proof of Theorem \ref{eulerthm}). 
        

        
        In technical part, 
        we resolve the singularity issue arising in stream function formulations of the full Euler system. 
        A stream function formulation when it is formulated by using the Stokes' stream function (see \eqref{V}) has a singularity issue at the axis of symmetry. 
        We resolve this singularity issue by formulating a stream function formulation using the vector potential form of the stream function (see \S \ref{substream}) and solving a singular elliptic equation appearing in this stream function formulation as an elliptic system (see \S \ref{subelliptic}).  
        The stream function formulation formulated by using the vector potential form of the stream function 
        contains a singular elliptic equation. We transform this singular elliptic equation into a form of an elliptic system and, by solving the elliptic system form as an elliptic system, solve  the singular elliptic equation. 
        (We also use this approach 
        to prove the orthogonal completeness of eigenfunction of an associated Legendre problem of type $m=1$ with a general domain (see Lemma \ref{lemlegendre})). 
        Using the stream function formulation formulated by using the vector potential form of the stream function, we obtain the stability result for flows having $C^{1,\alpha}$ interior and $C^\alpha$ up to boundary regularity.

        
        This paper is organized as follows. In Section \ref{secProblemTheorem}, we present definitions and 
        a basic lemma used throughout this paper and introduce our problem and result. 
        In this section, we introduce the stream function formulation used in this paper. 
        In Section \ref{secPseudoFree}, we solve the Pseudo Free Boundary Problem. 
        In this section, we 
        study a linear boundary value problem for a singular elliptic equation and an initial value problem of a transport equation 
        appearing 
        in the Pseudo Free Boundary Problem, and 
        prove the unique existence of solutions of the Pseudo Free Boundary Problem. 
        In Section \ref{secdetermineshock}, we show 
        the existence and uniqueness of transonic shock solutions.
        In Section \ref{secappen}, we present some computations done by using the tensor notation given 
        in \S \ref{subelliptic}. 

        \section{Problem and Theorem}\label{secProblemTheorem}
        \subsection{Preliminary} 
        In this paper, we consider a 3-D steady 
        compressible invicid flow of an ideal polytropic gas. The motion of this flow is governed by the following full Euler system: 
        \begin{align}\label{euler}
        \begin{cases}
        \Div(\rho\bu)=0,\\
        \Div(\rho\bu\otimes\bu+p{\mathbb I})=0,\\
        \Div(\rho\bu B)=0
        \end{cases}
        \end{align}
        where $\rho$, $\bu$ and $p$ are the density, velocity and pressure of the flow, ${\mathbb I}$ is the $3\times 3$ identity matrix and $B$ is the Bernoulli invariant of the flow given by
        \begin{align}
        \label{bernoullidef}B=\frac{|\bu|^2}{2}
        + \frac{ \gam p}{(\gam-1)\rho}
        \end{align}
        for a constant $\gam>1$. 
        Types of the flow are classified by the quantity $M:=\frac{|\bu|}{c}$ called Mach number where $c$ is the sound speed of the flow given by 
        \begin{align*}
        c:=\sqrt{\frac{\gam p}{\rho}}
        \end{align*}
        for an ideal polytropic gas: if $M>1$, then a flow is called supersonic, if $M=1$, then it is called sonic and if $M<1$, then it is called subsonic. 
        It is generally known that types of the system varies depending on the value of $M$.
        If $M>1$, then the system is a hyperbolic system and if $M<1$, then the system is an elliptic-hyperbolic coupled system.
        %
        
        When the flow passes through a domain having a certain geometric structure or satisfies a certain boundary condition, 
        it may have a discontinuity across a surface in the domain in the direction of the flow. 
        Such a discontinuity is called a shock. 
        
        

        A shock solution of \eqref{euler} is defined as follows. 
        \begin{definition}
        	[Shock solution]\label{defshock}
        	Let $\Om$ be an open connected set in $\R^3$. Assume that a $C^1$ surface $\Gam$ in $\Om$ divides $\Om$ into two nonempty disjoint subset $\Om^\pm$ such that $\Om=\Om^+\cup\Gam\cup\Om^-$. 
        	Then a solution $(\rho,\bu,p)$ of \eqref{euler} is called a shock solution of \eqref{euler} with a shock $\Gam$ if 
            \begin{itemize}
            	\item[(i)] $(\rho,\bu,p)$ is in $(C^0(\ol{\Om^\pm})\cap C^1(\Om^\pm))^3$,
            	\item[(ii)] $(\rho,\bu,p)|_{\ol{\Om^-}\cap\Gam}\neq (\rho,\bu,p)|_{\ol{\Om^+}\cap \Gam}$,
            	\item[(iii)] $\rho\bu|_{\ol{\Om^-}} \cdot {\bm \nu}\neq 0$ on $\Gam$,
            	\item [(iv)] and $(\rho,\bu,p)$ satisfies  \eqref{euler} pointwisely in $\Om^\pm$ and the following Rankine-Hugoniot conditions: 
        	\begin{align}\label{RH}
        	[\rho\bu\cdot{\bm \nu}]_{\Gam}=[\bu\cdot{\bm \tau_1}]_{\Gam}=[\bu\cdot{\bm \tau_2}]_{\Gam}=[\rho(\bu\cdot{\bm \nu})^2+p]_{\Gam}=[B]_{\Gam}=0
        	\end{align}
        	where ${\bm \nu}$ is the unit normal vector field on $\Gam$ pointing toward $\Om^+$ and
        	${\bm \tau_i}$ for $i=1,2$ are unit tangent vector fields on $\Gam$ perpendicular to each other at each point on $\Gam$ 
        	and $$[F]_\Gam:=F|_{\ov{\Om^-}}(x)-F|_{\ov{\Om^+}}(x)\qd\tx{for}\qd x\in \Gam.$$
        	\end{itemize}
        \end{definition}
        A shock solution is said to be physically admissible if it satisfies the following entropy condition. 
         \begin{definition}[Entropy condition] 
         	Let $(\rho,\bu,p)$ be a shock solution of \eqref{euler} 
         	defined in Definition \ref{defshock}. Without loss of generality, assume that 
         	$\bu|_{\ol{\Om^-}}\cdot {\bm \nu}>0$ on $\Gam$. 
         	Then $S|_{\ol{\Om^+}\cap \Gam}>S|_{\ol{\Om^-}\cap \Gam}$ where $S:=\frac{p}{\rho^\gam}$  
         	is called the entropy condition. 
        \end{definition}
        \begin{definition}
        	In this paper, we call $S$ the entropy of $(\rho,\bu,p)$.
        \end{definition}
        Using the definition of a shock solution, a transonic shock solution of \eqref{euler} is defined as follows.
        
        \begin{definition}
        A shock solution of \eqref{euler} in Definition \ref{defshock} is called a transonic shock solution if it satisfies $M>1$ in $\ol{\Om^-}$ and $M<1$ in $\ol{\Om^+}$ or the otherway around. 
        \end{definition}
        \begin{remark}
        If a transonic shock solution of \eqref{euler} satisfies $\bu|_{\ol{\Om^-}}\cdot {\bm \nu}>0$ on $\Gam$, $M>1$ in $\ol{\Om^-}$ and $M<1$ in $\ol{\Om^+}$, then it satisfies the entropy condition. 
        \end{remark}

        
        In this paper, we deal with a 3-D axisymmetric transonic shock solution of \eqref{euler}. 
        For precise statement, we define an axisymmetric domain and 
        axisymmetric functions used in this paper. For later use, 
        we present a lemma used to deal with
        regularities of axisymmetric functions.  
        
        
        In this paper, we use the spherical coordinate system $(r,\theta,\vphi)$ given by the relation
        \begin{align}
        \label{xyzrtp}(x,y,z)=(r\sin\theta\cos\vphi,r\sin\theta\sin\vphi,r\cos\theta)
        \end{align}
        where $(x,y,z)$ is the cartesian coordinate system in $\R^3$. 
         The unit vectors in this coordinate system 
         are given by 
        \begin{align*}
        &\be_r=\sin\theta\cos\vphi\be_1+\sin\theta\sin\vphi\be_2+\cos\theta\be_3,\\
        &\be_\theta=\cos\theta\cos\vphi\be_1+\cos\theta\sin\vphi\be_2-\sin\theta\be_3,\\
        &\be_{\vphi}=-\sin\vphi\be_1+\cos\vphi\be_2
        \end{align*} 
        where $\be_i$ for $i=1,2,3$ are the unit vector in the $x$, $y$ and $z$ direction, respectively. 
        
        Using this spherical coordinate system, an axisymmetric domain and axisymmetric functions are defined as follows.  
        \begin{definition}\label{defaxi}
        	Let $\Om\subset\R^3$. $\Om$ is called axisymmetric if $(x,y,z)\in\Om$, then $$(\sqrt{x^2+y^2}\cos\vphi,\sqrt{x^2+y^2}\sin\vphi,z)\in \Om$$ for $\vphi\in [0,2\pi)$.  A function 
        	$f:\Om\ra \R$ is called axisymmetric if $f$ is independent of $\vphi$ as a function of the spherical coordinate system. A vector valued function $\bu:\Om \ra \R^3$ is called axisymmetric 
        	if $u_r=\bu\cdot \be_r$, $u_\theta=\bu\cdot \be_\theta$ and $u_\vphi=\bu\cdot \be_\vphi$ are axisymmetric.  
        \end{definition}
        \begin{definition}
        In this paper, when a velocity field $\bu$ is represented as $\bu=u_r\be_r+u_\theta\be_\theta+u_\vphi\be_\vphi$, $u_\vphi\be_{\vphi}$ is called the angular momentum component of $\bu$. 
        \end{definition}
        
        For later use, we present the following lemma that shows when an axisymmetric function in $C^k$ as a function of the spherical coordinate system is in $C^k$ as a function of the cartesian coordinate system. This lemma is obtained from \cite[Corollary 1]{MR2564196}. 
        \begin{lem}\label{lemaxi}
        	Let $\Om$ be an axisymmetric connected open set in $\R^3$ that does not contain the origin.  Suppose that a function f: $\Omega\rightarrow \mathbb{R}$ is axisymmetric. Then
        	\begin{itemize}
        		\item [(i)] $f$ and $f\be_r$ are in $C^k(\Om)$ for $k\in 0,1,2,\ldots$ if and only if $f$ is in $C^k$ as a function of spherical coordinate system in $\Om$ and $\pt_{\theta}^{2m+1}f=0$ for all $0\le m \le \lfloor \frac{k-1}{2}\rfloor$. 
        		\item [(ii)] $f\be_\theta$ and $f\be_\vphi$ are in $C^k(\Om)$ for $k\in 0,1,2,\ldots$ if and only if $f$ is in $C^k$ as a function of spherical coordinate system in $\Om$ and $\pt_{\theta}^{2m}f=0$ for all $0\le m \le \lfloor \frac{k}{2}\rfloor$. 
        	\end{itemize}
        \end{lem}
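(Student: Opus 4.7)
The plan is to derive this lemma as a direct consequence of \cite[Corollary 1]{MR2564196}, which classifies the Cartesian $C^k$-regularity of axisymmetric scalar fields and their natural vector-valued dressings in terms of parity of $\theta$-derivatives at the axis $\{\sin\theta=0\}$. Away from the axis the spherical-to-Cartesian change of variables is a $C^\infty$ diffeomorphism, so any regularity issue is strictly local at points where $\sin\theta=0$, and since the lemma excludes the origin, these are the only critical points. Near such a point I would work with local cylindrical-type coordinates $(\varrho,z)=(r\sin\theta,r\cos\theta)$, using that Cartesian $C^k$ regularity of an axisymmetric function is equivalent to $C^k$ regularity of the induced function of $(\varrho^2,z)$ through $\varrho=0$.

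For part (i), since $\be_r=(x,y,z)/r$ is itself $C^\infty$ on $\R^3\setminus\{0\}$, the vector field $f\be_r$ is $C^k$ in Cartesian coordinates if and only if the scalar $f$ is. Taylor-expanding $f$ in $\theta$ about the axis and substituting $\sin\theta=\varrho/r$ converts odd powers of $\theta$ into odd powers of $\sqrt{x^2+y^2}/r$, which extend $C^k$ through $\varrho=0$ exactly when all odd $\theta$-derivatives of $f$ at $\theta=0$ (and at $\theta=\pi$) vanish up to the appropriate order, i.e.\ $\pt_\theta^{2m+1}f=0$ for $0\le m\le \lfloor(k-1)/2\rfloor$. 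This is the scalar content of the cited corollary.

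For part (ii), the fields $\be_\theta$ and $\be_\vphi$ are singular at the axis: in Cartesian, $\be_\vphi=(-y,x,0)/\sqrt{x^2+y^2}$, while $\be_\theta$ contains a $1/\sqrt{x^2+y^2}$ factor together with the angular combination $(xz,yz,-(x^2+y^2))/r$. For $f\be_\theta$ or $f\be_\vphi$ even to be continuous, $f$ must vanish on the axis; more generally, Cartesian $C^k$-regularity forces the even-order $\theta$-Taylor coefficients $\pt_\theta^{2m}f|_{\theta=0}$ to vanish up to $m\le \lfloor k/2\rfloor$, so that the $\sqrt{x^2+y^2}^{-1}$ singularity is cancelled order by order.

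The main technical obstacle, were one to reprove the result from scratch, is exactly this Taylor-coefficient bookkeeping, namely identifying which parity of $\theta$-derivatives neutralises the coordinate singularity of $\be_\theta$ and $\be_\vphi$ to each order and verifying the conditions are sharp. Since \cite[Corollary 1]{MR2564196} carries out precisely this matching in a general form, I would complete the proof by verifying that each of the three combinations $f$, $f\be_r$, $f\be_\theta$, $f\be_\vphi$ considered here falls under the hypotheses of that corollary and then quoting it directly.
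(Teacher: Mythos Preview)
Your proposal is correct and matches the paper's approach exactly: the paper does not give a proof of this lemma at all, stating only that it ``is obtained from \cite[Corollary 1]{MR2564196}.'' Your write-up in fact supplies more detail than the paper does, explaining why the scalar case and the $\be_r$, $\be_\theta$, $\be_\vphi$ dressings fall under the hypotheses of that corollary and where the parity conditions on $\theta$-derivatives come from.
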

        In this paper, we use the same function notation when we represent an axisymmetric function as a function on the cartesian coordinate system or spherical coordinate system. 
        
        \subsection{Radial transonic shock solution}\label{subradial}
        Let $r_0$, $r_1$ and $\theta_1$ be constants such that $0<r_0<r_1$ and $0\le \theta_1<\pi$.
        Define a divergent nozzle by 
        \begin{align}
        \n:=\{(x,y,z)\in \R^3\;|\: r_0<r<r_1,\;0\le \theta<\theta_1 
        \}.
        \end{align}
        To introduce our problem and 
        for our later analysis, we study a radial transonic shock solution of \eqref{euler} in $\n$. 
        
        Fix positive constants $(\rho_{in},u_{in},p_{in})$ 
        satisfying $M_{in}(:=u_{in}/\sqrt{\frac{\gam p_{in}}{\rho_{in}}})>1$. 
        Let $(\bar\rho,\bar u\be_r,\bar p)$ 
        be a radial shock solution of \eqref{euler} in $\n$ with a shock
        $\Gam_t:=\{r=t\}\cap \n$ for some $t\in [r_0,r_1]$ satisfying 
        \begin{align}
        \label{iv}
        (\bar\rho,\bar u\be_r, \bar p)=(\rho_{in},u_{in}\be_r,p_{in})\qdon \Gam_{en}:=\pt\n\cap\{r=r_0,\;0\le\theta<\theta_1\}. 
        \end{align}
        Then 
        $(\bar\rho,\bar u, \bar p)$ 
        is a solution of 
        \begin{align}\label{reuler}
        \begin{cases}
        (r^2\bar\rho\bar u)^\p=0,\\
        \bar\rho \bar u \bar u^\p +\bar p^\p=0,\\
        \bar \rho \bar u \bar B^\p=0
        \end{cases}
        \end{align}
        with 
        \begin{align}
        \label{iiv}
        (\bar \rho,\bar u,\bar p)(r_0)=(\rho_{in},u_{in},p_{in})
        \end{align} 
        in $\ol{D_t^-}$, where $D_t^-:=\{r_0<r<t\}$, $^\p$ is the derivative with respect to $r$ and  $\bar B:=\frac{\bar u^2}{2}+\frac{\gam \bar p}{(\gam-1)\bar \rho}$, and is a solution of 
        \eqref{reuler} 
        with 
        \begin{align}\label{rrh}
        \begin{cases}
        \bar \rho \bar u(t)=\bar\rho\bar u|_{\ol{D_t^-}}(t),\\
        (\bar \rho \bar u^2+\bar p)(t)=(\bar \rho \bar u^2+\bar p)|_{\ol{D_t^-}}(t),\\
        \bar B(t)=
        \bar B|_{\ol{D_t^-}}(t)
        \end{cases}
        \end{align}
        in $\ol{D_t^+}$ where $D_t^+:=\{t<r<r_1\}$. 
        
        By \eqref{reuler} and \eqref{iiv}, 
        a solution 
        $(\bar \rho,\bar u,\bar p)$ of \eqref{reuler} with \eqref{iiv}
        satisfies
        \begin{align}\label{conserv}
        \begin{cases}
        r^2\bar \rho \bar u=m_0,\\
        \bar S=S_{in},\\
        \bar B=B_0
        \end{cases}
        \end{align}
        where 
        $m_0:=r_0^2\rho_{in} u_{in}$, $\bar S:=\frac{\bar p}{\bar \rho^\gam}$, $S_{in}:=\frac{p_{in}}{\rho_{in}^\gam}$ 
        and  $B_0:=\frac{u_{in}^2}{2}+\frac{\gam p_{in}}{(\gam-1)\rho_{in}}$ 
        on the domain where \eqref{reuler} with \eqref{iiv} has a unique solution $(\bar\rho,\bar u,\bar p)$. 
        By this fact, the local unique existence theorem for ODE and the condition that $M_{in}>1$, 
        \eqref{reuler} with \eqref{iiv} has a unique solution $(\bar \rho,\bar u,\bar p)$ satisfying $\bar M>1$ and $\bar M^\p>0$ in $[r_0,r_1]$ where $\bar M:=\bar u/\sqrt{\frac{\gam \bar p}{\bar \rho}}$. 
        Thus, \eqref{rrh} is well-defined for any $t\in [r_0,r_1]$. 
        From \eqref{rrh}, we obtain
        \begin{align}\label{rhodevelocity}
        \bar u(t)=\left.\frac{\bar K}{\bar u}\right|_{\ol{D_t^-}}(t)
        \end{align}
        and
        \begin{align}\label{Sexpression}
        \bar S(t)=(g({\bar M}^2)
        \bar S)|_{\ol{D_t^-}}(t)
        \end{align}
        where $\bar K:=\frac{2(\gam-1)}{\gam+1}
        \bar B$ 
        and 
        \begin{align}
        \label{geqn}g(x):=\frac{1}{\gam+1}(2\gam x-(\gam-1))\left(\frac{\gam-1}{\gam+1}+\frac{2}{\gam+1}\frac{1}{x}\right)^\gam.
        \end{align}
        Using \eqref{rhodevelocity}, the third equations of  \eqref{rrh} and \eqref{conserv} and $\bar M|_{\ol{D_t^-}}(t)>1$, 
        one can check that $(\bar\rho,\bar u,\bar p)$ satisfying \eqref{rrh} satisfies $\bar M(t) 
        <1$.
        By \eqref{reuler}, the first and third equation of \eqref{rrh}, \eqref{conserv} and \eqref{Sexpression}, 
        a solution $(\bar\rho,\bar u,\bar p)$ of \eqref{reuler} with \eqref{rrh} satisfies
        \begin{align}\label{conservsub}
        \begin{cases}
        r^2\bar\rho \bar u=m_0,\\
        \bar S=g({\bar M}|_{\ol{D_t^-}}^2(t))S_{in},\\
        \bar B=B_0
        \end{cases}
        \end{align}
        on the domain where \eqref{reuler} with \eqref{rrh}  has a unique solution $(\bar \rho,\bar u,\bar p)$. By these two facts and the local unique existence theorem for ODE, 
        \eqref{reuler} with \eqref{rrh} has a solution $(\bar \rho,\bar u,\bar p)$ satisfying $\bar M<1$ and $\bar M^\p<0$ in $\ol{D_t^+}$.  
        Therefore, 
        a radial shock solution $(\bar \rho,\bar u\be_r,\bar p)$ uniquely exists in $\n$ for each $t\in [r_0,r_1]$ and it is a radial transonic shock solution. 
        \begin{remark}\label{rmkrhop}
        	By \eqref{reuler} and the fact that a solution $(\bar \rho,\bar u, \bar p)$ of \eqref{reuler} with \eqref{rrh} uniquely exists in $\ol{D_{t}^+}$ satisfying \eqref{conservsub}, we have that 
        	a solution $(\bar \rho,\bar  u,\bar p)$ of \eqref{reuler} with \eqref{rrh} satisfies
        	$$\bar \rho^\p=\frac{2\bar \rho}{r}\frac{\bar  M^2}{1-\bar  M^2}\qdin D_t^+.$$
        	From this fact and $\bar M|_{\ol{D_{t}^+}}<1$ in $\ol{D_{t}^+}$, 
        	we obtain $\bar \rho|_{\ol{D_{t}^+}}^\p>0$ in $\ol{D_{t}^+}$. 
        \end{remark}
        One can see that the values of $(\bar \rho,\bar u,\bar p)|_{\ol{D_t^+}}$ at a fixed location $r$ in $\ol{D_{t}^+}$ are determined by the three conserved quantities in the right-hand sides of the equations in \eqref{conservsub}. 
        This combined with 
        the fact that the conserved quantity for $\bar S$ in $\ol{D_t^+}$ given in \eqref{conservsub} varies depending on $t$ (obtained from \eqref{conservsub} by using $({\bar M}|_{\ol{D_t^-}})^\p>0$ in $\ol{D_t^-}$ for any $t\in [r_0,r_1]$) implies that 
        the values of $(\bar \rho,\bar u,\bar p)|_{\ol{D_t^+}}$ at a fixed location $r$ in $\ol{D_t^+}$ vary depending on $t$. To represent this dependence, we write 
        $(\bar \rho,\bar u,\bar p)|_{\ol{D_t^+}}(r)$ and $\bar S|_{\ol{D_t^+}}(r)$ 
        as
        $(\bar \rho,\bar u,\bar p)|_{\ol{D_t^+}}(r;t)$ and $\bar S|_{\ol{D_t^+}}(r;t)$, respectively.
        
        The conserved quantity for $\bar S$ in $\ol{D_t^+}$ satisfies the following monotonicity. 
        \begin{lem}\label{Slem}
        	Let $r_0$, $r_1$, $t$ be positive constants such that $r_0\le t\le r_1$. Suppose that $(\bar \rho,\bar u,\bar p)$ is as above. Then there holds
        	\begin{align*}
        	\frac{d\bar S|_{\ol{D_t^+}}(t;t)}{dt}>0\qd\tx{for any $t\in [r_0,r_1]$}.
        	\end{align*}
        	\begin{proof}
        		Differentiate $\bar S|_{\ol{D_t^+}}(t;t)$ with respect to $t$. Then we have
        		\begin{align}\label{Sdiff}\frac{d\bar S|_{\ol{D_t^+}}(t;t)}{dt}=\left.\frac{d g(x)}{dx}\right|_{x=(\bar M|_{\ol{D_t^-}})^2(t)}\frac{d(\bar M|_{\ol{D_t^-}})^2(t)}{dt} S_{in}.
        		\end{align}
        		One can easily check that $g(1)=1$ and  $g^\p(x)>0$ for all $x>1$. By this fact, $\bar M|_{\ol{D_t^-}}(t)>1$ and $(\bar M|_{\ol{D_t^-}})^\p(t)>0$ for any $t\in [r_0,r_1]$, we obtain from \eqref{Sdiff} the desired result. 
        	\end{proof}
        \end{lem}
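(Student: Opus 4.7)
The plan is to differentiate the closed-form expression \eqref{Sexpression} for the downstream entropy on the shock, namely
\[
\bar S|_{\ol{D_t^+}}(t;t)=g\bigl((\bar M|_{\ol{D_t^-}})^2(t)\bigr)\,S_{in},
\]
and then decompose the resulting derivative into a product of factors whose signs we can determine from the properties of the upstream radial supersonic solution already established in \S\ref{subradial}. By the chain rule,
\[
\frac{d\bar S|_{\ol{D_t^+}}(t;t)}{dt}
= g'\bigl((\bar M|_{\ol{D_t^-}})^2(t)\bigr)\,\frac{d(\bar M|_{\ol{D_t^-}})^2(t)}{dt}\,S_{in}.
\]
So the whole question is reduced to signs of the three factors on the right.

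For the upstream ingredients, the analysis in \S\ref{subradial} has already shown that the unique solution of \eqref{reuler}\textendash \eqref{iiv} satisfies $\bar M>1$ and $\bar M^\p>0$ on the entire interval $[r_0,r_1]$. In particular $(\bar M|_{\ol{D_t^-}})^2(t)>1$ and $\frac{d}{dt}(\bar M|_{\ol{D_t^-}})^2(t)>0$ for every $t\in[r_0,r_1]$. Thus both of the Mach-number factors above are strictly positive, and the problem reduces to showing $g'(x)>0$ for $x>1$.

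The main (and in fact only) technical step is to verify this monotonicity of $g$ on $(1,\infty)$ directly from the definition \eqref{geqn}. Writing $h(x):=\frac{\gam-1}{\gam+1}+\frac{2}{(\gam+1)x}$, so that $g(x)=\frac{1}{\gam+1}(2\gam x-(\gam-1))h(x)^\gam$, one differentiates by the product rule and uses $h'(x)=-\frac{2}{(\gam+1)x^2}$. After clearing common factors the bracket that appears is
\[
(\gam-1)x^2+(2-2\gam)x+(\gam-1)=(\gam-1)(x-1)^2,
\]
so that
\[
g'(x)=\frac{2\gam(\gam-1)}{(\gam+1)^2 x^2}\,h(x)^{\gam-1}(x-1)^2,
\]
which is strictly positive for $x>1$ since $\gam>1$ and $h(x)>0$. (The verification $g(1)=1$ drops out of the same expression and is the standard normalization of the Rankine\textendash Hugoniot relation at a sonic point.)

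Combining these three sign statements in the chain-rule identity gives the strict positivity of $\frac{d}{dt}\bar S|_{\ol{D_t^+}}(t;t)$ on $[r_0,r_1]$. The only point requiring genuine computation is the algebraic simplification leading to the $(x-1)^2$ factor in $g'$; everything else is a direct consequence of results already proved in \S\ref{subradial}.
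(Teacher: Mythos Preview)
Your proof is correct and follows essentially the same approach as the paper: differentiate \eqref{Sexpression} via the chain rule and use the already-established facts $\bar M|_{\ol{D_t^-}}>1$, $(\bar M|_{\ol{D_t^-}})'>0$ together with $g'(x)>0$ for $x>1$. The only difference is that you actually carry out the computation of $g'(x)$ and exhibit the factor $(\gam-1)(x-1)^2$, whereas the paper simply asserts that ``one can easily check'' $g'(x)>0$ on $(1,\infty)$; your explicit formula is a welcome addition but does not change the argument.
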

        From Lemma \ref{Slem}, we obtain the following result. 
        \begin{prop}\label{propressure}
        	Suppose that $r_0$, $r_1$, $t$ and $(\bar \rho,\bar u,\bar p)$ are as in Lemma \ref{Slem}. Then for any $t\in [r_0,r_1]$, 
        	$$
        	\frac{d\bar p|_{\ol{D_t^+}}}{dt}(r_1;t)<0.$$
        \end{prop}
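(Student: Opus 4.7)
The plan is to reduce the claim to a one-parameter thermodynamic computation at the fixed location $r=r_1$. In $\ol{D_t^+}$ the relations \eqref{conservsub} fix three conserved quantities: $r^2\bar\rho\bar u=m_0$, $\bar S=g(\bar M^2|_{\ol{D_t^-}}(t))S_{in}$, and $\bar B=B_0$. Only the entropy depends on $t$, and because $\bar S$ is spatially constant in $\ol{D_t^+}$ we have $\bar S|_{\ol{D_t^+}}(r_1;t)=\bar S|_{\ol{D_t^+}}(t;t)$, which is strictly increasing in $t$ by Lemma~\ref{Slem}. Thus by the chain rule,
\[
\frac{d\bar p|_{\ol{D_t^+}}(r_1;t)}{dt}=\frac{\partial\bar p}{\partial\bar S}\bigg|_{r=r_1}\cdot\frac{d\bar S|_{\ol{D_t^+}}(t;t)}{dt},
\]
where the first factor denotes the derivative of $\bar p$ along the one-parameter family of subsonic states at $r=r_1$ with $r_1^2\bar\rho\bar u=m_0$ and $\bar B=B_0$ held fixed. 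Since the second factor is positive, it suffices to prove that the first factor is strictly negative.

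I would carry out the thermodynamic step by implicit differentiation. Substituting $\bar u=m_0/(r_1^2\bar\rho)$ and $\bar p=\bar S\bar\rho^\gam$ into the Bernoulli relation converts $\bar B=B_0$ into a single implicit equation linking $\bar\rho$ and $\bar S$. Differentiating in $\bar S$ and rewriting $m_0^2/(r_1^4\bar\rho^3)=\bar u^2/\bar\rho$ and $\gam\bar S\bar\rho^{\gam-2}=\bar c^2/\bar\rho$ (where $\bar c^2=\gam\bar p/\bar\rho$) yields
\[
\frac{d\bar\rho}{d\bar S}=-\frac{\gam\bar\rho^\gam}{(\gam-1)(\bar c^2-\bar u^2)}.
\]
Then differentiating $\bar p=\bar S\bar\rho^\gam$ in $\bar S$ and substituting produces
\[
\frac{d\bar p}{d\bar S}=\bar\rho^\gam+\bar c^2\frac{d\bar\rho}{d\bar S}=-\frac{\bar\rho^\gam\bigl(\bar c^2+(\gam-1)\bar u^2\bigr)}{(\gam-1)(\bar c^2-\bar u^2)}.
\]
The numerator is manifestly negative, and the denominator is positive because the state at $r=r_1$ is subsonic by the uniqueness argument in \S\ref{subradial}. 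This gives $\partial\bar p/\partial\bar S|_{r=r_1}<0$, and combined with Lemma~\ref{Slem} the proposition follows.

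The only real obstacle is the sign bookkeeping in the implicit differentiation, where the essential ingredient is $\bar c^2-\bar u^2>0$. Without this subsonicity the map $\bar S\mapsto\bar p$ along the family degenerates at a sonic point and the monotonicity can fail; the subsonic character of the downstream state, together with the already-established monotonicity of the entropy jump in Lemma~\ref{Slem}, is what drives the argument.
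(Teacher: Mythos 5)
Your proof is correct and follows essentially the same strategy as the paper: both reduce to the chain rule $\frac{d\bar p}{dt}=\frac{d\bar p}{d\bar S}\cdot\frac{d\bar S}{dt}$, invoke Lemma~\ref{Slem} for the positivity of $\frac{d\bar S}{dt}$, and obtain the negativity of $\frac{d\bar p}{d\bar S}$ by implicit differentiation of the Bernoulli constraint using the subsonic condition $\bar M|_{\ol{D_t^+}}<1$. The only cosmetic difference is the choice of auxiliary variable: the paper eliminates $\bar\rho$ and $\bar u$ to express $B_0$ as a function of $(\bar p,\bar S)$ and differentiates implicitly in $t$, arriving at a quotient whose denominator one recognizes as $\bar c^2-\bar u^2$; you instead keep $\bar\rho$, compute $\frac{d\bar\rho}{d\bar S}$ first, and then push forward through $\bar p=\bar S\bar\rho^\gam$. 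Both routes produce the factor $\bar c^2-\bar u^2$ in the denominator and $\bar c^2+(\gam-1)\bar u^2$ in the numerator, so they are the same computation in different clothing; arguably your version makes the appearance of the sonic denominator and its sign more transparent at a glance.
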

        \begin{proof}
        	By the definitions of $\bar B$ and $\bar S$ and the first and third equation of \eqref{conservsub},
        	$$B_0=\left.\left(\frac{1}{2}(\frac{m_0}{r_1^2})^2(\frac{\bar S}{\bar p})^{\frac{2}{\gam}}+\frac{\gam}{\gam-1}{\bar p}^{1-\frac{1}{\gam}}{\bar S}^{\frac{1}{\gam}}\right)\right|_{\ol{D_t^+}}(r_1;t).$$
        	Differentiate this with respect to $t$. Then we get
        	\begin{align}\label{pdiff}
        	\frac{d\bar p|_{\ol{D_t^+}}(r_1;t)}{dt}=-\left.\left(\frac{(\frac{m_0}{r_1^2})^2(\frac{\bar S}{\bar p})^{\frac{2}{\gam}-1}+\frac{\gam}{\gam-1}{\bar p}^{2-\frac{1}{\gam}}{\bar S}^{\frac{1}{\gam}-1}}
        	{-(\frac{m_0}{r_1^2})^2(\frac{\bar S}{\bar p})^{\frac{2}{\gam}}+\gam{\bar p}^{1-\frac{1}{\gam}}{\bar S}^{\frac{1}{\gam}}}\right)\right|_{\ol{D_t^+}}(r_1;t)\frac{d\bar S|_{\ol{D_t^+}}(r_1;t)}{dt}.
        	\end{align} 
        	By $\bar M|_{\ol{D_t^+}}<1$ in $\ol{D_t^+}$, Lemma \ref{Slem} and the second equation of \eqref{conservsub}, we obtain from \eqref{pdiff} 
        	the desired result. 
        \end{proof}
        The above proposition implies that for any given 
        $p_c\in [p_1,p_2]$ where $p_1:=p_0|_{\ol{D_{r_1}^+}}(r_1;r_1)$ and $p_2:=p_0|_{\ol{D_{r_0}^+}}(r_1;r_0)$,
        there is a unique shock location $\Gam_t$ in $\ol{\n}$ such that 
        $(\ol\rho,\ol u\be_r,\ol p)$ satisfies $\ol p|_{\ol{D_{t}^+}}(r_1;t)=p_c$. 
        Hereafter, we fix a constant $p_c\in (p_1,p_2)$ and denote $t\in (r_0,r_1)$ such that 
        a radial transonic shock solution of \eqref{euler} satisfying \eqref{iv} and having a shock location $\Gam_t$ satisfies $p(r_1)=p_c$ by $r_s$. Also, we denote a solution $(\rho,u,p)$ of \eqref{reuler} with \eqref{iiv} and a solution $(\rho,u,p)$ of \eqref{reuler} with \eqref{rrh} for $t=r_s$ by  $(\rho_0^-,u_0^-,p_0^-)$ and  $(\rho_0^+,u_0^+,p_0^+)$, respectively, and denote $\frac{p_0^+}{{\rho_0^+}^\gam}$ by $S_0^+$. 
        By the local unique existence theorem for ODE, there exists a positive constant $\delta_1$ such that $(\rho_0^+,u_0^+,p_0^+)$ uniquely exists in $[r_s-\delta_1,r_s]$ satisfying $M_0^+(=u_0^+/\sqrt{\frac{\gam p_0^+}{\rho_0^+}})<1$. Fix any such $\delta_1$. 

        
        \subsection{Problem}
        Using the radial transonic shock solution given in the previous subsection, we present our problem.
        
        In this paper, we use the following weighted H\"older norm. 
        For a bounded connected open set $\Omega\subset \mathbb{R}^n$, let $\Gam$ be a closed portion of $\partial\Omega$.
        For ${\rm x},{\rm y}\in \Omega$, set $$\delta_{\rm x}:=\tx{dist}({\rm x},\Gam)\qdand \delta_{{\rm x},{\rm y}}:=\tx{min}(\delta_{\rm x},\delta_{\rm y}).$$
        For $k\in \mathbb{R}$, $\alpha\in (0,1)$ and $m\in \mathbb{Z}^+$, we define 
        \begin{align*}
        &||u||^{(k,\Gam)}_{m,0,\Omega}:=\sum_{0\leq |\beta|\leq m}\underset{{\rm x}\in\Omega}{\sup}\delta_{\rm x}^{\max(|\beta|+k,0)}|D^\beta u({\rm x})|\\
        &[u]^{(k,\Gam)}_{m,\alpha,\Omega}:=\sum_{|\beta|=m}\underset{{\rm x},{\rm y}\in\Omega,{\rm x}\neq {\rm y}}{\sup}\delta_{{\rm x},{\rm y}}^{\max(m+\alpha+k,0)}\frac{|D^\beta u({\rm x})-D^\beta u({\rm y})|}{|{\rm x}-{\rm y}|^\alpha}\\
        &||u||^{(k,\Gam)}_{m,\alpha,\Omega}:=||u||^{(k,\Gam)}_{m,0,\Omega}+[u]^{(k,\Gam)}_{m,\alpha,\Omega}
        \end{align*}
        where $D^\beta:=\pt_{x_1}^{\beta_1}\ldots\pt_{x_n}^{\beta_n}$ for a multi-index $\beta=(\beta_1,\ldots,\beta_n)$ with $\beta_i\in {\mathbb Z}^+$ for $i=1,\ldots,n$ and $|\beta|=\sum_{i=1}^n\beta_i$. We denote the completion of a set of smooth functions 
        under $||\cdot||_{m,\alpha,\Om}^{(k,\Gam)}$ norm 
        by $C^{m,\alpha}_{(k,\Gam)}(\Om)$. 
        
        \begin{figure}[htp]
        	\centering
        	\begin{psfrags}
        		\psfrag{Bent}[cc][][0.7][0]{$B=B_0$}
        		\psfrag{M1}[cc][][0.7][0]{$M>1$}
        		\psfrag{M2}[cc][][0.7][0]{$M<1$}
        		\psfrag{sup}[cc][][0.7][0]{$(\rho_-,\bu_-,p_-)$}
        		\psfrag{sub}[cc][][0.7][0]{$(\rho_+,\bu_+,p_+)$}
        		\psfrag{shock}[cc][][0.7][0]{$\Gam_f$}
        		\psfrag{pex}[cc][][0.7][0]{$p=p_{ex}$}
        		\psfrag{n}[cc][][0.7][0]{$\n$}
        		\includegraphics[scale=0.7]{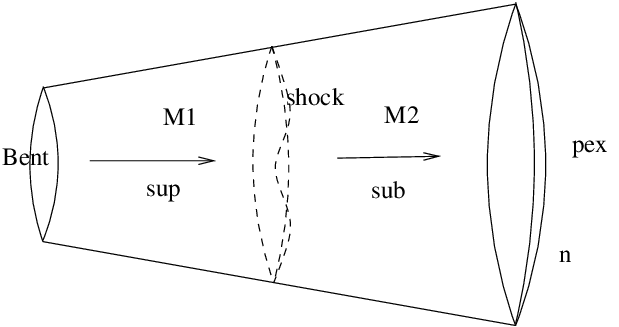}
        	\end{psfrags}
        \end{figure}
        Our problem is given as follows. 
        \begin{problem}[Transonic shock problem]\label{pb1}
        	Given an axisymmetric supersonic solution $(\rho_-,\bu_-,p_-)$ of \eqref{euler} in $\n$ satisfying the slip boundary condition 
        	\begin{align}\label{superslip}
        	\bu_-\cdot {\bm n}_w=0\qdon \Gam_w:=\pt\n\cap\{r_0<r<r_1,\;\theta=\theta_1\}
        	\end{align}
        	where ${\bm n}_w$ is the unit normal vector on $\Gam_w$,
        	\begin{align}
        	\label{B0ent}B=B_0\qdon \Gam_{en}
        	\end{align}
        	and 
        	\begin{align}\label{supesti}
        	||(\rho_-,\bu_-,p_-)-(\rho_0^-,u_0^-\be_r,p_0^-)||_{2,\alpha,\n}\le \sigma
        	\end{align}
        	and an axisymmetric exit pressure $p_{ex}$ on $\Gam_{ex}:=\pt\n \cap \{r=r_1,0\le\theta<\theta_1\}$ satisfying
        	\begin{align}\label{pexesti}
        	||p_{ex}-p_c||_{1,\alpha,\Gam_{ex}}^{(-\alpha,\pt\Gam_{ex})}\le \sigma
        	\end{align}
        	for a positive constant $\sigma$, 
        	find a shock location $\Gam_f:=\n \cap \{r=f(\theta)\}$ and a corresponding 
        	subsonic solution $(\rho_+,\bu_+,p_+)\in (C^0(\ol{\n_f^+})\cap C^1(\n_f^+))^3$ of \eqref{euler}
        	satisfying
        	\begin{itemize}
        		\item [(i)] the system \eqref{euler} in $\n_f^+:=\n\cap\{r>f(\theta)\}$, 
        		\item [(ii)] R-H conditions \eqref{RH} on $\Gam_f$,
        		\item [(iii)] the slip boundary condition 
        		\begin{align}
        		\label{slip}\bu_+\cdot {\bm n}_w=0\qd\tx{on}\qd\Gam_w^+:=\Gam_w\cap \{r>f(\theta_1)\}, 
        		\end{align}
        		\item [(iv)] and the exit pressure condition
        		\begin{align}\label{exitc}
        		p_+=p_{ex}\qd\tx{on}\qd\Gam_{ex}.
        		\end{align}
        	\end{itemize}
        \end{problem} 
        \begin{remark} 
        	It is generally known that a supersonic solution of \eqref{euler} is governed by a hyperbolic system. 
        	We assume that $(\rho_-,\bu_-,p_-)$ in Problem \ref{pb1} exists. 
        \end{remark}
        \begin{remark}
        	To simplify our argument,
        	we assumed in Problem \ref{pb1} that $(\rho_-,\bu_-,p_-)$ satisfies \eqref{B0ent}. 
        	This assumption will be used to reduce \eqref{euler} and \eqref{RH}
        	(see \S \ref{restate}). 
        	The result for Problem \ref{pb1} (Theorem \ref{eulerthm}) does not change if we consider a general perturbation of $(\rho_0^-,u_0^-\be_r,p_0^-)$ in Problem \ref{pb1}. 
        \end{remark}
        We study Problem \ref{pb1} using a stream function formulation of the full Euler system for an axisymmetric flow.
        We introduce a stream function formulation used in this paper in the next subsection. 
        \subsection{Stream function formulation}\label{substream}
        Let $\Om$ be an open simply connected axisymmetric set in $\R^3$. 
        Let $(\rho,\bu)$ be axisymmetric $C^1$ functions in $\Om$ satisfying the first equation of \eqref{euler} and $|\rho\bu|>0$. 
        For such $(\rho,\bu)$, 
        the Stokes'
        stream function for an axisymmetric flow of the full Euler system is defined 
        by
        \begin{align}\label{V}
        V(\rm x)=
        \int_{S_{\rm x}}\rho \bu \cdot {\bm\nu} d S \qd\mbox{for ${\rm x}\in \Om$}
        \end{align}
        where $S_{\rm x}$ is a simply connected $C^1$ surface in $\Om$ whose boundary is a circle centered at $z$-axis, parallel to $xy$-plane and passing through ${\rm x}$, and ${\bm \nu}$ is the unit normal vector on $S_{\rm x}$ pointing outward direction with respect to the cone-like
        domain 
        made by connecting $\pt S_{\rm x}$ and the origin by straight lines. By the first equation of \eqref{euler}, 
        the value of this function at ${\rm x}$ is independent of the choice of $S_{\rm x}$. 
        Since $\pt S_{\rm x}$ is axisymmetric, $V$ is axisymmetric in $\Om$. 
        
        By the first equation of \eqref{euler}, 
        $V$ is a constant on each stream surface of $\rho\bu$ in $\Om$. 
        Here, the stream surfaces of a vector field $\rho\bu$ in $\Om$ by 
        a set of surfaces made by collecting all the streamlines of $\rho\bu$ initiating from a point on a circle in $\Om$ centered at $z$-axis and parallel to $xy$-plane.
        By $|\rho\bu|>0$ in $\Om$, $V$ is a constant on each stream surface of $\rho\bu$ in $\Om$ and $V$ on each different stream surface of vector field $\rho\bu$ in $\Om$ is different from each other. 
        From these facts, we have that if we apply $\ngrad$, where 
        $\ngrad =\frac{1}{2\pi r \sin\theta } \left(\be_r\frac{\pt_{\theta} }{r}-\be_\theta\pt_r \right)$ which satisfies $\ngrad h\cdot\grad h=0$ and $\ngrad h \cdot \be_\vphi=0$ for a scalar function $h$, to $V$, then we have a vector field in $\Om$ tangent to the stream surfaces in $\Om$ 
        and having no $\be_{\vphi}$ component.
        Apply $\ngrad$ to $V$. Then we have
        \begin{align}\label{streamvelocity}
        \ngrad V=\rho u_r \be_r+\rho u_\theta \be_\theta
        \end{align}
        where $u_r=\bu\cdot \be_r$ and $u_\theta=\bu\cdot \be_\theta$.

        Using \eqref{streamvelocity}, we can reformulate the 
        full Euler system for an axisymmetric flow. But if we do this, then there is a singularity issue that can be seen in the relation $||\rho u_r\be_r+\rho u_\theta \be_\theta||_{\alpha,\n}=||\ngrad V||_{\alpha,\n}\not\le C||V||_{1,\alpha,\n}$ for any constant $C$.
        To avoid this issue, we use the following form of the stream function. 
       
        Let $\Phi\be_\vphi$ be an axisymmetric vector field in $\Om$ satisfying  
        \begin{align}\label{LL}
        \oint_{\pt S_{\rm x}} \Phi\be_\vphi\cdot d{\bm r}=\int_{S_{\rm x}}\rho\bu\cdot {\bm\nu} dS
        \end{align}
        where ${\bm r}$ is a parametrization of $\pt S_{\rm x}$ in a counter clockwise direction. Then by the definitions of $\Phi\be_\vphi$ and $V$, 
        \begin{align}
        \label{PhiVdef}\Phi=\frac{V}{2\pi r\sin\theta}
        \end{align}
        It is easily checked that
        \begin{align}\label{PhiV}
        \grad\times (\Phi\be_\vphi)=\ngrad V.
        \end{align} 
        From this relation and \eqref{streamvelocity} or \eqref{LL} directly, we have
        \begin{align}
        \label{Phiu}\grad\times (\Phi\be_\vphi)=\rho u_r\be_r+\rho u_\theta \be_\theta.
        \end{align}
        We call $\Phi\be_{\vphi}$ the vector potential form of the stream function.
       
        We reformulate the full Euler system for an axisymmetric flow using \eqref{Phiu}. 
        For our later analysis, when we reformulate the full Euler system using \eqref{Phiu}, we use the following form of the full Euler system representing the relation between $\grad S$ and $\grad\times \bu$ clearly 
        \begin{align}\label{ceuler}
        \begin{cases}
        \Div(\rho\bu)=0,\\
        (\grad\times\bu)\times \bu=\frac{\rho^{\gam-1}}{\gam-1}\grad S-\grad B,\\
        \rho \bu \cdot \grad B=0
        \end{cases}
        \end{align} 
        which is obtained 
        under the assumption that $(\rho,\bu,p)\in C^1$ and $\rho>0$. 
        
        We assume that $(\rho,\bu,p)$ in \eqref{ceuler} is axisymmetric and  reformulate \eqref{ceuler} using \eqref{Phiu}. 
        Rewrite \eqref{Phiu} as 
        $\bu=\frac{1}{\rho }\grad\times (\Phi\be_\vphi)+u_\vphi \be_\vphi.$
        Substitute this into the second equation of \eqref{ceuler}. 
        Then we obtain 
        \begin{align}\label{curlformulation}
        \left(\grad\times\left(\frac{1}{\rho}\grad\times (\Phi\be_\vphi)+u_\vphi  \be_\vphi\right)\right)\times \left(\frac{1}{\rho}\grad\times (\Phi\be_\vphi)+u_\vphi \be_\vphi\right)=\frac{\rho^{\gam-1}}{\gam-1}\grad S-\grad B.
        \end{align}
        From this equation, we can obtain three equations. From $\be_\vphi$-components of \eqref{curlformulation}, we get
        \begin{align}\label{uphiuphi}
        (\grad \times(u_\vphi \be_\vphi))\times \left(\frac{1}{\rho}\grad\times (\Phi\be_\vphi)\right)=0.
        \end{align}
        Define
        $L:=2\pi r \sin\theta u_\vphi$
        so that 
        $\grad\times(u_\vphi\be_\vphi)=\ngrad L$
        (see \eqref{PhiVdef} and \eqref{PhiV}). With this relation,  write \eqref{uphiuphi} 
        as
        \begin{align*}
        \ngrad L\times  \left(\frac{1}{\rho}\grad\times (\Phi\be_\vphi)\right)=0.
        \end{align*}
        From this equation, we obtain
        \begin{align}\label{Leqn}
        \grad\times (\Phi\be_\vphi)\cdot\grad L=0.
        \end{align}
        From $\be_{\theta}$-components of \eqref{curlformulation}, we have
        \begin{multline*}
        \left(\grad\times\left(\frac{1}{\rho}\grad\times (\Phi\be_\vphi)\right)\right)\times \left(\frac{1}{\rho}\grad\times (\Phi\be_\vphi)\cdot \be_r\right)\be_r+(\grad\times(u_\vphi\be_\vphi)\cdot\be_r)\be_r\times u_\vphi \be_\vphi\\
        =\left(\frac{\rho^{\gam-1}}{\gam-1}\frac{\pt_\theta S}{r}-\frac{\pt_\theta B}{r}\right) \be_\theta.
        \end{multline*}
        With the definition of $L$, rewrite the above equation. Move the rewritten term into the right-hand side of the equation. And then multiply $\be_r\times$ to the resultant equation. Then we obtain
        \begin{multline}\label{Phieqn}
        \left(\frac{1}{\rho}\grad\times (\Phi\be_\vphi)\cdot \be_r\right)\grad\times\left(\frac{1}{\rho}\grad\times (\Phi\be_\vphi)\right) \\
        =\left(\frac{L}{2\pi r\sin\theta}\grad \times\left(\frac{L}{2\pi r\sin\theta}\be_\vphi\right)\cdot\be_r+\frac{\rho^{\gam-1}}{\gam-1}\frac{\pt_\theta S}{r}-\frac{\pt_\theta B}{r}\right)\be_\vphi.
        \end{multline}
        Using the third equation of \eqref{ceuler}, we obtain from $\bu$-components of \eqref{curlformulation}
        \begin{align}\label{Seuler}
        \rho\bu\cdot \grad S=0.
        \end{align}
        With the assumption that $(\rho,\bu,p)$ are axisymmetric and \eqref{Phiu}, this equation can be written as
        \begin{align}\label{Seqn}
        \grad\times (\Phi\be_\vphi)\cdot \grad S=0.
        \end{align}
        Hence, we obtain 
        from \eqref{curlformulation} three equations: \eqref{Leqn}, \eqref{Phieqn} and \eqref{Seqn}. 
        Finally, in the same way that we obtained \eqref{Seqn} from \eqref{Seuler}, we obtain 
        from the third equation of \eqref{ceuler} 
        \begin{align}
        \label{Beqn}\grad\times (\Phi\be_\vphi)\cdot \grad B=0.
        \end{align}
        Combining \eqref{Leqn}, \eqref{Phieqn}, \eqref{Seqn} and \eqref{Beqn}, we have the following stream function formulation of the full Euler system for an axisymmetric flow
        \begin{align}\label{veuler}
        \begin{cases}
        \left(\frac{1}{\rho}\grad\times (\Phi\be_\vphi)\cdot \be_r\right)\grad\times\left(\frac{1}{\rho}\grad\times (\Phi\be_\vphi)\right)\\
        \qd\qd\qd\qd\qd\qd\qd
        =\left(\frac{L}{2\pi r\sin\theta}\grad \times\left(\frac{L}{2\pi r\sin\theta}\be_\vphi\right)\cdot\be_r+\frac{\rho^{\gam-1}}{\gam-1}\frac{\pt_\theta S}{r}-\frac{\pt_\theta B}{r}\right)\be_\vphi,\\
        \grad\times (\Phi\be_\vphi)\cdot\grad L=0,\\
        \grad\times (\Phi\be_\vphi)\cdot \grad S=0,\\
        \grad\times (\Phi\be_\vphi) \cdot \grad B=0.
        \end{cases}
        \end{align}
        Note that the first equation of \eqref{euler} is omitted in \eqref{veuler} because for a 3-D axisymmetric flow, the first equation of \eqref{euler} is reduced to $\Div(\rho u_r\be_r+\rho u_\theta\be_\theta)=0$ and this equation is directly satisfied by $\Div(\grad\times(\Phi\be_{\vphi}))=0$ if $\Phi\in C^2(\Om)$. 
        Also, note that 
        when $(\rho,\bu,p)$ in \eqref{euler} is in a sufficiently small perturbation of $(\rho_0^+,u_0^+\be_r,p_0^+)$ or $(\rho_0^-,u_0^-\be_r,p_0^-)$, 
        then unknowns of \eqref{veuler} can be $(\Phi\be_\vphi,L,S,B)$. 
        This fact (for the first case) is checked via the following lemma. 
        \begin{lem} \label{lemRho} 
        	Let $\Om$ be an axisymmetric connected open subset of $\n_{r_s-\delta_1}^+$.
        	There exist positive constants $\delta_{2,\Om}$ and $\delta_{3,\Om}$ 
        	and a function $\varrho: B_{\delta_{2,\Om},\Om}^{(1)}\ra  B_{\delta_{3,\Om},\Om}^{(2)}$, where
        	\begin{multline*}
        	B_{\delta,\Om}^{(1)}:=\{(\rho u_r\be_r+\rho u_\theta \be_\theta,u_\vphi\be_\vphi,S,B)|\in (C^0(\ol\Om))^4\;|\\\sup_{\Om}\{ |\rho u_r\be_r+\rho u_\theta\be_\theta-\rho_0^+u_0^+\be_r|+|u_\vphi\be_{\vphi}|+|S-S_0^+|+|B-B_0|\}\le \delta\},
        	\end{multline*}
        	and
        	\begin{align*}
        	B_{\delta,\Om}^{(2)}:=\{\rho\in C^0(\ol\Om)\;|\;\sup_{\Om} |\rho-\rho_0^+|\le \delta\},
        	\end{align*}
        	such that 
        	\begin{align}
        	\label{rhodef}\rho^2B-\frac{|\rho u_r\be_r+\rho u_\theta \be_\theta|^2}{2}-\frac{\rho^2}{2}|u_\vphi\be_{\vphi}|^2-\frac{\gam S \rho^{\gam+1}}{\gam-1}=0
        	\end{align}
        	if and only if 
        	\begin{align}\label{rhoex}
        	\rho=\varrho(\rho u_r\be_r+\rho u_\theta \be_\theta,u_\vphi\be_{\vphi},S,B),
        	\end{align}
        	for all $(\rho u_r\be_r+\rho u_\theta \be_\theta,u_\vphi\be_{\vphi},S,B)\in B_{\delta_{2,\Om},\Om}^{(1)}$ and $\rho\in B_{\delta_{3,\Om},\Om}^{(2)}$.
        	\begin{proof}
        		Using the definition of $B$ given in \eqref{bernoullidef}, we define 
        		\begin{multline}
        		\label{bdef}
        		b(\rho,\rho u_r\be_r+\rho u_\theta \be_\theta,u_\vphi\be_\vphi,S,B)=\rho^2B-\frac{|\rho u_r\be_r+\rho u_\theta \be_\theta|^2}{2}-\frac{\rho^2}{2}|u_\vphi\be_\vphi|^2-\frac{\gam S \rho^{\gam+1}}{\gam-1}.
        		\end{multline}
        		Then $b\in C^\infty$ with respect to $(\rho,\rho u_r\be_r+\rho u_\theta \be_\theta,u_\vphi\be_{\vphi},S,B)$,
        		\begin{align*}
        		\pt_{\rho} b(\rho_0^+,\rho_0^+u_0^+\be_r,0,S_0^+,B_0)=\rho_0^+({u_0^+}^2-\gam S_0^+{\rho_0^+}^{\gam-1})<0,
        		\end{align*}
        		and by the third equation of \eqref{conservsub}, 
        		$$b(\rho_0^+,\rho_0^+u_0^+\be_r,0,S_0^+,B_0)=0.$$
        		With these facts, we apply the implicit function theorem to $b(\rho,\rho u_r\be_r+\rho u_\theta \be_\theta,u_\vphi\be_{\vphi},S,B)$. Then we obtain the desired result.  
        	\end{proof}
        \end{lem}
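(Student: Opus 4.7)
The plan is to treat the Bernoulli relation \eqref{rhodef} as an implicit equation for $\rho$ with parameters $(\rho u_r\be_r+\rho u_\theta\be_\theta,\,u_\vphi\be_\vphi,\,S,\,B)$, and to invoke the implicit function theorem near the background state $(\rho_0^+,\rho_0^+u_0^+\be_r,0,S_0^+,B_0)$. Let $b(\rho,m,n,S,B)$ denote the left-hand side of \eqref{rhodef}, regarded as a scalar-valued function of $\rho$ and the parameters; it is a polynomial in $\rho$ with smooth dependence on the other arguments, so no regularity concerns arise.

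First I would verify that $b$ vanishes at the background state. Substituting and using $S_0^+={p_0^+}/{(\rho_0^+)^\gam}$ together with the third equation of \eqref{conservsub} (which asserts $B_0=\frac{(u_0^+)^2}{2}+\frac{\gam p_0^+}{(\gam-1)\rho_0^+}$), the four terms of $b$ cancel by construction --- this is essentially the fact that $B_0$ is the Bernoulli invariant of the background solution.

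Next, the key step is to check that $\pt_\rho b\neq 0$ at the background. A short computation reduces $\pt_\rho b$ evaluated at $(\rho_0^+,\rho_0^+u_0^+\be_r,0,S_0^+,B_0)$ to $\rho_0^+\bigl((u_0^+)^2-\gam S_0^+(\rho_0^+)^{\gam-1}\bigr)=\rho_0^+\bigl((u_0^+)^2-(c_0^+)^2\bigr)$, where $c_0^+$ denotes the sound speed of the background. Because the downstream radial solution is strictly subsonic on $[r_s-\delta_1,r_1]$ by the choice of $\delta_1$ made in \S\ref{subradial}, the Mach number satisfies $M_0^+<1$ throughout, so this quantity is strictly negative at every point of $\ol\Om$. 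This is the one genuine input beyond formal differentiation.

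Once $b=0$ and $\pt_\rho b\neq 0$ are known at each point of the compact set $\ol\Om$, the standard implicit function theorem provides, locally at each base point, a smooth single-valued inverse $\rho=\varrho(m,n,S,B)$ defined on a neighborhood of the base state. The continuity of the coefficients of $b$ and the compactness of $\ol\Om$ then allow the radii of these neighborhoods to be chosen uniformly, which produces the constants $\delta_{2,\Om}$ and $\delta_{3,\Om}$ independent of the base point together with the global function $\varrho:B_{\delta_{2,\Om},\Om}^{(1)}\ra B_{\delta_{3,\Om},\Om}^{(2)}$ asserted in the statement. The only point requiring any care is this uniformization across $\ol\Om$, but it is handled by compactness alone; I do not foresee any serious obstacle.
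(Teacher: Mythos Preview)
Your proposal is correct and follows essentially the same approach as the paper: define $b$ as the left-hand side of \eqref{rhodef}, verify $b=0$ at the background via \eqref{conservsub}, compute $\pt_\rho b=\rho_0^+\bigl((u_0^+)^2-(c_0^+)^2\bigr)<0$ from subsonicity, and apply the implicit function theorem. Your added remark on uniformizing the neighborhoods via compactness of $\ol\Om$ makes explicit a point the paper leaves tacit, but otherwise the two proofs coincide.
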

        \begin{remark}
        	Hereafter, $\delta_2$ and $\delta_3$ denote some constants $\delta_{2,\Om}$ and $\delta_{3,\Om}$ in Lemma \ref{lemRho} for $\Om=\n_{r_s-\delta_1}^+$. 
        	Hereafter, $\varrho$ denotes $\varrho$ in Lemma \ref{lemRho} for $\Om=\n_{r_s-\delta_1}^+$. 
        	One can see that if $\Om\subset\n_{r_s-\delta_1}^+$, then for all $(\grad\times(\Phi\be_{\vphi}),\frac{L}{2\pi r \sin\theta}\be_{\vphi},S,B)\in B_{\delta_{2},\Om}^{(1)}$ and $\rho\in B_{\delta_{3},\Om}^{(2)}$, \eqref{rhodef} holds if and only if \eqref{rhoex} holds. 
        \end{remark}
    
        In order to study 3-D axisymmetric transonic shock solution of the full Euler system using the stream function formulation of the full Euler system for an axisymmetric flow above, 
        we reformulate \eqref{RH} with respect to the variables in \eqref{veuler}. 
        
        Assume that $\Gam$ in \eqref{RH} is an axisymmetric $C^1$ surface. 
        Let ${\bm \tau}_2$ and ${\bm \tau}_1$ in \eqref{RH} be $\be_\vphi$ and the unit tangent vector field on $\Gam$ perpendicular to $\be_\vphi$ and satisfying ${\bm \nu}\cdot ({\bm \tau}_1\times \be_\vphi)>0$, respectively, where ${\bm \nu}$ is the unit normal vector field on $\Gam$ pointing toward $\Om^+$. 
        By the definition of $V$ given in \eqref{V}, the first equation of \eqref{RH} is written as
        $[V]_{\Gam}=0$. Rewrite this equation using \eqref{PhiVdef}. Then we have
        $$[\Phi\be_\vphi]_\Gam=0.$$ 
        With \eqref{Phiu}, rewrite the second equation of \eqref{RH}. Then we get
        $$\frac{1}{\rho}\grad\times (\Phi\be_\vphi)\cdot {\bm \tau}_1=\bu_-\cdot {\bm \tau}_1\qdon \Gam.$$
        By the definition of $L$, 
        the third equation of \eqref{RH} can be written as
        $$[L]_{\Gam}=0.$$
        From the second, third and fifth equation of \eqref{RH}, we can obtain
        \begin{align}\label{rh1bernoulli'}
        [B_s]_\Gam=0
        \end{align}
        where $B_s:=\frac{(\bu\cdot{\bm\nu})^2}{2}+\frac{\gam p}{(\gam-1)\rho}$.
        In the same way that \eqref{Sexpression} is obtained from \eqref{rrh}, we obtain from the first and fourth equation of \eqref{RH} and \eqref{rh1bernoulli'} 
        $$S_+=g\left(\left(\frac{\bu_-\cdot {\bm\nu}}{c_-}\right)^2\right)S_-\qd \tx{on}\qd \Gam$$
        where 
        $g(x)$ is a function defined in $\eqref{geqn}$ and variables with  lower indices $\pm$ denote variables in $\Om^\pm$, respectively. 
        Combining these reformulated equations of \eqref{RH} and the fifth equation of \eqref{RH}, we have the following stream function formulation of 
        \eqref{RH}:
        \begin{align}\label{RH2}
        \begin{cases}
        [\Phi\be_\vphi]_\Gam=0,\\
        \frac{1}{\rho}\grad\times (\Phi\be_\vphi)\cdot {\bm \tau}_1=\bu_-\cdot{\bm \tau}_1\qdon \Gam,\\
        [L]_{\Gam}=0,\\
        S_+=g\left(\left(\frac{\bu_-\cdot {\bm\nu}}{c_-}\right)^2\right)S_-\qd \tx{on}\qd \Gam,\\
        [B]_\Gam=0.
        \end{cases}
        \end{align}
        \subsection{Restatement of Problem \ref{pb1} using the stream function formulation and main result} \label{restate} 
        Using the stream function formulation in the previous subsection, we restate Problem \ref{pb1}.
        We first reduce \eqref{euler} and \eqref{RH} in Problem \ref{pb1}.
        
        In Problem \ref{pb1}, we assumed that a supersonic solution $(\rho_-,\bu_-,p_-)$ of \eqref{euler} satisfies \eqref{B0ent}. By this assumption, the third equation of \eqref{euler} in $\n_f^-:=\n\cap \{r<f(\theta)\}$ and $\n_f^+$, the fifth equation of \eqref{RH},  \eqref{superslip} and \eqref{slip}, $(\rho_+,\bu_+,p_+)$ we find in Problem \ref{pb1} must satisfy $B=B_0$ in $\ol{\n_f^+}$. To simplify our argument, we assume that $(\rho_+,\bu_+,p_+)$ in Problem \ref{pb1} satisfies 
        $B=B_0$ in $\ol{\n_f^+}$. Under this assumption, \eqref{euler} and \eqref{RH} that $(\rho_+,\bu_+,p_+)$ in Problem \ref{pb1} satisfies are reduced to the first and second equation of \eqref{euler} and the first, second, third and fourth equation of \eqref{RH}. 
        
        Then we present the  stream function formulations of \eqref{euler} and \eqref{RH} satisfied by $(\rho_+,\bu_+,p_+)$ in Problem \ref{pb1}. 
        By \eqref{veuler} and \eqref{RH2}, the stream function formulations of \eqref{euler} and \eqref{RH} satisfied by $(\rho_+,\bu_+,p_+)$ in Problem \ref{pb1} and reduced by using the assumption that $(\rho_+,\bu_+,p_+)$ satisfies 
        $B=B_0$ in $\ol{\n_f^+}$ are given as 
        \begin{align}
        \label{eqV}
        &\left(\frac{1}{\rho_+}\grad\times (\Phi_+\be_\vphi)\cdot \be_r\right)\grad\times\left(\frac{1}{\rho_+}\grad\times (\Phi_+\be_\vphi)\right)\\
        &\nonumber\qd\qd\qd\qd\qd=\left(\frac{L_+}{2\pi r\sin\theta}\grad \times\left(\frac{L_+}{2\pi r\sin\theta}\be_\vphi\right)\cdot\be_r+\frac{\rho_+^{\gam-1}}{\gam-1}\frac{\pt_\theta S_+}{r}\right)\be_\vphi,\\
        &\label{eqLambda}\grad\times (\Phi_+\be_\vphi)\cdot\grad L_+=0,\\
        &\label{eqS}\grad\times (\Phi_+\be_\vphi)\cdot \grad S_+=0
        \end{align}
        in $\n_f^+$
        and 
        \begin{align}
        &\label{rhV}\Phi_+\be_\vphi=\Phi_-\be_\vphi\qdon \Gam_f,\\
        &\label{rhtau}\frac{1}{\rho_+}\grad\times (\Phi_+\be_\vphi)\cdot {\bm\tau}_f=\bu_-\cdot{\bm\tau}_f\qdon \Gam_f,\\
        &\label{rhLambda}L_+=L_-\qdon \Gam_f,\\
        &\label{rhS}S_+=g\left(\left(\frac{\bu_-\cdot {\bm \nu}_f}{c_-}\right)^2\right)S_-\qd \tx{on}\qd \Gam_f
        \end{align}
        where 
        ${\bm \nu_f}$ is the unit normal vector on $\Gam_f$ pointing toward $\n_f^+$ and ${\bm \tau}_f$ is the unit tangential vector on $\Gam_f$ perpendicular to $\be_\vphi$ and satisfying ${\bm \nu}_f\cdot ({\bm \tau}_f\times \be_\vphi)>0$. 
        Here, $(\Phi_-\be_\vphi,L_-,S_-)$ 
        and  $(\Phi_+\be_\vphi,L_+,S_+)$ are 
        $(\Phi\be_\vphi,L,S)$ 
        given by the definitions of $\Phi\be_{\vphi}$, $L$ and $S$ 
        for $(\rho,\bu,p)=(\rho_-,\bu_-,p_-)$ and $(\rho_+,\bu_+,p_+)$, 
        respectively. 
        
        Determine $\rho_+$ in \eqref{eqV} and \eqref{rhtau}. In Problem \ref{pb1}, we consider the case that $(\rho_-,\bu_-,p_-)$ and $p_{ex}$ are in sufficiently small perturbations of $(\rho_0^-,u_0^-\be_r,p_0^-)$ and $p_c$ 
        so that $(\rho_+,\bu_+,p_+)$ in Problem \ref{pb1} is in a small perturbation of $(\rho_0^+,u_0^+\be_r,p_0^+)$ such that $\rho_+$ is uniquely determined by
        $(\rho_+ u_{+,r}\be_r+\rho_+ u_{+,\theta}\be_{\theta},u_{+,\vphi}\be_{\vphi},S_+,B_0)$ 
        where $u_{+,r}:=\bu_+\cdot \be_r$, $u_{+,\theta}:=\bu_+\cdot \be_{\theta}$ and $u_{+,\vphi}=\bu_+\cdot\be_{\vphi}$ (see Lemma \ref{lemRho}). 
        To find such $(\rho_+,\bu_+,p_+)$ using \eqref{eqV}-\eqref{rhS}, we set
        $\rho_+$ in \eqref{eqV} and \eqref{rhtau} to be
        \begin{align}\label{rhodefinition}
        \rho_+=\varrho(\grad\times(\Phi_+\be_\vphi),\frac{L_+}{2\pi r \sin\theta}\be_\vphi,S_+,B_0)
        \end{align}
        where $\varrho$ is a function given in Lemma \ref{lemRho}.  
        Hereafter, 
        to simplify notation, we write $\varrho(\grad\times(\Phi\be_\vphi),\frac{L}{2\pi r \sin\theta}\be_\vphi,S,B_0)$ as $\varrho(\grad\times(\Phi\be_\vphi),\frac{L}{2\pi r \sin\theta}\be_\vphi,S)$. 

        We will find a subsonic solution of \eqref{euler} using \eqref{eqV}-\eqref{eqS}. To do this,
        we define a subsonic solution of \eqref{eqV}-\eqref{eqS}. 
        Using the definition of subsonic solution of \eqref{euler}, we define a subsonic solution of \eqref{eqV}-\eqref{eqS} by a solution $(\Phi_+\be_{\vphi},L_+,S_+)$  of \eqref{eqV}-\eqref{eqS} satisfying 
        \begin{multline}\label{subdef}|\frac{1}{\varrho(\grad\times(\Phi_+\be_\vphi),\frac{L_+}{2\pi r \sin\theta}\be_{\vphi},S_+)}\grad\times(\Phi_+\be_\vphi)+\frac{L_+}{2\pi r \sin\theta}\be_{\vphi}|^2<\\
        \gam S_+(\varrho(\grad\times(\Phi_+\be_\vphi),\frac{L_+}{2\pi r \sin\theta}\be_{\vphi},S_+))^{\gam-1}.
        \end{multline}

        
        We rewrite the boundary conditions in Problem \ref{pb1} with respect to the variables in \eqref{veuler}. Denote 
        $V$ given by 
        the definition of $V$ for $(\rho,\bu,p)=(\rho_-,\bu_-,p_-)$ and $(\rho_+,\bu_+,p_+)$ by 
        $V_-$ and  
        $V_+$, respectively. 
        By \eqref{streamvelocity}, \eqref{superslip} and \eqref{slip} can be written as $V_-=V_-(r_0,\theta_1)$ on $ \Gam_w$ and $V_+=V_+(f(\theta_1),\theta_1)$ on $\Gam_w^+.$ Rewrite this in the vector potential form. 
        Then we have $
        \Phi_-\be_\vphi=\frac{r_0\Phi_-(r_0,\theta_1)}{r}\be_\vphi$ on  $\Gam_w$
        and
        $
        \Phi_+\be_\vphi=\frac{f(\theta_1)\Phi_+(f(\theta_1),\theta_1)}{r}\be_\vphi$ on $ \Gam_w^+.$ Combine these relations with \eqref{rhV} and the continuity condition for $\Phi\be_{\vphi}$ at $\ol{\Gam_f}\cap \ol{\Gam_w^+}$. Then we obtain 
        \begin{align}
        \label{slipc}
        \Phi_+\be_\vphi=\frac{r_0\Phi_-(r_0,\theta_1)}{r}\be_\vphi\qdon \Gam_w^+.
        \end{align}
        Using the definition of $S$ and 
        \eqref{rhodefinition}, rewrite \eqref{exitc}. Then we have
        \begin{align}
        \label{pressurestream}S_+(\varrho(\grad\times (\Phi_+ \be_\vphi),\frac{L_+}{2\pi r\sin\theta}\be_\vphi,S_+))^\gam=p_{ex}\qdon \Gam_{ex}.
        \end{align}

        Finally, we add some continuity 
        condition for $\Phi_+\be_{\vphi}$ 
        in the restatement of Problem \ref{pb1} using the stream function formulation. 
        By \eqref{euler} and \eqref{slip}, $(\rho_+,\bu_+)$ we find in Prolem \ref{pb1} must satisfy that 
        the total outgoing flux for $\rho_+\bu_+$ on $\Gam_{ex}$ is equal to the total incomming flux for $\rho_+\bu_+$ on any cross section of $\n_f^+$ whose all boundary points intersect with $\Gam_w^+$. Using \eqref{V}, this statement can be expressed as 
        $\lim_{\theta\ra \theta_1}V_+(r_1,\theta)=\lim_{r\ra r_1}V_+(r,\theta_1).$
        Rewrite this in the vector potential form. Then we have 
        \begin{align}
        \label{compc}
        \lim_{\theta\ra \theta_1}(\Phi_+\be_\vphi)(r_1,\theta)=\lim_{r\ra r_1}(\Phi_+\be_\vphi)(r,\theta_1). 
        \end{align}
        Since this condition cannot be achieved from \eqref{pressurestream} (this will be seen in \S \ref{sublinear}), we include this condition in the restatement of Problem \ref{pb1} using the stream function formulation.

        Using the equations and boundary conditions obtained above, Problem \ref{pb1} is restated as follows:
        \begin{problem}\label{pb2}
        	Given an axisymmetric supersonic solution $(\rho_-,\bu_-,p_-)$ of \eqref{euler} and an axisymmetric exit pressure $p_{ex}$ as in Problem \ref{pb1}, find a shock location $\Gam_f=\n\cap \{r=f(\theta)\}$ and a corresponding 
        	subsonic solution $(\Phi_+\be_\vphi,L_+,S_+)$ of \eqref{eqV}-\eqref{eqS} satisfying 
        	\begin{itemize}
        		\item [(i)] the system \eqref{eqV}-\eqref{eqS} in $\n_f^+$,
        		\item [(ii)] the R-H conditions \eqref{rhV}-\eqref{rhS},
        		\item [(iii)] the slip boundary condition \eqref{slipc},
        		\item [(iv)] the exit pressure condition \eqref{pressurestream}, 
        		\item [(v)] the compatibility condition \eqref{compc}. 
        	\end{itemize}
        \end{problem}
        
        Let $S^{2,\theta_1}:=\{(x,y,z)\in \R^3\;|\;
        r=1,\; 0\le \theta<\theta_1 \}$. 
        A function $f$ representing a shock location $\Gam_f$  
        can be considered as a function on $S^{2,\theta_1}$. Using this fact and the stereographic projection from $(0,0,-1)$ onto the plane $z=1$ passing through $S^{2,\theta_1}$, we see that $f$ can be regarded as a function on $\Lambda$ where $\Lambda:=\{(x,y)\in \R^2\;|\; \sqrt{x^2+y^2}< 2\tan\frac{\theta_1}{2}\}$. 
        Thus, $f$ can be regarded as a function on $\Lambda$ or $(0,\theta_1)$. 
        In this paper, we regard $f$ in both ways. 
        To simplify our notation, we use the same function notation when we represent $f$ 
        as a function on $\Lambda$ or $(0,\theta_1)$. 
        
        Hereafter, we denote $\Phi\be_{\vphi}$, $L$ and $V$ given by the definitions of $\Phi\be_{\vphi}$, 
        $L$ and $V$ 
        for $(\rho,\bu,p)=(\rho_0^\pm,u_0^\pm \be_r,p_0^\pm)$ 
        by $\Phi_0^\pm\be_{\vphi}$, $L_0^\pm$ and $V_0^\pm$, 
        respectively.
        To simplify our notation, hereafter, we denote $(\Phi_+\be_\vphi,L_+,S_+)$ by $(\Phi\be_\vphi,L,S)$.
        
        Our result of Problem \ref{pb2}, the main result in this paper, is given as follows. 
        \begin{thm}\label{mainthm}
        	Let $\alpha\in (\frac{2}{3},1)$.
        	There exists a positive constant $\sigma_1$ depending on $(\rho_{in},u_{in},p_{in})$, $p_c$, $\gam$, $r_0$, $r_1$, $\theta_1$ and $\alpha$ such that if $\sigma\in(0,\sigma_1]$, then 
        	Problem \ref{pb2} has a solution 
        	$(f,\Phi\be_\vphi,L,S)$ satisfying the estimate	
        	\begin{multline}\label{mainestimate}
        	||f-r_s||_{2,\alpha,\Lambda}^{(-1-\alpha,\pt\Lambda)}\\
        	+||\grad\times((\Phi-\Phi_0^+)\be_\vphi)||_{1,\alpha,\n_f^+}^{(-\alpha,\Gam_w^+)}+||\frac{L}{2\pi r \sin\theta}\be_\vphi||_{1,\alpha,\n_f^+}^{(-\alpha,\Gam_w^+)}+||S-S_0^+||_{1,\alpha,\n_f^+}^{(-\alpha,\Gam_w^+)}\le C  \sigma
        	\end{multline}
        	where $C$ is a positive constant depending on $(\rho_{in},u_{in},p_{in})$, $p_c$, $\gam$, $r_0$, $r_1$, $\theta_1$ and $\alpha$.
        	Furthermore, this solution is unique in the class of functions satisfying \eqref{mainestimate}. 
        \end{thm}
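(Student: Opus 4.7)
The plan is a nested iteration that decouples the free boundary into an initial shock position (a scalar, e.g.\ $\tau := f(\theta_1) - r_s$) and a transverse shock shape, and that handles the axis singularity of the stream-function formulation by recasting the singular scalar equation for $\Phi\be_\vphi$ as a non-singular elliptic system, as indicated in \S\ref{substream}. After subtracting the background radial transonic shock, I introduce perturbations $\psi := \Phi - \Phi_0^+$, $L$ (note $L_0^+ \equiv 0$), $\eta := S - S_0^+$ and $\hat f := f - r_s$, and flatten $\n_f^+$ onto $\n_{r_s}^+$ by a $\theta$-dependent radial stretching so that Problem \ref{pb2} becomes a boundary-value problem on a fixed domain with coefficients depending on $\hat f$.

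The inner step, the Pseudo Free Boundary Problem, takes a trial shift $\tau$ and solves a modified problem in which the true exit pressure condition \eqref{pressurestream} is replaced by a tractable surrogate (the shape of $\hat f$ being finalized from the R-H conditions in the subsequent step of the author's scheme). I would run this as a contraction fixed-point iteration in the weighted H\"older space appearing in \eqref{mainestimate}: given a current $(\hat f,\psi,L,\eta)$, integrate the transport equations \eqref{eqLambda} and \eqref{eqS} along the streamlines of $\grad\times(\Phi\be_\vphi)$ with R-H data \eqref{rhLambda}, \eqref{rhS} prescribed on the shock; solve the quasilinear elliptic equation \eqref{eqV} for $\Phi\be_\vphi$, which is singular on the axis in the Stokes' formulation, by treating the three Cartesian components of $\Phi\be_\vphi$ as a non-singular elliptic system and applying Schauder theory, with slip condition \eqref{slipc} on $\Gam_w^+$, the oblique condition derived from \eqref{rhtau} on $\Gam_f$, and the surrogate condition on $\Gam_{ex}$; and finally update the shock shape by closing \eqref{rhtau}, keeping $\hat f(\theta_1) = \tau$ fixed. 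For $\alpha > 2/3$ and $\sigma$ small, this map is a contraction.

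The outer step chooses $\tau$ so that the true exit pressure condition \eqref{pressurestream} is satisfied. In the unperturbed case the scalar equation in $\tau$ obtained after integrating the inner loop reduces to $\bar p|_{\ol{D_t^+}}(r_1;t) = p_c$ at $t = r_s + \tau$; Proposition \ref{propressure}, itself a consequence of the entropy monotonicity in Lemma \ref{Slem}, gives a strictly negative derivative at $\tau = 0$. The implicit function theorem then produces a unique admissible $\tau = \tau(\sigma)$ for $\sigma \in (0,\sigma_1]$, yielding existence. Uniqueness within the class defined by \eqref{mainestimate} follows from uniqueness of $\tau$ together with the inner contraction.

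The principal obstacle is the interaction of the axis singularity of the stream-function equation with the free boundary, the corners $\ol{\Gam_f}\cap\ol{\Gam_w^+}$ and $\ol{\Gam_{ex}}\cap\ol{\Gam_w^+}$, and the oblique and pressure-type boundary conditions. The vector potential reformulation removes the axis singularity but produces a mixed elliptic system whose Schauder estimates in the weighted norm $\|\cdot\|^{(-\alpha,\Gam_w^+)}_{1,\alpha,\n_f^+}$ must be established with care; the threshold $\alpha > 2/3$ is needed precisely to absorb the derivative losses arising from the coupling of this system with the transport equations and the free boundary update through the R-H relations.
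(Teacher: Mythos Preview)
Your proposal reverses the roles of the two free parameters in the paper's scheme, and this reversal creates a genuine gap. In the paper the inner problem (Problem \ref{pb3}, the Pseudo Free Boundary Problem) fixes the shape $\fsp$ and imposes the full exit pressure condition \eqref{pressurestream} as a Dirichlet condition on $\Gam_{ex}$; what is then left over is the single scalar corner compatibility condition \eqref{compc}, and $f(0)$ is adjusted to satisfy it via the entropy monotonicity of Lemma \ref{Slem} (see \eqref{PsiTcompc}--\eqref{Lleft}). The outer step then determines the infinite-dimensional unknown $\fsp$ so that the remaining R-H condition \eqref{rhtau} holds, using the weak implicit function theorem together with the nontrivial Fredholm/eigenfunction argument in Lemma \ref{leminvertible}. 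Your scheme does the opposite: you absorb \eqref{rhtau} into the inner loop and attempt to recover the exit pressure with a single scalar $\tau$ in the outer loop. But \eqref{pressurestream} is a pointwise condition on all of $\Gam_{ex}$, not a scalar equation; one real parameter cannot close it, and the ``tractable surrogate'' you place on $\Gam_{ex}$ is never specified. Proposition \ref{propressure} governs a one-parameter family of \emph{radial} states and cannot by itself match a perturbed $p_{ex}(\theta)$.

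There are further discrepancies. On $\Gam_f$ the paper imposes the Dirichlet condition \eqref{rhV} (continuity of $\Phi\be_\vphi$), which is linear and does not involve the unknown $\rho$; the condition \eqref{rhtau} is \emph{not} used as a boundary condition for the elliptic problem but is encoded in the functional $\mc A$ of \eqref{mcA}, whose zero determines $\fsp$. Your plan to impose \eqref{rhtau} as an oblique condition on $\Gam_f$ brings the nonlinear $\rho$-dependence into the boundary condition and leaves \eqref{rhV} unaccounted for; it is also unclear how you simultaneously use \eqref{rhtau} as the boundary condition and as the relation that ``closes'' the shock shape. Finally, the existence part of Proposition \ref{proPseudofree} is obtained by the Schauder fixed point theorem (continuity in $C^{2,\alpha/2}$, bounds in $C^{2,\alpha}$), not by a contraction in the weighted H\"older norm you cite; contraction in a lower norm appears only in the separate uniqueness argument.
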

        Hereafter, we say that a constant depends on the data if a constant depends on $(\rho_{in},u_{in},p_{in})$, $p_c$, $\gam$, $r_0$, $r_1$, $\theta_1$ and $\alpha$.
        
        The following result of Problem \ref{pb1} is obtained from Theorem \ref{mainthm}. 
        \begin{thm}\label{eulerthm} Let $\alpha\in (\frac{2}{3},1)$.
        	There exists a positive constant $\sigma_2$ depending on the data such that if $\sigma\in(0,\sigma_2]$, then 
        	Problem \ref{pb1} has a solution $(f,\rho_+,\bu_+,p_+)$ satisfying the estimate
        	\begin{multline}\label{thmestimate}
        	||f-r_s||_{2,\alpha,\Lambda}^{(-1-\alpha,\pt\Lambda)}\\
        	+||\rho_+-\rho_0^+||_{1,\alpha,\n_f^+}^{(-\alpha,\Gam_w^+)}+||\bu_+ -u_0^+\be_r||_{1,\alpha,\n_f^+}^{(-\alpha,\Gam_w^+)}+||p_+ -p_0^+||_{1,\alpha,\n_f^+}^{(-\alpha,\Gam_w^+)}\le C\sigma
        	\end{multline} 
        	where $C$ is a positive constant depending on the data.
        	Furthermore, this solution is unique in the class of functions satisfying \eqref{thmestimate}. 
        	
        \end{thm}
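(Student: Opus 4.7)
The plan is to deduce Theorem \ref{eulerthm} from Theorem \ref{mainthm} by converting the stream-function solution back into the Euler variables. Apply Theorem \ref{mainthm} to obtain a subsonic solution $(f,\Phi\be_\vphi,L,S)$ of Problem \ref{pb2} with the bound \eqref{mainestimate}, and set
\begin{align*}
\rho_+&:=\varrho\Bigl(\grad\times(\Phi\be_\vphi),\tfrac{L}{2\pi r\sin\theta}\be_\vphi,S\Bigr),\\
\bu_+&:=\tfrac{1}{\rho_+}\grad\times(\Phi\be_\vphi)+\tfrac{L}{2\pi r\sin\theta}\be_\vphi,\qquad p_+:=S\rho_+^\gam.
\end{align*}
The smallness in \eqref{mainestimate} combined with Lemma \ref{lemRho} places the arguments of $\varrho$ in its domain, and axisymmetric regularity near the axis is ensured by Lemma \ref{lemaxi}; thus $(\rho_+,\bu_+,p_+)\in (C^0(\ol{\n_f^+})\cap C^1(\n_f^+))^3$.

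Next I would verify that $(f,\rho_+,\bu_+,p_+)$ solves Problem \ref{pb1}. The Bernoulli identity $B_+\equiv B_0$ is built directly into the defining equation \eqref{rhodef} of $\varrho$. Continuity is automatic: $\Div(\grad\times(\Phi\be_\vphi))=0$ and axisymmetry kills $\Div(\rho_+\tfrac{L}{2\pi r\sin\theta}\be_\vphi)$, so the energy equation reduces via continuity to the trivial $\rho_+\bu_+\cdot\grad B_0=0$. For the momentum equation, the reformulation in \S\ref{substream} shows that, for axisymmetric flows with constant Bernoulli, the system \eqref{eqV}--\eqref{eqS} is equivalent to the second equation of \eqref{ceuler}, hence to the momentum equation of \eqref{euler}. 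The R-H conditions \eqref{RH} follow from \eqref{rhV}--\eqref{rhS} together with $B_-=B_+=B_0$; the slip condition \eqref{slip} follows from \eqref{slipc} upon applying $\grad\times$; and \eqref{exitc} follows from \eqref{pressurestream} and $p_+=S\rho_+^\gam$. Finally, the estimate \eqref{thmestimate} is obtained by expanding $\varrho$ to first order around $(\rho_0^+u_0^+\be_r,0,S_0^+,B_0)$ using its $C^\infty$-smoothness and applying standard product/composition estimates in the weighted Hölder spaces $C^{1,\alpha}_{(-\alpha,\Gam_w^+)}(\n_f^+)$, bounding everything by the right-hand side of \eqref{mainestimate}.

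For uniqueness, let $(\til f,\til\rho_+,\til\bu_+,\til p_+)$ be any solution of Problem \ref{pb1} satisfying \eqref{thmestimate} with $\sigma_2$ sufficiently small. I would reconstruct the associated stream-function triple by setting $\til S:=\til p_+/\til\rho_+^\gam$, $\til L:=2\pi r\sin\theta(\til\bu_+\cdot\be_\vphi)$, and defining $\til\Phi\be_\vphi$ from the poloidal mass flux through the line-integral formula \eqref{LL}; single-valuedness of $\til\Phi$ is ensured by $\Div(\til\rho_+\til\bu_+)=0$, and axisymmetric regularity at the axis by Lemma \ref{lemaxi}. Reversing the derivations of \S\ref{substream} and \S\ref{restate} shows that $(\til f,\til\Phi\be_\vphi,\til L,\til S)$ solves Problem \ref{pb2} and inherits \eqref{mainestimate} from \eqref{thmestimate} after possibly enlarging $C$; uniqueness in Theorem \ref{mainthm} then forces it to coincide with $(f,\Phi\be_\vphi,L,S)$, and the explicit formulas above give $(\til\rho_+,\til\bu_+,\til p_+)=(\rho_+,\bu_+,p_+)$. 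The main delicacy throughout is controlling $\til\Phi\be_\vphi$ near the axis of symmetry and at the corners $\ol{\Gam_f}\cap\ol{\Gam_w^+}$ and $\pt\Gam_{ex}$, where Lemma \ref{lemaxi} and the weighted Hölder norms are essential.
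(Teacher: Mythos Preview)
Your proposal is correct and follows essentially the same approach as the paper: for existence, convert the stream-function solution from Theorem~\ref{mainthm} into Euler variables via $\varrho$ and verify \eqref{euler}, \eqref{RH}, \eqref{slip}, \eqref{exitc} and the estimate \eqref{thmestimate}; for uniqueness, reconstruct $(\Phi\be_\vphi,L,S)$ from any Euler solution satisfying \eqref{thmestimate}, check it solves Problem~\ref{pb2} with \eqref{mainestimate} (after adjusting constants), and invoke the uniqueness in Theorem~\ref{mainthm} together with Lemma~\ref{lemRho} to recover the Euler variables. Your definition of $\bu_+$ correctly includes the angular component $\tfrac{L}{2\pi r\sin\theta}\be_\vphi$, which the paper's displayed formula omits (evidently a typo); otherwise the two arguments coincide in structure.
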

        \begin{proof}
        1. Let $(f,\Phi\be_{\vphi},L,S)$ be a solution of Problem \ref{pb2} given in Theorem \ref{mainthm} for $\sigma\in(0,\ol{\sigma}_2]$ where $\ol{\sigma}_2$ is a positive constant less than or equal to $\sigma_1$ and to be determined later.
        Then since $(f,\Phi\be_{\vphi},L,S)$ is a solution of Problem \ref{pb2}, $\varrho(\grad\times(\Phi\be_{\vphi}),\frac{L}{2\pi r\sin\theta}\be_{\vphi},S)$ is well-defined in $\ol{\n_f^+}$. 
        Define $\rho_+:=\varrho(\grad\times(\Phi\be_{\vphi}),\frac{L}{2\pi r\sin\theta}\be_{\vphi},S)$, $\bu_+:=\frac{\grad \times(\Phi\be_{\vphi})}{\varrho(\grad\times(\Phi\be_{\vphi}),\frac{L}{2\pi r\sin\theta}\be_{\vphi},S)}$ and $p_+:=S(\varrho(\grad\times(\Phi\be_{\vphi}),\frac{L}{2\pi r\sin\theta}\be_{\vphi},S))^\gam.$
        Then since $(\Phi\be_{\vphi},L,S)$ satisfies \eqref{eqV}-\eqref{eqS} in $\n_f^+$, \eqref{rhV}-\eqref{rhS}, \eqref{slipc}, \eqref{pressurestream}, 
        and 
        $$b(\varrho(\grad\times(\Phi\be_{\vphi}),\frac{L}{2\pi r\sin\theta}\be_{\vphi},S),\grad\times(\Phi\be_{\vphi}),\frac{L}{2\pi r\sin\theta}\be_{\vphi},S,B_0)=0\qdin\ol{\n_f^+}$$ 
        where $b$ is a funtion defined in \eqref{bdef}, $(\rho_+,\bu_+,p_+)$ satisfies the second and third equation of \eqref{euler} in $\n_f^+$, \eqref{RH} on $\Gam_f$, \eqref{slip} and \eqref{exitc}. Furthermore, since 
        $\grad\cdot(\grad\times(\Phi\be_{\vphi}))=0$ and $\grad\cdot (\varrho(\grad\times(\Phi\be_{\vphi}),\frac{L}{2\pi r\sin\theta}\be_{\vphi},S)\frac{L}{2\pi r \sin\theta}\be_{\vphi})=0$ in $\n_f^+$, 
        $(\rho_+,\bu_+,p_+)$ satisfies the first equation of \eqref{euler} in $\n_f^+$. 
        Since 
        $(\Phi\be_\vphi,L,S)$ is a subsonic solution of \eqref{eqV}-\eqref{eqS}, 
        $(\rho_+,\bu_+,p_+)$ is a subsonic solution of \eqref{euler}. 
        Since $(\grad\times(\Phi\be_\vphi),\frac{L}{2\pi r \sin\theta}\be_\vphi,S)\in C^{1,\alpha}_{(-\alpha,\Gam_w^+)}(\n_f^+)$, $(\rho_+,\bu_+,p_+)\in C^{1,\alpha}_{(-\alpha,\Gam_w^+)}(\n_f^+)$. 
        From these facts, we have that $(f,\rho_+,\bu_+,p_+)$ is a solution of Problem \ref{pb1}.
        
        Obtain \eqref{thmestimate}. By Lemma \ref{lemRho}, $\rho_0^+$ can be written as
        $\rho_0^+=\varrho(\grad\times(\Phi_0^+\be_{\vphi}),\frac{L_0^+}{2\pi r \sin\theta}\be_{\vphi}(=0),S_0^+).$
        Using this expression, we write $\rho_+-\rho_0^+$ as
        \begin{multline}\label{rhorho0+}
        \int_0^1\grad \varrho(t(\grad\times(\Phi\be_\vphi),\frac{L}{2\pi r \sin\theta}\be_\vphi,S)+(1-t)(\grad\times(\Phi_0^+\be_\vphi),0,S_0^+))dt\\
        (\grad\times((\Phi-\Phi_0^+)\be_\vphi),\frac{L}{2\pi r \sin\theta}\be_\vphi,S-S_0^+).
        \end{multline}
        Since $b$ is a $C^\infty$ function of $(\rho,\grad\times(\Phi\be_{\vphi}),\frac{L}{2\pi r\sin\theta}\be_{\vphi},S,B)$, $\varrho$ is a $C^\infty$ function of $(\grad\times(\Phi\be_{\vphi}),\frac{L}{2\pi r\sin\theta}\be_{\vphi},S,B)$. With this fact, the fact that $(\rho_0^+,u_0^+\be_r,p_0^+)\in (C^\infty(\ol{\n_f^+}))^3$ and \eqref{mainestimate} satisfied by $(f,\Phi\be_{\vphi},L,S)$ for $\sigma \in (0,\ol{\sigma}_2]$ for $\ol{\sigma}_2\le \sigma_1$, we estimate \eqref{rhorho0+} in $C^{1,\alpha}_{(-\alpha,\Gam_w^+)}(\n_f^+)$. Then we obtain
        \begin{align}\label{rhomrho0+thm}
        ||\rho_+-\rho_0^+||_{1,\alpha,\n_f^+}^{(-\alpha,\Gam_w^+)}
        \le C\sigma 
        \end{align}
        where $C$ is a positive constant depending on the data. 
        By this estimate, there exists a positive constant $\ol{\sigma}_2^{(1)}$ depending on the data such that if $\sigma\in(0, \ol{\sigma}_2^{(1)}]$, then 
        \begin{align}
        \label{rhoupper}\sup_{\n_f^+}|\frac{1}{\rho_+}|\le C
        \end{align}
        where $C$ is a positive constant depending on the data. Take $\ol{\sigma}_2=\min(\sigma_1,\ol{\sigma}_2^{(1)})$ so that \eqref{rhoupper} holds. With \eqref{mainestimate} satisfied by $(f,\Phi\be_{\vphi},L,S)$,  \eqref{rhomrho0+thm} and \eqref{rhoupper}, 
        we estimate $\bu_+ -u_0^+\be_r$ and $p_+ -p_0^+$ in $C^{1,\alpha}_{(-\alpha,\Gam_w^+)}(\n_f^+)$. Then we obtain
        $$||\bu_+ -u_0^+\be_r||_{1,\alpha,\n_f^+}^{(-\alpha,\Gam_w^+)}+||p_+ -p_0^+||_{1,\alpha,\n_f^+}^{(-\alpha,\Gam_w^+)}\le C\sigma$$
        where $C$ is a positive constant depending on the data. Combining this estimate, \eqref{mainestimate} and \eqref{rhomrho0+thm}, we obtain \eqref{thmestimate}. 
        
        2. Assume that for $\sigma\in (0,\ul{\sigma}_2]$ where $\ul{\sigma}_2$ is a positive constant to be determined later, there exist two solutions $(f^i,\rho_+^i,\bu_+^i,p_+^i)$ for $i=1,2$ of Problem \ref{pb1} satisfying the estimate \eqref{thmestimate}. 
        There exists a positive constant $\ul{\sigma}_2^{(1)}$ depending on the data such that if $\sigma\in(0,\ul{\sigma}_2^{(1)}]$, then 1)
        \begin{align}\label{rhoiupper}
        \sup_{\n_{f_i}^+}|\frac{1}{\rho_+^i}|\le C
        \end{align}
        for $i=1,2$ where $C$ is a positive constant depending on the data and 2) $\rho_+^i$ for $i=1,2$ are uniquely determined by $(\rho_+^i(u_{+,r}^i\be_r+u_{+,\theta}^i\be_\theta), u_{+,\vphi}^i\be_{\vphi}, S_+^i,B_0)$ where $u_{+,r}^i:=\bu_+^i\cdot\be_r$,  $u_{+,\theta}^i:=\bu_+^i\cdot\be_\theta$, $u_{+,\vphi}^i:=\bu_+^i\cdot \be_{\vphi}$ and $S_+^i:=\frac{p_+^i}{(\rho_+^i)^\gam}$ for $i=1,2$ (here we used Lemma \ref{lemRho} and the fact that if $(\rho_+^i,\bu_+^i,p_+^i)$ are $C^0(\ol{\n_f^+})\cap C^1(\n_f^+)$ solutions of Problem \ref{pb1}, then $(\rho,\bu,p)=(\rho_+^i,\bu_+^i,p_+^i)$ satisfy $B=B_0$ in $\ol{\n_{f^i}^+}$).
        Take $\ul{\sigma}_2= \ul{\sigma}_2^{(1)}$ so that 1) and 2) hold. 
        Let $(\Phi^i\be_{\vphi},L^i,S^i)$ for $i=1,2$ be $(\Phi\be_{\vphi},L,S)$ given by the definitions of $\Phi$, $L$ and $S$ for  $(\rho,\bu,p)=(\rho_+^i,\bu_+^i,p_+^i)$ for $i=1,2$. 
        Then since  $(f^i,\rho_+^i,\bu_+^i,p_+^i)$ for $i=1,2$ are solutions of Problem \ref{pb1}, $(f^i,\Phi^i\be_{\vphi},L^i,S^i)$ for $i=1,2$ are solutions of Problem \ref{pb2}. Furthermore, since $(f^i,\rho_+^i,\bu_+^i,p_+^i)$ for $i=1,2$ satisfy \eqref{thmestimate}, $(f^i,\Phi^i\be_{\vphi},L^i,S^i)$ for $i=1,2$ satisfy \eqref{mainestimate} with 
        $C$ replaced by 
        $C_1$ for some positive constant $C_1$ depending on the data (here we used 
        \eqref{rhoiupper}).   
        Take $\ul{\sigma}_2=\min (\ul{\sigma}_2^{(1)},\frac{C\sigma_1}{C_1},\sigma_1)$ where $C$ is $C$ in \eqref{mainestimate} so that $(\rho_-,\bu_-,p_-)$ and $p_{ex}$ satisfy \eqref{supesti} and \eqref{pexesti}, respectively, for $\sigma\in (0,\sigma_1]$ and $(f^i,\Phi^i\be_{\vphi},L^i,S^i)$ for $i=1,2$ satisfy \eqref{mainestimate} for $\sigma\in (0,\sigma_1]$. Then  by Theorem \ref{mainthm}, $(f^1,\Phi^1\be_{\vphi},L^1,S^1)=(f^2,\Phi^2\be_{\vphi},L^2,S^2)$.
        This implies
        \begin{align*}
        \rho_+^1(u_{+,r}^1\be_r+u_{+,\theta}^1\be_\theta)=\rho_+^2(u_{+,r}^2\be_r+u_{+,\theta}^2\be_\theta)\qdand u_{+,\vphi}^1\be_{\vphi}=u_{+,\vphi}^2\be_{\vphi}.
        \end{align*} 
        By these relations, $S^1=S^2$ and 2), we have that $\rho_+^1=\rho_+^2$. 
        With this relation, we can conclude that 
        $(\rho_+^1,\bu_+^1,p_+^1)=(\rho_+^2,\bu_+^2,p_+^2)$. 
        Let $\sigma_2=\min(\ol{\sigma}_2,\ul{\sigma}_2)$. 
        This finishes the proof. 
        \end{proof}
        The rest of this paper is devoted to prove Theorem \ref{mainthm}.
        For convenience, 
        we describe our main process of proving Theorem \ref{mainthm} below.  
        
        
        To describe our process of proving 
        Theorem \ref{mainthm}, we define some terminologies. 
        Let $f:[0,\theta_1]\ra \R$ be a function representing an axisymmetric shock location $\Gam_f$. Decompose $f$ into $f(\theta)=f(0)+f_s(\theta)$. Then by $f_s(0)=0$, $f_s$ is uniquely determined by $\fsp$. We call $f(0)$ and $\fsp$ the initial shock position and the shape of a shock location, respectively. 
        
        Using these terminologies, our 
        process of 
        proving 
        Theorem \ref{mainthm} is described as follows. 
        \begin{itemize}
        \item [1.] For given an incomming supersonic solution, an exit pressure and a shape of a shock location $(\rho_-,\bu_-,p_-,p_{ex},\fsp)$ in a small perturbation of  $(\rho_0^-,u_0^-\be_r,p_0^-,p_c,0)$, show that there exists a pair of an initial shock position $f(0)$ and a subsonic solution $(\Phi\be_{\vphi},L,S)$ of \eqref{eqV}-\eqref{eqS} satisfying all the conditions in Problem \ref{pb2} except \eqref{rhtau}, and that this solution is unique in the class of functions in a small perturbation of $(r_s,\Phi_0^+\be_{\vphi},L_0^+,S_0^+)$. 
        
        \item [2.] For given an incomming supersonic solution and an exit pressure as in Step 1 or in a much small perturbation of $(\rho_0^-,u_0^-\be_r,p_0^-,p_c)$ if necessary, show that there exists $\fsp$ in a small perturbation of $0$ as in Step 1 such that $(f(0),\Phi\be_{\vphi},L,S)$ determined by $(\rho_-,\bu_-,p_-,p_{ex},\fsp)$ in Step 1 satisfies \eqref{rhtau}, and that for given $(\rho_-,\bu_-,p_-,p_{ex})$ in a small perturbation of $(\rho_0^-,u_0^-\be_r,p_0^-,p_c)$, a solution $(f,\Phi\be_{\vphi},L,S)$ of Problem \ref{pb2} is unique in the class of functions in a small perturbation of $(r_s,\Phi_0^+\be_{\vphi},L_0^+,S_0^+)$. 
        \end{itemize}
        Once Step 1 and Step 2 are done, then $f=f(0)+f_s$ and $(\Phi\be_{\vphi},L,S)$ obtained through Step 1 and Step 2 satisfies all the conditions in Problem \ref{pb2}. Thus, Theorem \ref{mainthm} is proved if Step 1 and Step 2 are done.  
        We will deal with Step 1 and Step 2 in Section \ref{secPseudoFree} and Section \ref{secdetermineshock}, respectively. 
        
        
        Note that the fact that for transonic shock solutions of the full Euler system, an initial shock position and a shape of a shock location are determined in different mechanisms (an initial shock position is determined by the solvability condition related to the mass conservation law and a shape of a shock location is determined by the R-H conditions) was pointed out in \cite{MR2420002} and the authors in \cite{MR2420002,MR2420002,MR2576697,MR3005323,MR2557901,MR3060893,MR3459028} prove the stability of transonic shock solutions of the full Euler system using 
        iteration schemes based on this fact. 
        In this paper, we also prove the stability of transonic shock solutions of the full Euler system using a scheme based on this fact.  
        But we do this using a different scheme. 
        In our scheme, a non-local elliptic equation appearing in \cite{MR2420002,MR2576697,MR3005323,MR2557901,MR3060893,MR3459028} does not appear.  

        \section{Pseudo Free Boundary Problem}\label{secPseudoFree}
        As a first step to prove Theorem \ref{mainthm}, we will solve the Pseudo Free Boundary Problem below. This problem naturally arises from the requirement that a subsonic solution in Problem \ref{pb2} must satisfy \eqref{compc}. From the  linearized equation of \eqref{pressurestream}, it is seen that 
        an iteration scheme for a fixed boundary problem does not give a subsonic solution satisfying \eqref{compc} in general. Thus, 
        an iteration scheme for a fixed boundary problem is not a proper scheme to find a subsonic solution in Problem \ref{pb2}. 
        To find a subsonic solution satisfying \eqref{compc}, we let $f(0)$ be an unknown to be determined simultaneously with a subsonic solution. Using this variable, we 
        adjust the value of a subsonic solution so that this solution can satisfy \eqref{compc}. 
        The main ingredient for this argument to hold is the monotonicity of the entropy of the downstream subsonic solution of a radial transonic shock solution on a shock location 
        with respect to the shock location. This will be seen in the proof of Proposition \ref{proPseudofree}. 
        

        \begin{problem} [Pseudo Free Boundary Problem]\label{pb3}
        	Given an axisymmetric supersonic solution $(\rho_-,\bu_-,p_-)$ of \eqref{euler} and exit pressure $p_{ex}$ as in Problem \ref{pb1}  
        	and a shape of a shock location $f_s^\p\in C^{1,\alpha}_{(-\alpha,\{\theta=\theta_1\})}((0,\theta_1))$ satisfying $\fsp(\theta)=0$ at $\theta=0, \theta_1$ and
        	\begin{align}
        	\label{fsestimate}||f_s^\p||_{1,\alpha,(0,\theta_1)}^{(-\alpha,\{\theta=\theta_1\})}\le \sigma
        	\end{align}
        	for a sufficiently small $\sigma>0$,
        	find an initial shock position $f(0)$ and a corresponding subsonic solution $(\Phi\be_\vphi,L,S)$ of \eqref{eqV}-\eqref{eqS} satisfying 
        	\begin{align}\tag{A}
        	\begin{split}
        	&\left(\frac{1}{\rho}\grad\times (\Phi\be_\vphi)\cdot \be_r\right)\grad\times\left(\frac{1}{\rho}\grad\times (\Phi\be_\vphi)\right)\\
        	&\qd\qd=\left(\frac{L}{2\pi r\sin\theta}\grad \times\left(\frac{L}{2\pi r\sin\theta}\be_\vphi\right)\cdot\be_r+\frac{\rho^{\gam-1}}{\gam-1}\frac{\pt_\theta S}{r}\right)\be_\vphi\qdin \n_{f(0)+f_s}^+,\\
        	&\Phi\be_\vphi=\Phi_-\be_{\vphi}\qdon \Gam_{f(0)+f_s},\\
        	&\Phi\be_\vphi=\frac{r_0\Phi_-(r_0,\theta_1)}{r}\be_\vphi\qdon \Gam_{w,f(0)+f_s}^+,\\
        	&S\rho^\gam=p_{ex}\qdon \Gam_{ex},\\
        	&\lim_{\theta\ra \theta_1}(\Phi\be_\vphi)(r_1,\theta)=\lim_{r\ra r_1}(\Phi\be_\vphi)(r,\theta_1),
        	\end{split}
        	\end{align}
        	where $\rho=\varrho(\grad\times(\Phi\be_{\vphi}),\frac{L}{2\pi r\sin\theta}\be_{\vphi},S)$,
        	and 
        	\begin{align}\tag{B}
        	\begin{split}
        	&\grad\times(\Phi\be_\vphi)\cdot \grad L=0\qdin \n_{f(0)+f_s}^+,\\
        	&L=L_-\qdon \Gam_{f(0)+f_s},\\
        	&\grad\times(\Phi\be_\vphi)\cdot \grad S=0\qdin \n_{f(0)+f_s}^+,\\
        	&S=g\left(\left(\frac{\bu_-\cdot {\bm \nu}_f}{c_-}\right)^2\right)S_-\qdon \Gam_{f(0)+f_s}
        	\end{split}
        	\end{align}
        	where $f_s$ denotes $\int_0^\theta \fsp$. 
        \end{problem}
        Hereafter, we denote a set of functions in $C^{1,\alpha}_{(-\alpha,\{\theta=\theta_1\})}((0,\theta_1))$ having $0$ value at $\theta=0,\theta_1$ by $C^{1,\alpha}_{(-\alpha,\{\theta=\theta_1\}),0}((0,\theta_1))$. 
        Hereafter, $f_s$ denotes $\int_0^\theta \fsp$. 
        
        Our result of Problem \ref{pb3} is given as follows.
        \begin{prop}
        	\label{proPseudofree} Let $\alpha\in (\frac{2}{3},1)$.
        	There exists a positive constant $\sigma_3$ depending on the data such that if $\sigma \in (0, \sigma_3]$, then Problem \ref{pb3} has a solution $(f(0),\Phi\be_\vphi,L,S)$ satisfying 
        	\begin{multline}\label{Pseudoestimate}
        	|f(0)-r_s|\\+||\grad\times((\Phi-\Phi_0^+)\be_\vphi)||_{1,\alpha,\n_\ff^+}^{(-\alpha,\Gam_w^+)}+||\frac{L}{2\pi r\sin\theta}\be_\vphi||_{1,\alpha,\n_\ff^+}^{(-\alpha,\Gam_w^+)}+||S-S_0^+||_{1,\alpha,\n_\ff^+}^{(-\alpha,\Gam_w^+)}
        	\le C\sigma
        	\end{multline}
        	where $C$ is a positive constant depending on the data. Furthermore, this solution is unique in the class of functions satisfying \eqref{Pseudoestimate}. 
        \end{prop}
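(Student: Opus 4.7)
The plan is a two-layer argument that uses the entropy monotonicity of the background radial solution (Lemma \ref{Slem} and Proposition \ref{propressure}) to determine $f(0)$. For each candidate initial shock position $f(0)$ in a small interval around $r_s$ with $\fsp$ fixed, I would first construct a subsonic solution $(\Phi\be_\vphi,L,S)$ of systems (A) and (B) with the compatibility condition \eqref{compc} omitted, and then use \eqref{compc} as a single scalar equation to pin down $f(0)$.

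For the inner problem with $f(0)$ fixed, I would run an iteration that decouples the elliptic and transport structure: given an approximation $(\til\Phi\be_\vphi,\til L,\til S)$, first update $L$ and $S$ by integrating the transport equations in (B) along the streamlines of $\grad\times(\til\Phi\be_\vphi)$ issued from $\Gam_{\ff}$ with boundary data determined by \eqref{rhLambda} and \eqref{rhS}; then update $\Phi\be_\vphi$ by solving the singular elliptic equation \eqref{eqV} with Dirichlet data on $\Gam_{\ff}$ and on the wall and with the pressure condition \eqref{pressurestream} on $\Gam_{ex}$, applying the vector-potential/elliptic-system reformulation of \S \ref{subelliptic} to circumvent the axis singularity. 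Weighted $C^{1,\alpha}_{(-\alpha,\Gam_w^+)}$ estimates together with the smallness of $\sigma$ and $|f(0)-r_s|$ should close the iteration by Schauder fixed point, and contraction in a weaker norm would give uniqueness of the inner solution.

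With the inner solution $(\Phi^{f(0)}\be_\vphi,L^{f(0)},S^{f(0)})$ in hand, depending continuously on $f(0)$, I would introduce the residual
\begin{align*}
G(f(0)) := \lim_{\theta\ra \theta_1}(\Phi^{f(0)}\be_\vphi)(r_1,\theta) - \lim_{r\ra r_1}(\Phi^{f(0)}\be_\vphi)(r,\theta_1),
\end{align*}
which vanishes for the radial background, so $G(r_s)=0$ at $\sigma=0$. The heart of the argument is to show that the linearization $G^\p(r_s)$ is nonzero. Tracing the linearized problem, the $f(0)$-dependence of $G$ enters through the shock boundary data and, most importantly, through the downstream entropy $S_0^+|_{r=f(0)}$, which is strictly monotone in $f(0)$ by Lemma \ref{Slem}; transported through the elliptic solver and combined with the strict sign of $d\bar p|_{\ol{D_t^+}}(r_1;t)/dt$ from Proposition \ref{propressure}, this should yield $G^\p(r_s)\ne 0$. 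The implicit function theorem then supplies a unique $f(0)$ with $|f(0)-r_s|\le C\sigma$, and \eqref{Pseudoestimate} follows by combining this with the inner estimates.

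The principal obstacle I expect is the weighted Hölder regularity for the mixed-boundary singular elliptic problem for $\Phi\be_\vphi$: the boundary conditions change type at the corners $\ol{\Gam_{\ff}}\cap\ol{\Gam_w^+}$ and $\ol{\Gam_w^+}\cap\ol{\Gam_{ex}}$, the symmetry axis $\theta=0$ forces the tensorial elliptic-system reformulation from \S \ref{subelliptic}, and the pressure condition \eqref{pressurestream} is genuinely nonlinear in $\grad\Phi$. A secondary difficulty is verifying the nondegeneracy $G^\p(r_s)\ne 0$ in a quantitative form, since this is what guarantees that the exit-pressure and corner-compatibility conditions together isolate a unique initial shock position.
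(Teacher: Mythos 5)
Your nested scheme (inner: fix $f(0)$ and solve for $(\Phi\be_\vphi,L,S)$; outer: solve the scalar residual $G(f(0))=0$ by the implicit function theorem) has a structural gap in the inner problem. The exit condition \eqref{pressurestream} on $\Gam_{ex}$, after linearization, constrains $\grad\times(\Psi\be_\vphi)\cdot\be_r=\frac{1}{r\sin\theta}\pt_\theta(\Psi\sin\theta)$ — a \emph{tangential} derivative along $\Gam_{ex}$ — so it determines $\Psi|_{\Gam_{ex}}$ only up to an additive constant of integration, and the boundary datum obtained by integrating from $\theta=0$ fails to match the slip datum $\frac{r_0(\Phi_--\Phi_0^-)(r_0,\theta_1)}{r}$ at the corner $\ol{\Gam_w^+}\cap\ol{\Gam_{ex}}$ unless the compatibility condition \eqref{compc} holds. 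That matching is precisely what you proposed to defer. Without it the Dirichlet data for the singular elliptic problem is discontinuous at that corner, Lemma \ref{lemSE} does not apply (it explicitly requires $\frac{f(\theta_1)h_1(f(\theta_1),\theta_1)}{r_1}=h_2(r_1,\theta_1)$), the $C^{2,\alpha}_{(-1-\alpha,\Gam_w^+)}$ estimate is lost, and the inner Schauder iteration does not close. The paper flags this at the start of \S\ref{secPseudoFree}: a fixed-boundary iteration ``does not give a subsonic solution satisfying \eqref{compc} in general,'' which is exactly why $f(0)$ must be adjusted \emph{inside} the iteration rather than in an outer layer.

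The paper's proof therefore uses a single Schauder fixed-point map $\mc J$ on pairs $(f(0),\til\Psi\be_\vphi)$: given an iterate, it first solves the transport problems (B$^\p$), then uses the entropy monotonicity Lemma \ref{Slem} to solve the scalar compatibility equation $(L)(f(0)^\sharp)=(R)$ (equation \eqref{PsiTcompc}) \emph{before} the elliptic solve, so that when Lemma \ref{lemSE} is invoked in Step 4 the boundary data in \eqref{Psi1bc} is already continuous. Note also that the nondegeneracy needed there is $(L)'\ge\lambda>0$, which follows from Lemma \ref{Slem} alone; Proposition \ref{propressure} is not used in this step (it concerns the exit pressure of the background radial solution in \S\ref{subradial}), so routing your nondegeneracy argument through $d\bar p|_{\ol{D_t^+}}(r_1;t)/dt$ is both unnecessary and more indirect. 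If you want to salvage your two-layer plan you would need to reformulate the inner problem with $f(0)$ still free in the scalar compatibility sense — which is, in effect, what the paper's $\mc J$ does in Steps 2--4.
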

        We will prove Proposition \ref{proPseudofree} using a fixed point argument. To do this, 
        we linearize (A) with respect to (A) satisfied by $(\Phi\be_\vphi,L,S)=(\Phi_0^+\be_{\vphi},L_0^+,S_0^+)$
        and reformulate (B) in terms of  $(\Psi\be_{\vphi},A,T)$ where $(\Psi,A,T):=(\Phi-\Phi_0^+,L,S-S_0^+)$. 
        \subsection{Linearization and reformulation of (A) and (B)}\label{sublinear}
        We linearize (A) with respect to (A) satisfied by $(\Phi_0^+\be_{\vphi},L_0^+,S_0^+)$. Since $\rho$ in the first and fourth equation of (A) 
        is given using an 
        implicit relation,
        to obtain the linearized equations of (A), we first linearize $\rho$ with respect to $\rho_0^+$. 

        
         \begin{lem}\label{lemrholinear}
        	Suppose that $f:\Lambda\ra \R$ 
        	is an axisymmetric function in $C^{2,\alpha}_{(-1-\alpha,\pt\Lambda)}(\Lambda)$
        	satisfying 
        	\begin{align}
        	\label{f}||f-r_s||_{2,\alpha,\Lambda}^{(-1-\alpha,\pt\Lambda)}\le \delta_1.
        	\end{align}
        	Also, suppose that $\Psi\be_\vphi:\n_f^+\ra \R^3$, $A: \n_f^+\ra \R$ and $T:\n_f^+\ra \R$ are axisymmetric functions in $C^{2,\alpha}_{(-1-\alpha,\Gam_w^+)}(\n_f^+)$, $C^{1,\alpha}_{(-\alpha,\Gam_w^+)}(\n_f^+)$ and $C^{1,\alpha}_{(-\alpha,\Gam_w^+)}(\n_f^+)$, respectively,  
        	and satisfy 
        	\begin{align}\label{ppdel2}
        	||\grad\times(\Psi\be_\vphi)||^{(-\alpha,\Gam_w^+)}_{1,\alpha,\n_f^+}+||\frac{A}{2\pi r\sin\theta}\be_\vphi||^{(-\alpha,\Gam_w^+)}_{1,\alpha,\n_f^+}+||T||^{(-\alpha,\Gam_w^+)}_{1,\alpha,\n_f^+}\le \delta_2.
        	\end{align}
        	Let $\rho=\varrho(\grad\times(\Phi\be_{\vphi}),\frac{L}{2\pi r \sin\theta}\be_{\vphi},S)$. There holds 
        	\begin{align}\label{rholinear}
        	\rho-\rho_0^+=\frac{\grad\times(\Phi_0^+\be_\vphi)}{\rho_0^+({u_0^+}^2-{c_0^+}^2)}\cdot\grad\times (\Psi\be_\vphi)+\frac{\gam {\rho_0^+}^{\gam}}{(\gam-1)({u_0^+}^2-{c_0^+}^2)}T
        	+g_1(\Psi\be_\vphi,A,T)
        	\end{align}
        	where 
        	\begin{align}\label{g1}
        	&g_1(\Psi\be_\vphi,A,T)=\\
        	&\nonumber\frac{1}{\rho_0^+({u_0^+}^2-{c_0^+}^2)}\left(\int_0^1 \left((2\rho_0^+B_0-\frac{\gam(\gam+1)}{\gam-1}S_0^+{\rho_0^+}^\gam)\right.\right.\\
        	&
        	\nonumber\left.-(2(t\rho+(1-t)\rho_0^+)B_0-\frac{\gam(\gam+1)}{\gam-1}(S_0^++T)(t\rho+(1-t)\rho_0^+)^\gam)\right)dt(\rho-\rho_0^+)\\
        	&\nonumber\left.+\int_0^1(-\grad\times (\Phi_0^+\be_\vphi)+(t\grad\times ((\Phi_0^++\Psi)\be_\vphi)+(1-t)\grad\times( \Phi_0^+\be_\vphi)))dt\cdot \grad\times(\Psi\be_\vphi)\right.\\
        	&\left.+\int_0^1 \frac{{\rho_0^+}^2 tA}{2\pi r\sin\theta}\be_\vphi dt\cdot\frac{A}{2\pi r \sin\theta}\be_\vphi \right).\nonumber
        	\end{align}
        	$g_1(\Psi\be_\vphi,A,T)$ satisfies 
        	\begin{align}\label{g1estimate}
        	||g_1(\Psi\be_\vphi,A,T)||_{1,\alpha,\n_f^+}^{(-\alpha,\Gam_w^+)}\le C(||\grad\times(\Psi\be_\vphi)||_{1,\alpha,\n_f^+}^{(-\alpha,\Gam_w^+)}+||\frac{A}{2\pi r \sin\theta}\be_\vphi||_{1,\alpha,\n_f^+}^{(-\alpha,\Gam_w^+)}+||T||_{1,\alpha,\n_f^+}^{(-\alpha,\Gam_w^+)})^2
        	\end{align}
        	where $C$ is a positive constant depending on $(\rho_0^+,u_0^+,p_0^+)$, $\gam$, $r_s$, $r_1$, $\alpha$, $\delta_1$ and $\delta_2$.  
        	\begin{proof}
        		1. By 
        		the definition of $\varrho$, 
        		$\rho=\varrho(\grad\times(\Phi\be_{\vphi}),\frac{L}{2\pi r \sin\theta}\be_{\vphi},S)$ satisfies 
        		$$b(\rho,\grad\times(\Phi\be_\vphi), \frac{L}{2\pi r\sin\theta}\be_\vphi,S,B_0)=0$$
        		where $b$ is a function defined in \eqref{bdef}. Subtract this equation from $b(\rho_0^+,\grad\times(\Phi_0^+\be_\vphi),0,S_0^+,B_0)=0$ obtained from the third equation of \eqref{conservsub}. And then linearize the resultant equation. 
        		Then we obtain \eqref{rholinear}.

        		2. 
        		With the fact that $(\rho_0^+,u_0^+\be_r,p_0^+)\in (C^\infty(\ol{\n_f^+}))^3$ and \eqref{ppdel2}, 
        		estimate $\rho-\rho_0^+$ in $C^{1,\alpha}_{(-\alpha,\Gam_w^+)}(\n_f^+)$ in the way that we estimated $\rho_+-\rho_0^+$ in the proof of Theorem \ref{eulerthm}. 
        		Then we have
        		\begin{align}
        		\label{rhomrho+}||\rho-\rho_0^+||_{1,\alpha,\n_f^+}^{(-\alpha,\Gam_w^+)}\le C (||\grad\times(\Psi\be_{\vphi})||_{1,\alpha,\n_f^+}^{(-\alpha,\Gam_w^+)}+||\frac{A}{2\pi r \sin\theta}\be_\vphi||_{1,\alpha,\n_f^+}^{(-\alpha,\Gam_w^+)}+||T||_{1,\alpha,\n_f^+}^{(-\alpha,\Gam_w^+)})
        		\end{align}
        		where $C$ is a positive constant depending on $(\rho_0^+,u_0^+,p_0^+)$, $\gam$, $r_s$, $r_1$, $\alpha$, $\delta_1$ and $\delta_2$. 
        		With this estimate, the fact that $(\rho_0^+,u_0^+\be_r,p_0^+)\in (C^\infty(\ol{\n_f^+}))^3$ and \eqref{ppdel2}, estimate \eqref{g1} in $C^{1,\alpha}_{(-\alpha,\Gam_w^+)}(\n_f^+)$. Then we obtain the desired result. 
        	\end{proof}
        \end{lem}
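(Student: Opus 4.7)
The plan is to exploit the implicit definition of $\rho$ from Lemma \ref{lemRho}: the density $\rho = \varrho(\grad\times(\Phi\be_\vphi), \frac{L}{2\pi r\sin\theta}\be_\vphi, S)$ satisfies the identity $b(\rho, \grad\times(\Phi\be_\vphi), \frac{L}{2\pi r\sin\theta}\be_\vphi, S, B_0) = 0$, where $b$ is the $C^\infty$ function defined in \eqref{bdef}. Crucially, the background density $\rho_0^+$ satisfies the analogous identity $b(\rho_0^+, \grad\times(\Phi_0^+\be_\vphi), 0, S_0^+, B_0) = 0$ by the third equation of \eqref{conservsub}. Subtracting these two identities and expanding each nonlinear dependence by the fundamental theorem of calculus, I isolate the linear contribution of $\rho - \rho_0^+$, $\grad\times(\Psi\be_\vphi)$, $\frac{A}{2\pi r\sin\theta}\be_\vphi$, and $T$, and sweep everything of quadratic order into the remainder $g_1$.

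First I compute $\pt_\rho b$ at the background point, which equals $\rho_0^+({u_0^+}^2 - \gam S_0^+ {\rho_0^+}^{\gam-1}) = \rho_0^+({u_0^+}^2 - {c_0^+}^2)$ and is nonzero (this is exactly the invertibility already used in Lemma \ref{lemRho}). The partial with respect to the meridional momentum slot contributes $-\grad\times(\Phi_0^+\be_\vphi)$; the partial with respect to the angular momentum slot vanishes at the background since the $\vphi$ component of the background velocity is zero; the partial with respect to $S$ contributes $-\frac{\gam {\rho_0^+}^{\gam+1}}{\gam-1}$; and the $B$-slot is held at $B_0$ throughout, so it does not contribute. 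Solving for $\rho - \rho_0^+$ gives the leading-order formula in \eqref{rholinear}, and keeping the integral remainder from each single-variable Taylor expansion yields precisely the three summands in \eqref{g1}. One checks by direct simplification that the second integral reduces to $\tfrac12|\grad\times(\Psi\be_\vphi)|^2$ and the third to a constant multiple of $\bigl(\tfrac{A}{2\pi r\sin\theta}\bigr)^2$, so every summand is manifestly a product of two small quantities.

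For the quadratic estimate \eqref{g1estimate}, I proceed in two stages. First a crude linear bound: invoking the $C^\infty$ smoothness of $\varrho$ (from the implicit function theorem applied to $b$) together with $(\rho_0^+, u_0^+\be_r, p_0^+) \in (C^\infty(\ol{\n_f^+}))^3$ and \eqref{ppdel2}, I obtain
\[ \|\rho-\rho_0^+\|_{1,\alpha,\n_f^+}^{(-\alpha,\Gam_w^+)} \le C\bigl(\|\grad\times(\Psi\be_\vphi)\|_{1,\alpha,\n_f^+}^{(-\alpha,\Gam_w^+)} + \|\tfrac{A}{2\pi r\sin\theta}\be_\vphi\|_{1,\alpha,\n_f^+}^{(-\alpha,\Gam_w^+)} + \|T\|_{1,\alpha,\n_f^+}^{(-\alpha,\Gam_w^+)}\bigr), \]
essentially by repeating the argument that produced \eqref{rhomrho0+thm} in the proof of Theorem \ref{eulerthm}. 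Next, applying the algebra (Leibniz) property of the weighted Hölder space $C^{1,\alpha}_{(-\alpha,\Gam_w^+)}(\n_f^+)$ to each of the three explicit products in \eqref{g1}, and substituting the linear bound above for the factor $\rho - \rho_0^+$ occurring in the first summand, yields \eqref{g1estimate}.

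The main technical obstacle is the weighted-norm bookkeeping: each integrand in \eqref{g1} must not merely be bounded but genuinely small in the $C^{1,\alpha}_{(-\alpha,\Gam_w^+)}$ norm, since we need a \emph{quadratic} estimate under a norm with a singular weight at $\Gam_w^+$. The Leibniz-type product estimate for this space, combined with the reduction to the linear estimate for $\rho - \rho_0^+$, handles this, but one must verify that no derivative falls solely on a factor that has already been absorbed into the smallness budget. The implicit, rather than explicit, dependence of $\rho$ on the perturbations is precisely the reason that this two-stage argument (first a linear bound, then the quadratic remainder) is required instead of a direct Taylor expansion.
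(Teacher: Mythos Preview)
Your proposal is correct and follows essentially the same approach as the paper: subtract the two identities $b=0$ at $(\rho,\ldots)$ and $(\rho_0^+,\ldots)$, linearize to extract \eqref{rholinear} with remainder $g_1$, then first obtain the linear bound on $\rho-\rho_0^+$ (as in the proof of Theorem~\ref{eulerthm}) and feed it back into the product structure of $g_1$ to get the quadratic estimate \eqref{g1estimate}. Your additional explicit computation of the partials of $b$ and the simplification of the second and third integrals in \eqref{g1} are helpful elaborations but not a departure from the paper's method.
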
 
        Then we linearize (A). 
        
        Subtract \eqref{eqV} satisfied by $(\Phi\be_\vphi,L,S)=(\Phi_0^+\be_{\vphi},L_0^+,S_0^+)$ in $\n_f^+$ from \eqref{eqV} in $\n_f^+$ and then linearize the resultant equation (in this process, we express $\rho-\rho_0^+$ using \eqref{rholinear}). 
        Then we obtain 
        \begin{align}\label{Wlinear}
        \grad\times&\left(\frac{1}{\rho_0^+}(1+\frac{{u_0^+}\be_r\otimes u_0^+\be_r}{{c_0^+}^2-{u_0^+}^2})\grad\times(\Psi\be_\vphi)\right) \\\nonumber&=\frac{{\rho_0^+}^{\gam-1}}{(\gam-1)u_0^+}(1+\frac{\gam {u_0^+}^2}{{c_0^+}^2-{u_0^+}^2})\frac{\pt_\theta T}{r}\be_\vphi+{\bm F}_1(\Psi\be_\vphi,A,T)\qdin \n_f^+
        \\
        &(=:{\bm F}_2(\Psi\be_\vphi,A,T))\nonumber
        \end{align}
        where 
        \begin{align}\label{F1}
        {\bm F}_1&(\Psi\be_\vphi,A,T)=\\
        \nonumber&\grad\times \left(\frac{1}{\rho_0^+}g_1\grad\times (\Phi_0^+\be_\vphi)-\frac{1}{\rho{\rho_0^+}^2}g_2^2\grad\times((\Phi_0^++\Psi)\be_\vphi)+\frac{1}{{\rho_0^+}^2}g_2\grad\times(\Psi\be_\vphi)\right)\\
        &\nonumber-\frac{1}{u_0^+ }\left(-\frac{1}{\rho\rho_0^+}g_2\grad\times((\Phi_0^++\Psi)\be_\vphi)\cdot\be_r+\frac{1}{\rho_0^+}\grad\times(\Psi\be_\vphi)\cdot \be_r\right)\\
        &\nonumber\qd\qd\qd\qd\qd\qd\qd\qd \grad\times\left(-\frac{1}{\rho\rho_0^+}g_2\grad\times((\Phi_0^++\Psi)\be_\vphi)+\frac{1}{\rho_0^+}\grad\times(\Psi\be_\vphi)\right)\\
        &\nonumber+\frac{1}{u_0^+ }\left(\int_0^1(t\rho+(1-t)\rho_0^+)^{\gam-2}dtg_2 \frac{\pt_\theta T}{r}\right)\be_\vphi\\
        &\nonumber+\frac{A}{u_0^+ 2\pi r \sin\theta}\left(\grad\times (\frac{A}{2\pi r \sin\theta}\be_\vphi)\cdot \be_r\right)\be_\vphi
        \end{align}
        where $g_1$ and $g_2$ are $g_1(\Psi\be_\vphi,A,T)$ given in \eqref{g1} and the right-hand side of \eqref{rholinear}, respectively. 
        
        By the definition of $V_0^\pm$ and the first equations of \eqref{conserv} and \eqref{conservsub}, 
        $V_0^+=V_0^-$ on $\Gam_f.$ 
        In the vector potential form, this is written as 
        $\Phi_0^+\be_{\vphi}=\Phi_0^-\be_{\vphi}$ on $\Gam_f$. 
        Subtract this equation from \eqref{rhV}. Then we obtain
        \begin{align}\label{Wf}
        \Psi\be_\vphi=(\Phi_--\Phi_0^-)\be_\vphi\qd\tx{on}\qd \Gam_{f}. 
        \end{align} 
        
        By the definition of $V_0^\pm$ and the first equations of \eqref{conserv} and \eqref{conservsub},  
        $V_0^+=V_0^-(r_0,\theta_1)$ on $\Gam_w^+.$
        In the vector potential form, 
        this is written as $\Phi_0^+\be_\vphi=\frac{r_0\Phi_0^-(r_0,\theta_1)\be_{\vphi}}{r}$ on $\Gam_w^+$. 
        By subtracting this equation 
        from 
        \eqref{slipc}, 
        we obtain
        \begin{align}\label{Wslip}
        \Psi\be_\vphi=\frac{r_0(\Phi_--\Phi_0^-)(r_0,\theta_1)}{r}\be_\vphi\qdon \Gam_w^+.
        \end{align}
        
        Rewrite \eqref{pressurestream} as $\rho=(\frac{p_{ex}}{S})^{\frac{1}{\gam}}$ on $\Gam_{ex}$.
        Subtract this equation from $\rho_0^+=(\frac{p_c}{S_0^+})^{\frac{1}{\gam}}$ on $\Gam_{ex}$.
        And then express $\rho-\rho_0^+$ using \eqref{rholinear}  
        and linearize $(\frac{p_{ex}}{S})^{\frac{1}{\gam}}-(\frac{p_c}{S_0^+})^{\frac{1}{\gam}}$.  
        Multiply $\frac{ {u_0^+}^2-{c_0^+}^2}{u_0^+}r\sin\theta$ and then
        integrate the resultant equation from $0$ to $\theta$. After that, divide $\sin\theta$ and multiply $\be_\vphi$ on both-hand sides of the equation. Then we obtain 
        \begin{multline}\label{Wexit}
        \Psi\be_\vphi=\bigg(\frac{1}{r\sin\theta}\int_0^\theta \bigg(\mf f_0(T,p_{ex})-\frac{\rho_0^+((\gam-1){u_0^+}^2+{c_0^+}^2)}{\gam(\gam-1)u_0^+S_0^+}T\\
        +\mf f_1(\Psi\be_\vphi,A,T) \bigg)r^2\sin\xi d\xi\bigg)\be_\vphi\qdon \Gam_{ex}
        \end{multline}
        where 
        \begin{align} \label{f0}\mf f_0(T,p_{ex})=\frac{{u_0^+}^2-{c_0^+}^2}{u_0^+}\left(\frac{1}{S_0^++T}\right)^{\frac{1}{\gam}}(p_{ex}^{\frac{1}{\gam}}-p_c^{\frac{1}{\gam}})
        \end{align}
        and
        \begin{multline}
        \label{f1}
        \mf f_1(\Psi\be_\vphi,A,T)=
        -\frac{{p_c}^{\frac{1}{\gam}}({u_0^+}^2-{c_0^+}^2)}{\gam u_0^+}\int_0^1 \left(\left(\frac{1}{tS+(1-t)S_0^+}\right)^{\frac{1}{\gam}+1}-\left(\frac{1}{S_0^+}\right)^{\frac{1}{\gam}+1}\right)dt T\\-\frac{{u_0^+}^2-{c_0^+}^2}{u_0^+}g_1. 
        \end{multline}
        
        By the definition of $V_0^\pm$ and the first equation of \eqref{conservsub}, 
        $\lim_{\theta\ra\theta_1}V_0^+(r_1,\theta)=\lim_{r\ra r_1} V_0^+(r,\theta_1)$. 
        In the vector potential form, this is written as $\lim_{\theta\ra \theta_1}(\Phi_0^+\be_{\vphi})(r_1,\theta)=\lim_{r\ra r_1}(\Phi_0^+\be_{\vphi})(r,\theta_1)$. Subtract this equation from \eqref{compc}. Then we obtain
        \begin{align*}
        \lim_{\theta\ra\theta_1}(\Psi\be_\vphi)(r_1,\theta)=\lim_{r\ra r_1} (\Psi\be_\vphi)(r,\theta_1). 
        \end{align*}
        Express this relation using \eqref{Wslip} and \eqref{Wexit}. Then we have
        \begin{multline}\label{Wcompc}
        \frac{1}{r_1\sin\theta_1}\int_0^{\theta_1} \bigg(\mf f_0(T,p_{ex})-\frac{\rho_0^+((\gam-1){u_0^+}^2+{c_0^+}^2)}{\gam(\gam-1)u_0^+S_0^+}T\\+\mf f_1(\Psi\be_\vphi,A,T) \bigg)\bigg|_{r=r_1}r_1^2\sin\xi d\xi=\frac{r_0(\Phi_--\Phi_0^-)(r_0,\theta_1)}{r_1}.
        \end{multline}
        
        Combining \eqref{Wlinear}, \eqref{Wf}, \eqref{Wslip}, \eqref{Wexit} and \eqref{Wcompc}, we have the following linearized equations of (A):
        \begin{align}\tag{A$^\prime$}
        \begin{split}
        &\grad\times\left(\frac{1}{\rho_0^+}(1+\frac{{u_0^+}\be_r\otimes u_0^+\be_r}{{c_0^+}^2-{u_0^+}^2})\grad\times(\Psi\be_\vphi)\right) \\&=\frac{{\rho_0^+}^{\gam-1}}{(\gam-1)u_0^+}(1+\frac{\gam {u_0^+}^2}{{c_0^+}^2-{u_0^+}^2})\frac{\pt_\theta T}{r}\be_\vphi+{\bm F}_1(\Psi\be_\vphi,A,T)\qdin \n_{f(0)+f_s}^+,\\
        &\Psi\be_\vphi=(\Phi_--\Phi_0^-)\be_\vphi\qd\tx{on}\qd \Gam_{f(0)+f_s},\\
        &\Psi\be_{\vphi}=\frac{r_0(\Phi_--\Phi_0^-)(r_0,\theta_1)}{r}\be_{\vphi}\qdon \Gam_{w,f(0)+f_s}^+,\\
        &\Psi\be_\vphi=\bigg(\frac{1}{r\sin\theta}\int_0^\theta \bigg(\mf f_0(T,p_{ex})-\frac{\rho_0^+((\gam-1){u_0^+}^2+{c_0^+}^2)}{\gam(\gam-1)u_0^+S_0^+}T\\
        &\qd\qd\qd\qd\qd\qd\qd\qd\qd\qd\qd\qd\qd\qd\qd+\mf f_1(\Psi\be_\vphi,A,T) \bigg)r^2\sin\xi d\xi\bigg)\be_\vphi\qdon \Gam_{ex},\\
        &\frac{1}{r_1\sin\theta_1}\int_0^{\theta_1} \bigg(\mf f_0(T,p_{ex})-\frac{\rho_0^+((\gam-1){u_0^+}^2+{c_0^+}^2)}{\gam(\gam-1)u_0^+S_0^+}T\\&\qd\qd\qd\qd\qd\qd\qd+\mf f_1(\Psi\be_\vphi,A,T) \bigg)\bigg|_{r=r_1}r_1^2\sin\xi d\xi=\frac{r_0(\Phi_--\Phi_0^-)(r_0,\theta_1)}{r_1}.
        \end{split}
        \end{align}
        
        For later use, we present the following estimates of ${\bm F}_1(\Psi\be_\vphi,A,T)$ and $\mf f_1(\Psi\be_\vphi,A,T)$. 
        \begin{lem}\label{F1f1estimate}
        	Let $\delta_4$ be a positive constant $\le \delta_2$ 
        	such that for $f$ as in Lemma \ref{lemrholinear}, if $(\Psi\be_{\vphi},A,T)$ satisfies 
        	\begin{align}\label{delta4}
        		||\grad\times(\Psi\be_\vphi)||^{(-\alpha,\Gam_w^+)}_{1,\alpha,\n_f^+}+||\frac{A}{2\pi r\sin\theta}\be_\vphi||^{(-\alpha,\Gam_w^+)}_{1,\alpha,\n_f^+}+||T||^{(-\alpha,\Gam_w^+)}_{1,\alpha,\n_f^+}\le \delta_4,
        	\end{align} 
            then 
        	\begin{align*}
        		\sup_{\n_f^+}|\frac{1}{\varrho(\grad\times(\Phi\be_{\vphi}),\frac{L}{2\pi r \sin\theta}\be_{\vphi},S)}|
        		\le C\qdand \sup_{\n_f^+}|\frac{1}{S}|\le C
        	\end{align*}
        	where $C$s are positive constants depending on $(\rho_0^+,u_0^+,p_0^+)$, $\gam$, $r_s$, $r_1$ 
        	and $\delta_4$. 
        	Suppose that $f$ is as in Lemma \ref{lemrholinear}.
        	Also, suppose that $(\Psi\be_{\vphi},A,T)$ are as in Lemma \ref{lemrholinear}
        	and satisfy \eqref{delta4}. 
            Then there hold
        	\begin{align}\label{F1estimate}
        	||{\bm F}_1(\Psi\be_\vphi,A,T)||_{\alpha,\n_f^+}^{(1-\alpha,\Gam_w^+)}
        	\le C (||\Psi\be_\vphi||_{2,\alpha,\n_f^+}^{(-1-\alpha,\Gam_w^+)}+||\frac{A}{2\pi r \sin\theta}\be_\vphi||_{1,\alpha,\n_f^+}^{(-\alpha,\Gam_w^+)}+||T||_{1,\alpha,\n_f^+}^{(-\alpha,\Gam_w^+)})^2
        	\end{align}
        	and
        	\begin{align}\label{f1estimate}
        	||\mf f_1(\Psi\be_\vphi,A,T)||_{1,\alpha,\Gam_{ex}}^{(-\alpha,\pt \Gam_{ex})}
        	\le C
        	(||\Psi\be_\vphi||_{2,\alpha,\Gam_{ex}}^{(-1-\alpha,\pt \Gam_{ex})}+||\frac{A}{2\pi r \sin\theta}\be_\vphi|||_{1,\alpha,\Gam_{ex}}^{(-\alpha,\pt\Gam_{ex})}+||T||_{1,\alpha,\Gam_{ex}}^{(-\alpha,\pt \Gam_{ex})})^2
        	\end{align}
        	where $Cs$ are positive constants depending on  $(\rho_0^+,u_0^+,p_0^+)$, $\gam$, $r_s$, $r_1$, $\alpha$, $\delta_1$, $\delta_2$ and $\delta_4$.
        	\begin{proof} 
        		With \eqref{g1estimate}, \eqref{rhomrho+}, \eqref{delta4} and the fact that if an axisymmetric vector field ${\bm a}$ on an axisymmetric connected open set $\Om$ is in $C^k(\Om)$, then $\grad\times{\bm a}\in C^{k-1}(\Om)$ and ${\bm a}\cdot \be_r\in C^k(\Om)$ (the second one is obtained from Lemma \ref{lemaxi}), 
        		we estimate \eqref{F1} 
        		and \eqref{f1} in $C^{\alpha}_{(1-\alpha,\Gam_w^+)}(\n_f^+)$, 
        		and $C^{1,\alpha}_{(-\alpha,\pt \Gam_{ex})}(\Gam_{ex})$, respectively. Then we obtain the desired result. 
        	\end{proof}
        \end{lem}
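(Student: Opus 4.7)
I would argue in three stages. First, to establish the pointwise bounds on $1/\rho$ and $1/S$, I combine the perturbation estimate \eqref{rhomrho+} from Lemma \ref{lemrholinear} with the smallness \eqref{delta4}, giving $\sup_{\n_f^+}|\rho-\rho_0^+| \le C\delta_4$. Since $\rho_0^+$ is smooth on $\ol{\n_f^+}$ and bounded below by a strictly positive constant depending only on the data (by the construction in \S \ref{subradial}), choosing $\delta_4$ small enough forces $\rho \ge \rho_0^+/2$ pointwise, yielding the first bound; the second follows from $S = S_0^+ + T$, the uniform bound $\|T\|_0 \le \delta_4$, and the positivity of $S_0^+$.

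For \eqref{F1estimate} I would unpack \eqref{F1} term by term and verify that every summand is structurally quadratic in the small data $(\grad\times(\Psi\be_\vphi),\ \tfrac{A}{2\pi r\sin\theta}\be_\vphi,\ T)$. Concretely: the $g_1$ factor is quadratic by \eqref{g1estimate}; the $g_2$ factor equals $\rho-\rho_0^+$ and is linear by \eqref{rhomrho+}, so that any product of the form $g_2\cdot(\text{linear})$ is quadratic; the term involving $\int_0^1(t\rho+(1-t)\rho_0^+)^{\gam-2}\,dt\, g_2\,\pt_\theta T/r$ is likewise a product of two linear quantities; and the last term of \eqref{F1} is visibly quadratic in $A$. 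With the background fields $\rho_0^+,u_0^+,c_0^+\in C^\infty(\ol{\n_f^+})$ bounded by constants depending on the data, the weighted-H\"older product rule yields the bound termwise. The outer $\grad\times$ in the first line of \eqref{F1} costs one derivative and shifts the weight exponent by $1$, sending the natural $C^{1,\alpha}_{(-\alpha,\Gam_w^+)}$-level quantities into $C^{\alpha}_{(1-\alpha,\Gam_w^+)}$, which is precisely the target space.

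For \eqref{f1estimate} I would restrict \eqref{f1} to $\Gam_{ex}$. The first summand factors as (smooth prefactor)$\cdot \bigl(\int_0^1 h(tS+(1-t)S_0^+)-h(S_0^+)\,dt\bigr)\cdot T$ with $h(\cdot)=1/(\cdot)^{1/\gam+1}$; Taylor-expanding the difference around $T=0$ and using the uniform bound $\sup|1/S|\le C$ from the first stage yields an $O(T^2)$ control in $C^{1,\alpha}_{(-\alpha,\pt\Gam_{ex})}(\Gam_{ex})$. The second summand is a smooth multiple of $g_1|_{\Gam_{ex}}$ and inherits the quadratic bound \eqref{g1estimate} by restriction.

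The main obstacle I anticipate is the careful weighted-H\"older bookkeeping, especially for expressions of the form $(\grad\times(\cdot\be_\vphi))\cdot\be_r$ that appear in \eqref{F1}. A priori, projecting an axisymmetric $C^k$ vector field onto $\be_r$ could lose regularity at the axis of symmetry where the spherical basis vectors are singular; the remedy is Lemma \ref{lemaxi}, which ensures that this scalar lies in $C^k$ in Cartesian coordinates whenever the parent vector field does. I would invoke this systematically so that every $\be_r$-projected quantity remains in its expected weighted space; without this observation the estimates near the axial part of $\n_f^+$ would not close.
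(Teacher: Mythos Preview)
Your proposal is correct and follows essentially the same route as the paper's own proof: both rely on the quadratic estimate \eqref{g1estimate} for $g_1$, the linear estimate \eqref{rhomrho+} for $g_2=\rho-\rho_0^+$, the smallness \eqref{delta4}, and the observation (via Lemma \ref{lemaxi}) that the $\be_r$-projection of an axisymmetric $C^k$ field remains $C^k$, after which the weighted-H\"older estimates for \eqref{F1} and \eqref{f1} follow term by term. Your write-up is simply more explicit than the paper's sketch---in particular, you spell out why the constant $\delta_4$ with the stated properties exists, which the paper leaves implicit.
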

        Next, we reformulate (B) in terms of $(\Psi\be_\vphi,A,T)$. 
        With the facts that $L_0^+=L_0^-=0$ in $\n_f^+$ and $S_0^+=(g({M_0^-}^2))(r_s)S_{in}$ in $\n_f^+$ (see \eqref{conservsub}), we reformulate 
        (B) 
        in terms of $(\Psi\be_\vphi,A,T)$. Then we obtain 
        \begin{align}\tag{B$^\p$}
        \begin{split}
        &\grad\times((\Phi_0^++\Psi)\be_\vphi)\cdot \grad A=0\qdin \n_{f(0)+f_s}^+,\\
        &A=A_{en,f(0)+f_s}\qdon \Gam_{f(0)+f_s},\\
        &\grad\times((\Phi_0^++\Psi)\be_\vphi)\cdot \grad T=0\qdin \n_{f(0)+f_s}^+,\\
        &T=T_{en,f(0)+f_s}\qdon \Gam_{f(0)+f_s}
        \end{split}
        \end{align}
        where 
        \begin{align}\label{Aentdef}
        A_{en,f(0)+f_s}:=L_-\qdon \Gam_{f(0)+f_s}
        \end{align}
        and 
        \begin{align}\label{Tentdef}
        T_{en,f(0)+f_s}:=
        g\left(\left(\frac{\bu_-\cdot {\bm\nu}_{f(0)+f_s}}{c_-}\right)^2\right)S_-
        -\left(g({M_0^-}^2)\right)(r_s)S_{in}\qdon \Gam_{f(0)+f_s}.
        \end{align}
        
        For later use, we present the following estimate of $T_{en,f(0)+f_s}$. 
        \begin{lem}\label{LemTenestimate} 
        	Let $f(0)$ and $\fsp$  be a constant and a function in $C^{1,\alpha}_{(-\alpha,\{\theta=\theta_1\})}((0,\theta_1))$, respectively.
        	Let $\delta_5$ be a positive constant  
        	such that if $(\rho_-,\bu_-,p_-)$ satisfies \eqref{supesti} 
        	for $\sigma\in (0,\delta_5]$, then 
        	\begin{align*}
        		\sup_{\n}|\frac{1}{c_-}|\le C
        	\end{align*}
        	where $C$ is a positive constant depending on $(\rho_0^-,u_0^-,p_0^-)$, $\gam$, $r_0$, $r_1$ and $\delta_5$.
        	Suppose that $f=f(0)+f_s$ satisfies \eqref{f}. 
        	Also, suppose that $(\rho_-,\bu_-,p_-)$ is an axisymmetric supersonic solution of \eqref{euler} in $\n$ satisfying
        	\eqref{supesti} for $\sigma\in (0,\delta_5]$.
        	Then there holds
        	\begin{align}\label{Tenesti}
        	||T_{en,f}||_{1,\alpha,\Gam_f}^{(-\alpha,\pt\Gam_f)}\le
        	C(|f(0)-r_s|+||\fsp||_{1,\alpha,(0,\theta_1)}^{(-\alpha,\{\theta=\theta_1\})}) 
        	+C\sigma
        	\end{align}
        	where $C$s are positive constants depending on $(\rho_0^-,u_0^-,p_0^-)$, $\gam$, $r_0$, $r_s$, $r_1$,  $\theta_1$, $\alpha$, $\delta_1$ and $\delta_5$. 
        	\begin{proof}
        		In this proof, $C$s denote positive constants depending on the whole or a part of $(\rho_0^-,u_0^-,p_0^-)$, $\gam$, $r_0$, $r_s$, $r_1$,  $\theta_1$, $\alpha$, $\delta_1$ and $\delta_5$. Each $C$ in different situations differs from each other.

        		By the fact that $(\rho_-,\bu_-,p_-)$ and $f=f(0)+f_s$ are axisymmetric,  $T_{en,f}$ defined in \eqref{Tentdef} can be regarded as a function of $\theta$. As a function of $\theta$, $T_{en,f}$ can be written as
        		\begin{align}
        		\nonumber T_{en,f}
        		&=\int_0^1(g({M_0^-}^2))^\p(t f(\theta)+(1-t)r_s)S_{in}dt (f(\theta)-r_s)
        		\\
        		&\;\nonumber+\left(g\left(\left(\frac{\bu_-\cdot {\bm\nu}_f(\theta)}{c_-}\right)^2\right)S_-\right)(f(\theta),\theta)-\left(g({M_0^-}^2)\right)(f(\theta),\theta)S_{in}
        		\\&\nonumber=:(a)+(b).
        		\end{align}
        		To estimate $||T_{en,f}||_{1,\alpha,\Gam_f}^{(-\alpha,\pt\Gam_f)}$,
        		we estimate $(a)$ and $(b)$ in $C^{1,\alpha}_{(-\alpha,\{\theta=\theta_1\})}((0,\theta_1))$, respectively. 
        		
        		Since an estimate of $(a)$ in $C^{1,\alpha}_{(-\alpha,\{\theta=\theta_1\})}((0,\theta_1))$ is obtained directly: 
        		\begin{align}\label{Ten1}
        		||(a)||_{1,\alpha,(0,\theta_1)}^{(-\alpha,\{\theta=\theta_1\})}\le C
        		(|f(0)-r_s|+||f_s^\p||_{1,\alpha,(0,\theta_1)}^{(-\alpha,\{\theta=\theta_1\})})
        		\end{align}
        		where we used $||f_s||_{1,\alpha,(0,\theta_1)}^{(-\alpha,\{\theta=\theta_1\})}\le C||f_s^\p||_{1,\alpha,(0,\theta_1)}^{(-\alpha,\{\theta=\theta_1\})}$, we only estimate $(b)$ in $C^{1,\alpha}_{(-\alpha,\{\theta=\theta_1\})}((0,\theta_1))$.



        		
        		Estimate of $(b)$ in $C^{1,\alpha}_{(-\alpha,\{\theta=\theta_1\})}((0,\theta_1))$:
        		
        		Decompose $(b)$ into two parts:
        		\begin{align*}
        		\left(g\left(\left(\frac{\bu_-\cdot \be_r}{c_-}\right)^2\right)S_-\right)(f(\theta),\theta)-\left(g({M_0^-}^2)\right)(f(\theta),\theta)S_{in}=:(b)_1
        		\end{align*}
        		and
        		\begin{align*}
        		\left(g\left(\left(\frac{\bu_-\cdot {\bm\nu}_f}{c_-}\right)^2\right)S_-\right)(f(\theta),\theta)-\left(g\left(\left(\frac{\bu_-\cdot \be_r}{c_-}\right)^2\right)S_-\right)(f(\theta),\theta)=:(b)_2.
        		\end{align*}
        		With \eqref{supesti} for $\sigma\in (0,\delta_5]$ and 
        		\eqref{f}, we estimate $(b)_1$ in   $C^{1,\alpha}_{(-\alpha,\{\theta=\theta_1\})}((0,\theta_1))$. 
        		Then we obtain 
        		\begin{align}\label{Ten2}
        		||(b)_1||_{1,\alpha,(0,\theta_1)}^{(-\alpha,\{\theta=\theta_1\})}\le C\sigma.
        		\end{align}
        		Write $(b)_2$ as 
        		\begin{align}\label{gprime}
        		\int_0^1 g^\p\left(\left(\frac{\bu_-}{c_-}\cdot(t{\bm \nu}_f+(1-t)\be_r)\right)^2\right)2\left(\frac{\bu_-}{c_-}\cdot (t{\bm \nu}_f+(1-t)\be_r)\right)S_-dt \frac{\bu_-}{c_-}\cdot({\bm \nu}_f-\be_r).
        		\end{align}
        		By ${\bm \nu_f}=\frac{\be_r-\frac{f^\p}{f}\be_\theta}{\sqrt{1+(\frac{f^\p}{f})^2}}$, ${\bm \nu}_f-\be_r$ can be written as
        		\begin{align*}
        		{\bm \nu}_f-\be_r
        		=\left(\int_0^1-\frac 1 2 \left(1+t\left(\frac{f^\p}{f}\right)^2\right)^{-\frac 3 2}dt\left(\frac{f^\p}{f}\right)^2\right)\be_r-\frac{\frac{f^\p}{f}}{\sqrt{1+\left(\frac{f^\p}{f}\right)^2}}\be_{\theta}.
        		\end{align*}
        		Substitute this expression of ${\bm \nu}_f-\be_r$ into ${\bm \nu}_f-\be_r$ in \eqref{gprime} and then 
        		estimate \eqref{gprime} 
        		in $C^{1,\alpha}_{(-\alpha,\{\theta=\theta_1\})}((0,\theta_1))$ with \eqref{supesti} for $\sigma\in (0,\delta_5]$ and 
        		\eqref{f}. Then we obtain
        		\begin{align*}
        		||(b)_2||_{1,\alpha,(0,\theta_1)}^{(-\alpha,\{\theta=\theta_1\})}\le C (||f_s^\p||_{1,\alpha,(0,\theta_1)}^{(-\alpha,\{\theta=\theta_1\})})^2+C\delta_5||f_s^\p||_{1,\alpha,(0,\theta_1)}^{(-\alpha,\{\theta=\theta_1\})}.
        		\end{align*}
        		Using the fact that $||f_s^\p||_{1,\alpha,(0,\theta_1)}^{(-\alpha,(0,\theta_1))}\le C||f-r_s||_{2,\alpha,\Lambda}^{(-1-\alpha,\pt\Lambda)}\le C\delta_1$, 
        		we get from this estimate
        		\begin{align}\label{Ten5}
        		||(b)_2||_{1,\alpha,(0,\theta_1)}^{(-\alpha,\{\theta=\theta_1\})}\le C||f_s^\p||_{1,\alpha,(0,\theta_1)}^{(-\alpha,\{\theta=\theta_1\})}.
        		\end{align}
        		Combining \eqref{Ten2} and \eqref{Ten5}, we obtain
        		\begin{align}
        		\label{Ten4}
        		||(b)||_{1,\alpha,(0,\theta_1)}^{(-\alpha,\{\theta=\theta_1\})}\le  C 
        		||f_s^\p||_{1,\alpha,(0,\theta_1)}^{(-\alpha,(0,\theta_1))}+C\sigma.
        		\end{align}
        		
        		From the facts that $\pt_\theta T_{en,f}(0)=0$, $(a)\in C^{1,\alpha}_{(-\alpha,\{\theta=\theta_1\})}((0,\theta_1))$ and $(b)\in C^{1,\alpha}_{(-\alpha,\{\theta=\theta_1\})}((0,\theta_1))$ (obtained from \eqref{Ten1} and \eqref{Ten4}), we see that 
        		$T_{en,f}\in C^{1,\alpha}_{(-\alpha,\pt\Gam_f)}(\Gam_f)$ (see Lemma \ref{lemaxi}). 
        		Then $||T_{en,f}||_{1,\alpha,\Gam_f}^{(-\alpha,\pt\Gam_f)}$ can be estimated by estimating $||T_{en,f}||_{1,\alpha,(0,\theta_1)}^{(-\alpha,\{\theta=\theta_1\})}$. 
        		By this fact, \eqref{Ten1} and \eqref{Ten4}, we have \eqref{Tenesti}.
        		This finishes the proof. 
        	\end{proof}
        \end{lem}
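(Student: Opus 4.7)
My plan is to exploit the axisymmetry of $(\rho_-,\bu_-,p_-)$ and $f=f(0)+f_s$ so that $T_{en,f}$ becomes a function of $\theta$ on $(0,\theta_1)$, and then to isolate three independent sources of smallness: (i) the displacement of $\{r=f(\theta)\}$ from $\{r=r_s\}$, (ii) the perturbation of $(\rho_-,\bu_-,p_-)$ from $(\rho_0^-,u_0^-\be_r,p_0^-)$, and (iii) the tilt of ${\bm\nu}_f$ from $\be_r$. Using $S_0^+ = g({M_0^-}^2)(r_s)S_{in}$, I would add and subtract intermediate quantities to write
\begin{align*}
T_{en,f}(\theta)
&= \Bigl[g({M_0^-}^2)(f(\theta))\,S_{in} - g({M_0^-}^2)(r_s)\,S_{in}\Bigr] \\
&\quad + \Bigl[\bigl(g((\bu_-\cdot\be_r/c_-)^2)\,S_-\bigr)(f(\theta),\theta) - g({M_0^-}^2)(f(\theta))\,S_{in}\Bigr] \\
&\quad + \Bigl[g\bigl((\bu_-\cdot{\bm\nu}_f/c_-)^2\bigr) - g\bigl((\bu_-\cdot\be_r/c_-)^2\bigr)\Bigr]S_-\bigl(f(\theta),\theta\bigr),
\end{align*}
which I label $(a)+(b)_1+(b)_2$, and estimate each in $C^{1,\alpha}_{(-\alpha,\{\theta=\theta_1\})}((0,\theta_1))$ separately.

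For $(a)$, a single integral Taylor expansion gives $(a)=\int_0^1 (g({M_0^-}^2))'(tf(\theta)+(1-t)r_s)\,S_{in}\,dt\cdot(f(0)-r_s+f_s(\theta))$; since the integrand is smooth in $\theta$ and $|f_s|\le C\|f_s^\p\|_{1,\alpha,(0,\theta_1)}^{(-\alpha,\{\theta=\theta_1\})}$, this piece is bounded by $C(|f(0)-r_s|+\|f_s^\p\|)$. For $(b)_1$, the expression vanishes identically when $(\rho_-,\bu_-,p_-)=(\rho_0^-,u_0^-\be_r,p_0^-)$, so writing it as an integral of its derivative along this linear homotopy and applying \eqref{supesti} together with the lower bound $\sup|1/c_-|\le C$ (guaranteed by $\sigma\le\delta_5$) yields $\|(b)_1\|\le C\sigma$.

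For $(b)_2$, I would use the identity ${\bm\nu}_f = (\be_r - (f^\p/f)\be_\theta)/\sqrt{1+(f^\p/f)^2}$ to express
\[
{\bm\nu}_f-\be_r = \Bigl(-\tfrac12\int_0^1 (1+t(f^\p/f)^2)^{-3/2}\,dt\Bigr)(f^\p/f)^2\be_r - \frac{f^\p/f}{\sqrt{1+(f^\p/f)^2}}\be_\theta,
\]
then write the bracketed difference in $(b)_2$ via the mean value form $\int_0^1 g^\p(\cdots)\cdot 2(\bu_-/c_-)\cdot(\cdots)\,dt\cdot(\bu_-/c_-)\cdot({\bm\nu}_f-\be_r)$. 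Each resulting summand carries at least one factor of $f^\p/f$, which produces a bound of the form $C(\|f_s^\p\|^2+\delta_5\|f_s^\p\|)$; since $\|f_s^\p\|\le C\delta_1$ is already built in through \eqref{f}, this collapses to $C\|f_s^\p\|$.

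The main technical nuisance will be the weighted $C^{1,\alpha}$ bookkeeping near $\theta=\theta_1$: in each composition I need the weight exponents to balance, and near the axis $\theta=0$ I must lift $\theta$-regularity to Cartesian regularity via Lemma \ref{lemaxi}, which requires checking $\pt_\theta T_{en,f}(0)=0$. This last parity condition follows because $f^\p(0)=0$ (so ${\bm\nu}_f=\be_r$ at $\theta=0$) and because axisymmetry of $(\rho_-,\bu_-,p_-)$ forces $\pt_\theta S_-=0$ and $\pt_\theta(\bu_-\cdot\be_r)=0$ on the axis. Combining the estimates for $(a)$, $(b)_1$, $(b)_2$ and then invoking Lemma \ref{lemaxi} yields \eqref{Tenesti}.
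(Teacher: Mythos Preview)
Your proposal is correct and follows essentially the same approach as the paper: the decomposition into $(a)+(b)_1+(b)_2$, the integral Taylor form for $(a)$, the perturbation argument for $(b)_1$, the explicit expression for ${\bm\nu}_f-\be_r$ combined with the mean value form for $(b)_2$, and the final passage to $\Gam_f$ via Lemma~\ref{lemaxi} using $\pt_\theta T_{en,f}(0)=0$ all match the paper's proof line by line. The only cosmetic difference is that the paper first groups $(b)_1+(b)_2$ as a single term $(b)$ before splitting it, whereas you write the three-term decomposition directly.
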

        From (A$^\p$), (B$^\p$), the Pseudo Free Boundary Problem is naturally derived. 
        We explain 
        this below. 
        
        For a given $(\rho_-,\bu_-,p_-,p_{ex})$, find $(\Psi\be_{\vphi},A,T)$ satisfying  (A$^\p$), (B$^\p$) using an iteration scheme for a fixed boundary problem (for example, in a fixed domain $\n_{f(0)+f_s}^+$, 
        for a given  $\Psi\be_{\vphi}$, solve (B$^\p$), substitute the resultant $A$ and $T$ and the previously given $\Psi\be_\vphi$ into 
        the right-hand sides of (A$^\p$), obtain a new $\Psi\be_{\vphi}$ by solving this (A$^\p$) and show that a new $\Psi\be_{\vphi}$ is equal to the given $\Psi\be_{\vphi}$ using a fixed point argument). Then since $(\Psi\be_{\vphi},A,T)$ we find in this way does not satisfy 
        the fifth equation of (A$^\p$) 
        in general, 
        this iteration scheme does not give 
        a subsonic solution of \eqref{eqV}-\eqref{eqS} satisfying \eqref{compc} in general. 
        From the facts that the entropy at a point on a shock location in the subsonic side is conserved along the streamline passing through that point and the entropy of the downstream subsonic solution of a radial transonic shock solution in a divergent nozzle on a shock location 
        monotonically increases as a shock location moves toward the exit 
        (see Lemma \ref{Slem}), we see that we can find 
        $(\Psi\be_{\vphi},A,T)$ satisfying  the fifth equation of (A$^\p$) 
        by varying $S$ on $\Gam_{ex}$ by adjusting $f(0)$.  
        From this fact, Problem \ref{pb3} is derived.


        (A$^\p$) and (B$^\p$) are of the form of 
        one linear boundary value problem for a singular elliptic equation (this will be seen in the next subsection) and two initial value problems of a transport equation whose coefficient is an axisymmetric and divergence-free vector field, respectively. We will study 
        these problems, seperately,
        in \S \ref{subelliptic} and \S \ref{subtrans}. 

        \subsection{
        	Linear boundary value problem for 
        	a singular elliptic equation} 
        \label{subelliptic} 
        
        Fix the right-hand sides of the first and fourth equation in (A$^\p$) with the fifth equation in (A$^\p$) satisfied. Then we obtain 
        \begin{align}\label{EE}
        &\grad\times\left(\frac{1}{\rho_0^+}(1+\frac{{u_0^+}\be_r\otimes u_0^+\be_r}{{c_0^+}^2-{u_0^+}^2})\grad\times( \Psi\be_{\vphi})\right)={\bm F}\qd\tx{in}\qd \n_f^+,\\
        \label{EEbc}
        &\Psi\be_{\vphi}=\begin{cases}
        h_1\be_{\vphi}\qd\tx{on} \qd\Gam_f,\\
        \frac{f(\theta_1)h_1(f(\theta_1),\theta_1)}{r}\be_{\vphi} 
        \qd\tx{on}\qd \Gam_w^+,\\
        h_2\be_{\vphi}\qd\tx{on} \qd\Gam_{ex}
        \end{cases}
        \end{align}
        where $f$, ${\bm F}$ and $h_i\be_{\vphi}$ for $i=1,2$ are functions given in Lemma \ref{lemSE}. 
        Since \eqref{EE} is expressed as
        \begin{multline}\label{curlformsig}
        \bigg(-\frac{1}{\rho_0^+}\left(\Delta\Psi-\frac{\Psi}{r^2\sin^2\theta}\right)+\frac{\pt_r\rho_0^+}{{\rho_0^+}^2r}\pt_r(r\Psi)\\-\frac{{u_0^+}^2}{\rho_0^+({c_0^+}^2-{u_0^+}^2)r^2}\left(\frac{1}{\sin\theta}\pt_{\theta}(\sin\theta\pt_{\theta}\Psi)
        -\frac{\Psi}{\sin^2\theta}\right)\bigg)\be_{\vphi}={\bm F},
        \end{multline}
        \eqref{EE}, \eqref{EEbc} is a linear boundary value problem for a singular equation as a problem for $\Psi$. 
        Thus, the standard elliptic theorems cannot be applied to 
        this problem as a problem for $\Psi$. 
        We resolve the singularity issue in \eqref{EE}, \eqref{EEbc} by dealing with  \eqref{EE}, \eqref{EEbc} as a boundary value problem for an elliptic system. 

        The following is the main result in this subsection. 
        \begin{lem}\label{lemSE} 
        	Let $\alpha\in(\frac{2}{3},1)$. Suppose that $f$ is as in Lemma \ref{lemrholinear} and satisfy $f^\p(\theta_1)=0$.
        	Also, suppose that ${\bm F}:\n_f^+\ra \R^3$ is a function in $C^{\alpha}_{(1-\alpha,\Gam_w^+)}(\n_f^+)$ having the form
        	\begin{align}\label{Fform}
        	{\bm F}=\sum_i A^i \pt_r B^i \be_{\vphi}+\sum_i C^i\pt_{\theta}D^i \be_{\vphi}+E\frac{\pt_\theta(F\sin\theta)}{\sin\theta}\be_{\vphi}
        	\end{align}
        	where $A^i$, $B^i$, $C^i$, $D^i$, $E$ and $F$ are axisymmetric functions satisfying
        	\begin{align}\label{regularityF}
        	&A^i\in C^{1,\alpha}_{(-\alpha,\Gam_w^+)}(\n_f^+),\; B^i\be_\theta\in C^{1,\alpha}_{(-\alpha,\Gam_w^+)}(\n_f^+), \;
        	C^i\in C^{1,\alpha}_{(-\alpha,\Gam_w^+)}(\n_f^+),
        	\\& D^i\in C^{1,\alpha}_{(-\alpha,\Gam_w^+)}(\n_f^+),\; E\be_{\vphi}\in C^{1,\alpha}
        	(\ol{\n_f^+})\;\tx{and}\; F\be_{\vphi}\in C^{1,\alpha}
        	(\ol{\n_f^+}).\nonumber
        	\end{align}
        	Finally, suppose that $h_1\be_{\vphi}:\Gam_f\ra \R^3$ and $h_2\be_{\vphi}:\Gam_{ex}\ra \R^3$ are axisymmetric functions in $C^{2,\alpha}_{(-1-\alpha,\pt\Gam_f)}(\Gam_f)$ and $C^{2,\alpha}_{(-1-\alpha,\pt\Gam_{ex})}(\Gam_{ex})$, respectively, and satisfy $\frac{f(\theta_1)h_1(f(\theta_1),\theta_1)}{r_1}=h_2(r_1,\theta_1)$.
        	Then the boundary value problem \eqref{EE}, \eqref{EEbc}
        	has a unique axisymmetric $C^{2,\alpha}_{(-1-\alpha,\Gam_w^+)}(\n_f^+)$ solution $\Psi\be_{\vphi}$. Furthermore, the solution $\Psi\be_{\vphi}$ satisfies 
        	\begin{align}\label{Westimate}
        	||\Psi\be_{\vphi}||_{2,\alpha,\n_f^+}^{(-1-\alpha,\Gam_w^+)}
        	\le C\left(||{\bm F}||_{\alpha,\n_f^+}^{(1-\alpha,\Gam_w^+)}+\sum_{i=1,2,3}F_i
        	+||h_1\be_{\vphi}||_{2,\alpha,\Gam_f}^{(-1-\alpha,\pt \Gam_f)}+||h_2\be_{\vphi}||_{2,\alpha,\Gam_{ex}}^{(-1-\alpha,\pt \Gam_{ex})}\right)
        	\end{align}
        	where $C$ is a positive constant depending only on $(\rho_0^+,u_0^+,p_0^+)$, $\gam$, $r_s$, $r_1$, $\theta_1$ and $\alpha$, and 
        	\begin{align*}
        	&F_1=\sum_i
        	||A^i||_{W^{1,3}(\n_f^+)\cap L^\infty(\n_f^+)}||B^i\be_{\theta}||_{\alpha,\n_f^+}\\
        	&F_2=\sum_i||C^i||_{W^{1,3}(\n_f^+)\cap L^\infty(\n_f^+)} ||D^i||_{\alpha,\n_f^+},\\
        	&F_3=||E\frac{\pt_\theta(F\sin\theta)}{\sin\theta}\be_{\vphi}||_{L^q(\n_f^+)}
        	\end{align*}
        	for $q=\frac{3}{1-\alpha}$ with 
        	$||\cdot||_{W^{1,3}(\n_f^+)\cap L^\infty(\n_f^+)}:=
        	||\cdot||_{W^{1,3}(\n_f^+)}+||\cdot||_{L^\infty(\n_f^+)}$. 
        \end{lem}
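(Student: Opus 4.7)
The plan is to remove the apparent singularity at the axis in \eqref{curlformsig} by treating \eqref{EE}, \eqref{EEbc} as a boundary value problem for the vector field $\bm W:=\Psi\be_\vphi$ viewed as an element of $(C^{2,\alpha}(\ol{\n_f^+}))^3$, not as a scalar equation for $\Psi$. Since $\Psi$ is axisymmetric we have $\grad\cdot(\Psi\be_\vphi)=0$, so the identity $\grad\times(\grad\times\bm W)=-\Delta\bm W+\grad(\grad\cdot\bm W)$ together with an expansion of the $\be_r\otimes\be_r$ piece reduces \eqref{EE} to a second-order elliptic system for the Cartesian components of $\bm W$; its principal symbol is $\frac{1}{\rho_0^+}(I+K)$ with $K=u_0^+\be_r\otimes u_0^+\be_r/({c_0^+}^2-{u_0^+}^2)$, whose eigenvalues $1$ and ${c_0^+}^2/({c_0^+}^2-{u_0^+}^2)$ are positive throughout the subsonic region where $u_0^+<c_0^+$. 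The axis $\{\sin\theta=0\}$ is interior to this 3D system, so the singular coefficient in \eqref{curlformsig} simply disappears.

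For existence I would use a weak formulation in $H^1(\n_f^+;\R^3)$ after subtracting an $H^1$-extension of the Dirichlet data (the compatibility $f(\theta_1)h_1(f(\theta_1),\theta_1)/r_1=h_2(r_1,\theta_1)$ makes such an extension possible). A convenient bilinear form is
\begin{equation*}
B(\bm U,\bm V)=\int_{\n_f^+}\frac{1}{\rho_0^+}(I+K)\grad\times\bm U\cdot\grad\times\bm V+\frac{1}{\rho_0^+}(\grad\cdot\bm U)(\grad\cdot\bm V)\,dx,
\end{equation*}
which is coercive on $H^1_0(\n_f^+;\R^3)$ by the Friedrichs inequality $\|\grad\bm V\|_{L^2}^2\le C(\|\grad\times\bm V\|_{L^2}^2+\|\grad\cdot\bm V\|_{L^2}^2)$. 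The divergence penalty is harmless because once a solution is produced, the axisymmetry of the data and the $\be_\vphi$-direction of $\bm F$ force $\grad\cdot\bm W=0$, and uniqueness for the system then forces $\bm W$ itself to be axisymmetric of the form $\Psi\be_\vphi$. The three contributions $F_1,F_2,F_3$ on the right-hand side of \eqref{Westimate} arise from how $\bm F$ enters the weak formulation: the terms $\sum A^i\pt_rB^i\be_\vphi$ and $\sum C^i\pt_\theta D^i\be_\vphi$ are moved onto the test vector by integration by parts, producing the $W^{1,3}\cap L^\infty$ norms of $A^i,C^i$ paired against $C^\alpha$ norms of $B^i,D^i$, while $E\pt_\theta(F\sin\theta)/\sin\theta\,\be_\vphi$ is kept as an $L^q$ source with $q=3/(1-\alpha)$, integrable near the axis thanks to $F\be_\vphi\in C^{1,\alpha}(\ol{\n_f^+})$ combined with Lemma \ref{lemaxi}.

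Regularity is bootstrapped from the weak solution: interior $C^{2,\alpha}$ by Schauder estimates for elliptic systems, and $C^{2,\alpha}$ up to the smooth portions of $\Gam_f$ and $\Gam_{ex}$ by boundary Schauder estimates (the hypothesis $\alpha>2/3$ gives $q>9$, so Morrey embedding upgrades the $L^q$ piece into the required Hölder class). The weighted norm $\|\cdot\|_{2,\alpha,\n_f^+}^{(-1-\alpha,\Gam_w^+)}$ is precisely the norm adapted to the two corners $\ol\Gam_f\cap\ol\Gam_w^+$ and $\ol\Gam_{ex}\cap\ol\Gam_w^+$ where two smooth pieces of $\pt\n_f^+$ meet; dyadic rescaling at these corners, combined with the interior and boundary Schauder estimates applied on each scale, produces \eqref{Westimate}.

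The hard part will be the corner regularity at $\ol\Gam_w^+\cap\ol\Gam_f$ and $\ol\Gam_w^+\cap\ol\Gam_{ex}$: one has an elliptic system (not a single equation) with Dirichlet data on both meeting faces, the opening angle is governed by the perturbed shock function $f$, and the weight $-1-\alpha$ is chosen to absorb precisely the loss of regularity at such corners. Reducing the perturbed system to a model wedge problem via coefficient freezing and using the positive-definite structure of the principal part to get the estimate at the model scale will be the technical heart of the argument; once that is in hand, the remaining pieces (energy estimate, interior and flat-boundary Schauder, compatibility bookkeeping) are routine.
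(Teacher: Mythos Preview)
Your overall strategy---reformulating the singular scalar equation as an elliptic system for the vector field $\Psi\be_\vphi$---matches the paper's, and your identification of the corners on $\Gam_w^+$ as the crux is correct. But two points of your plan differ from the paper's execution, and the second is the one you should not leave vague.

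\textbf{The weak formulation.} You add a divergence penalty and invoke Friedrichs; the paper instead discovers (by a direct tensor computation, carried out in the Appendix) that \eqref{EE} is \emph{exactly} equivalent to the divergence-form system
\[
\Div\!\left(\frac{{c_0^+}^2}{\rho_0^+({c_0^+}^2-{u_0^+}^2)}\Big({\bm I}-\frac{{u_0^+}^2}{{c_0^+}^2}(\mc I\otimes\be_r\otimes\be_r\otimes\mc I)\Big)D(\Psi\be_\vphi)\right)-\frac{\pt_r\rho_0^+}{{\rho_0^+}^2 r}\,\Psi\be_\vphi=-{\bm F},
\]
valid for \emph{all} vector fields, not only divergence-free ones. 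Coercivity then comes directly from the Legendre condition plus the positive zeroth-order term $\pt_r\rho_0^+>0$ (Remark~\ref{rmkrhop}); no penalty is needed, and the reduction to the form $\Psi(r,\theta)\be_\vphi$ (Lemma~\ref{lemform}) is done by an $\vphi$-averaging argument rather than by your divergence-first route. Your order of reasoning (``$\grad\cdot\bm W=0$, hence $\bm W=\Psi\be_\vphi$'') is in fact backwards: one first shows the solution is of the form $\Psi(r,\theta)\be_\vphi$ and only then concludes divergence-freeness.

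\textbf{The corner estimate.} This is where your proposal is too vague to succeed as stated. For elliptic \emph{systems}, there is no maximum-principle or barrier machinery at a wedge, and generic Kondrat'ev-type theory would not produce the clean weight $-1-\alpha$. The paper exploits two concrete structural facts you did not record: (i) the hypothesis $f'(\theta_1)=0$ makes both corners \emph{exactly} right-angled in the $(r,\theta)$ variables (after the straightening $\Pi_{f(\theta)f(\theta_1)}$), and (ii) the frozen principal part, written in spherical coordinates via the tensor calculus, is \emph{diagonal} with respect to the $(r,\theta,\vphi)$ frame (Lemma~\ref{lemfixedcoeffi}). These two facts together permit odd reflection across both faces simultaneously, reducing the corner problem to an \emph{interior} estimate for a constant-coefficient system on a full ball (equations \eqref{Wintesti}--\eqref{Wintesti2}). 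Campanato iteration then gives global $C^\beta$ and $C^{1,\alpha}$ up to the corner (Lemmas~\ref{lemalpha} and~\ref{lem1alpha}, the latter using exactly the integration-by-parts on $\bm F$ you describe), and only afterwards a dyadic scaling (Lemma~\ref{lem2alpha}) upgrades to $C^{2,\alpha}_{(-1-\alpha,\Gam_w^+)}$. Your ``model wedge problem'' language suggests a different, harder route; the reflection trick is the actual device, and it is available only because of the right angle and the diagonal coefficient structure.
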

        \begin{remark}
        	The form of ${\bm F}$ given in \eqref{Fform} is obtained from ${\bm F}_2(\Psi\be_\vphi,A,T)$ in \eqref{Wlinear}. This form will be used in the proof of Lemma \ref{lem1alpha}. 
        \end{remark}
        To avoid the singularity issue in \eqref{EE}, \eqref{EEbc}, we deal with \eqref{EE}, \eqref{EEbc} as a boundary value problem for a vector equation. 
        From $\grad\times(\grad \times(\Psi\be_{\vphi}))=-\Delta (\Psi\be_{\vphi})$, we expected that \eqref{EE} can be transformed into a form of an elliptic system. 
        We, motivated by the work in \cite{MR3739930}, thought that if \eqref{EE} can be transformed into a solvable elliptic system form, then the unique existence and regularity of solutions of \eqref{EE}, \eqref{EEbc} can be obtained by obtaining those of solutions of the elliptic system form of \eqref{EE}, \eqref{EEbc} as a boundary value problem for an elliptic system. 
        
        For this argument to hold, it is needed to find a solvable elliptic system form of \eqref{EE}. For computational convenience to find such a form and for our later argument (reflection argument in the proof of Lemma \ref{lemalpha} and Lemma \ref{lem1alpha}), 
        we use the following tensor notation. 
        
        \textbf{Tensor notation}
        
        Let ${\bm a}\otimes {\bm b}={\bm a}{\bm b}^T$
        for ${\bm a}$, ${\bm b}\in \R^3$. Then ${\bm a}\otimes {\bm b}$ is a linear map from $\R^3$ to $\R^3$ and any linear map from $\R^3$ to $\R^3$ can be represented using this operator. 
        This notation can be extended so that using the extension of this operator, we can represent any linear map from $\R^{3\times3}$ to $\R^{3\times3}$. 
        For any ${\bm a}$, ${\bm b}$, ${\bm c}$, ${\bm d}\in \R^3$, let 
        ${\bm a}\otimes {\bm b}\otimes {\bm c}\otimes {\bm d}$ 
        be an operator satisfying
        \begin{align}\label{abcddef}
        ({\bm a}\otimes {\bm b}\otimes {\bm c}\otimes {\bm d})({\bm e}\otimes {\bm f})=({\bm d}\cdot {\bm e})({\bm c}\cdot {\bm f}){\bm a}\otimes{\bm b}
        \end{align}
        where ${\bm e}$, ${\bm f}\in \R^3$. Then ${\bm a}\otimes {\bm b}\otimes {\bm c}\otimes {\bm d}$ is a linear map from $\R^{3\times3}$ to $\R^{3\times3}$ and any linear map from $\R^{3\times3}$ to $\R^{3\times3}$ can be represented using this operator. 

        By direct computation done by using the above tensor notation, 
        we found the following form of \eqref{EE}
        \begin{multline}\label{Psielliptic}
        \Div \left(\frac{{c_0^+}^2}{\rho_0^+({c_0^+}^2-{u_0^+}^2)}\left({\bm I}-\frac{ {u_0^+}^2}{{c_0^+}^2}(\mc I\otimes \be_r\otimes\be_r \otimes \mc I )\right)D(\Psi \be_{\vphi})\right)\\-\frac{\pt_r\rho_0^+}{{\rho_0^+}^2r}\Psi \be_{\vphi}=-{\bm F}\qdin \n_f^+
        \end{multline}
        where ${\bm I}$ is the identity map from $\R^{3\times3}$ to $\R^{3\times3}$ and $\mc I\otimes \be_r\otimes\be_r \otimes \mc I$ is a linear map from $\R^{3\times3}$ to $\R^{3\times3}$ satisfing 
        \begin{align*}
        (\mc I\otimes \be_r\otimes \be_r\otimes \mc I)({\bm a}\otimes{\bm b})=({\bm b}\cdot \be_r){\bm a}\otimes \be_r
        \end{align*}
        for any ${\bm a}$, ${\bm b}\in R^3$
        (see the definition of $\mc I\otimes {\bm a}\otimes {\bm b} \otimes \mc I$ for any ${\bm a}$, ${\bm b}\in \R^3$ in \eqref{IItensor}). 
        By $M_0^+<1$ in $\n_f^+$ and the boundedness of $(\rho_0^+,u_0^+,p_0^+)$ in $\n_f^+$ for $\n_f^+\subset \n_{r_s-\delta_1}^+$, 
        there exist positive constants  $\mu$  and $\mc M$ such that 
        \begin{align}\label{legendre}
         \mu|{\bm \xi}|^2\le
        \sum_{\alpha,\beta,i,j=1}^3 \left[\frac{{c_0^+}^2}{\rho_0^+({c_0^+}^2-{u_0^+}^2)}\left({\bm I}-\frac{ {u_0^+}^2}{{c_0^+}^2}(\mc I\otimes \be_r\otimes\be_r \otimes \mc I )\right)\right]_{ij}^{\alpha\beta}\xi^i_\alpha\xi^j_\beta
        \le\mc M|{\bm \xi}|^2 
        \qdin\n_f^+
        \end{align}
        for any ${\bm \xi}\in \R^{3\times 3}$ where $|{\bm \xi}|=\sqrt{\sum_{i,j=1}^3|\xi_j^i|^2}$ with ${\bm \xi}=[\xi_j^i]$. 
        And by $\pt_r \rho_0^+>0$ in $\n_f^+$ for $\n_f^+\subset \n_{r_s-\delta_1}^+$ (see Remark \ref{rmkrhop}), 
        $$\frac{\pt_r\rho_0^+}{{\rho_0^+}^2r}> 0\qdin \n_f^+.$$
        Hence, \eqref{Psielliptic} is a form of a solvable elliptic system for a dirichlet boundary condition. 

        We obtain the unique existence and regularity of solutions of \eqref{EE}, \eqref{EEbc} by obtaining those of solutions of \eqref{Psielliptic}, \eqref{EEbc} as a boundary value problem for an elliptic system. 
        The result of the unique existence and regularity of solutions of \eqref{Psielliptic}, \eqref{EEbc} as a boundary value problem for an elliptic system is given in the following lemma. 
        \begin{lem}\label{lemLES} Under the 
        	assumptions as in Proposition \ref{lemSE}, the boundary value problem
        	\begin{align}\label{LES}
        	&\Div \left(\frac{{c_0^+}^2}{\rho_0^+({c_0^+}^2-{u_0^+}^2)}\left({\bm I}-\frac{ {u_0^+}^2}{{c_0^+}^2}(\mc I\otimes \be_r\otimes\be_r \otimes \mc I )\right)D\bU\right)-\frac{\pt_r\rho_0^+}{{\rho_0^+}^2r}\bU=-{\bm F}\qdin \n_f^+,\\
        	\label{LESbc}
        	&{\bU}=\begin{cases}
        	h_1\be_{\vphi}\qd\tx{on} \qd\Gam_f,\\
        	\frac{f(\theta_1)h_1(f(\theta_1),\theta_1)}{r}\be_{\vphi}
        	\qd\tx{on}\qd \Gam_w^+,\\
        	h_2\be_{\vphi}\qd\tx{on} \qd\Gam_{ex},
        	\end{cases}
        	\end{align}
        	has a unique $C^{2,\alpha}_{(-1-\alpha,\Gam_w^+)}(\n_f^+)$ solution $\bU$, and this solution $\bU$ satisfies
        	\begin{align}\label{Uestimate}
        	||\bU||_{2,\alpha,\n_f^+}^{(-1-\alpha,\Gam_w^+)}
        	\le \underbrace{C\left(||{\bm F}||_{\alpha,\n_f^+}^{(1-\alpha,\Gam_w^+)}+\sum_{i=1,2,3}F_i
        	+||h_1\be_{\vphi}||_{2,\alpha,\Gam_f}^{(-1-\alpha,\pt \Gam_f)}+||h_2\be_{\vphi}||_{2,\alpha,\Gam_{ex}}^{(-1-\alpha,\pt \Gam_{ex})}\right)}_{=:C^*}
        	\end{align}
        	where $C$ is a positive constant depending on $(\rho_0^+,u_0^+,p_0^+)$, $\gam$, $r_s$, $r_1$, $\theta_1$ and $\alpha$ and $F_i$ for $i=1,2,3$ are constants given in Lemma \ref{lem1alpha}. This solution $\bU$ is of the form $\Psi(r,\theta)\be_{\vphi}$.
        \end{lem}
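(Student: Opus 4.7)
The plan is to treat \eqref{LES}--\eqref{LESbc} as a Dirichlet boundary value problem for a strongly elliptic system with a coercive zero-order term, obtain a weak $H^1$ solution by the Lax-Milgram theorem, and then promote this solution to the weighted $C^{2,\alp}_{(-1-\alp,\Gam_w^+)}$ bound \eqref{Uestimate} by combining interior and boundary Schauder estimates with a corner-adapted dyadic rescaling near $\Gam_w^+$. The two structural facts making the weak step work are the Legendre bound \eqref{legendre} on the coefficient tensor and the strict positivity of the zero-order coefficient $\frac{\pt_r\rho_0^+}{(\rho_0^+)^2 r}$ on $\n_f^+$ noted just after \eqref{legendre}.

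For weak existence, I would first produce an $H^1$ extension $\bU_0$ of the prescribed Dirichlet data. Such an extension exists because the three pieces $\Gam_f$, $\Gam_w^+$, $\Gam_{ex}$ share consistent traces on their common edges (directly on $\Gam_f\cap\Gam_w^+$, and by the hypothesis $\frac{f(\theta_1)h_1(f(\theta_1),\theta_1)}{r_1}=h_2(r_1,\theta_1)$ on $\Gam_w^+\cap\Gam_{ex}$). Writing $\bU=\bU_0+\bW$ with $\bW\in H_0^1(\n_f^+;\R^3)$ and testing \eqref{LES} against $\bV\in H_0^1$, the natural bilinear form
\[
a(\bW,\bV)=\int_{\n_f^+}\sum A_{ij}^{\alp\beta}\,\pt_\alp W^i\,\pt_\beta V^j\;+\;\int_{\n_f^+}\frac{\pt_r\rho_0^+}{(\rho_0^+)^2 r}\,\bW\cdot\bV
\]
is continuous on $H_0^1$ by the upper bound in \eqref{legendre} and coercive by the lower bound in \eqref{legendre} together with the positive zero-order term (so no Poincar\'e inequality is required). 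Lax-Milgram then yields a unique weak solution together with an $H^1$ estimate linear in the data.

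To upgrade to the weighted $C^{2,\alp}$ bound I would combine standard Schauder estimates for strongly elliptic systems with the structured decomposition \eqref{Fform} of ${\bm F}$. Integration by parts transfers the factors $A^i\pt_r B^i\be_\vphi$ and $C^i\pt_\theta D^i\be_\vphi$ into divergence-form sources whose norms are controlled by the product expressions $F_1,F_2$ (Sobolev coefficient in $W^{1,3}\cap L^\infty$ times a H\"older factor), while the remaining piece $E\,\pt_\theta(F\sin\theta)\be_\vphi/\sin\theta$ is controlled in $L^{3/(1-\alp)}$ by $F_3$. These estimates, together with $W^{1,q}$- and $W^{2,q}$-Calder\'on-Zygmund bounds for elliptic systems and the Sobolev embedding $W^{2,q}\hookrightarrow C^{1,\alp}$ with $q=\frac{3}{1-\alp}$, give a $C^{1,\alp}$ bound on compact subsets away from $\Gam_w^+$; a subsequent $C^\alp$-Schauder step absorbs the pointwise $C^\alp_{(1-\alp,\Gam_w^+)}$ part of ${\bm F}$ and produces the local $C^{2,\alp}$ bound. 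Near the corners where $\Gam_w^+$ meets $\Gam_f$ or $\Gam_{ex}$, I would use a dyadic rescaling at distance $\delta$ from $\Gam_w^+$: on each rescaled ball the coefficients are smooth, the Schauder estimate applies uniformly in the scaled variables, and undoing the scaling reproduces exactly the weight $\delta^{\max(|\beta|-1-\alp,0)}$ built into the norm \eqref{Uestimate}. The toroidal form $\bU=\Psi(r,\theta)\be_\vphi$ then follows from the uniqueness of the Lax-Milgram solution, since this ansatz closes \eqref{LES} against the axisymmetric data.

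I expect the hardest point to be the reconciliation between the mixed regularity of ${\bm F}$ and the corner geometry at the two edges of $\Gam_w^+$: the $L^{3/(1-\alp)}$ contribution from $F_3$ must be converted into a pointwise gradient bound through the Sobolev embedding, and this conversion must be compatible both with the corner-adapted dyadic scaling and with the absorption of the lower-order contribution $-\frac{\pt_r\rho_0^+}{(\rho_0^+)^2 r}\bU$ into the Schauder step rather than into the source, without deteriorating the weight.
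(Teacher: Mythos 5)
Your Lax--Milgram step is essentially the same as the paper's (Lemma \ref{lemH1}), including the observation that the positive zero-order coefficient $\frac{\pt_r\rho_0^+}{(\rho_0^+)^2 r}$ gives coercivity without Poincar\'e. The structural use of \eqref{legendre} is also correct.

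The genuine gap is in the regularity step, at the two right-angle corners $\ol{\Gam_f}\cap\ol{\Gam_w^+}$ and $\ol{\Gam_w^+}\cap\ol{\Gam_{ex}}$. Note that in the weight $\delta^{\max(|\beta|-1-\alp,0)}$ the exponent is $0$ for $|\beta|\le 1$, so the norm \eqref{Uestimate} demands a genuine $C^{1,\alp}$ bound up to and including the corner points, not merely a weighted bound that degenerates there. Your dyadic rescaling at distance $\delta$ from $\Gam_w^+$ produces exactly the latter: it controls what happens as you approach $\Gam_w^+$, but it cannot by itself upgrade a weak or H\"older solution to $C^{1,\alp}$ at a corner. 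For a generic strongly elliptic system with H\"older coefficients, a solution with Dirichlet data on a right-angle corner domain is \emph{not} $C^{1,\alp}$ at the corner; the smoothness there depends on the principal coefficients and the corner angle. The paper gets around this with a reflection argument (Lemma \ref{lemfixedcoeffi}) that uses a structural fact you did not invoke: after passing to spherical coordinates and freezing the coefficients at the corner, the principal part of the system is \emph{diagonal} with respect to the coordinate axes that represent the two walls meeting at the corner (this is checked in the Appendix via the tensor computation). This diagonality is precisely what permits the odd reflection across both walls to preserve the equation and thereby furnish the interior Campanato growth estimates \eqref{Wintesti}, \eqref{Wintesti2} at the corner. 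Your ``standard Schauder estimates'' plus Calder\'on--Zygmund/Sobolev steps do not supply this, so the argument as written would fail to produce \eqref{Uestimate} near those corners.

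A secondary issue: the closing claim that the toroidal form follows ``from the uniqueness of the Lax--Milgram solution, since this ansatz closes \eqref{LES} against the axisymmetric data'' needs more. One must actually verify that a function of the form $\Psi(r,\theta)\be_\vphi$ solves the full $3\times 3$ system --- i.e.\ that the $\be_r$- and $\be_\theta$-components of the system are identically satisfied by the ansatz --- and that the resulting $\Psi$ enjoys the axis compatibility $\Psi=\pt_{\theta\theta}\Psi=0$ at $\theta=0$ needed for $\Psi\be_\vphi$ to be $C^{2,\alp}$ across the axis (cf.\ Lemma \ref{lemaxi}). The paper's Lemma \ref{lemform} handles this by averaging the solution over $\vphi$-rotations, passing to a convergent subsequence, and checking that the averaged system decouples in \eqref{expsystemred}; the axis regularity is verified explicitly from the third equation of that decoupled system. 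Your one-line argument elides these checks.
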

        One can see that 
        that Lemma \ref{lemSE} is obtained from Lemma \ref{lemLES}. To prove Lemma \ref{lemSE}, in the remainder of this subsection, we prove Lemma \ref{lemLES}.  
        
        Transform \eqref{LES}, \eqref{LESbc} into the following ${\bm 0}$ boundary problem: 
        \begin{align}\label{veqn}
        &\Div({\bm A}D \bU^\sharp)-d \bU^\sharp=-{\bm F}-\Div({\bm A}D {\bm h})+ d{\bm h}
        (=:{\bm F}^\sharp)
        \qdin \n_{f}^+,\\
        \label{vbdry}
        &\bU^\sharp=0\qdon \pt\n_{f}^+
        \end{align}
        where $\bU^\sharp:=\bU-{\bm h}$, 
        ${\bm A}=\frac{{c_0^+}^2}{\rho_0^+({c_0^+}^2-{u_0^+}^2)}\left({\bm I}-\frac{ {u_0^+}^2}{{c_0^+}^2}(\mc I\otimes \be_r\otimes\be_r \otimes \mc I )\right),$
        $d:=\frac{\pt_r\rho_0^+}{{\rho_0^+}^2r}$ and
        $ {\bm h}:=\frac{(r-f(\theta))\frac{r_1}{r} h_2\be_{\vphi}+(r_1-r)\frac{f(\theta)}{r} h_1\be_{\vphi}}{r_1-f(\theta)}$.
        One can see that Lemma \ref{lemLES} can be proved by showing \eqref{veqn}, \eqref{vbdry}
        has a unique weak solution, 
        the weak solution of  \eqref{veqn}, \eqref{vbdry} is in $C^{2,\alpha}_{(-1-\alpha,\Gam_w^+)}(\n_f^+)$ and the 
        $C^{2,\alpha}_{(-1-\alpha,\Gam_w^+)}(\n_f^+)$ solution of \eqref{veqn}, \eqref{vbdry} is of the form $\Psi(r,\theta)\be_\vphi$. 
        Hereafter, we prove these statements.  
        
        
        We first prove the unique existence of weak solution of \eqref{veqn}, \eqref{vbdry}.

        
        
        \begin{lem}\label{lemH1}
        	Under the assumptions as in Proposition \ref{lemSE}, the boundary value problem
        	\eqref{veqn}, \eqref{vbdry} has a unique weak solution $\bU^\sharp\in H_0^1(\n_f^+)$. Furthermore,  
        	$\bU^\sharp$ satisfies
        	$$||\bU^\sharp||_{H^1(\n_{f}^+)}\le C||{\bm F}^\sharp||_{L^2(\n_{f}^+)} $$
        	where $C$ is a positive constant depending on $(\rho_0^+,u_0^+,p_0^+)$, $\gam$ and $\n_{f}^+$. 
        	\begin{proof}
        		Write \eqref{veqn}, \eqref{vbdry} in the form 
        		\begin{align}\label{intveqn}
        		(B[\bU^\sharp,{\bm\xi}]:=)\int_{\n_{f}^+}{\bm A}D\bU^\sharp D{\bm \xi}+d\bU^\sharp {\bm \xi} =\int_{\n_{f}^+}{\bm F}^\sharp{\bm \xi}(=:<{\bm F}^\sharp,{\bm \xi}>)
        		\end{align}
        		for all ${\bm \xi}\in H_0^1(\n_{f}^+)$.
        		Then $B$ is a bilinear map satisfying $B[\bU^\sharp,{\bm \xi}]\le C ||\bU^\sharp||_{H^1(\n_{f}^+)} ||{\bm \xi}||_{H^1(\n_{f}^+)}$ for a constant $C>0$ 
        		and 
        		$$\underline{\mu}||\bU^\sharp||_{H^1(\n_{f}^+)}^2\le B[\bU^\sharp,\bU^\sharp]$$
        		where $\underline{\mu}=\min( \mu,\min_{\n_f^+}d)>0$ with $\mu$ given in \eqref{legendre}. And by $h_1 \be_{\vphi}\in C^{2,\alpha}_{(-1-\alpha,\pt \Gam_f)}(\Gam_f)$, $h_2 \be_{\vphi}\in C^{2,\alpha}_{(-1-\alpha,\pt \Gam_{ex})}(\Gam_{ex})$ and ${\bm F}\in C^{\alpha}_{(1-\alpha,\Gam_w^+)}(\n_f^+)$ for $\alpha\in (\frac 2 3,1)$, ${\bm F}^\sharp\in L^2(\n_{f}^+)$ and thus $<{\bm F}^\sharp,{\bm \xi}>$ is a bounded linear functional on $H_0^1(\n_f^+)$. 
        		With these facts, we apply the Lax-Milgram Theorem to \eqref{intveqn}. Then we obtain that
        		there exists a unique $\bU^\sharp\in H_0^1(\n_f^+)$ such that \eqref{intveqn} holds 
        		for all ${\bm \xi}\in H_0^1(\n_{f}^+)$. This finishes the proof. 
        	\end{proof}
        \end{lem}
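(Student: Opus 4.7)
The plan is to apply the Lax--Milgram theorem to the weak formulation of \eqref{veqn}, \eqref{vbdry}. I would set
\begin{align*}
B[\bU^\sharp,{\bm \xi}] := \int_{\n_f^+} {\bm A}D\bU^\sharp : D{\bm \xi} + d\, \bU^\sharp \cdot {\bm \xi}\, dx
\end{align*}
on $H^1_0(\n_f^+) \times H^1_0(\n_f^+)$ and interpret ${\bm F}^\sharp$ as a linear functional through $\langle {\bm F}^\sharp,{\bm \xi}\rangle = \int_{\n_f^+} {\bm F}^\sharp \cdot {\bm \xi}\,dx$. A weak solution is then any $\bU^\sharp \in H^1_0(\n_f^+)$ with $B[\bU^\sharp,{\bm\xi}] = \langle {\bm F}^\sharp,{\bm \xi}\rangle$ for all ${\bm \xi}\in H^1_0(\n_f^+)$, which is the identity to target.

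Continuity of $B$ is immediate: since the background solution $(\rho_0^+,u_0^+,p_0^+)$ is smooth and bounded on $\ol{\n_f^+}$, both ${\bm A}$ and $d$ are bounded, and $|B[\bU^\sharp,{\bm \xi}]|\le C\|\bU^\sharp\|_{H^1}\|{\bm \xi}\|_{H^1}$. For coercivity I would combine two ingredients already proved in the excerpt: the uniform pointwise lower bound \eqref{legendre} with constant $\mu>0$ controls $\int |D\bU^\sharp|^2$, while Remark \ref{rmkrhop} gives $\pt_r\rho_0^+>0$ on $\ol{\n_f^+}\subset\ol{\n_{r_s-\delta_1}^+}$, hence $d = \frac{\pt_r\rho_0^+}{(\rho_0^+)^2 r}>0$ with a positive minimum, which controls $\int |\bU^\sharp|^2$. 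Setting $\underline{\mu}:=\min(\mu, \min_{\ol{\n_f^+}} d)>0$ yields $B[\bU^\sharp,\bU^\sharp] \ge \underline{\mu}\|\bU^\sharp\|_{H^1}^2$.

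The main technical point, and the step I would expect to be the subtlest, is verifying that ${\bm F}^\sharp = {\bm F} + \Div({\bm A}D{\bm h}) - d{\bm h}$ lies in $L^2(\n_f^+)$, so that $\langle {\bm F}^\sharp,\cdot\rangle$ is a bounded functional on $H^1_0$. The hypothesis ${\bm F}\in C^{\alpha}_{(1-\alpha,\Gam_w^+)}(\n_f^+)$ gives only $|{\bm F}|\lesssim \delta_{\Gam_w^+}^{\alpha-1}$; since $\Gam_w^+$ is a $2$-surface, the integrability of $\delta^{2(\alpha-1)}$ against the $3$-D volume reduces to finiteness of $\int_0^\epsilon s^{2\alpha-2}\,ds$, which requires $\alpha>1/2$, and is therefore ensured by the assumption $\alpha\in(2/3,1)$. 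For the correction from the lifted boundary data, I would note that $h_1\be_\vphi\in C^{2,\alpha}_{(-1-\alpha,\pt\Gam_f)}$ and $h_2\be_\vphi\in C^{2,\alpha}_{(-1-\alpha,\pt\Gam_{ex})}$ only allow second derivatives of ${\bm h}$ to blow up like $\delta^{\alpha-1}$ near the $1$-dimensional edges $\pt\Gam_f\cup\pt\Gam_{ex}$, and the resulting integral $\int r\cdot r^{2\alpha-2}\,dr$ near a curve in $\R^3$ converges for all $\alpha>0$; the compatibility condition $\frac{f(\theta_1)h_1(f(\theta_1),\theta_1)}{r_1}=h_2(r_1,\theta_1)$ and $f^\p(\theta_1)=0$ ensure that the convex combination defining ${\bm h}$ is well-posed and that no further singularity is created along the corner $\ol{\Gam_w^+}\cap\ol{\Gam_{ex}}$.

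With continuity, coercivity, and ${\bm F}^\sharp\in L^2(\n_f^+)$ in hand, Lax--Milgram delivers a unique weak solution $\bU^\sharp\in H^1_0(\n_f^+)$, and testing the identity with ${\bm \xi}=\bU^\sharp$ and applying Cauchy--Schwarz gives $\underline{\mu}\|\bU^\sharp\|_{H^1}^2 \le \|{\bm F}^\sharp\|_{L^2}\|\bU^\sharp\|_{L^2}\le \|{\bm F}^\sharp\|_{L^2}\|\bU^\sharp\|_{H^1}$, which yields the claimed $H^1$ estimate with a constant depending on $(\rho_0^+,u_0^+,p_0^+)$, $\gam$ and $\n_f^+$ (the last dependence arises through $\underline{\mu}$ and the domain of integration).
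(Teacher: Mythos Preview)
Your proof is correct and follows essentially the same approach as the paper: set up the bilinear form, verify boundedness and coercivity via \eqref{legendre} and the strict positivity of $d$, check ${\bm F}^\sharp\in L^2$ from the weighted regularity hypotheses with $\alpha\in(\tfrac23,1)$, and invoke Lax--Milgram. You in fact supply more detail than the paper does on the $L^2$ integrability of ${\bm F}^\sharp$ and on extracting the $H^1$ estimate from the coercivity bound, but the underlying argument is identical.
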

        We next prove that this weak solution is in $C^{2,\alpha}_{(-1-\alpha,\Gam_w^+)}(\n_f^+)$. For this, we prove that the weak solution of \eqref{veqn}, \eqref{vbdry} is in 
         $C^\beta(\ol{\n_f^+})$ for any $\beta\in (0,1)$ and $C^{1,\alpha}(\ol{\n_f^+})$. 
        \begin{lem}\label{lemalpha}
        	Under the assumptions as in Proposition \ref{lemSE}, let $\bU^\sharp$ be a weak solution of the boundary value problem \eqref{veqn}, \eqref{vbdry}. 
        	Then for any $\beta\in (0,1)$, 
        	\begin{align*}
        	||\bU^\sharp||_{\beta,\n_{f}^+}\le C\left(||{\bm F}^\sharp||_{L^p(\n_{f}^+)}+||\bU^\sharp||_{H^1(\n_f^+)}\right) 
        	\end{align*}
        	for $p=\frac{3}{2-\beta}$ where $C$ is a positive constant depending on  $\mu$, 
        	$\mc M$, $\tau$, $||d||_{L^3(\n_f^+)}$ and $\n_f^+$, and $\tau$ is the modulus of continuity of ${\bm A}$ in $\n_f^+$ given as
        	\begin{align}
        	\label{tau}\tau(t)=\mathop{\sup_{{\rm x},{\rm y}\in \n_f^+,}}_{|{\rm x}-{\rm y}|\le t}(\sum|A^{\alpha \beta}_{ij}({\rm x})-A^{\alpha \beta}_{ij}({\rm y})|^2)^{\frac 1 2}.
        	\end{align}
        \end{lem}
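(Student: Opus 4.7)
The plan is to apply the Campanato-Morrey framework for weak solutions of linear elliptic systems with continuous principal coefficients. By \eqref{legendre} the principal matrix $\bm A$ satisfies the Legendre condition with constants $\mu,\mc M$, it has modulus of continuity $\tau$, the zeroth-order coefficient $d$ is strictly positive and in $L^3(\n_f^+)$, and $\bU^\sharp\in H_0^1(\n_f^+)$ from Lemma \ref{lemH1}. The target is the Morrey-type decay
\begin{equation*}
\int_{B_r(x_0) \cap \n_f^+} |D \bU^\sharp|^2 \, dx \le C\, r^{1+2\beta}\bigl( \|{\bm F}^\sharp\|_{L^p(\n_f^+)}^2 + \|\bU^\sharp\|_{H^1(\n_f^+)}^2 \bigr)
\end{equation*}
uniform in $x_0\in \ol{\n_f^+}$ and in small $r$. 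Combined with Poincar\'e's inequality this is the hypothesis of Campanato's characterization of H\"older spaces and gives $\bU^\sharp\in C^\beta(\ol{\n_f^+})$ with the claimed bound.

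To obtain the decay I would use the classical coefficient-freezing comparison. For interior $x_0$ with $B_R(x_0)\subset\n_f^+$, let $\bW$ solve $\Div(\bm A(x_0)D\bW)=0$ in $B_R(x_0)$ with $\bW=\bU^\sharp$ on $\partial B_R(x_0)$; the constant-coefficient decay yields $\int_{B_r}|D\bW|^2\le C(r/R)^3\int_{B_R}|D\bW|^2$. The difference $\bm V:=\bU^\sharp-\bW\in H_0^1(B_R)$ satisfies $\Div(\bm A(x_0)D\bm V)=\Div((\bm A(x_0)-\bm A)D\bU^\sharp)+d\bU^\sharp-{\bm F}^\sharp$ in $B_R$. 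Testing with $\bm V$ and using Legendre ellipticity, the modulus $\tau$, the three-dimensional Sobolev embedding $H_0^1(B_R)\hookrightarrow L^6(B_R)$, H\"older with $\|d\|_{L^3}$, and the Poincar\'e-Sobolev-H\"older bound $\|\bm V\|_{L^{p'}(B_R)}\le C R^{1/2+\beta}\|D\bm V\|_{L^2(B_R)}$ (for which $p=3/(2-\beta)$ is the precise exponent so that $3(1/p'-1/6)=1/2+\beta$), I would obtain
\begin{equation*}
\int_{B_R}|D\bm V|^2\le C\tau(R)^2\int_{B_R}|D\bU^\sharp|^2+CR^{1+2\beta}\|{\bm F}^\sharp\|_{L^p(\n_f^+)}^2+(\tx{absorbable lower-order terms}).
\end{equation*}
Combining via the triangle inequality with the decay of $\bW$ and invoking the standard Campanato iteration lemma (which requires $1+2\beta<3$, i.e.\ $\beta<1$) then yields the Morrey decay above. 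For $x_0$ on a smooth portion of $\partial\n_f^+$ the same chain of estimates runs on a half-ball after flattening the boundary and extending $\bU^\sharp$ by odd reflection, admissible because $\bU^\sharp=0$ on $\partial\n_f^+$.

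The main obstacle will be the boundary treatment near the edges where $\Gam_f$, $\Gam_w^+$, and $\Gam_{ex}$ meet, since $\n_f^+$ is only Lipschitz there. Away from those edges each boundary piece is smooth and the reflection just described applies on a half-ball. At an edge the reflected domain becomes a Lipschitz cone rather than a half-ball; the constant-coefficient decay on such a cone still holds with an exponent strictly larger than $1+2\beta$, which is precisely why the restriction $\beta<1$ is essential. Two further technical points are worth highlighting: first, the three-dimensional axis of symmetry $\{\theta=0\}$ is an interior locus of \eqref{LES} --- although the scalar form \eqref{curlformsig} carries a $1/\sin^2\theta$ singularity there, the vector-potential formulation is uniformly elliptic across the axis and the interior estimates apply unchanged, which is exactly the payoff of the reformulation in \S\ref{substream}; second, the $L^3$ integrability of $d$ is critical-exponent in three dimensions and is precisely what allows $\int_{B_R} d\,\bU^\sharp\cdot\bm V$ to be absorbed into the left-hand side after Sobolev embedding.
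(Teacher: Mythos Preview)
Your overall plan—Campanato--Morrey decay via coefficient freezing—matches the paper's approach, and your interior and flat-boundary arguments are standard and fine. The real content of this lemma, however, is the treatment of the edges $\ol{\Gam_f}\cap\ol{\Gam_w^+}$ and $\ol{\Gam_w^+}\cap\ol{\Gam_{ex}}$, and there your proposal has a genuine gap.

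You write that at an edge ``the reflected domain becomes a Lipschitz cone rather than a half-ball; the constant-coefficient decay on such a cone still holds with an exponent strictly larger than $1+2\beta$.'' For elliptic \emph{systems} this is not automatic: the decay exponent on a wedge is governed by a Kondratiev-type operator pencil and need not exceed $1+2\beta$ for an arbitrary constant coefficient matrix satisfying the Legendre condition. The paper does \emph{not} appeal to any such general cone estimate. Instead it exploits two structural features of the specific problem. First, the hypothesis $f'(\theta_1)=0$ (recorded in Lemma~\ref{lemSE}) together with the straightening map $\Pi=\Pi_{f(\theta)f(\theta_1)}$ makes the corner a genuine right angle in spherical coordinates. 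Second—and this is the key point you are missing—when the principal part of \eqref{LES} is written in spherical coordinates and frozen at the corner, the ``domain'' factor of the $(2,2)$-tensor becomes \emph{diagonal}: it is a sum $\sum_i c_i\,\mc I\otimes\be_i\otimes\be_i\otimes\mc I$ (this is the content of \eqref{fixedcoeffeqn} and is verified in the Appendix). That diagonal structure is exactly what permits a simultaneous odd reflection across both faces of the right angle (Lemma~\ref{lemfixedcoeffi}), turning the quarter-ball problem into a full-ball interior problem and yielding the full interior decay exponent $3$ rather than some unspecified cone exponent. Without identifying and using this structure, your corner step does not go through for a system.

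In short: the decisive idea is not a general Lipschitz-cone estimate but a double reflection enabled by the right-angle corner and the diagonal form of ${\bm A}$ in spherical coordinates at that corner. Your sketch needs to incorporate the transformation to $\n_{f(\theta_1)}^+$, the passage to spherical coordinates near the edge, and the reflection lemma (Lemma~\ref{lemfixedcoeffi} / Corollary~\ref{corWpert}) to close the Morrey iteration there.
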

        \begin{lem}\label{lem1alpha}
        	Under the assumptions as in Proposition \ref{lemSE}, let $\bU^\sharp$ be a weak solution of \eqref{veqn}, \eqref{vbdry}. 
        	Then 
        	\begin{align*}
        	||\bU^\sharp||_{1,\alpha,\n_f^+}\le C\left(\sum_{i=1,2,3}F_i
        	+||{\bm h}||_{1,\alpha,\n_f^+ }+||\bU^\sharp||_{H^1(\n_f^+)}+||{\bm F}^\sharp||_{L^3(\n_f^+)}\right)
        	\end{align*}
        	where 
        	$C$ is a positive constant depending on 
        	$\mu$, $\mc M$, $||{\bm A}||_{\alpha,\n_f^+}$, $||d||_{L^q(\n_f^+)}$ with $q=\frac{3}{1-\alpha}$ and $\n_f^+$ and $F_i$ for $i=1,2,3$ are constants given in Lemma \ref{lem1alpha}. 
        \end{lem}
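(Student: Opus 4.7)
The plan is to bootstrap the weak solution $\bU^\sharp\in H^1_0(\n_f^+)$ from Lemma \ref{lemH1} up to $C^{1,\alpha}$, exploiting the special product structure of $\bm F$ in \eqref{Fform}. The decisive observation is that the first two groups of terms in $\bm F$ can be put in divergence form via the Leibniz identities
\[
A^i\pt_r B^i=\pt_r(A^iB^i)-B^i\pt_r A^i,\qquad C^i\pt_\theta D^i=\pt_\theta(C^iD^i)-D^i\pt_\theta C^i,
\]
which recasts \eqref{veqn} as
\[
\Div(\bm A\,D\bU^\sharp-\bm G)-d\,\bU^\sharp=\bm R\qdin\n_f^+,
\]
where $\bm G\in C^\alpha(\ol{\n_f^+})$ has norm controlled by $F_1+F_2+\|\bm h\|_{1,\alpha,\n_f^+}$ (from $B^i\be_\theta,D^i\in C^\alpha$ paired with $A^i,C^i\in L^\infty$, plus the divergence piece coming from $\bm h$), and $\bm R$ collects the lower-order remainders $B^i\pt_r A^i$ and $D^i\pt_\theta C^i$ (sitting in $L^3$ via $B^i,D^i\in L^\infty$ against $\pt A^i,\pt C^i\in L^3$), the $F_3$ piece (sitting in $L^q$ for $q=3/(1-\alpha)$), together with the analogous $\bm h$-driven contributions.

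Once the equation is in this form --- divergence-form system with $C^\alpha$ coefficients, bounded positive zeroth-order term $d$ (using Remark \ref{rmkrhop}), $C^\alpha$ flux correction $\bm G$, and a mixed $L^3/L^q$ right-hand side $\bm R$ --- I would apply Campanato/Schauder theory for divergence-form systems to obtain local $C^{1,\alpha}$ estimates at interior points of $\n_f^+$ and near the smooth open portions of $\Gamma_f$ and $\Gamma_{ex}$, where $\bU^\sharp$ carries zero Dirichlet data by construction. The $C^\alpha$ flux correction $\bm G$ feeds directly into $D\bU^\sharp\in C^\alpha$, while the $L^q$ component of $\bm R$ enters through Morrey-type embeddings that require exactly $\alpha>\tfrac{2}{3}$ in order to close the scaling. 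The $H^1$ bound from Lemma \ref{lemH1} and the $C^\beta$ control of Lemma \ref{lemalpha} serve as the low-regularity seeds that drive the bootstrap.

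The genuinely delicate step is the treatment of $\Gamma_w^+=\{\theta=\theta_1\}$, since $\pt\n_f^+$ is only Lipschitz there. Writing $\bU^\sharp=\Psi^\sharp(r,\theta)\be_\vphi$ and using that $\bm A$ depends on $(r,\theta)$ only through $\mathbb{I}$ and $\be_r\otimes\be_r$, I would reflect the problem across $\theta=\theta_1$ so that $\Gamma_w^+$ becomes an interior face of a reflected domain; the hypothesis $f^\p(\theta_1)=0$ is precisely what is needed for the reflected shock surface $\Gamma_f\cup\Gamma_f^{\mathrm{refl}}$ to remain $C^{2,\alpha}$ across $\theta=\theta_1$. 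Near the codimension-two edges $\ol{\Gamma_w^+}\cap\ol{\Gamma_f}$ and $\ol{\Gamma_w^+}\cap\ol{\Gamma_{ex}}$, the compatibility relation $f(\theta_1)h_1(f(\theta_1),\theta_1)/r_1=h_2(r_1,\theta_1)$ makes the Dirichlet data continuous across the reflected edge, so a corner $C^{1,\alpha}$ estimate on the resulting Lipschitz domain closes the argument.

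The main obstacle is verifying that the axisymmetric reflection across $\theta=\theta_1$ genuinely preserves the elliptic structure of the recast system: the coefficient tensor $\bm A$, the divergence-form correction $\bm G$, and the right-hand side $\bm R$ must all extend to objects of the same regularity class, which reduces to tracking how $\be_r,\be_\theta,\be_\vphi$ and the individual pieces of $\bm G$ and $\bm R$ split into tangential and normal components at $\{\theta=\theta_1\}$. A secondary subtlety is the borderline integrability of the remainder terms in $\bm R$: the full $C^{1,\alpha}$ gain hinges on the $F_3$-piece living in $L^q$ with $q=3/(1-\alpha)$, so the threshold $\alpha>\tfrac{2}{3}$ in the hypothesis is tight and must be invoked from the very start.
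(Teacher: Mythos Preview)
Your overall strategy---redistribute the derivatives in $\bm F$ via integration by parts, then run Campanato with a reflection near the wall---is the paper's strategy too. The gap is in how you execute the Leibniz step. Writing $A^i\pt_r B^i=\pt_r(A^iB^i)-B^i\pt_r A^i$ \emph{globally} leaves the remainder $B^i\pt_r A^i$ (and likewise $D^i\pt_\theta C^i$) in $L^3$ only, since $B^i\in L^\infty$ and $\pt_r A^i\in L^3$. For a divergence-form system with a non-divergence right-hand side in $L^3$ in three dimensions, the Campanato iteration yields at best $\int_{D_t({\rm x}_0)}|D\bU^\sharp-(D\bU^\sharp)_{{\rm x}_0,t}|^2\le C t^3$, i.e.\ $D\bU^\sharp\in\mathrm{BMO}$, not $C^\alpha$. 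Your split therefore falls one H\"older exponent short, and neither the $C^\beta$ seed from Lemma~\ref{lemalpha} nor the hypothesis $\alpha>\tfrac23$ rescues it, because the offending terms do not involve $\bU^\sharp$ at all.

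The paper closes this gap by freezing $B^i$ and $D^i$ at the center ${\rm x}_0$ \emph{before} integrating by parts: in the weak form it writes $A^i\pt_r(B^i-B^i({\rm x}_0))$ and only then moves the derivative onto $A^i$ and the test function. The remainder now carries the factor $|B^i-B^i({\rm x}_0)|\le\|B^i\be_\theta\|_\alpha\,\mf r^\alpha$, and after H\"older against $\pt_r A^i\in L^3$ this contributes precisely $\mf r^{3+2\alpha}\|A^i\|_{W^{1,3}\cap L^\infty}^2\|B^i\be_\theta\|_\alpha^2$ to the comparison energy (see \eqref{Vfinal}). This \emph{localized} integration by parts---not a global split---is why $F_1,F_2$ take the product form they do. A secondary difference: the paper does not reflect the full variable-coefficient problem across $\{\theta=\theta_1\}$; it first straightens $\Gam_f$ via $\Pi$, passes to spherical coordinates, and reflects only the frozen-coefficient homogeneous comparison problem (Lemma~\ref{lemfixedcoeffi}), whose diagonal structure in $(r,\theta,\vphi)$ makes the double odd extension automatic and spares you the parity bookkeeping on $\bm G$ and $\bm R$ that your approach would require.
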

        We will prove Lemma \ref{lemalpha} and Lemma \ref{lem1alpha} using 
        the method of freezing the coefficients (Korn's device of freezing the coefficients) (see \cite[Chapter 3]{MR717034}). Since $\n_f^+$ is a Lipshitz domain, 
        $\bU^\sharp\in C^\beta(\ol{\n_f^+})$ and $\bU^\sharp\in C^{1,\alpha}(\ol{\n_f^+})$
        can be proved by showing that (i) there are positive constants $C$ and $R$ such that 
        \begin{align}\label{igalpha}
        \int_{D_{t}({\rm x}_0)}|D\bU^\sharp|^2\le C {t}^{3-2+2\beta}\qd\tx{for any}\qd 0<t<R
        \end{align}
        for all ${\rm x}_0\in \ol{\n_{f}^+}$,
        and (ii) there are positive constants $C$ and $R$ such that 
        \begin{align}\label{ig1alpha}
        \int_{D_{t}({\rm x}_0)} |D\bU^\sharp-(D\bU^\sharp)_{{\rm x}_0,{t}}|^2\le C{t}^{3+2\alpha}\qd\tx{for any}\qd 0<{t}< R
        \end{align}
        for all ${\rm x}_0\in \ol{\n_{f}^+}$ where $D_{t}({\rm x}_0):=B_{t}({\rm x}_0)\cap \n_{f}^+$ with $B_t({\rm x}_0):=\{{\rm x}\in \R^3:|{\rm x}-{\rm x}_0|<t\}$ and $(D\bU^\sharp)_{{\rm x}_0,{t}}:=\frac{1}{|D_{t}({\rm x}_0)|}\int_{D_{t}({\rm x}_0)}D\bU^\sharp$. We prove 
        (i) and (ii) 
        by obtaining \eqref{igalpha} and \eqref{ig1alpha} at each point ${\rm x}_0$ in $\ol{\n_f^+}$ for $C$ and $R$ independent of ${\rm x}_0$ 
        using the method of freezing the coefficients. 
        When we do this, there exists some difficulty. 
        For the case of ${\rm x}_0\in \n_f^+$ or $\Gam_f\cup \Gam_w^+\cup \Gam_{ex}$, we can obtain the integral estimates for the fixed coefficients equation using the Cacciopolli inequality 
        and the quotient difference method, and obtain \eqref{igalpha} and \eqref{ig1alpha} at $x_0\in \n_f^+$ or $\Gam_f\cup \Gam_w^+\cup \Gam_{ex}$ using these estimates and the method of freezing the coefficients (see \cite[Chapter 6]{MR3887613}).  
        But 
        for the case of ${\rm x}_0\in \ol{\Gam_f}\cap\ol{\Gam_w^+}$ or $\ol{\Gam_w^+}\cap \ol{\Gam_{ex}}$, 
        we cannot obtain 
        the integral estimates for the fixed coefficients equation using the Cacciopolli inequality 
        and the quotient difference method. 
        Thus, we cannot obtain \eqref{igalpha} and \eqref{ig1alpha} at  ${\rm x}_0\in \ol{\Gam_f}\cap\ol{\Gam_w^+}$ or $\ol{\Gam_w^+}\cap \ol{\Gam_{ex}}$ using 
        the standard method of freezing the coefficients. 
        We resolve this difficulty by developing
        some reflection argument that holds for a linear boundary value problem on a Lipschitz  domain whose all corners are perpendicular for an elliptic system whose the domain part of principal coefficients is diagonal with respect to the coordinate systems representing the walls near the corners of the domain. 
        This will be seen in the proof of Lemma \ref{lemalpha} and Lemma \ref{lem1alpha}. 

        Hereafter, we use the following notation:
        \begin{align*}
        &\Div_{\rtp}:=(\be_1\pt_r +\be_2\pt_\theta +\be_3\pt_\vphi )\cdot,\qd D_{\rtp}:=\be_1\pt_r + \be_2\pt_\theta+ \be_3\pt_\vphi\\
        &\mathbb T:\;\tx{a one dimensional torus with period}\;2\pi,\\
        &\n_{a}^{+,*}:=\{\rtp\in \R^3\;|\;a<r<r_1,0<\theta<\theta_1, \vphi\in \mathbb T\},\\
        &\Gam_{a}^{*}:=\{\rtp\in \R^3\;|\;r=a,0<\theta<\theta_1, \vphi\in \mathbb T\},\\
        &\Gam_{w,a}^{+,*}:=\{\rtp\in \R^3\;|\;a<r<r_1,\theta=\theta_1, \vphi\in \mathbb T\}\qd\tx{for}\qd 0<a<r_1\\
        &\Gam_w^{+,*}:=\{\rtp\in \R^3\;|\;f(\theta_1)<r<r_1,\theta=\theta_1, \vphi\in \mathbb T\},\\
        &\Gam_{ex}^*:=\{\rtp\in \R^3\;|\;r=r_1,0<\theta<\theta_1, \vphi\in \mathbb T\} \\
        &{\mf r}, {\mf t}:\; \tx{a radius of a ball in the spherical coordinate system},\\
        &B_{\mf r}^*({\rm x}^*):=\{(r,\theta,\vphi)\in \R^+\times [0,\pi]\times\ \mathbb T:|r-r^*|^2+|\theta-\theta^*|^2+|\vphi-\vphi^*|^2<{\mf r}^2\},\\
        &D_{\mf r}^*({\rm x}^*):=B_{\mf r}^*({\rm x}^*)\cap \n_{f(\theta_1)}^{+,*}
        \qd\tx{for}\qd {\rm x}^*=(r^*,\theta^*,\vphi^*)\in \R^+\times (0,\pi)\times\ \mathbb T.
        \end{align*}
        To prove Lemma \ref{lemalpha} and Lemma \ref{lem1alpha}, we prove the following lemma.
        \begin{lem}\label{lemfixedcoeffi}
        	Let ${\rm x}_0^*= (f(\theta_1),\theta_1,\vphi_0)$ for some $\vphi_0\in \mathbb T$. Let $0<\mf r< \min(\theta_1,\pi,r_1-f(\theta_1))$. Suppose that $\bW\in H^1(D_{\mf r}^*({\rm x}_0^*))$ is a weak solution of
        	\begin{align}\label{fixedcoeffeqn}
        	&\Div_{\rtp}\left( \left.\bigg(\frac{r^2\sin\theta}{\rho_0^+}\right|_{r=f(\theta_1),\theta=\theta_1}\mc I\otimes \be_1\otimes \be_1\otimes \mc I\right.\\\nonumber &+\left.\frac{\sin\theta}{\rho_0^+}(\frac{{c_0^+}^2}{{c_0^+}^2-{u_0^+}^2})\right|_{r=f(\theta_1),\theta=\theta_1}\mc I\otimes \be_2\otimes\be_2\otimes \mc I\\
        	&\left.
        	+\left.\frac{1}{\sin\theta\rho_0^+}(\frac{{c_0^+}^2}{{c_0^+}^2-{u_0^+}^2})\right|_{r=f(\theta_1),\theta=\theta_1}\mc I\otimes \be_3\otimes \be_3 \otimes \mc I \bigg) D_{\rtp} \bW \right)=0\qdin D_{\mf r}^*({\rm x}_0^*)\nonumber,\\
        	\label{fixedcoeffbc}
        	&\bW={\bf 0}  \qdon \pt D_{\mf r}^*({\rm x}_0^*)\cap (\ol{\Gam_{f(\theta_1)}^{*}}\cup\ol{\Gam_{w,f(\theta_1)}^{+,*}}).
        	\end{align}
        	Then for any $\mf t$ 
        	such that $0<\mf t\le \mf r$, there hold
        	\begin{align}\label{Wintesti}
            \int_{D_{\mf t}^*({\rm x}_0^*)}|D_{\rtp} \bW|^2\le C\left(\frac{\mf t}{\mf r}\right)^3\int_{D_{\mf r}^*({\rm x}_0^*)}|D_{\rtp} \bW|^2,
            \end{align}
            and
            \begin{align}
            \label{Wintesti2}
        	\int_{D_{\mf t}^*({\rm x}_0^*)}|D_{\rtp}\bW-(D_{\rtp}\bW)^*_{{\rm x}_0^*,\mf t}|^2\le C\left(\frac{\mf t}{\mf r}\right)^{5}\int_{D_{\mf r}^*({\rm x}_0^*)}|D_{\rtp}\bW-(D_{\rtp}\bW)^*_{{\rm x}_0^*,\mf r}|^2
        	\end{align}
        	where $C$ is a positive constant depending on 
        	$\mu$, $\mc M$, $f(\theta_1)$ and $\theta_1$ 
        	and $(D_{\rtp}\bW)^*_{{\rm x}_0^*,\mf r}:=\frac{1}{|D_{\mf r}^*({\rm x}_0^*)|}\int_{D_{\mf r}^*({\rm x}_0^*)}D_{\rtp}\bW$. 
        	\begin{proof}
        		The result is obtained by using the reflection argument. 
        		
        		Extend \eqref{fixedcoeffeqn} 
        		in $B_{\mf r}^*({\rm x}_0^*)$:
        		\begin{align}\label{fixedextendedeqn}
        		&\Div_{\rtp}\left( \left.\bigg(\frac{r^2\sin\theta}{\rho_0^+}\right|_{r=f(\theta_1),\theta=\theta_1}\right.\mc I\otimes \be_1\otimes \be_1\otimes\mc I \\
        		&\nonumber+\left.\frac{\sin\theta}{\rho_0^+}(\frac{{c_0^+}^2}{{c_0^+}^2-{u_0^+}^2})\right|_{r=f(\theta_1),\theta=\theta_1}\mc I\otimes \be_2\otimes \be_2 \otimes\mc I\\
        		&\left.
        		+\left.\frac{1}{\sin\theta\rho_0^+}(\frac{{c_0^+}^2}{{c_0^+}^2-{u_0^+}^2})\right|_{r=f(\theta_1),\theta=\theta_1}\mc I\otimes \be_3\otimes \be_3 \otimes\mc I\bigg) D_{\rtp} \bW \right)=0\qdin B_{\mf r}^*({\rm x}_0^*).\nonumber
        		\end{align}
        	    Extend a weak solution $\bW$  of \eqref{fixedcoeffeqn}, \eqref{fixedcoeffbc} in $B_{\mf r}^*({\rm x}_0^*)$:
        	    \begin{align*}
        	    	\bW_{ext}=\begin{cases}{\bm W}(r, \theta,\vphi)\qdin   B_{\mf r}^*({\rm x}_0^*)\cap \{r\ge f(\theta_1),\theta\le \theta_1\} \\
        	    		-{\bm W}(2f(\theta_1)-r,\theta,\vphi)\qdin   B_{\mf r}^*({\rm x}_0^*)\cap \{r< f(\theta_1),\theta\le \theta_1\}\\
        	    		-{\bm W}(r,2\theta_1-\theta,\vphi)\qdin  B_{\mf r}^*({\rm x}_0^*)\cap \{r\ge f(\theta_1),\theta>\theta_1\}\\
        	    		{\bm W}(2f(\theta_1)-r,2\theta_1-\theta,\vphi)\qdin  B_{\mf r}^*({\rm x}_0^*)\cap \{r< f(\theta_1),\theta> \theta_1\}
        	    	\end{cases}
        	    \end{align*}
                Then ${\bW}_{ext}\in H^1(B_{\mf r}^*({\rm x}_0^*))$. 
        		There exists a unique weak solution of \eqref{fixedextendedeqn}, 
        		\begin{align}\label{fixedextendedeqnbc}
        		\bW=\bW_{ext}\qdon \pt B_{\mf r}^*({\rm x}_0^*). 
        		\end{align}
        		%
        		We denote the weak solution of \eqref{fixedextendedeqn}, \eqref{fixedextendedeqnbc} by $\ol{\bW}$. 
        		
        		One can check that $-\ol{\bm W}(2f(\theta_1)-r,\theta,\vphi)$ and $-\ol{\bm W}(r,2\theta_1-\theta,\vphi)$ are also weak solutions of \eqref{fixedextendedeqn}, \eqref{fixedextendedeqnbc}. 
        		By this fact and the uniqueness of weak solutions of \eqref{fixedextendedeqn}, \eqref{fixedextendedeqnbc}, 
        		\begin{align}\label{reflect}
        		\ol{\bm W}(r, \theta,\vphi)=
        		-\ol{\bm W}(2f(\theta_1)-r,\theta,\vphi)=-\ol{\bm W}(r,2\theta_1-\theta,\vphi) \qdin B_{\mf r}^*({\rm x}_0^*).
        		\end{align}
        		From \eqref{reflect}, we have $\ol{\bm W}=0$ on $\pt D_{\mf r}^*({\rm x}_0^*)\cap (\ol{\Gam_{f(\theta_1)}^{*}}\cup\ol{\Gam_{w,f(\theta_1)}^{+,*}})$. 
        		By this fact and the fact that $\ol{\bW}$ is a unique weak solution of \eqref{fixedextendedeqn}, \eqref{fixedextendedeqnbc}, $\ol{\bm W}$ is a weak solution of \eqref{fixedcoeffeqn},
        		\begin{align*}
        		\ol{\bm W}=\begin{cases}
        		\bW\qdon\pt D_{\mf r}^*({\rm x}_0^*)\setminus (\ol{\Gam_{f(\theta_1)}^{*}}\cup\ol{\Gam_{w,f(\theta_1)}^{+,*}}),\\
        		0\qdon \pt D_{\mf r}^*({\rm x}_0^*)\cap (\ol{\Gam_{f(\theta_1)}^{*}}\cup\ol{\Gam_{w,f(\theta_1)}^{+,*}}).
        		\end{cases}
        		\end{align*}
        		By the uniqueness of weak solutions of \eqref{fixedcoeffeqn} satisfying ${\bm W}=\bW_{ext}$ on $\pt D_{\mf r}^*({\rm x}_0^*)\setminus (\ol{\Gam_{f(\theta_1)}^{*}}\cup\ol{\Gam_{w,f(\theta_1)}^{+,*}})$ and ${\bm W}=0$ on $\pt D_{\mf r}^*({\rm x}_0^*)\cap (\ol{\Gam_{f(\theta_1)}^{*}}\cup\ol{\Gam_{w,f(\theta_1)}^{+,*}})$, we have
        		\begin{align}
        		\label{WbarW}\ol\bW=\bW \qdin D_{\mf r}^*({\rm x}_0^*).
        		\end{align} 
        		
        	    By \cite[Theorem 2.1, Chapter 3]{MR717034}, 
        	    $\ol{\bW}$ satisfies 
        		for any $0<\mf t\le \mf r$,  
        		\begin{align}\label{igrowthesti}
        		\int_{B_{\mf t}^*({\rm x}_0^*)}|D_{\rtp} \ol{\bm W}
        		|^2\le C\left(\frac{\mf t}{\mf r}\right)^3\int_{B_{\mf r}^*({\rm x}_0^*)}|D_{\rtp} \ol{\bm W}
        		|^2
        		\end{align}
        		and
        		\begin{align}
        		&\label{igrowthesti1}
        		\int_{B_{\mf t}^*({\rm x}_0^*)}|D_{\rtp}\ol{\bm W}
        		-(D_{\rtp}\ol{\bm W}
        		)^{**}_{\til {\rm x}_0^{*},\mf t}|^2
        		\\
        		&\qquad\qquad\qquad\qquad\qquad\le C\left(\frac{\mf t}{\mf r}\right)^{5}\int_{B_{\mf r}^*({\rm x}_0^*)}|D_{\rtp}\ol{\bm W}
        		-(D_{\rtp}\ol{\bm W}
        		)^{**}_{\til {\rm x}_0^{*},\mf r}|^2\nonumber
        		\end{align}
        		where $C$ is a positive constant depending on 
        		$\mu$,  $\mc M$, $f(\theta_1)$ and $\theta_1$ and $(D_{\rtp}\ol{\bm W}
        		)^{**}_{{\rm x}_0^*,\mf r}:=\frac{1}{|B_{\mf r}^*({\rm x}_0^*)|}\int_{B_{\mf r}^*({\rm x}_0^*)}D_{\rtp}\ol{\bm W}
        		$. 
        		By \eqref{reflect} and \eqref{WbarW}, we obtain from \eqref{igrowthesti} and \eqref{igrowthesti1} for any $0<\mf t\le \mf r$,  
        		\begin{align*}
        		4\int_{D_{\mf t}^*({\rm x}_0^*)}|D_{\rtp} \bW|^2\le 4C\left(\frac{\mf t}{\mf r}\right)^3\int_{D_{\mf r}^*({\rm x}_0^*)}|D_{\rtp} \bW|^2
        		\end{align*}
        		and
        		\begin{align*}
        		&4\int_{D_{\mf t}^*({\rm x}_0^*)}|D_{\rtp}\bW-(D_{\rtp}\bW)^*_{{\rm x}_0^*,\mf t}|^2
        		\\
        		&\qquad\qquad\qquad\qquad\qquad\le 4C\left(\frac{\mf t}{\mf r}\right)^{5}\int_{D_{\mf r}^*({\rm x}_0^*)}|D_{\rtp}\bW-(D_{\rtp}\bW)^*_{{\rm x}_0^*,\mf r}|^2
        		\end{align*}
        		where we used 
        		$$(D_{\rtp}\ol{\bm W})^{**}_{{\rm x}_0^*,\mf r}=\frac{1}{|B_{\mf r}^*({\rm x}_0^*)|}\int_{B_{\mf r}^*({\rm x}_0^*)}D_{\rtp}\ol{\bm W}=\frac{1}{|D_{\mf r}^*({\rm x}_0^*)|}\int_{D_{\mf r}^*({\rm x}_0^*)}D_{\rtp}\bW.$$ 
        		This finishes the proof.  
        	\end{proof}
        \end{lem}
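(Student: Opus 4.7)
The plan is a reflection argument that converts this corner-singular Dirichlet problem into a standard interior problem on a full ball, after which the classical Campanato growth and Campanato excess estimates for constant-coefficient elliptic systems can be applied directly.

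First I would exploit the structure of the frozen principal part. Because the coefficient tensor is a sum of terms of the pure diagonal form $\mc I\otimes \be_i\otimes \be_i\otimes \mc I$ with $i=1,2,3$, the system acts componentwise as
\[
a_1\,\pt_r^2 W_k+a_2\,\pt_\theta^2 W_k+a_3\,\pt_\vphi^2 W_k=0,\qd k=1,2,3,
\]
with positive constants $a_i$ obtained by evaluating the given coefficients at $(r,\theta)=(f(\theta_1),\theta_1)$. No cross second derivatives appear, so the equation is invariant under each of the two reflections $\mathcal R_1:(r,\theta,\vphi)\mapsto(2f(\theta_1)-r,\theta,\vphi)$ and $\mathcal R_2:(r,\theta,\vphi)\mapsto(r,2\theta_1-\theta,\vphi)$ together with the sign flip $\bW\mapsto -\bW$.

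Next, I would use the zero Dirichlet condition on the two walls to extend $\bW$ by odd reflection across both $\{r=f(\theta_1)\}$ and $\{\theta=\theta_1\}$. The resulting $\bW_{ext}$ lies in $H^1(B_{\mf r}^*({\rm x}_0^*))$ because the traces match (they vanish on the walls). Let $\ol{\bW}$ be the unique $H^1$ weak solution of the same (constant-coefficient) system on the full ball $B_{\mf r}^*({\rm x}_0^*)$ with Dirichlet data $\bW_{ext}$ on $\pt B_{\mf r}^*({\rm x}_0^*)$. Since $-\ol\bW\circ \mathcal R_1$ and $-\ol\bW\circ \mathcal R_2$ are also weak solutions with the same boundary data (here one uses the invariance of the equation recorded above, and the fact that $\bW_{ext}$ has these symmetries by construction), uniqueness gives $\ol\bW=-\ol\bW\circ\mathcal R_1=-\ol\bW\circ\mathcal R_2$. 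In particular $\ol\bW$ vanishes on the two walls inside $B_{\mf r}^*({\rm x}_0^*)$, so it is a weak solution of the original corner problem \eqref{fixedcoeffeqn}, \eqref{fixedcoeffbc} with boundary data $\bW_{ext}|_{\pt D_{\mf r}^*({\rm x}_0^*)\setminus\tx{walls}}$. Uniqueness for that corner problem then yields $\ol\bW=\bW$ in $D_{\mf r}^*({\rm x}_0^*)$.

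Finally I would apply the standard Campanato-type growth and excess estimates for constant-coefficient elliptic systems (for instance \cite[Theorem 2.1, Chapter 3]{MR717034}) to $\ol\bW$ on concentric balls $B_{\mf t}^*\subset B_{\mf r}^*$, obtaining
\[
\int_{B_{\mf t}^*({\rm x}_0^*)}|D_{\rtp}\ol\bW|^2\le C\Bigl(\frac{\mf t}{\mf r}\Bigr)^3\int_{B_{\mf r}^*({\rm x}_0^*)}|D_{\rtp}\ol\bW|^2
\]
and the analogous fifth-power estimate for the oscillation of $D_{\rtp}\ol\bW$ about its ball-average. By the fourfold symmetry of $\bW_{ext}=\ol\bW$ under $\mathcal R_1$, $\mathcal R_2$, each integral over a ball $B_{\mf s}^*({\rm x}_0^*)$ equals exactly four times the corresponding integral over the quarter-ball $D_{\mf s}^*({\rm x}_0^*)$; the same symmetry shows that the ball-average $(D_{\rtp}\ol\bW)^{**}_{{\rm x}_0^*,\mf s}$ agrees componentwise with the quarter-ball average $(D_{\rtp}\bW)^*_{{\rm x}_0^*,\mf s}$. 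Dividing by $4$ gives \eqref{Wintesti} and \eqref{Wintesti2}. The main obstacle, and the place where all the structural hypotheses are used, is the second step: without the purely block-diagonal form of the principal part, odd reflection would not preserve the weak formulation, and the argument would collapse; verifying that $\bW_{ext}\in H^1$ (which needs the zero boundary data on the walls) and that the two independent reflections compose consistently at the corner are the technical points to check carefully.
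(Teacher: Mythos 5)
Your proposal is correct and follows essentially the same route as the paper: odd reflection across the two walls, identification of the extended function with the unique solution on the full ball via the invariance of the constant block-diagonal principal part under sign-flip plus reflection, and then the standard Campanato growth and excess estimates from Giaquinta, with the factor $4$ absorbed by the fourfold symmetry. You make more explicit than the paper does the observation that the frozen coefficient tensor decouples componentwise with no cross second derivatives, which is indeed the structural fact that makes the odd reflections compatible with the weak formulation; this is a helpful clarification but not a different argument.
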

        The following Corollary is obtained from Lemma \ref{lemfixedcoeffi} in the same way that Corollary 3.11 is obtained from Lemma 3.10 in \cite{MR2777537}. We omit the proof.  
        \begin{cor}\label{corWpert}
        	Suppose that $\bW$ is as in Lemma \ref{lemfixedcoeffi}. Let $\til\bU^*$ be any function in $H^1(D_{\mf r}^*({\rm x}_0^*))$ for $0<\mf r< \min(\theta_1,\pi,r_1-f(\theta_1))$.
        	Then 
        	for any $\mf t$ and $\mf r$ such that $0<\mf t\le \mf r< \min(\theta_1,\pi,r_1-f(\theta_1))$, there hold
        	\begin{align*}
        	\int_{D_{\mf t}^*({\rm x}_0^*)}|D_{\rtp}\til\bU^*|^2
        	\le C\left( \left(\frac{\mf t}{\mf r}\right)^3\int_{D_{\mf r}^*({\rm x}_0^*)}|D_{\rtp}\til\bU^*|^2+\int_{D_{\mf r}^*({\rm x}_0^*)}|D_{\rtp}(\til\bU^*-{\bf W})|^2\right)
        	\end{align*}
        	and 
        	\begin{multline*}
        	\int_{D_{\mf t}^*({\rm x}_0^*)}|D_{\rtp}\til\bU^*-(D_{\rtp}\til\bU^*)^*_{{\rm x}_0^*,\mf t}|^2\\
        	\le C\left(\left(\frac{\mf t}{\mf r}\right)^5\int_{D_{\mf r}^*({\rm x}_0^*)}|D_{\rtp}\til\bU^*-(D_{\rtp}\til\bU^*)^*_{{\rm x}_0^*,\mf r}|^2\right.\\
        	\left.+\int_{D_{\mf r}^*({\rm x}_0^*)}|D_{\rtp}(\til\bU^*-{\bf W})|^2\right)
        	\end{multline*}
        	where  $C$ is a positive constant depending on 
        	$\mu$, $\mc M$, $f(\theta_1)$ and $\theta_1$.
        \end{cor}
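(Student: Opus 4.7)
The plan is to carry out the standard perturbation/comparison argument that accompanies Lemma \ref{lemfixedcoeffi}: given an arbitrary $\til\bU^*\in H^1(D_{\mf r}^*({\rm x}_0^*))$, one compares it to a carefully chosen $\bW$ satisfying the hypotheses of Lemma \ref{lemfixedcoeffi}, and then transfers the decay estimates \eqref{Wintesti} and \eqref{Wintesti2} from $\bW$ to $\til\bU^*$ by the triangle inequality. Concretely, I would take $\bW$ to be the unique $H^1(D_{\mf r}^*({\rm x}_0^*))$ weak solution of \eqref{fixedcoeffeqn} with $\bW-\til\bU^*\in H^1_0(D_{\mf r}^*({\rm x}_0^*))$; existence and uniqueness come from Lax--Milgram exactly as in Lemma \ref{lemH1}, with coercivity supplied by the Legendre--Hadamard bound \eqref{legendre}. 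In the intended application the $\til\bU^*$ coming from $\bU^\sharp$ will have vanishing trace on $\pt D_{\mf r}^*({\rm x}_0^*)\cap(\ol{\Gam_{f(\theta_1)}^{*}}\cup\ol{\Gam_{w,f(\theta_1)}^{+,*}})$, so the resulting $\bW$ also vanishes there and falls within the scope of Lemma \ref{lemfixedcoeffi}.

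For the first claimed inequality I would split $|D_{\rtp}\til\bU^*|^2\le 2|D_{\rtp}\bW|^2+2|D_{\rtp}(\til\bU^*-\bW)|^2$, integrate on $D_{\mf t}^*({\rm x}_0^*)$, apply \eqref{Wintesti} to bound $\int_{D_{\mf t}^*}|D_{\rtp}\bW|^2$ by $C(\mf t/\mf r)^3\int_{D_{\mf r}^*}|D_{\rtp}\bW|^2$, and then use the triangle inequality once more, in the form $\int_{D_{\mf r}^*}|D_{\rtp}\bW|^2\le 2\int_{D_{\mf r}^*}|D_{\rtp}\til\bU^*|^2+2\int_{D_{\mf r}^*}|D_{\rtp}(\til\bU^*-\bW)|^2$, to recast the right-hand side in terms of $\til\bU^*$ and the defect $\til\bU^*-\bW$. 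Collecting yields the desired estimate with a constant depending only on $\mu$, $\mc M$, $f(\theta_1)$ and $\theta_1$.

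For the second claimed inequality, I would begin with the variational characterization of the mean,
\[
\int_{D_{\mf t}^*({\rm x}_0^*)}|D_{\rtp}\til\bU^*-(D_{\rtp}\til\bU^*)^*_{{\rm x}_0^*,\mf t}|^2\le \int_{D_{\mf t}^*({\rm x}_0^*)}|D_{\rtp}\til\bU^*-(D_{\rtp}\bW)^*_{{\rm x}_0^*,\mf t}|^2,
\]
split the integrand as $D_{\rtp}(\til\bU^*-\bW)+(D_{\rtp}\bW-(D_{\rtp}\bW)^*_{{\rm x}_0^*,\mf t})$, apply $2a b$-type triangle inequality, and invoke \eqref{Wintesti2} on the oscillation of $D_{\rtp}\bW$. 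A second application of the variational characterization, now for $\bW$, bounds $\int_{D_{\mf r}^*}|D_{\rtp}\bW-(D_{\rtp}\bW)^*_{{\rm x}_0^*,\mf r}|^2$ by $\int_{D_{\mf r}^*}|D_{\rtp}\bW-(D_{\rtp}\til\bU^*)^*_{{\rm x}_0^*,\mf r}|^2$ and hence by twice the analogous quantity for $\til\bU^*$ plus twice $\int_{D_{\mf r}^*}|D_{\rtp}(\til\bU^*-\bW)|^2$, producing the claimed inequality. The argument is essentially mechanical once Lemma \ref{lemfixedcoeffi} is in hand; the only point demanding any care is the well-posedness of the auxiliary Dirichlet problem defining $\bW$ and the compatibility of boundary traces needed so that Lemma \ref{lemfixedcoeffi} applies to $\bW$, both of which are handled as above.
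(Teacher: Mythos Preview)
Your argument is correct and is precisely the standard comparison/perturbation argument the paper has in mind: the paper omits the proof entirely, stating only that it follows from Lemma \ref{lemfixedcoeffi} ``in the same way that Corollary 3.11 is obtained from Lemma 3.10 in \cite{MR2777537}'', and your triangle-inequality transfers of \eqref{Wintesti} and \eqref{Wintesti2} are exactly that argument. One small remark: in the Corollary as stated, $\bW$ is \emph{given} (any weak solution as in Lemma \ref{lemfixedcoeffi}), so the Lax--Milgram construction you describe is not part of the proof of the Corollary itself but rather belongs to its application in Lemma \ref{lemalpha} (where one takes $\bW$ to agree with $\til\bU^*$ on the curved part of $\partial D_{\mf r}^*$ and vanish on the flat parts, exactly as in \eqref{fixed}--\eqref{fixedbc}); with $\bW$ assumed, the Corollary is a purely elementary consequence of the two decay estimates plus the minimizing property of the mean, just as you wrote.
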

    
        We first prove Lemma \ref{lemalpha}. 
        \begin{proof}[Proof of Lemma \ref{lemalpha}]
        	We prove Lemma \ref{lemalpha} by proving that 
        	\eqref{igalpha} holds 
        	for all ${\rm x}_0\in \ol{\n_{f}^+}$ for some positive constants $C$ and $R$. 

        	1. Transform \eqref{veqn}, \eqref{vbdry} into a problem in $\n_{f(\theta_1)}^+$. 
        	
        	
        	Define a map 
        	\begin{align}\label{iota}
        	\Pi_{ab}^*(r,\theta,\vphi):=(\frac{r_1-b}{r_1-a}(r-a)+b,\theta,\vphi)
        	\end{align}
        	for $0<a,b<r_1$. Then $\Pi_{ab}^*(r,\theta,\vphi)$ maps $(a,r_1)\times (0,\theta_1)\times \mathbb T$ to $(b,r_1)\times (0,\theta_1)\times \mathbb T$. 
        	$\Pi^*_{ab}$ naturally induces a map from $\n_a^+$ to $\n_b^+$ as a map between two cartesian coordinate systems. 
        	Denote this map by $\Pi_{ab}$. 
        	Using $\Pi:=\Pi_{f(\theta)f(\theta_1)}$, we transform the boundary value problem \eqref{veqn}, \eqref{vbdry} 
        	into the following boundary value problem
        	\begin{align}\label{yeqn}
        	&\Div_{{\rm y}} (\til{\bm A}D_{\rm y} \til \bU^\sharp)-\til d \til \bU^\sharp=\til{\bm F}^\sharp\qdin \n_{f(\theta_1)}^+,\\
        	\label{ybdry}
        	&\til\bU^\sharp={\bf 0}\qdon \pt \n_{f(\theta_1)}^+
        	\end{align} 
        	where $\til {\bm A}:=\frac{1}{\det(\frac{\pt \Pi}{\pt {\rm x}})}(\frac{\pt {\rm \Pi}}{\pt {\rm x}})^T(
        	{\bm A}\circ \Pi^{-1})(\frac{\pt{\rm \Pi}}{\pt {\rm x}})$, $\til d:=\frac{d\circ \Pi^{-1}}{\det(\frac{\pt {\rm \Pi}}{\pt {\rm x}})}$, $\til \bU^\sharp:=\bU^\sharp\circ \Pi^{-1}$,
        	$\til{\bm F}^\sharp:=\frac{ {\bm F}^\sharp\circ \Pi^{-1}}{\det(\frac{\pt  \Pi}{\pt {\rm x}})}$ and  ${\rm x}$ and ${\rm y}$ are the cartesian coordinate systems for $\n_f^+$ and
        	$\n_{f(\theta_1)}^+$, respectively. 
        	
        	2. Transform 
        	the weak formulation of 
        	\eqref{yeqn}, \eqref{ybdry} near ${\rm x}_0\in \ol{\Gam_{f(\theta_1)}}\cap \ol{\Gam_w^+}$ 
        	into the weak formulation of the spherical coordinate representation of \eqref{yeqn}, \eqref{ybdry}. 
        	
        	
        	Write \eqref{yeqn}, \eqref{ybdry} in the form
        	\begin{align}\label{weakform}
        	\int_{\n_{f(\theta_1)}^+}\til {\bm A}D_{{\rm y}}\til \bU^\sharp D_{{\rm y}}{\bm{\xi}}+\til d \til \bU^\sharp {\bm \xi}=-\int_{\n_{f(\theta_1)}^+}\til{\bm F}^\sharp {\bm \xi}
        	\end{align}
        	for all ${\bm \xi}\in H_0^1(\n_{f(\theta_1)}^+)$. Let $(\til r,\til \theta,\til \vphi)$ be the spherical coordinate system for ${\rm y}$ and $\Xi$ be the map from ${\rm y}$ to $(\til r,\til \theta,\til \vphi)$. Choose any ${\rm x}_0^*:= (f(\theta_1),\theta_1,\vphi_0)$ for some $\vphi_0\in \mathbb T$ and
        	set ${\bm \xi}=0$ outside of $\Xi^{-1}(D_{{\mf r}}^*({\rm x}_0^*))$ for $0<\mf r<\min(\theta_1,\pi,r_1-f(\theta_1))$ in \eqref{weakform}. Then we obtain
        	\begin{align*}
        	\int_{\Xi^{-1}(D_{\mf r}^*({\rm x}_0^*))}\til {\bm A}D_{{\rm y}}\til \bU^\sharp D_{{\rm y}}{\bm{\xi}}+\til d \til \bU^\sharp {\bm \xi}=-\int_{\Xi^{-1}(D_{\mf r}^*({\rm x}_0^*))}\til{\bm F}^\sharp {\bm \xi}
        	\end{align*}
        	for all ${\bm \xi}\in H_0^1(\Xi^{-1}(D_{\mf r}^*({\rm x}_0^*)))$. 
        	Using $\Xi$, transform this equation. Then we have
        	\begin{align}\label{A0trans}
        	\int_{D_{\mf r}^*({\rm x}_0^*)}\frac{1}{\det \til M}\til M^T\til{\bm A}\circ\Xi^{-1}\til MD_{(\til r,\til \theta,\til\vphi)}\til\bU^*D_{(\til r,\til \theta,\til\vphi)}{\bm \xi}+\til d^*\til\bU^*{\bm \xi}=-\int_{D_{r}^*({\rm x}_0^*)}\til{\bm F}^*{\bm \xi}
        	\end{align}
        	for all ${\bm \xi}\in H_0^1(D_{\mf r}^*({\rm x}_0^*))$ where  $\til \bU^*=\til \bU^\sharp\circ\Xi^{-1}$, $\til d^*=\frac{\til d\circ\Xi^{-1}}{\det \til M}$, $\til {\bm F}^*=\frac{\til{\bm F}^\sharp\circ\Xi^{-1}}{\det \til M}$
        	and $\til M=\frac{\pt \Xi 
        	}{\pt y}$. 
        
            3. Obtain \eqref{igalpha} at ${\rm x}_0\in\ol{\Gam_f}\cap\ol{\Gam_w^+}$. 
            
            Rewrite \eqref{A0trans} as 
        	\begin{multline*}
        	\int_{D_{\mf r}^*({\rm x}_0^*)}\frac{1}{\det \til M}\til M^T{\bm A}\circ\Xi^{-1}\til MD_{(\til r, \til \theta, \til \vphi)} \til\bU^*D_{(\til r, \til \theta, \til \vphi)}{\bm \xi}\\
        	=\int_{D_{\mf r}^*({\rm x}_0^*)} (\frac{1}{\det \til M}\til M^T{\bm A}\circ\Xi^{-1}\til M-\frac{1}{\det \til M}\til M^T\til{\bm A}\circ\Xi^{-1}\til M)D_{(\til r, \til \theta, \til \vphi)}\til\bU^*D_{(\til r, \til \theta, \til \vphi)}{\bm \xi}-\til d^*\til\bU^*{\bm \xi}-\til{\bm F}^*{\bm \xi}
        	\end{multline*}
        	for all ${\bm \xi}\in H_0^1(D_{\mf r}^*({\rm x}_0^*))$. Fix the principal coefficients of the left-hand side of the resultant equation at ${\rm x}_0^*$. Then we obtain
        	\begin{multline}\label{A0trans2}
        	\int_{D_{\mf r}^*({\rm x}_0^*)}(\frac{1}{\det \til M}\til M^T{\bm A}\circ\Xi^{-1}\til M) ({\rm x}_0^*)D_{(\til r, \til \theta, \til \vphi)} \til\bU^*D_{(\til r, \til \theta, \til \vphi)}{\bm \xi}\\
        	=\int_{D_{\mf r}^*({\rm x}_0^*)}((\frac{1}{\det \til M}\til M^T{\bm A}\circ\Xi^{-1}\til M) ({\rm x}_0^*)-\frac{1}{\det \til M}\til M^T{\bm A}\circ\Xi^{-1}\til M)D_{(\til r, \til \theta, \til \vphi)} \til\bU^*D_{(\til r, \til \theta, \til \vphi)}{\bm \xi}
        	\\+ (\frac{1}{\det \til M}\til M^T{\bm A}\circ\Xi^{-1}\til M-\frac{1}{\det \til M}\til M^T\til{\bm A}\circ\Xi^{-1}\til M)D_{(\til r, \til \theta, \til \vphi)}\til\bU^*D_{(\til r, \til \theta, \til \vphi)}{\bm \xi}-\til d^*\til\bU^*{\bm \xi}-\til{\bm F}^*{\bm \xi}
        	\end{multline}
        	for all ${\bm \xi}\in H_0^1(D_{\mf r}^*(\til{\rm x}^*_0))$. Using the argument in Appendix, it can be checked that 
        	$(\frac{1}{\det \til M}\til M^T{\bm A}\circ\Xi^{-1}\til M) ({\rm x}_0^*)$  is equal to the principal coefficients of the equation in \eqref{fixedcoeffeqn}. 
        	
        	Let $\bW$ be the weak solution of
        	\begin{align}\label{fixed}
        	&\Div_{(\til r, \til \theta, \til \vphi)}((\frac{1}{\det\til M}\til M^T{\bm A}\circ \Xi^{-1}\til M)({\rm x}_0^*) D_{(\til r, \til \theta, \til \vphi)}\bW)=0\qdin D_{\mf r}^{*}({\rm x}_0^*),\\
        	&\bW=\label{fixedbc}
        	\begin{cases}
        	\til\bU^*\qdon \pt D_{\mf r}^*({\rm x}_0^*)\cap \n_{f(\theta_1)}^{+,*},   \\
        	{\bf 0}  \qdon \pt D_{\mf r}^*({\rm x}_0^*)\cap (\ol{\Gam_{f(\theta_1)}^{*}}\cup\ol{\Gam_{w,f(\theta_1)}^{+,*}}). 
        	\end{cases}
        	\end{align}
        	Subtract the weak formulation of \eqref{fixed}, \eqref{fixedbc} from  \eqref{A0trans2} and then take ${\bm \xi}={\bm V}$ to the resultant equation.
        	Then we obtain
        	\begin{multline*}
        	\int_{D_{\mf r}^*({\rm x}_0^*)}(\frac{1}{\det \til M}\til M^T{\bm A}\circ\Xi^{-1}\til M) ({\rm x}_0^*)D_{(\til r, \til \theta, \til \vphi)} {\bm V}D_{(\til r, \til \theta, \til \vphi)}{\bm V}\\
        	=\int_{D_{\mf r}^*({\rm x}_0^*)}((\frac{1}{\det \til M}\til M^T{\bm A}\circ\Xi^{-1}\til M) ({\rm x}_0^*)-\frac{1}{\det \til M}\til M^T{\bm A}\circ\Xi^{-1}\til M)D_{(\til r, \til \theta, \til \vphi)} \til\bU^*D_{(\til r, \til \theta, \til \vphi)}{\bm V}
        	\\+ (\frac{1}{\det \til M}\til M^T{\bm A}\circ\Xi^{-1}\til M-\frac{1}{\det \til M}\til M^T\til{\bm A}\circ\Xi^{-1}\til M)D_{(\til r, \til \theta, \til \vphi)}\til\bU^*D_{(\til r, \til \theta, \til \vphi)}{\bm V}-\til d^*\til\bU^*{\bm V}-\til{\bm F}^*{\bm V}
        	\end{multline*}
        	where ${\bm V}:=\til \bU^*-\bW\in H_0^1(D_{\mf r}^*({\rm x}_0^*))$. 
        	Using the Sobolev 
        	and H\"older inequality, we obtain from this equation
        	\begin{multline}\label{calphaenergy}
        	\int_{D_{\mf r}^*({\rm x}_0^*)}|D_{(\til r, \til \theta, \til \vphi)}{\bm V}|^2\le C\left( (\tau_1^2(\mf r)+\tau_2^2(\mf r)
        	)\int_{D_{\mf r}^*({\rm x}_0^*)}|D_{(\til r, \til \theta, \til \vphi)}\til \bU^*|^2\right.\\\left.+(\int_{D_{\mf r}^*({\rm x}_0^*)}|\til d^*|^3)^{\frac{2}{3}}\int_{D_r^*({\rm x}_0^*)}|\til \bU^*|^2+(\int_{D_{\mf r}^*({\rm x}_0^*)}|\til{\bm F}^*|^{\frac{6}{5}})^{\frac{5}{3}}\right)
        	\end{multline}
        	where 
        	$$\tau_1(\mf r)=\mathop{\sup_{{\rm x}^*,{\rm y}^*\in \n_f^{+,*}}}_{|{\rm x}^*-{\rm y}^*|\le {\mf r}}|(\frac{1}{\det \til M}\til M^T{\bm A}\circ\Xi^{-1}\til M) ({\rm x}^*)-(\frac{1}{\det \til M}\til M^T{\bm A}\circ\Xi^{-1}\til M) ({\rm y}^*)|$$ 
        	and 
        	$$\tau_2(\mf r):=\sup_{|\theta-\theta_1|\le \mf r} \left\{ |f(\theta)-f(\theta_1)|+|f^\p(\theta)|\right\}$$
        	(note that $f^\p(\theta_1)=0$). 
        	By Corollary \ref{corWpert} and \eqref{calphaenergy}, we have 
        	for any $0<\mf t\le \mf r$,
        	\begin{multline*}
        	\int_{D_{\mf t}^*({\rm x}_0^*)}|D_{(\til r, \til \theta, \til \vphi)}\til\bU^*|^2\le C
        	\Bigg(\left(\left(\frac{\mf t}{\mf r}\right)^3+\tau_1^2(\mf r)+\tau_2^2(\mf r)\right)\int_{D_{\mf r}^*({\rm x}_0^*)}|D_{(\til r, \til \theta, \til \vphi)}\til\bU^*|^2\\+(\int_{D_{\mf r}^*({\rm x}_0^*)}|\til d^*|^3)^{\frac{2}{3}}\int_{D_{\mf r}^*({\rm x}_0^*)}|\til \bU^*|^2+(\int_{D_{\mf r}^*({\rm x}_0^*)}|\til{\bm F}^*|^{\frac{6}{5}})^{\frac{5}{3}}\Bigg).
        	\end{multline*}
        	Using the H\"older and Poincar\'e inequality,
        	we get from this inequality
        	\begin{multline}\label{DF}
        	\int_{D_{\mf t}^*({\rm x}_0^*)}|D_{(\til r, \til \theta, \til \vphi)}\til\bU^*|^2\le C
        	\left(\left(\left(\frac{\mf t}{\mf r}\right)^3+\tau_1^2(\mf r)+\tau_2^2(\mf r)\right)\int_{D_{\mf r}^*({\rm x}_0^*)}|D_{(\til r, \til \theta, \til \vphi)}\til\bU^*|^2\right.\\\left.+(\int_{D_{\mf r}^*({\rm x}_0^*)}|\til d^*|^3)^{\frac{2}{3}}{\mf r}^2\int_{D_{\mf r}^*({\rm x}_0^*)}|D_{(\til r, \til \theta, \til \vphi)}\til \bU^*|^2+(\int_{D_{\mf r}^*({\rm x}_0^*)}|\til {\bm F}^*|^p)^{\frac{2}{p}}{\mf r}^{3-2+2\beta}\right)
        	\end{multline}
        	where $\beta=2-\frac{3}{p}\in (0,1)$ if $p\in (\frac{3}{2},3)$. 

        	Depending on the value of $\beta$, we consider two cases. 
        	
        	Case 1: $3-2+2\beta\le 2$. 
        	
        	Apply Lemma 2.1 in \cite[Chapter 3]{MR717034} to \eqref{DF}. Then we obtain that there exists $R_1\in (0,\min(\theta_1,\pi,r_1-f(\theta_1)))$ such that for any $0<\mf r\le R_1$,
        	\begin{align}\label{cbeta}
        	\int_{D_{\mf r}^*({\rm x}_0^*)}|D_{(\til r, \til \theta, \til \vphi)}\til\bU^*|^2&\le C{\mf r}^{3-2+2\beta}\left(\int_{D_{R_1}^*({\rm x}_0^*)}|D_{(\til r, \til \theta, \til \vphi)}\til\bU^*|^2+(\int_{D_{R_1}^*({\rm x}_0^*)}|\til{\bm F}^*|^p)^\frac{2}{p}\right).
        	\end{align}
        	
        	Case 2: $3-2+2\beta>2$. 
        	
        	As in the Case 1, we apply Lemma 2.1 in \cite[Chapter 3]{MR717034} to \eqref{DF}. Then we obtain that 
        	there exists $R_2\in (0,\min(\theta_1,\pi,r_1-f(\theta_1)))$ such that for any $0<\mf r\le R_2$,
        	\begin{align*}
        	\int_{D_{\mf r}^*({\rm x}_0^*)}|D_{(\til r, \til \theta, \til \vphi)}\til\bU^*|^2\le C{\mf r}^2\left(\int_{D_{R_2}^*({\rm x}_0^*)}|D_{(\til r, \til \theta, \til \vphi)}\til\bU^*|^2+(\int_{D_{R_2}^*({\rm x}_0^*)}|\til{\bm F}^*|^p)^\frac{2}{p}\right).
        	\end{align*}
        	Substitute this into \eqref{DF}. After then apply Lemma 2.1 in \cite[Chapter 3]{MR717034} again. Then we obtain that there exists $R_3\le R_2$ such that for any $0<\mf r\le R_3$, \eqref{cbeta}  holds with $R_1$ replaced by $R_2$. 
        	
        	From this result, we obtain that for any $\beta\in (0,1)$, there exists $R>0$ such that 
        	\begin{align*}
        	\int_{D_{ t}({\rm x}_0)
        	}|D
        	\bU^\sharp|^2\le C{t}^{3-2+2\beta}
        	\left(||\bU^\sharp||_{H^1(\n_f^+)}^2+||{\bm F}^\sharp||_{L^p(\n_f^+)}^2\right)\qd\tx{for any $0<t<R$}
        	\end{align*}
            for ${\rm x}_0\in\ol{\Gam_f}\cap\ol{\Gam_w^+}$.  
        	Since ${\bm F}^\sharp\in C^{\alpha}_{(1-\alpha,\Gam_{w}^+)}(\n_{f}^{+})$ 
        	for $\alpha\in (\frac 2 3,1)$, 
        	${\bm F}^\sharp\in L^3(\n_{f}^+)$. This implies that ${\bm F}^\sharp\in L^p(\n_{f}^+)$ for any $\beta\in (0,1)$ where $p=\frac{3}{2-\beta}$.
        	Hence, we obtain \eqref{igalpha} with $C$ replaced by $C_1=C(||\bU^\sharp||_{H^1(\n_f^+)}^2+||{\bm F}^\sharp||_{L^p(\n_f^+)}^2)$ for ${\rm x}_0\in\ol{\Gam_f}\cap\ol{\Gam_w^+}$.
        	
        	4. 
        	When ${\rm x}_0$ is in $\ol{\Gam_w^+}\cap\ol{\Gam_{ex}}$, 
        	we obtain \eqref{igalpha} with $C$ replaced by $C_1$ using similar arguments without the process of transforming $\n_f^+$ into $\n_{f(\theta_1)}^+$.  
        	When ${\rm x}_0$ is in $\n_f^+$ or $
        	\Gam_f\cup\Gam_w^+\cup\Gam_{ex}$ and far away from the corners $\ol{\Gam_f}\cap\ol{\Gam_w^+}$ and $ \ol{\Gam_w^+}\cap\ol{\Gam_{ex}}$, we obtain \eqref{igalpha} with $C$ replaced by $C_1$ using the standard method of freezing the coefficients. When 
        	${\rm x}_0$ is in $\n_f^+$ or $
        	\Gam_f\cup\Gam_w^+\cup\Gam_{ex}$ and near $\ol{\Gam_f}\cap\ol{\Gam_w^+}$ or $ \ol{\Gam_w^+}\cap\ol{\Gam_{ex}}$, we obtain \eqref{igalpha} with $C$ replaced by $C_1$ using the arguments in the proof of \cite[Theorem 5.21]{MR3099262} and arguments similar to the ones in Step 1-Step 3 above.  
        	Combining these results, we obtain that there exist a positive constant $R$ such that $\bU^\sharp$ satisfies \eqref{igalpha} for all ${\rm x}_0\in \ol{\n_f^+}$ for $C=C_1$. This finishes the proof. 
        \end{proof}
        Next, we prove Lemma \ref{lem1alpha}. 
        We prove Lemma \ref{lem1alpha} using the method of freezing the coefficients and the reflection arguments in the proof of Lemma \ref{lemalpha}. 
        When we do this, there exists some problem:
        since  ${\bm F}^\sharp$ is not in $L^p(\n_f^+)$ for $q=\frac{3}{1-\alpha}$ 
        nor has the form $\Div {\bm G}$ with ${\bm G}\in C^{\alpha}(\ol{\n_f^+})$, we cannot get the power of $t$ required in \eqref{ig1alpha} from the integral estimate of ${\bm F}^\sharp$ directly. 
        We 
        obtain this power by delivering $\theta$-derivatives imposed on some functions in ${\bm F}^\sharp$ to the functions multiplied to those functions in the integral form of ${\bm F}^\sharp$ using integration by parts and estimating the resultant integral form of ${\bm F}^\sharp$.  
        To make our argument clear, we present the detailed proof. 
        \begin{proof}[Proof of Lemma \ref{lem1alpha}]
        	Using \eqref{Fform}, write \eqref{veqn}, \eqref{vbdry} in the form
        	\begin{align*}
        	\int_{\n_f^+}{\bm A}D \bU^\sharp D{\bm \xi} +d\bU^\sharp{\bm \xi}=
        	&\int_{\n_f^+}\sum_i A^i\pt_r (B^i-B^i({\rm x}_0)) \xi_\vphi+\sum_i C^i \pt_\theta(D^i-D^i({\rm x}_0))\xi_\vphi\\
        	&+E\frac{\pt_\theta(F\sin\theta)}{\sin\theta}\xi_\vphi
        	+\Div({\bm A}D{\bm h}
        	){\bm \xi}-d{\bm h}{\bm \xi}
        	\end{align*}
        	for all ${\bm \xi}\in H_0^1(\n_f^+)$ where $\xi_\vphi={\bm \xi}\cdot \be_\vphi$. Using integration by parts, we change this equation into 
        	\begin{align*}
        	\int_{\n_f^+}{\bm A}D \bU^\sharp D{\bm \xi} +d\bU^\sharp{\bm \xi}=
        	&\int_{\n_f^+}-\sum_i\left(\pt_r A^i (B^i-B^i({\rm x}_0)) \xi_\vphi+A^i(B^i-B^i({\rm x}_0))\frac{1}{r^2}\pt_r (r^2\xi_\vphi)\right)\\
        	&-\sum_i\left(\pt_\theta C^i (D^i-D^i({\rm x}_0))\xi_\vphi+C^i(D^i-D^i({\rm x}_0))\frac{1}{\sin\theta}\pt_\theta(\xi_\vphi\sin\theta)\right)
        	\\
        	&
        	+E\frac{\pt_\theta(F\sin\theta)}{\sin\theta}\xi_\vphi-{\bm A}D{\bm h}D{\bm \xi}
        	-d{\bm h}{\bm \xi}
        	\end{align*}
        	for all ${\bm \xi}\in H_0^1(\n_f^+)$. Using $\Pi$ defined in Step 1 in the proof of Lemma \ref{lemalpha}, transform this equation. Then we obtain
        	\begin{align*}
        	\int_{\n_{f(\theta_1)}^+}\til {\bm A}D \til \bU^\sharp D{\bm \xi} +\til d\til \bU^\sharp{\bm \xi}=
        	\int_{\n_{f(\theta_1)}^+}(a) \frac{1}{\det(\frac{d{\rm y}}{ d{\rm x}})}
        	-\til{\bm A}D\til{\bm h}D{\bm \xi}
        	-\til d\til {\bm h}{\bm \xi}
        	\end{align*}
        	for all ${\bm \xi}\in H_0^1(\n_{f(\theta_1)}^+)$
        	where 
        	\begin{align*}
        	(a)=&-\sum_i 
        	\left(\frac{\pt \til r}{\pt r}\pt_{\til r}\til A^i(\til B^i-B^i({\rm x}_0)) \xi_\vphi+\til A^i(\til B^i-B^i({\rm x}_0))\frac{1}{r^2}\frac{\pt \til r}{\pt r}\pt_{\til r}(r^2\xi_\vphi)
        	\right)
        	\\
        	&-\sum_i\bigg(
        	(\frac{\pt \til r}{\pt \theta}\pt_{\til r}+\pt_{\til \theta})\til C^i
        	(\til D^i-D^i({\rm x}_0))\xi_\vphi
        	+\til C^i(\til D^i-D^i({\rm x}_0))\frac{1}{\sin\til \theta}(\frac{\pt \til r}{\pt \theta}\pt_{\til r}+\pt_{\til \theta})(\xi_\vphi\sin\til \theta)\bigg)
        	\\
        	&+\til E\frac{1}{\sin\til\theta}(\frac{\pt \til r}{\pt \theta}\pt_{\til r}+\pt_{\til \theta})(\til F\sin\til \theta)\xi_\vphi,
        	\end{align*}
        	$\til {\bm A}$, $\til d$ and $\til \bU^\sharp$ are functions given below \eqref{ybdry}, 
        	\begin{align*}
        	&\til A^{i}:=A^i\circ \Pi^{-1},\; \til B^i:=B^i \circ \Pi^{-1},\; \til C^{i}:=C^i\circ\Pi^{-1},\\
        	&\til D^i=D^i\circ\Pi^{-1},\;\til E:=E\circ \Pi^{-1}, \til F:=F\circ \Pi^{-1},\; \til {\bm h}={\bm h}\circ \Pi^{-1},
        	\end{align*}
        	${\rm x}$ is the cartesian coordinate representing $\n_f^+$, ${\rm y}=\Pi({\rm x})$ and
        	$(r,\theta,\vphi)$ and $(\til r,\til \theta,\til \vphi)$ are the spherical coordinate systems for ${\rm x}$ and ${\rm y}$, respectively. 
        	As we did in Step 2 in the proof of Lemma \ref{lemalpha}, set  ${\bm \xi}=0$ outside of $\Xi^{-1}(D_{{\mf r}}^*({\rm x}_0^*))$ for $0<\mf r<\min(\theta_1,\pi,r_1-f(\theta_1))$ where ${\rm x}_0^*= (f(\theta_1),\theta_1,\vphi_0)$ for some $\vphi_0\in \mathbb T$ and then transform this equation  using $\Xi$. Then we obtain	
        	\begin{align*}
        	&\int_{D_{\mf r}^*({\rm x}_0^*)}\frac{1}{\det \til M}\til M^T\til{\bm A}\circ\Xi^{-1}\til MD_{(\til r,\til \theta,\til\vphi)}\til\bU^*D_{(\til r,\til \theta,\til\vphi)}{\bm \xi}+\til d^*\til\bU^*{\bm \xi}\\
        	&=\int_{D_{\mf r}^*({\rm x}_0^*)}(a) \frac{\til r^2\sin\til \theta}{\det(\frac{d{\rm y}}{ d{\rm x}})}
        	\\
        	&-\left(\frac{1}{\det \til M}\til M^T \til{\bm A}\circ \Xi^{-1}\til MD_{(\til r, \til \theta,\til \vphi)} \til {\bm h}^*-(\frac{1}{\det \til M}\til M^T \til{\bm A}\circ \Xi^{-1}\til MD_{(\til r, \til \theta,\til \vphi)} \til {\bm h}^*)({\rm x}_0^*)\right)D_{(\til r, \til \theta,\til \vphi)}{\bm \xi}
        	-\til d^*\til {\bm h}^*{\bm \xi}
        	\end{align*}
        	for all ${\bm \xi}\in H_0^1(D_{\mf r}^*({\rm x}_0^*))$ where  $\til \bU^*$, $\til d^*$, 
        	and $\til M$ are functions defined below \eqref{A0trans}
            and 
            we used $\Div_{(\til r, \til \theta,\til \vphi)} \left( (\frac{1}{\det \til M}\til M^T \til{\bm A}\circ \Xi^{-1}\til MD_{(\til r, \til \theta,\til \vphi)} \til {\bm h}^*)({\rm x}_0^*)\right)=0$.
            Fix the principal coefficients of the left-hand side of the above equation. Then we get
            \begin{align*}
            &\int_{D_{\mf r}^*({\rm x}_0^*)}(\frac{1}{\det \til M}\til M^T{\bm A}\circ\Xi^{-1}\til M)({\rm x}_0^*)D_{(\til r,\til \theta,\til\vphi)}\til\bU^*D_{(\til r,\til \theta,\til\vphi)}{\bm \xi}
            \\
            &=\int_{D_{\mf r}^*({\rm x}_0^*)}((\frac{1}{\det \til M}\til M^T{\bm A}\circ\Xi^{-1}\til M) ({\rm x}_0^*)-\frac{1}{\det \til M}\til M^T{\bm A}\circ\Xi^{-1}\til M)D_{(\til r,\til \theta,\til\vphi)} \til\bU^*D_{(\til r,\til \theta,\til\vphi)}{\bm \xi}\\
            &+(\frac{1}{\det \til M}\til M^T{\bm A}\circ\Xi^{-1}\til M-\frac{1}{\det \til M}\til M^T\til{\bm A}\circ\Xi^{-1}\til M)D_{(\til r,\til \theta,\til\vphi)}\til\bU^*D_{(\til r,\til \theta,\til\vphi)}{\bm \xi}-\til d^*\til\bU^*{\bm \xi}\\
            &+(a) \frac{\til r^2\sin\til \theta}{\det(\frac{d{\rm y}}{ d{\rm x}})}
            \\
            &-\left(\frac{1}{\det \til M}\til M^T \til{\bm A}\circ \Xi^{-1}\til MD_{(\til r, \til \theta,\til \vphi)} \til {\bm h}^*-(\frac{1}{\det \til M}\til M^T \til{\bm A}\circ \Xi^{-1}\til MD_{(\til r, \til \theta,\til \vphi)} \til {\bm h}^*)({\rm x}_0^*)\right)D_{(\til r, \til \theta,\til \vphi)}{\bm \xi}
            -\til d^*\til {\bm h}^*{\bm \xi}
            \end{align*}
            for all ${\bm \xi}\in H_0^1(D_{\mf r}^*({\rm x}_0^*))$. 
            
            Let ${\bm W}$ be the weak solution of  \eqref{fixed}, \eqref{fixedbc}.
            As we did in Step 3 in the proof of Lemma \ref{lemalpha}, 
            subtracting the weak formulation of \eqref{fixed}, \eqref{fixedbc} from  the above equation. And then
            take ${\bm \xi}={\bm V}$ where ${\bm V}=\til {\bm U}^*-\bW$ to the resultant equation. Then we have 
            \begin{align*}
            &\int_{D_{\mf r}^*({\rm x}_0^*)}(\frac{1}{\det \til M}\til M^T{\bm A}\circ\Xi^{-1}\til M)({\rm x}_0^*)D_{(\til r,\til \theta,\til\vphi)}{\bm V}D_{(\til r,\til \theta,\til\vphi)}{\bm V}
            \\
            &=\int_{D_{\mf r}^*({\rm x}_0^*)}((\frac{1}{\det \til M}\til M^T{\bm A}\circ\Xi^{-1}\til M) ({\rm x}_0^*)-\frac{1}{\det \til M}\til M^T{\bm A}\circ\Xi^{-1}\til M)D_{(\til r,\til \theta,\til\vphi)} \til\bU^*D_{(\til r,\til \theta,\til\vphi)}{\bm V}\\
            &+(\frac{1}{\det \til M}\til M^T{\bm A}\circ\Xi^{-1}\til M-\frac{1}{\det \til M}\til M^T\til{\bm A}\circ\Xi^{-1}\til M)D_{(\til r,\til \theta,\til\vphi)}\til\bU^*D_{(\til r,\til \theta,\til\vphi)}{\bm V}-\til d^*\til\bU^*{\bm V}\\
            &-\Bigg(\sum_i 
            \left(\frac{\pt \til r}{\pt r}\pt_{\til r}\til A^i(\til B^i-B^i({\rm x}_0)) V_\vphi+\til A^i(\til B^i-B^i({\rm x}_0))\frac{1}{r^2}\frac{\pt \til r}{\pt r}\pt_{\til r} (r^2V_\vphi)
            \right)
            \\
            &-\sum_i\bigg(
            (\frac{\pt \til r}{\pt \theta}\pt_{\til r}+\pt_{\til \theta})\til C^i
            (\til D^i-D^i({\rm x}_0))V_\vphi
            +\til C^i(\til D^i-D^i({\rm x}_0))\frac{1}{\sin\til \theta}(\frac{\pt \til r}{\pt \theta}\pt_{\til r}+\pt_{\til \theta})(V_\vphi\sin\til \theta)\bigg)
            \\
            &+\til E\frac{1}{\sin\til\theta}(\frac{\pt \til r}{\pt \theta}\pt_{\til r}+\pt_{\til \theta})(\til F\sin\til \theta)V_\vphi\Bigg) \frac{\til r^2\sin\til \theta}{\det(\frac{d{\rm y}}{ d{\rm x}})}\\
            &-\left(\frac{1}{\det \til M}\til M^T \til{\bm A}\circ \Xi^{-1}\til MD_{(\til r, \til \theta,\til \vphi)} \til {\bm h}^*-(\frac{1}{\det \til M}\til M^T \til{\bm A}\circ \Xi^{-1}\til MD_{(\til r, \til \theta,\til \vphi)} \til {\bm h}^*)({\rm x}_0^*)\right)D_{(\til r, \til \theta,\til \vphi)}{\bm V}-\til d^*\til {\bm h}^*{\bm V}
            \end{align*}
            where $V_\vphi={\bm V}\cdot \be_{\vphi}$. 
            Using the Sobolev and  H\"older inequality and the facts that ${\bm A}\in C^{\alpha}(\ol{\n_f^+})$, $f\in C^{1,\alpha}(\ol{\Lambda})$, $\bm h\in C^{1,\alpha}(\ol{\n_f^+})$ and
            \begin{align*}
            \int_{D_{\mf r}^*({\rm x}_0^*)}|\frac{1}{r^2}\pt_{\til r}(r^2V_\vphi)|^2,\;
            \int_{D_{\mf r}^*({\rm x}_0^*)}|\frac{1}{\sin\til\theta}\pt_{\til\theta}(V_\vphi\sin\til\theta)|^2&\le C\int_{D_{\mf r}^*({\rm x}_0^*)}|D(V_\vphi\be_{\vphi})|^2 \qd\tx{(See \eqref{DPsi})}\\
            &\le C\int_{D_{\mf r}^*({\rm x}_0^*)}|D_{(\til r, \til \theta,\til\vphi)}{\bm V}|^2
            \end{align*}
            we obtain from the above equation
            \begin{multline}\label{Vfinal}
            \int_{D_{\mf r}^*({\rm x}_0^*)}|D_{\rtp}{\bm V}|^2\le C \left( {\mf r}^{2\alpha}\int_{D_r^*({\rm x}_0^*)}|D_{\rtp}\til\bU^*|^2\right.\\\left.+(\int_{D_{\mf r}^*({\rm x}_0^*)}|\til d^*|^q)^{\frac{2}{q}}{\mf r}^{2\alpha}\int_{D_{\mf r}^*({\rm x}_0^*)}|\til\bU^*|^2
            +{\mf r}^{3+2\alpha}F^\flat\right)
            \end{multline}
            for $q=\frac{3}{1-\alpha}$ where
            \begin{align*}
            F^\flat=&\sum_i ||B^i\be_{\theta}||_{\alpha,\n_f^+}^2||A^i||^2_{W^{1,3}(\n_f^+)\cap L^\infty(\n_f^+)} 
            +\sum_i
            ||D^{i}||^2_{\alpha,\n_f^+}||C^{i}||^2_{W^{1,3}(\n_f^+)\cap L^\infty(\n_f^+)}\\
            &+
            ||E\frac{\pt_\theta(F\sin\theta)}{\sin\theta}\be_{\vphi}||_{L^q(\n_f^+)}^2+|| {\bm h}||^2_{1,\alpha,\n_f^+}
            \end{align*}
            with
            $||\cdot||_{W^{1,3}(\Om)\cap L^\infty(\Om)}:=||\cdot||_{W^{1,3}(\Om)}+||\cdot||_{L^\infty(\Om)}$.
            By Corollory \ref{corWpert} and \eqref{Vfinal}, 
            we have 
            for any $0<{\mf t}\le {\mf r}$,
            \begin{multline}\label{UWdifa}
            \int_{D_{\mf t}^*({\rm x}_0^*)}|D_{(\til r,\til\theta,\til\vphi)}\til\bU^*-(D_{(\til r,\til\theta,\til\vphi)}\til\bU^*)^*_{{\rm x}_0^*,\mf t}|^2\\
            \le C\left(\left(\frac{\mf t}{\mf r}\right)^5\int_{D_{\mf r}^*(\til {\rm x}_0)}|D_{(\til r,\til\theta,\til\vphi)}\til\bU^*-(D_{(\til r,\til\theta,\til\vphi)}\til\bU^*)^*_{{\rm x}_0^*,\mf r}|^2\right.\\
            \left.+{\mf r}^{2\alpha}\int_{D_{\mf r}^*({\rm x}_0^*)}|D_{(\til r,\til\theta,\til\vphi)}\til\bU^*|^2+{\mf r}^{2\alpha}\int_{D_{\mf r}^*( {\rm x}_0^*)}|\til\bU^*|^2
            +{\mf r}^{3+2\alpha}F^\flat
            \right).
            \end{multline}
        	
        	In the proof of Lemma \ref{lemalpha}, we showed that for any $\eps\in(0,1)$, there exists $R_4>0$ such that 
        	\begin{align*}
        	\int_{D_{\mf r}^*({\rm x}_0^*)}|D_{(\til r,\til\theta,\til\vphi)}\til\bU^*|^2\le C{\mf r}^{3-2\eps}(||\bU^\sharp||_{H^1(\n_f^+)}^2+||{\bm F}^\sharp||_{L^3(\n_f^+)}^2)\qd\tx{for any $0<\mf r\le R_4$}.
        	\end{align*}
        	Using this inequality and $\bU^\sharp\in C^0(\ol{\n_f^+})$ obtained from Lemma \ref{lemalpha}, we apply Lemma 2.1 in \cite[Chapter 3]{MR717034} to \eqref{UWdifa}. Then we obtain 
        	\begin{multline}\label{c1alphae}\int_{D_{\mf r}^*({\rm x}_0^*)}|D_{(\til r,\til\theta,\til\vphi)}\til\bU^*-(D_{(\til r,\til\theta,\til\vphi)}\til\bU^*)^*_{{\rm x}_0^*,{\mf r}}|^2\\
        	\le C(||\bU^\sharp||_{H^1(\n_f^+)}^2+||{\bm F}^\sharp||_{L^3(\n_f^+)}^2+{F}^\flat){\mf r}^{3+2\alpha-2\eps} \qd\tx{for any $0<\mf r<R_5$}
        	\end{multline}
        	for a constant $R_5>0$. 
        	
        	When ${\rm x}_0^*$ is in $\ol{\Gam_w^{+.*}}\cap \ol{\Gam_{ex}^*}$, we obtain \eqref{c1alphae} using similar argument without the process of transforming $\n_f^+$ to $\n_{f(\theta_1)}^+$. 
        	When ${\rm x}_0^*$ is in $\n_f^{+,*}\cap \{\theta\ge\frac{ \theta_1}{3}\}$ or $(\Gam_f^*\cup\Gam_w^{+,*}\cup\Gam_{ex}^*)\cap \{\theta\ge\frac{ \theta_1}{3}\}$ and far away from the corners $\ol{\Gam_f^*}\cap\ol{\Gam_w^{+,*}}$ and $\ol{\Gam_w^{+,*}}\cap\ol{\Gam_{ex}^*}$, we obtain \eqref{c1alphae} using the standard method of freezing coefficients to the spherical coordinate representation of \eqref{veqn}, \eqref{vbdry} with integration by parts argument above. 
        	When ${\rm x}_0^*$ is in $\n_f^{+,*}\cap \{\theta\ge\frac{ \theta_1}{3}\}$ or $
        	\Gam_f^*\cup\Gam_w^{+,*}\cup\Gam_{ex}^*\cap \{\theta\ge\frac{ \theta_1}{3}\}$ and near $\ol{\Gam_f^*}\cap\ol{\Gam_w^{+_*}}$ or $ \ol{\Gam_w^{+,*}}\cap\ol{\Gam_{ex}^*}$, we obtain \eqref{c1alphae} using the arguments in 
        	\cite[Theorem 5.21]{MR3099262} and arguments similar to the ones above. Then we obtain 
        	$D\bU^\sharp\in C^{\alpha-\eps}([f(\theta),r_1]\times[\frac{\theta_1}{3},\theta_1]\times \mathbb T)$. 
        	When ${\rm x}_0$ is in $\n_f^+\cap \{\theta\le\frac{ 2\theta_1}{3}\}$ or $(\Gam_f\cup\Gam_w^+\cup\Gam_{ex})\cap \{\theta\le\frac{2 \theta_1}{3}\}$,  we obtain \eqref{ig1alpha} with $\alpha$ and $C$ replaced by $\alpha-\eps$ for any $\eps\in (0,1)$ and $C(||\bU^\sharp||_{H^1(\n_f^+)}^2+||{\bm F}^\sharp||_{L^3(\n_f^+)}^2+{F}^\flat)$ using the standard method of freezing coefficients to \eqref{veqn}, \eqref{vbdry} with integration by parts argument above. 
        	Note that if we estimate the integral form of ${\bm F}^\sharp$ using integration by parts argument above, 
        	then there is no singularity issue. From this result, we obtain $D\bU^\sharp\in C^{\alpha-\eps}(\ol{\n_f^+}\cap \{\theta\le \frac{2\theta_1}{3}\})$. Combining these two regularity results for $D\bU^\sharp$, we obtain $D\bU^\sharp\in C^{\alpha-\eps}(\ol{\n_f^+})$.
        	
        	Using the regularity result for $D\bU^\sharp$ and $\bU^\sharp\in C^0(\ol{\n_f^+})$, we obtain from \eqref{UWdifa}
        	\begin{multline}\label{VVV}
        	\int_{D_{\mf t}^*({\rm x}_0^*)}|D_{(\til r,\til \theta,\til \vphi)}\til\bU^*-(D_{(\til r,\til \theta,\til \vphi)}\til\bU^*)^*_{{\rm x}_0^*,\mf t}|^2\\
        	\le C\bigg(\left(\frac{\mf t}{\mf r}\right)^5\int_{D_{\mf r}^*({\rm x}_0^*)}|D_{(\til r,\til \theta,\til \vphi)}\til\bU^*-(D_{(\til r,\til \theta,\til \vphi)}\til\bU^*)^*_{{\rm x}_0^*,\mf r}|^2\\
        	+{\mf r}^{3+2\alpha}\left(||\bU^\sharp||_{H^1(\n_f^+)}^2+||{\bm F}^\sharp||_{L^3(\n_f^+)}^2
        	+{F}^\flat\right)
        	\bigg)
        	\end{multline}
        	(here ${\rm x}_0^*\in \ol{\Gam_f^*}\cap\ol{\Gam_w^{+,*}}$). 
        	Apply Lemma 2.1 in \cite[Chapter 3]{MR717034} to \eqref{VVV}. Then we have 
        	\begin{multline}\label{mmm}
        	\int_{D_{\mf r}^*({\rm x}_0^*)}|D_{(\til r,\til \theta,\til \vphi)}\til\bU^*-(D_{(\til r,\til \theta,\til \vphi)}\til\bU^*)^*_{{\rm x}_0^*,{\mf r}}|^2\\
        	\le C(||\bU^\sharp||_{H^1(\n_f^+)}^2+||{\bm F}^\sharp||_{L^3(\n_f^+)}^2+{F}^\flat){\mf r}^{3+2\alpha}\qd\tx{for any $0<\mf r<R_6$}
        	\end{multline}
        	for a constant $R_6>0$. Using the arguments used when we obtained $D\bU\in C^{\alpha-\eps}(\ol{\n_f^+})$, we obtain \eqref{mmm} at ${\rm x}_0^*\in \ol{\n_f^{+,*}}\cap\{\theta\ge \frac{\theta_1}{3}\}$ and \eqref{ig1alpha} with $C$ replaced by $C(||\bU^\sharp||_{H^1(\n_f^+)}^2+||{\bm F}^\sharp||_{L^3(\n_f^+)}^2+{F}^\flat)$ at ${\rm x}_0\in \ol{\n_f^+}\cap \{\theta\le\frac{2\theta_1}{3}\}$. 
        	From this result, we obtain the desired result. 
        	This finishes the proof. 
        \end{proof}
        Using the scailing argument given in the proof of Proposition 3.1 in \cite{MR3458161} with the results in Theorem 5.21 in \cite{MR3099262} and Lemma \ref{lem1alpha}, we can obtain the following result. %
        We omit the proof. 
        \begin{lem}\label{lem2alpha}
        	Under the assumption as in Lemma \ref{lemSE}, let $\bU^\sharp$ be a weak solution of \eqref{veqn}, \eqref{vbdry}. Then $\bU^\sharp\in C^{2,\alpha}_{(-1-\alpha,\Gam_w^+)}(\n_f^+)$. Furthermore,
        	$\bU^\sharp$ satisfies 
        	\begin{align*}
        	||\bU^\sharp||_{2,\alpha,\n_f^+}^{(-1-\alpha,\Gam_w^+)}\le C(||{\bm F}^\sharp||_{\alpha,\n_f^+}^{(1-\alpha,\Gam_w^+)}+||\bU^\sharp||_{1,\alpha,\n_f^+})
        	\end{align*}
        	where $C$ is a positive constant depending on $(\rho_0^+,u_0^+,p_0^+)$, $\gam$, $\n_f^+$ and $\alpha$.
        \end{lem}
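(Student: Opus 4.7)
The plan is to upgrade the $C^{1,\alpha}(\overline{\n_f^+})$ bound of Lemma \ref{lem1alpha} to the weighted $C^{2,\alpha}$ estimate by an interior/boundary rescaling argument in the spirit of the proof of Proposition~3.1 in \cite{MR3458161}, combined with Schauder theory for elliptic systems. The weighted norm $\|\cdot\|_{2,\alpha,\n_f^+}^{(-1-\alpha,\Gam_w^+)}$ puts no weight on derivatives of order $\le 1$ but permits second derivatives to grow like $\delta_x^{\alpha-1}$ near $\Gam_w^+$, which is precisely the growth produced by applying a classical Schauder estimate on balls of radius comparable to $\delta_x$.

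First I would fix $x_0\in \n_f^+$, set $\delta:=\delta_{x_0}$, and rescale via $y=(x-x_0)/\delta$ and $\bv(y):=\bU^\sharp(x_0+\delta y)/\delta^{1+\alpha}$. A direct computation shows $\bv$ satisfies an elliptic system of the same form as \eqref{veqn} on a unit ball (or unit half-ball, when $x_0$ lies near a smooth portion of $\pt\n_f^+$), with principal coefficients whose $C^\alpha$ norms are controlled uniformly in $\delta$ and with right-hand side whose $C^\alpha$ norm is controlled by $\|\bm F^\sharp\|_{\alpha,\n_f^+}^{(1-\alpha,\Gam_w^+)}$ in view of the definition of the weighted norm. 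The Dirichlet condition \eqref{vbdry} rescales to homogeneous Dirichlet data on the flat part of the rescaled boundary.

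Next I would split into three cases. For $x_0$ in the interior of $\n_f^+$ the classical interior Schauder estimate applies to $\bv$ on $B_{1/2}$. For $x_0$ on the smooth boundary portion $\Gam_f\cup\Gam_{ex}$, away from the corners where they meet $\Gam_w^+$, the boundary Schauder estimate of Theorem~5.21 in \cite{MR3099262} for elliptic systems with homogeneous Dirichlet data applies on the rescaled half-ball. For $x_0$ in a neighborhood of a corner $\overline{\Gam_f}\cap\overline{\Gam_w^+}$ or $\overline{\Gam_w^+}\cap\overline{\Gam_{ex}}$, I would invoke the reflection trick used in the proof of Lemma \ref{lemfixedcoeffi}: extend $\bv$ by odd reflection across the two perpendicular walls and apply a Schauder estimate to the extended system on a smooth half-ball. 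Unrescaling each local bound and assembling them through the definition of the weighted norm, with the lower-order term absorbed by Lemma \ref{lem1alpha}, will yield the desired global estimate.

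The main obstacle is the corner case. One has to verify that, after freezing the principal coefficients at the corner, the odd-reflected system across the perpendicular walls remains uniformly elliptic with the same ellipticity constants and still falls within the Schauder framework, and that the $C^\alpha$ norms of the coefficient and right-hand side transform cleanly under the reflection. This is guaranteed by the assumption $f^\prime(\theta_1)=0$, which makes the corners genuinely perpendicular in the $(r,\theta)$-plane, together with the diagonal structure of the frozen principal coefficient already exploited in \eqref{fixedcoeffeqn}. Once this is established and the local Schauder constants are seen to be independent of $x_0$, the remaining step is a routine matching of weight exponents under the rescaling.
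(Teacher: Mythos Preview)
Your approach is the same as the paper's: the paper omits the proof but points to exactly the scaling argument of Proposition~3.1 in \cite{MR3458161}, Theorem~5.21 in \cite{MR3099262}, and Lemma~\ref{lem1alpha}. One minor remark: your third case (corners) is actually unnecessary, since rescaling by $\delta=\mathrm{dist}(x_0,\Gam_w^+)$ keeps $B_{\delta/2}(x_0)$ away from $\Gam_w^+$ and hence from the corners, so interior and one-wall boundary Schauder estimates already suffice.
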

        Finally, we prove that the
        $C^{2,\alpha}_{(-1-\alpha,\Gam_w^+)}(\n_f^+)$ solution of \eqref{veqn}, \eqref{vbdry} is of the form $\Psi(r,\theta)\be_{\vphi}$. This statement is proved using the argument as
        in  the proof of Proposition 3.3 in \cite{MR3739930} {(Method II)}. Although the arguments to prove this statement are almost same with those in  the proof of Proposition 3.3 in \cite{MR3739930} {(Method II)}, 
        since \eqref{veqn}, \eqref{vbdry} is different from the problem in Proposition 3.3 in \cite{MR3739930} and similar arguments will be used later in the proof of Lemma \ref{lemlegendre}, 
        we present the detailed proof. 
        \begin{lem}\label{lemform}
        	Under the assumption as in Lemma \ref{lemSE}, 
        	the $C^{2,\alpha}_{(-1-\alpha,\Gam_w^+)}(\n_f^+)$ solution of \eqref{veqn}, \eqref{vbdry} is of the form $\Psi(r,\theta)\be_{\vphi}$. 
        \end{lem}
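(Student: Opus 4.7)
The plan is to prove Lemma \ref{lemform} by exploiting the rotational and reflection symmetries of the boundary value problem \eqref{veqn}, \eqref{vbdry} together with the uniqueness supplied by Lemma \ref{lemH1}, in the spirit of Method~II in the proof of Proposition~3.3 of \cite{MR3739930}. The structural facts I will use are: the domain $\n_f^+$, the coefficients ${\bm A}$ and $d$, and the lifting ${\bm h}$ are axisymmetric; every term of ${\bm F}$ in \eqref{Fform} is an axisymmetric scalar multiplying $\be_\vphi$; and the Dirichlet data are axisymmetric vectors pointing in the $\be_\vphi$-direction. Because ${\bm h}$ is an axisymmetric $\be_\vphi$-directed field and the principal part of ${\bm A}$ is built only from $\mc I$ and $\be_r\otimes\be_r$, the same computation yielding \eqref{curlformsig} shows that $\Div({\bm A} D{\bm h})$ and $d{\bm h}$ are themselves axisymmetric $\be_\vphi$-directed fields, so the full right-hand side ${\bm F}^\sharp=-{\bm F}-\Div({\bm A}D{\bm h})+d{\bm h}$ has this form.

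The first step will be to establish $\vphi$-independence. For each $\vphi_0\in[0,2\pi)$, let $R_{\vphi_0}$ denote rotation about the $z$-axis by angle $\vphi_0$, and define $\bU^{(\vphi_0)}({\rm x}):=R_{\vphi_0*}^{-1}\bU^\sharp(R_{\vphi_0}{\rm x})$, where $R_{\vphi_0*}$ is the induced push-forward on vectors. Because $\mc I\otimes\be_r\otimes\be_r\otimes\mc I$ is rotation-covariant, $\n_f^+$ is $R_{\vphi_0}$-invariant, and the data transform covariantly under $R_{\vphi_0*}$, the field $\bU^{(\vphi_0)}$ is another weak solution of \eqref{veqn}, \eqref{vbdry} in $H_0^1(\n_f^+)$. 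Lemma \ref{lemH1} then forces $\bU^{(\vphi_0)}=\bU^\sharp$; writing $\bU^\sharp=U_r\be_r+U_\theta\be_\theta+U_\vphi\be_\vphi$ and using the rotation-equivariance of the spherical frame yields that $U_r$, $U_\theta$, $U_\vphi$ are each independent of $\vphi$.

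Next I would rule out the radial and polar components via the reflection $\sigma:(r,\theta,\vphi)\mapsto(r,\theta,-\vphi)$, whose push-forward satisfies $\sigma_*\be_r=\be_r$, $\sigma_*\be_\theta=\be_\theta$, $\sigma_*\be_\vphi=-\be_\vphi$ at corresponding points. The principal operator is $\sigma$-invariant, but every datum and the right-hand side carry exactly one factor of $\be_\vphi$ and thus pick up a sign under $\sigma_*$. Consequently $-\sigma_*\bU^\sharp(\sigma\,\cdot\,)$ is again an $H_0^1$ weak solution of \eqref{veqn}, \eqref{vbdry}, and uniqueness gives $\bU^\sharp({\rm x})=-\sigma_*\bU^\sharp(\sigma{\rm x})$. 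Combined with Step~1, this identity reads $U_r=-U_r$, $U_\theta=-U_\theta$, $U_\vphi=U_\vphi$, so $U_r\equiv U_\theta\equiv 0$ and $\bU^\sharp=U_\vphi(r,\theta)\be_\vphi=:\Psi(r,\theta)\be_\vphi$, as required. The only technical obstacle I anticipate is the careful verification of the transformation laws of ${\bm F}^\sharp$ and ${\bm h}$ under $R_{\vphi_0}$ and $\sigma$, which reduces to tracking a single factor of $\be_\vphi$ in each term; once this bookkeeping is done, two applications of Lemma \ref{lemH1} close the argument with no further analysis.
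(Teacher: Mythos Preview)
Your proposal is correct and takes a genuinely different route from the paper's proof. The paper proceeds by dyadic averaging in $\vphi$: it forms $U_k^n:=2^{-n}\sum_{j=0}^{2^n-1}U_k(r,\theta,\vphi+2\pi j/2^n)$, uses the uniform $C^{2,\alpha}_{(-1-\alpha)}$ bound and compactness to extract a $\vphi$-independent limit $(U_r^*,U_\theta^*,U_\vphi^*)$, observes that for $\vphi$-independent fields the spherical-coordinate system \eqref{expsystem} decouples the $(U_r^*,U_\theta^*)$-equations from the $U_\vphi^*$-equation, checks the axis conditions $\Psi=\pt_{\theta\theta}\Psi=0$ at $\theta=0$ so that $\Psi\be_\vphi:=U_\vphi^*\be_\vphi$ lies in $C^{2,\alpha}_{(-1-\alpha,\Gam_w^+)}(\n_f^+)$, and only then invokes Lemma~\ref{lemH1} once to identify $\bU$ with $\Psi\be_\vphi$. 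Your argument replaces the averaging/compactness step by a direct application of uniqueness for every rotation $R_{\vphi_0}$, and replaces the decoupling-plus-axis-regularity step by a second application of uniqueness under the reflection $\sigma:(x,y,z)\mapsto(x,-y,z)$. The payoff of your route is that it is shorter and needs no separate verification that the candidate $\Psi\be_\vphi$ is regular across the axis, since you work with the original solution throughout; the paper's route has the minor advantage of using uniqueness only once and of making the structural decoupling of \eqref{expsystemred} explicit, which is reused later in Lemma~\ref{lemlegendre}. Both hinge on the same two facts you isolate: ${\bm A}$, $d$, $\n_f^+$ and ${\bm F}^\sharp=\mc F(r,\theta)\be_\vphi$ are axisymmetric, and $\sigma_*$ fixes $\be_r,\be_\theta$ while sending $\be_\vphi\mapsto-\be_\vphi$.
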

        \begin{proof}
        	Let $\bU=U_r\be_r+U_\theta\be_\theta+U_\vphi\be_\vphi$ be the 
        	$C^{2,\alpha}_{(-1-\alpha,\Gam_w^+)}(\n_f^+)$ solution 
        	of \eqref{veqn}, \eqref{vbdry}. 
        	Then, $(U_r,U_\theta,U_\vphi)$  is in $(C^{2,\alpha}_{(-1-\alpha,\{\theta=\theta_1\})}(\n_{f}^{+,*}))^3$ satisfying
        	\begin{align}\label{Usphenorm}||U_r||_{2,\alpha,\n_{f}^{+,*}}^{(-1-\alpha,\{\theta=\theta_1\})},\; ||U_\theta||_{2,\alpha,\n_{f}^{+,*}}^{(-1-\alpha,\{\theta=\theta_1\})},\; ||U_\vphi||_{2,\alpha,\n_{f}^{+,*}}^{(-1-\alpha,\{\theta=\theta_1\})}\le C||\bU||_{2,\alpha,\n_f^+}^{(-1-\alpha,\Gam_w^+)}
        	\le CC^*,
        	\end{align}
        	where $C$ and $C^*$ are positive constants depending on $\n_f^+$ and $\alpha$ and given in \eqref{Uestimate}, respectively,  
        	and satisfies
        	the following spherical coordinate representation of \eqref{veqn}, \eqref{vbdry} 
        	\begin{align}\label{expsystem}
        	&\begin{cases}
        	\left(\frac{{c_0^+}^2}{\rho_0^+({c_0^+}^2-{u_0^+}^2)}(\Delta U_r-\frac{2U_r}{r^2}-\frac{2}{r^2\sin\theta}\pt_\theta(U_\theta\sin\theta)-\frac{2}{r^2\sin\theta}\pt_\vphi U_\vphi)\right.\\
        	\left.\qd\qd\qd\qd\qd\qd\qd\qd-\frac{ {u_0^+}^2}{\rho_0^+({c_0^+}^2-{u_0^+}^2)}\frac{1}{r^2}\pt_r (r^2\pt_r U_r)-\frac{\pt_r \rho_0^+}{{\rho_0^+}^2}\frac{\pt_r (r U_r)}{r}\right)=0,\\
        	\left(\frac{{c_0^+}^2}{\rho_0^+({c_0^+}^2-{u_0^+}^2)}(\Delta U_\theta-\frac{U_\theta}{r^2\sin^2\theta}+\frac{2}{r^2}\pt_\theta U_r-\frac{2\cos\theta}{r^2\sin^2\theta}\pt_\vphi U_\vphi)\right.\\
        	\left.\qd\qd\qd\qd\qd\qd\qd\qd-\frac{ {u_0^+}^2}{\rho_0^+({c_0^+}^2-{u_0^+}^2)}\frac{1}{r^2}\pt_r (r^2 \pt_r U_\theta)-\frac{\pt_r\rho_0^+}{{\rho_0^+}^2}\frac{\pt_r (r U_\theta)}{r}\right)=0,\\
        	\left( \frac{{c_0^+}^2}{\rho_0^+({c_0^+}^2-{u_0^+}^2)}(\Delta U_\vphi-\frac{U_\vphi}{r^2\sin^2\theta}+\frac{2}{r^2\sin\theta}\pt_\vphi U_r+\frac{2\cos\theta}{r^2\sin^2\theta}\pt_\vphi U_\theta)\right.\\
        	\left.\qd\qd\qd\qd\qd\qd\qd\qd-\frac{ {u_0^+}^2}{\rho_0^+({c_0^+}^2-{u_0^+}^2)} \frac{1}{r^2}\pt_r (r^2\pt_r U_\vphi) -\frac{\pt_r \rho_0^+}{{\rho_0^+}^2}\pt_r (r U_\vphi)\right)=- \mc F(r,\theta)
        	\end{cases}\;\tx{in $\n_f^{+,*}$},\\
        	\label{expsystembc}
        	&(U_r,U_\theta,U_\vphi)=
        	(0,0,0) \qd\tx{on}\qd \Gam_f^*,\Gam_w^{+,*},\Gam_{ex}^*,
        	\end{align}
        	where $\mc F={\bm F}^\sharp\cdot \be_{\vphi}$ and 
        	$\Delta U_k=\frac{1}{r^2}\pt_r(r^2\pt_r U_k)+\frac{1}{r^2\sin\theta}\pt_\theta(\sin\theta\pt_\theta U_k)+\frac{1}{r^2\sin^2\theta}\pt_\vphi^2 U_k$
        	for $k=r,\theta,\vphi$.
        	
        	Define 
        	$$U_k^n:=\frac{1}{2^n}\sum_{k=0}^{2^n-1}U_k(r,\theta,\vphi+\frac{2\pi k}{2^n})$$
        	for $k=r,\theta,\vphi$. 
        	Then by the definition of $U_k^n$ for $k=r,\theta,\vphi$ and \eqref{Usphenorm},
        	\begin{align}\label{Ure}
        	||U_r^n||_{2,\alpha,\n_{f}^{+,*}}^{(-1-\alpha,\{\theta=\theta_1\})},\; ||U_\theta^n||_{2,\alpha,\n_{f}^{+,*}}^{(-1-\alpha,\{\theta=\theta_1\})},\; ||U_\vphi^n||_{2,\alpha,\n_{f}^{+,*}}^{(-1-\alpha,\{\theta=\theta_1\})}\le CC^*.
        	\end{align}
            By \eqref{Ure} and  $C^{2,\alpha}_{(-1-\alpha,\{\theta=\theta_1\})}(\n_f^{+,*})\Subset C^{2,\alpha}_{(-1-\frac{\alpha}{2},\{\theta=\theta_1\})}(\n_f^{+,*})$, 
            there exists a subsequence $(U_r^{n_k},$ $U_\theta^{n_k},U_\vphi^{n_k})$ of $(U_r^{n},U_\theta^{n},U_\vphi^{n})$ such that  $(U_r^{n_k},U_\theta^{n_k},U_\vphi^{n_k})$ converges in $C^{2,\alpha}_{(-1-\frac{\alpha}{2},\{\theta=\theta_1\})}(\n_f^{+,*})$ as $n_k\ra \infty$. 
            Denote its limit 
            by $(U_r^*,U_\theta^*,U_\vphi^*)$.
            Then 
            $(U_r^*,U_\theta^*,U_\vphi^*)$ is independent of $\vphi$ 
            and $(U_r^*,U_\theta^*,U_\vphi^*)\in (C^{2,\alpha}_{(-1-\alpha,\{\theta=\theta_1\})}(\n_f^{+,*}))^3$. 
           
        	Since the coefficients of \eqref{expsystem}, $\mc F$ and the boundary conditions in \eqref{expsystembc} are independent of $\vphi$, $(U_r^n,U_\theta^n,U_\vphi^n)$ satisfies \eqref{expsystem}, \eqref{expsystembc} for all $n\in \mathbb N\cup \{0\}$. 
        	By this fact and the definition of $(U_r^*,U_\theta^*,U_\vphi^*)$, 
            $(U_r^*,U_\theta^*,U_\vphi^*)$ satisfies \eqref{expsystem}, \eqref{expsystembc}. 
            Since $(U_r^*,U_\theta^*,U_\vphi^*)$ is independent of $\vphi$, \eqref{expsystem} satisfied by $(U_r^*,U_\theta^*,U_\vphi^*)$ is given as 
            \begin{align}\label{expsystemred}
            \begin{cases}
            \left(\frac{{c_0^+}^2}{\rho_0^+({c_0^+}^2-{u_0^+}^2)}(\frac{1}{r^2}\pt_r(r^2\pt_r U_r^*)+\frac{1}{r^2\sin\theta}\pt_\theta(\sin\theta\pt_\theta U_r^*)-\frac{2U_r^*}{r^2}-\frac{2}{r^2\sin\theta}\pt_\theta(U_\theta^*\sin\theta))\right.\\
            \left.\qd\qd\qd\qd\qd\qd\qd\qd-\frac{ {u_0^+}^2}{\rho_0^+({c_0^+}^2-{u_0^+}^2)}\frac{1}{r^2}\pt_r (r^2\pt_r U_r^*)-\frac{\pt_r \rho_0^+}{{\rho_0^+}^2}\frac{\pt_r (r U_r^*)}{r}\right)=0,\\
            \left(\frac{{c_0^+}^2}{\rho_0^+({c_0^+}^2-{u_0^+}^2)}(\frac{1}{r^2}\pt_r(r^2\pt_r U_\theta^*)+\frac{1}{r^2\sin\theta}\pt_\theta(\sin\theta\pt_\theta U_\theta^*)-\frac{U_\theta^*}{r^2\sin^2\theta}+\frac{2}{r^2}\pt_\theta U_r^*)\right.\\
            \left.\qd\qd\qd\qd\qd\qd\qd\qd-\frac{ {u_0^+}^2}{\rho_0^+({c_0^+}^2-{u_0^+}^2)}\frac{1}{r^2}\pt_r (r^2 \pt_r U_\theta^*)-\frac{\pt_r\rho_0^+}{{\rho_0^+}^2}\frac{\pt_r (r U_\theta^*)}{r}\right)=0,\\
            \left( \frac{{c_0^+}^2}{\rho_0^+({c_0^+}^2-{u_0^+}^2)}(\frac{1}{r^2}\pt_r(r^2\pt_r U_\vphi^*)+\frac{1}{r^2\sin\theta}\pt_\theta(\sin\theta\pt_\theta U_\vphi^*)-\frac{U_\vphi^*}{r^2\sin^2\theta})\right.\\
            \left.\qd\qd\qd\qd\qd\qd\qd\qd-\frac{ {u_0^+}^2}{\rho_0^+({c_0^+}^2-{u_0^+}^2)} \frac{1}{r^2}\pt_r (r^2\pt_r U_\vphi^*) -\frac{\pt_r \rho_0^+}{{\rho_0^+}^2}\pt_r (r U_\vphi^*)\right)=- \mc F(r,\theta)
            \end{cases}\; \tx{in $\n_f^{+,*}$}. 
            \end{align}
            Note that the first and second equation of \eqref{expsystemred} is not coupled with the third equation of \eqref{expsystemred}. 
            
            Let $\Psi=U_\vphi^*$. Using the third equation of \eqref{expsystemred} and the facts that 
            $U_\vphi^*\in C^{2,\alpha}_{(-1-\alpha,\{\theta=\theta_1\})}(\n_f^{+,*})$ and  ${\bm F}^\sharp$ is an axisymmetric function in  $C^{\alpha}_{(1-\alpha,\Gam_w^+)}(\n_f^+)$, it can be checked that $\Psi=\pt_{\theta\theta}\Psi=0$ on $\theta=0$. By this fact and $U_\vphi^*\in C^{2,\alpha}_{(-1-\alpha,\{\theta=\theta_1\})}(\n_f^{+,*})$, we have that $\Psi\be_{\vphi}\in C^{2,\alpha}_{(-1-\alpha,\Gam_w^+)}(\n_f^+)$.  
            From \eqref{expsystemred} and \eqref{expsystembc}, one can see that $(0,0,U_\vphi^*)$ is a solution of \eqref{expsystem}, \eqref{expsystembc}. 
            Combining this fact with the fact that
            $\Psi\be_{\vphi}\in C^{2,\alpha}_{(-1-\alpha,\Gam_w^+)}(\n_f^+)$, we have that  $\Psi\be_\vphi$ is a $C^{2,\alpha}_{(-1-\alpha,\Gam_w^+)}(\n_f^+)$ solution of \eqref{veqn}, \eqref{vbdry}.  By Lemma \ref{lemH1}, a $C^{2,\alpha}_{(-1-\alpha,\Gam_w^+)}(\n_f^+)$ solution of \eqref{veqn}, \eqref{vbdry} is unique. 
            Therefore, $\bU=\Psi\be_{\vphi}$. This finishes the proof. 
        \end{proof}

        \subsection{Initial value problem of a transport equation with an axisymmetric divergence-free coefficient}\label{subtrans}
        
        The initial value problems of a transport equation 
        in (B$^\p$) 
        are of the form 
        \begin{align}
        \label{transeqn}&\grad\times ((\Phi_0^++\Psi)\be_\vphi)\cdot \grad Q=0\qd\tx{in}\qd \n_f^+,\\
        \label{transbdry} &Q=Q_{en}\qd\tx{on}\qd \Gam_f
        \end{align}
        where $\Psi\be_\vphi:\n_f^+\ra \R^3$ and $Q_{en}:\Gam_f\ra \R$ are axisymmetric functions. %
        Here, \eqref{transeqn} is a transport equation whose coefficient
        is an axisymmetric and divergence-free vector field.  Thus, 
        the stream function of the coefficient vector field of \eqref{transeqn} can be defined (see \eqref{V}). 
        We 
        find a solution of \eqref{transeqn}, \eqref{transbdry} and obtain the 
        regularity and uniqueness of solutions of \eqref{transeqn}, \eqref{transbdry}
        using the stream function of the coefficient vector field of \eqref{transeqn} and the solution expression given by using the stream function in the following lemma. 

         
        \begin{lem}\label{lemTrans}
        	Suppose that $f$ is as in Lemma \ref{lemrholinear}.  Let $\delta_6$ be a positive constant such that 
        	for such $f$, if  
        	$||\grad\times(\Psi\be_\vphi)||_{0,0,\n_f^+}\le \delta_6,$
        	then 
        	\begin{multline}\label{transcondition}
        	\grad\times ((\Phi_0^++\Psi)\be_\vphi)\cdot \be_r>c^*\qdin \ol{\n_f^+}\qd\tx{and}\qd
        	\grad\times((\Phi_0^++\Psi)\be_\vphi)\cdot {\bm\nu}_f >c^*\qdon \ol{\Gam_f}
        	\end{multline}
        	where $c^*$ is a positive constant depending on $\rho_0^+$, $u_0^+$, $r_s$, $r_1$, $\delta_1$ and $\delta_6$, and  ${\bm{\nu}}_f$ is the unit normal vector on $\Gam_f$ pointing toward $\n_f^+$.
        	Suppose  that $\Psi\be_{\vphi}:\n_f^+\ra \R^3$ is an axisymmetric function in $C^{1,\alpha}(\ol{\n_f^+})$ satisfying 
        	\begin{align}
        	\label{Psislip}\Psi=\frac{f(\theta_1)\Psi(f(\theta_1),\theta_1)}{r}\qdon\ol{\Gam_w^+}
        	\end{align}
        	and
        	\begin{align}\label{Phisize}
        	||\grad\times(\Psi\be_\vphi)||_{0,\alpha,\n_f^+}\le \delta_6. 
        	\end{align}
        	Suppose finally that $Q_{en}:\Gam_f\ra \R$ is an axisymmetric function in $C^{1,\alpha}_{(-\alpha,\pt\Gam_f)}(\Gam_f)$.  
        	Then the problem \eqref{transeqn}, \eqref{transbdry}
        	has a unique axisymmetric $C^0(\ol{\n_f^+})\cap C^1(\n_f^+)$ solution 
        	\begin{align}
        	\label{Qsol}Q=Q_{en}(\mc L) \qdin \n_f^+
        	\end{align}
        	where $\mc L=k^{-1}\circ V$ with $V=2\pi r \sin\theta (\Phi_0^++\Psi)$ and
        	$k(\theta)=V(f(\theta),\theta)$. Furthermore, this solution $Q$ satisfies
        	\begin{align*}
        	||Q||_{1,\alpha,\n_f^+}^{(-\alpha,\Gam_w^+)}\le C ||Q_{en}||_{1,\alpha,\Gam_f}^{(-\alpha,\pt \Gam_f)}
        	\end{align*}
        	where $C$ is a positive constant depending on $\rho_0^+$, $u_0^+$, $r_s$, $r_1$, $\theta_1$,  $\alpha$, $\delta_1$ and $\delta_6$.  
        \end{lem}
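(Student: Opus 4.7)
The approach is to exploit the stream-function structure of the equation. Since the coefficient vector field is axisymmetric and divergence-free, by \eqref{PhiV} it coincides with $\ngrad V$ for the Stokes stream function $V := 2\pi r\sin\theta\,(\Phi_0^++\Psi)$ of $(\Phi_0^++\Psi)\be_\vphi$, and the identity $\ngrad V\cdot\grad V\equiv 0$ shows that any function of $V$ automatically satisfies \eqref{transeqn}. The whole problem thus reduces to selecting the correct function of $V$ to match the boundary data on $\Gam_f$.

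First I would verify that $k(\theta):=V(f(\theta),\theta)$ is a $C^{1,\alpha}$-diffeomorphism of $[0,\theta_1]$ onto its image. Using the explicit form of $\ngrad V$ and ${\bm\nu}_f=(\be_r-(f'/f)\be_\theta)/\sqrt{1+(f'/f)^2}$, a direct computation gives
\begin{align*}
k'(\theta)=2\pi f(\theta)^2\sin\theta\sqrt{1+(f'/f)^2}\,\bigl(\grad\times((\Phi_0^++\Psi)\be_\vphi)\cdot{\bm\nu}_f\bigr)(f(\theta),\theta),
\end{align*}
so the second part of \eqref{transcondition} forces $k'>0$ on $(0,\theta_1)$. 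With $\mc L:=k^{-1}\circ V$, the first (radial) part of \eqref{transcondition} ensures that the backward integral curve of the coefficient field from any point of $\n_f^+$ reaches $\Gam_f$ without crossing $\Gam_w^+$ or the axis (where $V$ is constant by \eqref{Psislip} and by $\sin\theta=0$, respectively), so $V(\n_f^+)\subset k([0,\theta_1])$ and $\mc L$ is well defined. Setting $Q:=Q_{en}(\mc L)$ then gives \eqref{transeqn} from $\ngrad V\cdot\grad V=0$ and \eqref{transbdry} from $\mc L|_{\Gam_f}=\theta$, establishing existence via \eqref{Qsol}; uniqueness follows from the same characteristic argument, since any $C^1$ solution of \eqref{transeqn} is constant along those curves and hence determined by its values on $\Gam_f$.

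For the weighted H\"older estimate, $V\in C^{1,\alpha}(\ol{\n_f^+})$ by \eqref{Phisize} and the inverse function theorem yields $k^{-1}\in C^{1,\alpha}$ on its image because $k'$ is bounded away from zero. The main obstacle I anticipate is matching the singularities under composition near $\Gam_w^+$ and near the axis. Near $\Gam_w^+$, $Q_{en}'(\mc L)$ blows up like $(\theta_1-\mc L)^{\alpha-1}$ and this must be matched against $\mathrm{dist}(\cdot,\Gam_w^+)^{\alpha-1}$ in the target norm; the slip condition \eqref{Psislip} makes $V$ constant on $\Gam_w^+$, after which the transversality in \eqref{transcondition} produces a two-sided comparison $c\,\mathrm{dist}(x,\Gam_w^+)\le\theta_1-\mc L(x)\le C\,\mathrm{dist}(x,\Gam_w^+)$ via Lipschitz bounds on $V$ and $k^{-1}$. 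Near the axis the factor $\sin\theta$ appearing in both $V$ and $k$ furnishes matching linear vanishing, turning $\mc L$ into a $C^{1,\alpha}$ function vanishing linearly in $\theta$ and absorbing the singularity of $Q_{en}'$ at $\theta=0$.
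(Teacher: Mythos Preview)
Your approach is the same as the paper's: exploit that $\ngrad V\cdot\grad V=0$, define $\mc L=k^{-1}\circ V$, set $Q=Q_{en}(\mc L)$, and then estimate. The existence/uniqueness part is fine and matches the paper closely.

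The gap is in the H\"older estimate, specifically at the axis. Your assertion that ``$k'$ is bounded away from zero'' is false: your own formula shows $k'(\theta)$ carries a factor $\sin\theta$, so $k'(0)=0$. Consequently $k^{-1}$ is \emph{not} $C^{1,\alpha}$ on the closed image; $(k^{-1})'(s)=1/k'(k^{-1}(s))$ blows up as $s\to0$. The compensation is that $DV$ also vanishes linearly in $\theta$ at the axis, so the product $D\mc L=(k^{-1})'(V)\,DV$ stays bounded. But staying bounded is not the same as being $C^\alpha$: you need the H\"older seminorm of the ratio $\sin\theta/\sin\mc L$ (equivalently $\theta/\mc L$) to be controlled uniformly down to $\theta=0$. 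Your sentence about ``matching linear vanishing'' identifies the phenomenon but does not supply the estimate. This is precisely where the paper invests the most work: a separate Claim proving
\[
\Bigl\|\tfrac{\theta}{\mc L}\Bigr\|_{0,\alpha,\n_f^+}\le C,
\]
argued by writing $V(f(\mc L),\mc L)=V(r,\theta)$ as an integral identity, exploiting the lower bound $c^*$ from \eqref{transcondition} together with the $C^\alpha$ control on $\grad\times(\Phi\be_\vphi)$, and extracting a H\"older modulus for $\theta/\mc L$ by a careful comparison of nearby points. Without this step your derivative bound $\|D\mc L\|_{0,\alpha}$ is unproved and the chain rule for $DQ=Q_{en}'(\mc L)\,D\mc L$ does not close.

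A minor slip: $Q_{en}\in C^{1,\alpha}_{(-\alpha,\pt\Gam_f)}(\Gam_f)$ carries its weight only at $\theta=\theta_1$, so $Q_{en}'$ has no singularity at $\theta=0$; the axis issue is entirely about $D\mc L$, not about $Q_{en}'$.
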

        \begin{remark}
    	The existence of $\delta_6$ is obtained from the facts that $\grad\times (\Phi_0^+\be_\vphi)\cdot \be_r =\rho_0^+u_0^+>0$ in $\ol{\n_{r_s-\delta_1}^+}$ and ${\bm \nu}_f\cdot \be_r>0$ on $\ol{\Gam_f}$ for $f$ given in Lemma \ref{lemrholinear}. 
        \end{remark}
        \begin{remark}
        	If $\Psi\be_\vphi$ satisfies \eqref{transcondition}, then $V=2\pi r \sin\theta(\Phi_0^++\Psi)$ satisfies $$\pt_\theta V>0\qdin\n_f^+\setminus\{\theta=0\}\qdand\pt_\theta (V(f(\theta),\theta))>0\qd\tx{for}\qd\theta\in (0,\theta_1).$$
        	This condition will be used to construct the stream surfaces of $\grad\times((\Phi_0^++\Psi)\be_{\vphi})$ in $\n_f^+$ in the proof of Lemma \ref{lemTrans}. 
        \end{remark}
        
        \begin{proof}[Proof of Lemma \ref{lemTrans}]
        	1. Construct the stream surfaces of $\grad\times((\Phi_0^++\Psi)\be_{\vphi})$ in $\ol{\n_f^+}$. 
        	
        	Let us define $V:=2\pi r \sin\theta(\Phi_0^++\Psi)$.  By the fact that $\Psi\be_{\vphi}$ and $\Phi_0^+\be_{\vphi}$ are axisymmetric functions in $C^{1,\alpha}(\ol{\n_f^+})$, 
        	$V$ is an axisymmetric function in $C^{1,\alpha}(\ol{\n_f^+})$ (see Lemma \ref{lemaxi}).  
        	By \eqref{transcondition}, $\pt_\theta V>0$ in $\n_f^+\setminus\{x=y=0\}$.
        	Using these facts, 
        	we apply the implicit function theorem to $V$. Then we obtain that for any $(r^\sharp,\theta^\sharp,\vphi^\sharp)\in \n_f^{+,*}$, there exists a unique $C^1$ surface $\theta=h_{(r^\sharp,\theta^\sharp,\vphi^\sharp)}(r,\vphi)$ defined near $(r^\sharp,\vphi^\sharp)$ such that $\theta^\sharp=h_{(r^\sharp,\theta^\sharp,\vphi^\sharp)}(r^\sharp,\vphi^\sharp)$ and $V(r,h_{(r^\sharp,\theta^\sharp,\vphi^\sharp)}(r,\vphi),\vphi)=V(r^\sharp,\theta^\sharp,\vphi^\sharp)$. 
        	Since $V$ is axisymmetric, this surface is axisymmetric. We denote $\theta=h_{(r^\sharp,\theta^\sharp,\vphi^\sharp)}(r,\vphi)$ by $\theta=h_{(r^\sharp,\theta^\sharp)}(r)$.  
        	
        	By  $V_0^+=V_0^+(f(\theta_1),\theta_1)$ on $\ol{\Gam_w^{+}}$ and \eqref{Psislip},  $V=V(f(\theta_1),\theta_1)$ on $\ol{\Gam_w^{+}}$. 
        	By this fact, $V=0$ on $\ol{\n_f^{+}}\cap \{x=y=0\}$ and $\pt_\theta V>0$  in $\n_f^{+}\setminus\{x=y=0\}$, $\theta=h_{(r^\sharp,\theta^\sharp)}(r)$ is defined 
        	until it reaches $\Gam_f$ or $\Gam_{ex}$ 
        	not touching $\ol{\n_f^{+}}\cap \{x=y=0\}$ or $\ol{\Gam_w^{+}}$.
        	Note that by the facts that $\pt_\theta (V(f(\theta),\theta))>0$ for $\theta\in (0,\theta_1)$ and $\pt_\theta V(r_1,\theta)>0$ for $\theta\in (0,\theta_1)$ obtained from \eqref{transcondition}, 
        	the surface $\theta=h_{(r^\sharp,\theta^\sharp)}(r)$ intersects with $\Gam_f$ and $\Gam_{ex}$ once, respectively. 
        	Collect $\theta=h_{(r^\sharp,\theta^\sharp)}(r)$ for all $(r^\sharp,\theta^\sharp)\in (f(\theta^\sharp),r_1)\times (0,\theta_1)$, $\ol{\n_f^{+}}\cap \{x=y=0\}$ and $\ol{\Gam_w^{+}}$. Then we have the entire level surfaces of $V$ in $\ol{\n_f^+}$. By $\pt_\theta V>0$  in $\n_f^{+}\setminus\{x=y=0\}$, the values of $V$ on distinct level surfaces of $V$ in $\ol{\n_f^+}$ are different from each other.

        	2. 
        	Find a solution of \eqref{transeqn}, \eqref{transbdry}. 
        	
        	By \eqref{PhiV}, \eqref{transeqn} can be written as
        	\begin{align}
        	\label{Qform}\ngrad V\cdot \grad Q=0.
        	\end{align}
        	Using this form of \eqref{transeqn}, it can be checked that 1) if $Q$ is in $C^1(\n_f^+)$ and $Q=\tx{constant}$ on
        	any curve on any level surface of $V$ in $\n_f^+$ whose $\vphi$ argument is fixed (in the case when a level surface of $V$ is $\n_f^+\cap \{x=y=0\}$, $Q=\tx{constant}$ on  $\n_f^+\cap \{x=y=0\}$), then $Q$ is a solution of \eqref{transeqn} and that 2) if $Q$ is a $C^1$ solution of \eqref{transeqn}, then $Q=\tx{constant}$ on any curve on any level surface of $V$ in $\n_f^+$ whose $\vphi$ argument is fixed. 
        	Denote the $\theta$-argument of the intersection points of $\Gam_f$ and the level surface of $V$ in $\ol{\n_f^+}$ passing through ${\rm x}\in \ol{\n_f^+}$ by $\mc L({\rm x})$. 
        	Since each level surface of $V$ in $\ol{\n_f^+}$ intersects with $\ol{\Gam_f}$ where the value of $V$ on $\ol{\Gam_f}$ is equal to the value of $V$ on the level surface, $\mc L({\rm x})$ is given by
        	$$\mc L({\rm x}):=k^{-1}\circ V({\rm x})$$ 
        	where $k(\theta)=V(f(\theta),\theta)$.
        	
        	By 2), a $C^0(\ol{\n_f^+})\cap C^1(\n_f^+)$ solution of \eqref{transeqn}, \eqref{transbdry} has the form
        	\begin{align*}
        	Q(r,\theta,\vphi)=\begin{cases}
        	Q_{en}(\mc L(r,\theta),\vphi)\qd\tx{if $(r,\theta)\in [f(\theta),r_1]\times(0,\theta_1]$}\\
        	Q_{en}(\mc L(r,0),0)\qd\tx{if $(r,\theta)\in [f(0),r_1]\times\{0\}$.}
        	\end{cases}
        	\end{align*} 
        	Since $Q_{en}$ is axisymmetric, this can be written as
        	$$Q=Q_{en}(\mc L).$$
        	Let $Q=Q_{en}(\mc L)$. 
        	One can see that $Q$ is a constant  on any level surface of $V$ in $\ol{\n_f^+}$ and satisfies \eqref{transbdry}. 
            Thus, by 1), if $Q\in C^0(\ol{\n_f^+})\cap C^1(\n_f^+)$, then $Q$ is a $C^0(\ol{\n_f^+})\cap C^1(\n_f^+)$ solution of \eqref{transeqn}, \eqref{transbdry}. 
        	
        	3. Estimate $||Q||_{1,\alpha,\n_f^+}^{(-\alpha,\Gam_w^+)}$. 
        	
        	Since $\mc L\in C^0(\ol{\n_f^+})$ and $Q_{en}\in C^0(\ol{\Gam_f})$, $Q$ is in $C^0(\ol{\n_f^+})$. It directly follows from the definition of $Q$ 
        	$$||Q||_{0,0,\n_f^+}=||Q_{en}||_{0,0,\Gam_f}.$$ 
        	Thus, to estimate $||Q||_{1,\alpha,\n_f^+}^{(-\alpha,\Gam_w^+)}$, 
        	it is enough to estimate $||DQ||_{\alpha,\n_f^+}^{(1-\alpha,\Gam_w^+)}$. We estimate $||DQ||_{\alpha,\n_f^+}^{(1-\alpha,\Gam_w^+)}$. 
        	
        	By direct computation, 
        	\begin{align}\nonumber
        	DQ&=Q_{en}^\p(\mc L)D\mc L\\
        	&=Q_{en}^\p(\mc L)\frac{DV}{(\pt_r V(f(\vtheta),\vtheta)f^\p(\vtheta)+
        		\pt_\vtheta V(f(\vtheta),\vtheta))|_{\vtheta=\mc L}}. \nonumber
        	\end{align}
        	To estimate $||DQ||_{\alpha,\n_f^+}^{(1-\alpha,\Gam_w^+)}$, we 
        	estimate $||D\mc L||_{0,\alpha,\n_f^+}$.
        	Write $D\mc L$ as
        	\begin{align*}
        	\frac{\frac{DV}{2\pi  \sin\theta} }{\left.(\ngrad V)(f(\vtheta),\vtheta)\cdot {\bm \nu}_f(\vtheta)f^2(\vtheta)\sqrt{1+\left(\frac{f^\p(\vtheta)}{f(\vtheta)}\right)^2}\right|_{\vtheta=\mc L}}\cdot \frac{\sin\theta}{\sin \mc L}
        	\end{align*}
        	Here, we used the definition of $\ngrad V$ and the spherical coordinate expression of ${\bm \nu}_f$.  
        	Using  \eqref{f} and \eqref{transcondition}, it is easily seen that 
        	\begin{align}
        	\label{trae1}\sup_{\n_f^+}\left|\left.(\ngrad V)(f(\vtheta),\vtheta)\cdot {\bm \nu}_f(\vtheta)f^2(\vtheta)\sqrt{1+\left(\frac{f^\p(\vtheta)}{f(\vtheta)}\right)^2}\right|_{\vtheta=\mc L}\right|>c^{**}
        	\end{align}
        	where $c^{**}$ is a positive constant depending on $c^*$, $r_s$ and $\delta_1$. 
        	Using \eqref{f} and \eqref{Phisize}, it is also easily seen that 
        	\begin{align}
        	\label{trae2}||\frac{DV}{2\pi  \sin\theta}||_{0,\alpha,\n_f^+}\le C
        	\end{align}
        	and
        	\begin{align}
        	\label{trae3}||\left.(\ngrad V)(f(\vtheta),\vtheta)\cdot {\bm \nu}_f(\vtheta)f^2(\vtheta)\sqrt{1+\left(\frac{f^\p(\vtheta)}{f(\vtheta)}\right)^2}\right|_{\vtheta=\mc L}||_{0,\alpha,\n_f^+}\le C||D\mc L||_{0,0,\n_f^+}
        	\end{align} 
        	where $Cs$ are positive constants depending on $\rho_0^+$, $u_0^+$, $r_s$, $\alpha$, $\delta_1$ and $\delta_6$.
        	By these three estimates, one can see that to estimate $||D\mc L||_{0,\alpha,\n_f^+}$, it is enough to estimate $||\frac{\theta}{\mc L}||_{0,\alpha,\n_f^+}$. 
        	To estimate $||D\mc L||_{0,\alpha,\n_f^+}$, we 
        	prove the following claim. 
        	
        	{\bf\textit{Claim.}}
        		\textit{There exists a positive constant $C$ depending on $\rho_0^+$, $u_0^+$, $r_s$, $r_1$, $\theta_1$, $\alpha$, $\delta_1$ and $\delta_6$ such that
        		\begin{align}\label{thetaLalpha}||\frac{\theta}{\mc L}||_{0,\alpha,\n_f^+}\le C.\end{align}}

        	
           
            \begin{proof}[Proof of Claim]
            	To simplify our argument, we assume that $f=r_s$. 
            	
            	First, we estimate 
            	$||\frac{\theta}{\mc L}||_{0,0,\n_{r_s}^+}$. 
            	By the definition of $\mc L$,
            	$$V(r_s, \mc L(r,\theta))=V(r,\theta)$$
            	for all $(r,\theta)\in [r_s,r_1]\times [0,\theta_1]$. 
            	Write this as
            	\begin{align*}
            	\int_0^\theta (\rho_0^+u_0^++\grad\times(\Psi\be_\vphi)\cdot \be_r)(r,\xi) r^2\sin \xi d\xi=\int_0^{\mc L(r,\theta)}(\rho_0^+u_0^++\grad\times(\Psi\be_\vphi)\cdot \be_r)(r_s,\xi)r_s^2\sin \xi  d \xi. 
            	\end{align*}
            	By \eqref{transcondition}, \eqref{Phisize} and the first equation of \eqref{conservsub}, we have from this equation
            	\begin{align*}
            	\int_0^\theta 
            	c^*r^2\sin  \xi d\xi\le\int_0^{\mc L(r,\theta)}(m_0+\delta_6r_s^2)\sin  \xi d \xi. 
            	\end{align*}
            	Using $
            	\frac{\sin\theta_1}{\theta_1}\xi  \le \sin \xi  \le  \xi $ for $\xi \in [0,\theta_1]$, change $\sin \xi  $ in the integrands in the left and right-hand side of the above inequalities to $\frac{\sin\theta_1}{\theta_1}\xi $ and $\xi $, respectively, and then integrate the resultant terms. Then we obtain
            	$$\frac{\sin\theta_1}{\theta_1}c^*r^2\frac{\theta^2}{2}\le (m_0+\delta_6r_s^2)\frac{\mc L^2(r,\theta)}{2}.$$
            	From this inequality, we have
            	\begin{align*}
            	\sqrt{\frac{\sin\theta_1}{\theta_1}\frac{c^*r_s^2}{m_0+\delta_6r_s^2}}\theta\le \mc L(r,\theta).
            	\end{align*}
            	This holds for all $(r,\theta)\in [r_s,r_1]\times [0,\theta_1]$. Hence, \begin{align}
            	\label{c00}||\frac{\theta}{\mc L}||_{0,0,\n_{r_s}^+}\le C
            	\end{align}
            	where $C$ is a positive constant depending on $\rho_0^+$, $u_0^+$, $r_s$, $\theta_1$, $\delta_6$ and $c^*$.  
            	
            	Next, we estimate $[\frac{\theta}{\mc L}]_{0,\alpha,\n_{r_s}^+}$. 
            	By \eqref{c00}, we can obtain an estimate of $[\frac{\theta}{\mc L}]_{0,\alpha,\n_{r_s}^+}$ by obtaining
            	\begin{align}\label{qdif}
            	 \left|\frac{\theta}{\mc L(r,\theta)}-\frac{\theta^\p}{\mc L(r^\p,\theta^\p)}\right|\le C\eps^\alpha
            	\end{align}
            	for all $(r,\theta)$, $(r^\p,\theta^\p)\in [r_s,r_1]\times [0,\theta_1]$ satisfying $\eps\le \eps_0$ for a positive constant $\eps_0$ and a positive constant $C$ where $\eps:=\sqrt{|r^\p-r|^2+|\theta^\p-\theta|^2}$. 
            	We obtain this estimate. 
            	Hereafter, to simplify our notation, we denote $\mc L(r^\p,\theta^\p)$, $\mc L(r,\theta)$ and $\Phi_0^++\Psi$ by $\mc L^\p$, $\mc L$ and $\Phi$, respectively. 
            	
            	By the definition of $\mc L$,
            	$$V(r_s,\mc L^\p)-V(r_s,\mc L)=V(r^\p,\theta^\p)-V(r,\theta).$$
            	Write this as
            	\begin{multline}\label{Ldifference}
                \int_{\mc L}^{\mc L^\p}(\grad\times (\Phi\be_\vphi)\cdot \be_r )(r_s,\xi)r_s^2 \sin \xi   d \xi\\
                =\int_\theta^{\theta^\p}(\grad\times(\Phi\be_\vphi)\cdot \be_r)(r,\xi) r^2\sin \xi   d \xi+\int_r^{r^\p}(\grad\times(\Phi\be_\vphi))(t,\theta^\p)\cdot\be_\theta(\theta^\p) t sin\theta^\p d t.
                \end{multline}
            	Using \eqref{Ldifference}, we find an upper bound of $\mc L^\p$. 
            	Using \eqref{Phisize}, the fact that $\Phi_0^+\be_\vphi\in C^\infty(\ol{\n_{r_s}^+})$ and $\frac{\sin\theta_1}{\theta_1}\xi \le \sin \xi  \le  \xi$ for $\xi\in [0,\theta_1]$, we have for $\mc L^\p \ge \mc L$
            	\begin{align*}
            	&\int_{\mc L}^{\mc L^\p} \left((\grad\times(\Phi\be_\vphi)\cdot \be_r )(r_s,\mc L)-C
            	(\xi-\mc L)^\alpha \right) r_s^2 \frac{\sin\theta_1}{\theta_1 } \xi
            	d \xi
            	\\&\le (\grad\times(\Phi\be_\vphi)\cdot \be_r)(r,\theta) r^2\frac{{\theta^\p}^2-\theta^2}{2}+C
            	r^2\left(\frac{1}{\alpha+1}|\theta^\p-\theta|^{\alpha+1}\theta^\p-\frac{1}{(\alpha+1)(\alpha+2)}|\theta^\p-\theta|^{\alpha+2}\right)\\
            	&+(\grad\times(\Phi\be_\vphi)\cdot \be_\theta)(r,\theta^\p)\sin\theta^\p \frac{ {r^\p}^2-r^2}{2}+C
            	\sin\theta^\p\left(\frac{1}{\alpha+1} |{r^\p}-r|^{\alpha+1}r^\p-\frac{1}{(\alpha+1)(\alpha+2)}|r^\p-r|^{\alpha+2}\right)\\&(=:(u))
            	\end{align*}
            	where the right-hand side is an upper bound of the right-hand side of \eqref{Ldifference} and $C$ is a positive constant depending on $\rho_0^+$, $u_0^+$, $\n_{r_s}^+$, $\alpha$ and $\delta_6$. 
            	From this inequality, we get
            	\begin{multline*}
                (\grad\times(\Phi\be_\vphi)\cdot \be_r )(r_s,\mc L)\frac{\sin\theta_1}{\theta_1} r_s^2 \frac{{\mc L^\p}^2-\mc L^2}{2}\\-C
                r_s^2\frac{\sin\theta_1}{\theta_1}
                \left(\frac{1}{\alpha+1}(\mc L^\p-\mc L)^{\alpha+1}\mc L^\p-\frac{1}{(\alpha+1)(\alpha+2)}(\mc L^\p-\mc L)^{\alpha+2}\right)
            	\le (u).
            	\end{multline*}
            	Using this inequality, we have that for each $(r,\theta)\in [r_s,r_1]\times[0,\theta_1]$, there exists positive constants $\eps_{(r,\theta)}< 1$ and $C_{(r,\theta)}$ such that for any $(r^\p,\theta^\p)$ satisfying $\eps\le\eps_{(r,\theta)}$, 
            	\begin{align*}
            	\mc L^\p\le \theta^\p \left(\frac{\mc L}{\theta}+C_{(r,\theta)}\eps^\alpha\right).
            	\end{align*}
            	Note that when $(r,\theta)\in [r_s,r_1]\times \{0\}$, there exists a positive constant $\eps_{(r,\theta)}<1$ such that for any $(r^\p,\theta^\p)$ satisfying $\eps\le\eps_{(r,\theta)}$,
            	\begin{align*}
            	\mc L^\p\le \theta^\p \left(\sqrt\frac{r^2(\grad\times (\Phi\be_\vphi)\cdot \be_r )(r,0)}{r_s^2(\grad\times (\Phi\be_\vphi)\cdot \be_r )(r_s,0)}+C_{(r,\theta)}\eps^\alpha\right).
            	\end{align*}
            	Let $\ol\eps_0=\inf_{(r,\theta)\in [r_s,r_1]\times [0,\theta_1]}\eps_{(r,\theta)}$ and $C=\sup_{(r,\theta)\in [r_s,r_1]\times [0,\theta_1]}C_{(r,\theta)}$. Then by the above statement, 
            	\begin{align*}
            	\frac{\mc L^\p}{\theta^\p}\le \frac{\mc L}{\theta}+C\eps^\alpha
            	\end{align*}
            	for all $(r,\theta)$, $(r^\p,\theta^\p)$ satisfying $\eps\le \ol\eps_0$. This gives
            	\begin{align*}
            	\frac{\theta}{\mc L}-\frac{\theta^\p}{\mc L^\p}\le C\eps^\alpha
            	\end{align*}
            	for all $(r,\theta)$, $(r^\p,\theta^\p)$ satisfying $\eps\le \ol\eps_0$ for a positive constant $C$. 
            	
            	Similarly, we can obtain 
            	\begin{align*}
            	- C\eps^\alpha\le\frac{\theta}{\mc L}-\frac{\theta^\p}{\mc L^\p}
            	\end{align*}
            	for all $(r,\theta)$, $(r^\p,\theta^\p)$ satisfying $\eps\le \ul\eps_0$ for a positive constant $C$ and a positive constant $\ul\eps_0$. Let $\eps_0=\min(\ol\eps_0,\ul\eps_0)$. 
            	Choose $\eps_{(r,\theta)}$ so that it can depend on $\rho_0^+$, $u_0^+$, $\n_{r_s}^+$, $\alpha$ and $\delta_6$. Then $\ol{\eps}_0$ depends on $\rho_0^+$, $u_0^+$, $\n_{r_s}^+$, $\alpha$ and $\delta_6$. In the same way, we have  $\ul{\eps}_0$ depends on $\rho_0^+$, $u_0^+$, $\n_{r_s}^+$, $\alpha$ and $\delta_6$. Thus, $\eps_0$ depends on $\rho_0^+$, $u_0^+$, $\n_{r_s}^+$, $\alpha$ and $\delta_6$.
            	This finishes the proof of Claim. 
            \end{proof}
        By \eqref{trae1}-\eqref{thetaLalpha},
        we have
        $$||D\mc L||_{0,\alpha,\n_f^+}\le C$$
        for a positive constant $C$. 
        It can be easily shown that there exists a positive constant $C$ such that
        $$|\theta-\theta_1|\le C|\mc L(r,\theta)-\theta_1|$$
        for all $(r,\theta)\in [f(\theta),r_1]\times [0,\theta_1]$. Using these two estimates, estimate $||DQ||_{\alpha,\n_f^+}^{(1-\alpha,\Gam_w^+)}$. Then we obtain
        $$||DQ||_{\alpha,\n_f^+}^{(1-\alpha,\Gam_w^+)}\le C$$
        where $C$ is a positive constant depending on $\rho_0^+$, $u_0^+$, $r_s$, $r_1$, $\alpha$, $\delta_1$ and $\delta_6$.  
        
        4. By the result in Step 3, 
        $Q$ is a $C^0(\ol{\n_f^+})\cap C^1(\n_f^+)$ solution of \eqref{transeqn}, \eqref{transbdry}. 
        By 2), a $C^0(\ol{\n_f^+})\cap C^1(\n_f^+)$ solution of \eqref{transeqn}, \eqref{transbdry} is unique. This finishes the proof. 
        
        \end{proof}
     \subsection{Proof of Proposition \ref{proPseudofree}.}\label{subproofPseudoFree}
     Using the results in \S \ref{subelliptic} and \S \ref{subtrans}, 
     we prove Proposition \ref{proPseudofree}. 
    \begin{proof}[Proof of Proposition \ref{proPseudofree} (Existence)]
    	Suppose that $(\rho_-,\bu_-,p_-,p_{ex},f_s^\p)$ is as in 
    	Problem \ref{pb3} for $\alpha\in (\frac 2 3,1)$ and $\sigma\in (0,\ol\sigma_3]$ where $\ol\sigma_3$ is a positive constant to be determined later.
    	For the same $\alpha$ and $\sigma$ and $M_1>0$ to be determined later, define
    	\begin{align*}
    	\mc P(M_1)&=\{(f(0),\Psi\be_{\vphi})\in \R\times C^{2,\alpha}_{(-1-\alpha,\Gam_{w,r_s+f_s}^+)}(\n_{r_s+f_s}^+)\;|\;\\
    	&\Psi\be_{\vphi}=\frac{f(\theta_1)\Psi(f(\theta_1),\theta_1)}{r}\be_{\vphi}\;\tx{on}\;\Gam_{w,r_s+f_s}^+,
    	\;|f(0)-r_s|+||\Psi\be_{\vphi}||_{2,\alpha,\n_{r_s+f_s}^+}^{(-1-\alpha,\Gam_{w,r_s+f_s}^+)}\le M_1\sigma\}.
    	\end{align*}
    	By the definition of $\mc P(M_1)$, $\mc P(M_1)$ is a compact convex subset of $\R\times C^{2,\frac{\alpha}{2}}_{(-1-\frac{\alpha}{2},\Gam_{w,r_s+f_s}^+)}(\n_{r_s+f_s}^+)$. 
    	We will prove the existence part of Proposition \ref{proPseudofree}
    	by constructing a continuous map of $\mc P(M_1)$ into itself as a map from $\R\times C^{2,\frac{\alpha}{2}}_{(-1-\frac{\alpha}{2},\Gam_{w,r_s+f_s}^+)}(\n_{r_s+f_s}^+)$ to $\R\times C^{2,\frac{\alpha}{2}}_{(-1-\frac{\alpha}{2},\Gam_{w,r_s+f_s}^+)}(\n_{r_s+f_s}^+)$
    	and applying the Schauder fixed point theorem.

    	In this proof, $C$s and $C_i$ for $i=1,2,\ldots$ denote positive constants depending on the whole or a part of the data, $\delta_1$, $\delta_2$, $\delta_4$, $\delta_5$ and $\delta_6$ unless otherwise specified. Each $C$ in different situations differs from each other. 
    	
    	1. For a fixed $(f(0),\Psi\be_\vphi)$, solve  (B$^\p$). 
    	
    	Take $(f(0)^*,\til \Psi^*\be_{\vphi})\in \mc P(M_1)$. By the definition of $f_s$ and the assumption that $\fsp$ satisfies \eqref{fsestimate},  
    	\begin{align}
    	\label{c1}||f_s||_{2,\alpha,\Lambda}^{(-1-\alpha,\pt\Lambda)}\le C_1\sigma.
    	\end{align}
    	Choose $\ol{\sigma}_3=\min(\frac{\delta_1}{2M_1},\frac{\delta_1}{2C_1})
    	(=:\ol{\sigma}_3^{(1)})$ so that
    	\begin{align}\label{f0star}
    	|f(0)^*-r_s|\le \frac{\delta_1}{2}\qdand||f_s||_{2,\alpha,\Lambda}^{(-1-\alpha,\pt\Lambda)} \le \frac{\delta_1}{2}.
    	\end{align}
    	And then, using $\Pi_{f(0)^*+f_sr_s+f_s}$, 
    	extend $\til \Psi^*\be_{\vphi}$ to a function in $\n_{f(0)^*+f_s}^+$:
    	$$\Psi^*\be_{\vphi}:=\frac{W^*}{2\pi r \sin\theta}\be_{\vphi}$$ 
    	where $W^*:=\til W^*(\Pi_{f(0)^*+f_sr_s+f_s})$ with $\til W^*:=2\pi r \sin\theta \til \Psi^*$ and $\Pi_{ab}$ for $0<a,b<r_1$ is a map from $\n_a^+$ to $\n_b^+$ defined in Step 1 in the proof of Lemma \ref{lemalpha}.
    	By \eqref{f0star} and the fact that $||\til \Psi^*\be_{\vphi}||_{2,\alpha,\n_{r_s+f_s}^+}^{(-1-\alpha,\Gam_{w,r_s+f_s}^+)}\le M_1\sigma$,  
    	\begin{align}\label{Psistarestimate}
    	||\grad\times(\Psi^*\be_{\vphi})||_{1,\alpha,\n_{f(0)^*+f_s}^+}^{(-\alpha,\Gam_{w,f(0)^*+f_s}^+)}\le C_2 M_1\sigma.
    	\end{align}
    	Choose $\ol{\sigma}_3= \min (\frac{\delta_6}{C_2M_1},
    	\ol{\sigma}_3^{(1)})(=:\ol{\sigma}_3^{(2)})$ 
    	where $\delta_6$ is a positive constant given in Lemma \ref{lemTrans} so that $\Psi\be_{\vphi}=\Psi^*\be_{\vphi}$ satisfies \eqref{Phisize} for $f=f(0)^*+f_s$. 
        
        For $(f(0),\Psi\be_{\vphi})=(f(0)^*,\Psi^*\be_{\vphi})$, solve (B$^\p$) : solve
    	\begin{align}\label{trans1}
    	&\begin{cases}
    	\grad\times((\Phi_0^++\Psi^*)\be_{\vphi})\cdot \grad A=0\qdin \n_{f(0)^*+f_s}^+,\\ 
    	A=A_{en,f(0)^*+f_s}\qdon \Gam_{f(0)^*+f_s},
    	\end{cases}\\
    	\label{trans2}
    	&\begin{cases}
    	\grad\times((\Phi_0^++\Psi^*)\be_{\vphi})\cdot \grad T=0\qdin \n_{f(0)^*+f_s}^+,\\ 
    	T=T_{en,f(0)^*+f_s}\qdon \Gam_{f(0)^*+f_s}
    	\end{cases}
    	\end{align}	
    	where $A_{en,f(0)^*+f_s}$ and $T_{en,f(0)^*+f_s}$ are $A_{en,f(0)+f_s}$ and $T_{en,f(0)+f_s}$ given in \eqref{Aentdef} and \eqref{Tentdef}, respectively, for $f(0)=f(0)^*$.  
    	By $(\rho_-,\bu_-,p_-,\fsp)\in (C^{2,\alpha}(\ol{\n}))^3\times C^{1,\alpha}_{(-\alpha,\{\theta=\theta_1\}),0}((0,\theta_1))$,
    	$A_{en,f(0)^*+f_s}\in C^{2,\alpha}_{(-1-\alpha,\pt \Gam_{f(0)^*+f_s})}$ $(\Gam_{f(0)^*+f_s})$ and $T_{en,f(0)^*+f_s}\in C^{1,\alpha}_{(-\alpha,\pt\Gam_{f(0)^*+f_s})}(\Gam_{f(0)^*+f_s}).$
    	Take $\ol{\sigma}_3 =\min (\ol{\sigma}_3^{(2)},\delta_5)(=:\ol{\sigma}_3^{(3)})$ where $\delta_5$ is a positive constant given in Lemma \ref{LemTenestimate} so that $(\rho_-,\bu_-,p_-)$ satisfies \eqref{supesti} for $\sigma\le\delta_5$. 
    	Then by Lemma \ref{LemTenestimate}, \eqref{fsestimate} and the fact that $f(0)^*$ satisfies $|f(0)^*-r_s|\le M_1\sigma$, 
    	\begin{align}\label{tenesti}||T_{en,f^*}||_{1,\alpha,\Gam_{f^*}}^{(-\alpha,\pt \Gam_{f^*})}\le CM_1\sigma +C\sigma
    	\end{align}
    	where $f^*:=f^*(0)+f_s$. 
    	Apply Lemma \ref{lemTrans} to \eqref{trans1}, \eqref{trans2}. Then we obtain that each \eqref{trans1} and \eqref{trans2} has a unique axisymmetric $C^0(\ol{\n_f^+})\cap C^1(\n_f^+)$ solution
    	\begin{align*}A^*=A_{en,f(0)^*+f_s}(\mc L^*)\qdand T^*=T_{en,f(0)^*+f_s}(\mc L^*),
    	\end{align*}
    	respectively, where $\mc L^*$ is $\mc L$ defined in Lemma \ref{lemTrans} for $V=2\pi r \sin\theta (\Phi_0^++\Psi^*)$ and $f=f(0)^*+f_s$. Furthermore, we have that  
    	$T^*$ satisfies 
    	\begin{align}\label{Tstarestimate}
    	||T^*||_{1,\alpha,\n_{f^*}^+}^{(-\alpha,\Gam_{w,f^*}^+)}\le CM_1\sigma+C\sigma
    	\end{align}
    	where we used \eqref{tenesti}. 
    	Using the solution expression of $A^*$, estimate $\frac{A^*}{2\pi r \sin\theta}\be_{\vphi}$ in $C^{1,\alpha}(\ol{\n_{f^*}^+})$. 
    	Using \eqref{Aentdef}, 
    	write $\frac{A^*}{2\pi r \sin\theta}\be_{\vphi}$ as
    	\begin{align}\label{Aesti}
    	\frac{2\pi f^*(\mc L^*)\sin (\mc L^*)u_{-,\vphi}(\mc L^*)}{2\pi r \sin\theta}\be_{\vphi}
    	\end{align}
    	where $u_{-,\vphi}=\bu_-\cdot\be_\vphi$. 
    	Using arguments similar to the ones in the proof of Claim in Lemma \ref{lemTrans}, we can obtain 
        $$||\frac{\mc L^*}{\theta}||_{0,\alpha,(f^*(\theta),r_1)\times (0,\theta_1)}\le C.$$ 
        With this estimate, $||\mc L^*||_{1,\alpha,(f^*(\theta),r_1)\times (0,\theta_1)}\le C$, \eqref{supesti} 
        and \eqref{f0star}, we estimate \eqref{Aesti} in $C^{1,\alpha}(\ol{\n_{f^*}^+})$. Then we have
    	\begin{align}\label{Astarestimate}
    	||\frac{A^*}{2\pi r \sin\theta}\be_{\vphi}||_{1,\alpha,\n_{f^*}^+}
    	\le C\sigma.
    	\end{align}

    	2. By substituting an extension of $(\Psi^*\be_{\vphi},A^*,T^*)$ into nonlinear parts of (A$^\p$), 
    	obtain a linear problem having unknowns $(f(0),\Psi\be_{\vphi})$.
    	
    	Let $f(0)^\sharp$ be a point in $[r_s-\frac{\delta_1}{2},r_s+\frac{\delta_1}{2}]$. 
    	By the choice of $\ol{\sigma}_3$, 
    	\begin{align*}
    	||f^\sharp-r_s||_{2,\alpha,\Lambda}^{(-1-\alpha,\pt \Lambda)}
    	\le \delta_1
    	\end{align*}
    	where $f^\sharp:=f(0)^\sharp+f_s$. 
    	Extend $\Psi^*\be_{\vphi}$ $A^*$ and $T^*$ to functions in  $\n_{f^\sharp}^+$:
    	$$\Psi^{\sharp,*}\be_{\vphi}:=\frac{W^{\sharp,*}}{2\pi r \sin\theta}\be_{\vphi},\qd A^{\sharp,*}:=A^*(\Pi_{f^\sharp f^*})\qdand T^{\sharp,*}:=T^*(\Pi_{f^\sharp f^*})$$ where $W^{\sharp,*}=W^*(\Pi_{f^\sharp f^*})$.
    	By \eqref{Psistarestimate}, \eqref{Tstarestimate} and \eqref{Astarestimate},  
    	\begin{align}\label{Psisharpstare}
    	||\grad\times(\Psi^{\sharp,*}\be_{\vphi})||_{1,\alpha,\n_{f^\sharp}^+}^{(-\alpha,\Gam_{w,f^\sharp}^+)}+||\frac{A^{\sharp,*}}{2\pi r \sin\theta}\be_\vphi||_{1,\alpha,\n_{f^\sharp}^+}+||T^{\sharp,*}||_{1,\alpha,\n_{f^\sharp}^+}^{(-\alpha,\Gam_{w,f^\sharp}^+)}\le C_3M_1\sigma+C_4\sigma
    	\end{align}
    	for all $f^\sharp(0)\in [r_s-\frac{\delta_1}{2},r_s+\frac{\delta_1}{2}]$. 
    	Take $\ol\sigma_3= \min (\ol\sigma_3^{(3)},
    	\frac{\delta_4}{C_3M_1+C_4})(=:\ol\sigma_3^{(4)})$
    	where 
    	$\delta_4$ is a positive constant given in 
    	Lemma \ref{F1f1estimate} so that 
    	$\varrho(\grad\times((\Phi_0^++\Psi^{\sharp,*})\be_{\vphi}),\frac{A^{\sharp,*}}{2\pi r \sin\theta}\be_{\vphi},S_0^++T^{\sharp,*})$ is well-defined in $\n_{f^\sharp}^+$ 
    	and $ \varrho(\grad\times((\Phi_0^++\Psi^{\sharp,*})\be_{\vphi}),\frac{A^{\sharp,*}}{2\pi r \sin\theta}\be_{\vphi},S_0^++T^{\sharp,*})$ and $S_0^++T^{\sharp,*}$ are strictly positive in $\n_{f^\sharp}^+$ for all $f^\sharp(0)\in [r_s-\frac{\delta_1}{2},r_s+\frac{\delta_1}{2}]$. 
    	By substituting $f(0)^\sharp$ 
    	and $
    	(\Psi^{\sharp,*}\be_{\vphi},A^{\sharp,*},T^{\sharp,*})$ into the place of $f(0)$ in (A$^\p$) and $(\Psi\be_{\vphi},A,T)$ in ${\bm F}_1(\Psi\be_{\vphi},A,T)$, $\mf f_0(T,p_{ex})$ and $\mf f_1(\Psi\be_{\vphi},A,T)$ in (A$^\p$), 
    	we obtain
    	\begin{multline}\label{Psieqnsubsti}
    	\grad\times\left(\frac{1}{\rho_0^+}(1+\frac{{u_0^+}\be_r\otimes u_0^+\be_r}{{c_0^+}^2-{u_0^+}^2})\grad\times(\Psi\be_{\vphi})\right) \\=\frac{{\rho_0^+}^{\gam-1}}{(\gam-1)u_0^+}(1+\frac{\gam {u_0^+}^2}{{c_0^+}^2-{u_0^+}^2})\frac{\pt_\theta T}{r}\be_{\vphi}+{\bm F}_1(\Psi^{\sharp,*}\be_{\vphi},A^{\sharp,*},T^{\sharp,*})\qdin \n_{f^\sharp}^+,
    	\end{multline}
    	\begin{align}\label{Psiboundarysubsti}
    	\Psi\be_{\vphi}=\begin{cases}
    	(\Phi_--\Phi_0^-)\be_{\vphi}\qd\tx{on}\qd \Gam_{f^\sharp},\\
    	\frac{r_0(\Phi_--\Phi_0^-)(r_0,\theta_1)}{r}\be_{\vphi}\qdon \Gam_{w,f^\sharp}^+:=\Gam_w\cap \{r>f^\sharp\},\\
    	\left(\left.\frac{1}{r_1\sin\theta}\int_0^\theta \left(\mf f_0(T^{\sharp,*},p_{ex})
    	\right.\right.\right.\\
    	\left.\left.\qd\qd-\frac{\rho_0^+((\gam-1){u_0^+}^2+{c_0^+}^2)}{\gam(\gam-1)u_0^+S_0^+}T+\mf f_1(\Psi^{\sharp,*}\be_{\vphi},A^{\sharp,*},T^{\sharp,*}) \right)r_1^2\sin\xi d\xi\right)\be_{\vphi}\qdon \Gam_{ex},
    	\end{cases}
    	\end{align}
    	\begin{multline}\label{Psicompcsubsti} \frac{1}{r_1\sin\theta_1}\int_0^{\theta_1} \bigg(\mf f_0(T^{\sharp,*},p_{ex})
    	\\
    	\qquad\qquad\qd-\frac{\rho_0^+((\gam-1){u_0^+}^2+{c_0^+}^2)}{\gam(\gam-1)u_0^+S_0^+}T+\mf f_1(\Psi^{\sharp,*}\be_{\vphi},A^{\sharp,*},T^{\sharp,*}) \bigg)\bigg|_{r=r_1}r_1^2\sin\xi d\xi\\
    	=\frac{r_0(\Phi_--\Phi_0^-)(r_0,\theta_1)}{r_1}.
    	\end{multline}
        We let $f(0)^\sharp$ be an unknown in this problem 
    	and let $T$ in \eqref{Psieqnsubsti}-\eqref{Psicompcsubsti} be a solution of 
    	\begin{align}\label{transshockposition}
    	\begin{cases}
    	\grad\times( (\Phi_0^++\Psi^{\sharp,*})\be_{\vphi})\cdot \grad T=0\qdin \n_{f^\sharp}^+,
    	\\ T=T_{en,f^\sharp}\qdon \Gam_{f^\sharp}.
    	\end{cases}
    	\end{align}
    	Then since a solution $T$ of \eqref{transshockposition} is uniquely determined by $f(0)^\sharp$ (see Step 3), unknowns of \eqref{Psieqnsubsti}-\eqref{Psicompcsubsti} become $(f^\sharp(0),\Psi\be_\vphi)$. We denote an unknown 
    	$\Psi\be_{\vphi}$ of \eqref{Psieqnsubsti}-\eqref{Psicompcsubsti} by $\Psi^\sharp\be_{\vphi}$. 
    	
    	3.  Find $f(0)^\sharp$ using \eqref{Psicompcsubsti}. 
    	
    	By the definition of $\Psi^{\sharp,*}\be_{\vphi}$ and \eqref{PhiV}, the transport equation in \eqref{transshockposition} can be written as 
    	\begin{align*}
    	\ngrad (V_0^++W^{\sharp,*})\cdot\grad T=0\qdin \n_{f^\sharp}^+.
    	\end{align*}
    	From this form of the transport equation in \eqref{transshockposition}, we see that 
    	the stream surface of the vector field $\grad\times( (\Phi_0^++\Psi^{\sharp,*})\be_{\vphi})$ in $\n_{f^\sharp}^+$ is obtained by  stretching or contracting the 
    	stream surface of the vector field $\grad \times((\Phi_0^++\Psi^*)\be_{\vphi})$ in $\n_{f^*}^+$ in $r$-direction. 
    	Using this fact, we obtain that 
    	the solution of \eqref{transshockposition} is given by
    	$$T=T_{en,f^\sharp}(\mc L^\sharp)$$
    	where $\mc L^\sharp=\mc L^*(\Pi_{f^\sharp f^*})$. We denote this solution 
    	by $T^\sharp$. 
    	
    	By \eqref{Tentdef}, $T^\sharp$ is expressed as 
    	\begin{align*}
    	T^{\sharp}=\left(g\left(\left(\frac{\bu_-\cdot{\bm\nu_{f^\sharp}(\mc L^\sharp)}}{c_-}\right)^2\right)S_-\right)
    	(f(0)^\sharp+f_s(\mc L^\sharp),\mc L^\sharp)-(g({M_0^-}^2))(r_s)S_{in}.
    	\end{align*}
    	Substituting this expression of $T^\sharp$ into the place of $T$ in \eqref{Psicompcsubsti} using the fact that $\Pi_{f^\sharp f^*}(r_1,\theta)=(r_1,\theta)$, 
    	we obtain 
    	\begin{align}\label{PsiTcompc}
    	(L):=&\frac{1}{r_1\sin\theta_1} \int_0^{\theta_1}\left.\frac{\rho_0^+((\gam-1){u_0^+}^2+{c_0^+}^2)}{\gam(\gam-1)u_0^+S_0^+}\right|_{r=r_1}(a)_{1}
    	r_1^2\sin\xi d\xi\\
    	&=-\frac{r_0(\Phi_--\Phi_0^-)(r_0,\theta_1)}{r_1}+\frac{1}{r_1\sin\theta_1}\int_0^{\theta_1} \left(\mf f_0(T^*,p_{ex})\right.\nonumber\\
    	&-\frac{\rho_0^+((\gam-1){u_0^+}^2+{c_0^+}^2)}{\gam(\gam-1)u_0^+S_0^+}\left((a)_{2}+
    	\left.\left.(a)_{3}\right)+\mf f_1(\Psi^{*}\be_{\vphi},A^{*}, T^{*}) \right)\right|_{r=r_1}r_1^2\sin\xi d\xi=:(R)\nonumber
    	\end{align}
    	where 
    	\begin{align*}
    	&(a)_{1}=
    	(g({M_0^-}^2))(f(0)^\sharp+f_s(\mc L^{*}(r_1,\theta)))S_{in}-(g({M_0^-}^2))(r_s+f_s(\mc L^{*}(r_1,\theta)))S_{in},\\
    	&(a)_{2}=\left(g\left(\left(\frac{\bu_-\cdot{\bm\nu_{f^\sharp}(\mc L^{*}(r_1,\theta))}}{c_-}\right)^2\right)S_-\right)(f(0)^\sharp+f_s(\mc L^{*}(r_1,\theta)),\mc L^{*}(r_1,\theta))
    	\\&\qd\qd\qd\qd\qd\qd\qd\qd\qd\qd\qd\qd\qd\qd\qd\qd\qd\qd-(g({M_0^-}^2))(f(0)^\sharp+f_s(\mc L^{*}(r_1,\theta)))S_{in},\\
    	&(a)_{3}=(g({M_0^-}^2))(r_s+f_s(\mc L^{*}(r_1,\theta)))S_{in}-(g({M_0^-}^2))(r_s)S_{in}.
    	\end{align*}
    	We find $f(0)^\sharp$ satisfying \eqref{PsiTcompc}. For this, we  estimate $|(R)|$. 
    	
    	Estimate $|(R)|$:
    	With \eqref{supesti} and \eqref{fsestimate}, we estimate $(a)_2$, $(a)_3$ and $\frac{r_0(\Phi_--\Phi_0^-)(r_0,\theta_1)}{r_1}$. 
    	Then we obtain 
    	\begin{align}
        \nonumber
    	\sup_{\theta\in (0,\theta_1)}|(a)_{2}|
    	&\le C\sigma|f(0)^\sharp-r_s|+C\sigma\\
    		\label{a1esti}&\le C\frac{\delta_1}{2}\sigma+C\sigma
    	\end{align}
    	for all $f^\sharp(0)\in [r_s-\frac{\delta_1}{2},r_s+\frac{\delta_1}{2}]$,
    	\begin{align}
    	\label{a3esti}\sup_{\theta\in (0,\theta_1)}|(a)_{3}|\le C\sigma
    	\end{align}
        and
        \begin{align}
    	\label{phisupesti}|\frac{r_0(\Phi_--\Phi_0^-)(r_0,\theta_1)}{r_1}|\le C\sigma.
    	\end{align}
        With \eqref{pexesti}, we estimate $\mf f_0(T^*,p_{ex})(r_1,\theta)$. Then we have
    	\begin{align}\label{f0esti}
    	\sup_{\theta\in (0,\theta_1)}|\mf f_0(T^*,p_{ex})(r_1,\theta)|\le C \sigma.
    	\end{align} 
    	By Lemma \ref{F1f1estimate} and using \eqref{Psistarestimate}, \eqref{Tstarestimate} and \eqref{Astarestimate}, we estimate $\mf f_1(\Psi^{*}\be_{\vphi},A^{*},T^*)(r_1,\theta)$. Then we have 
    	\begin{align}\label{f1esti}
    	\sup_{\theta\in (0,\theta_1)}|\mf f_1(\Psi^{*}\be_{\vphi},A^{*},T^*)(r_1,\theta)|
    	\le C(M_1+1)^2\sigma^2.
    	\end{align} 
        With \eqref{a1esti}-\eqref{f1esti}, we estimate $|(R)|$. Then we get
    	\begin{align}\label{c0norm}
    	|(R)|\le C_5(M_1+1)^2\sigma^2
    	+C_6\sigma 
    	\end{align}
    	for all $f^\sharp(0)\in [r_s-\frac{\delta_1}{2},r_s+\frac{\delta_1}{2}]$.
    	
    	Then we find $f(0)^\sharp$. By Lemma \ref{Slem}, there exists a positve constant $\lambda$ such that
    	\begin{align*}
    	(L)^\p(f(0)^\sharp)\ge\lambda
    	\end{align*}
    	for all $f^\sharp(0)\in [r_s-\frac{\delta_1}{2},r_s+\frac{\delta_1}{2}]$.
    	By this fact and $(L)(r_s)=0$, 
    	\begin{align}\label{Lleft}
    	(L)(\frac{\delta_1}{2})\ge\frac{\delta_1\lambda}{2}\qdand (L)(-\frac{\delta_1}{2})\le -\frac{\delta_1\lambda}{2}.
    	\end{align}
    	Choose $\ol{\sigma}_3= \min(\ol{\sigma}_3^{(4)},
    	\frac{C_6}{C_5(M_1+1)^2},\frac{\delta_1\lambda}{8C_6})(=:\ol{\sigma}_3^{(5)})$.
    	Then since
    	\begin{align*}
        |(R)|\le \frac{\delta_1\lambda}{4}
    	\end{align*} 
    	for all $f(0)^\sharp\in [r_s-\frac{\delta_1}{2},r_s+\frac{\delta_1}{2}]$
    	and \eqref{Lleft} holds, by the intermediate value theorem, there exists $f(0)^\sharp$ satisfying \eqref{PsiTcompc} in $[r_s-\frac{\delta_1}{2},r_s+\frac{\delta_1}{2}]$.
    	Such $f(0)^\sharp$ is unique because $(L)$ is a monotone function of $f(0)^\sharp$. 
    	Since at $f(0)^\sharp$ where \eqref{PsiTcompc} holds there holds
    	\begin{align*}
    	\lambda |f(0)^\sharp-r_s|\le |(R)(f(0)^\sharp)|,
    	\end{align*}
    	there holds 
    	\begin{align}\label{f0estimate}
    	|f(0)^\sharp-r_s|\le C(M_1+1)^2\sigma^2+C\sigma.
    	\end{align} 
    	
    	
    	4. Find 
    	$\Psi^\sharp\be_{\vphi}$. 
    	
    	Fix $f(0)^\sharp$ obtained in Step 3. By \eqref{f0estimate}, Lemma \ref{LemTenestimate} and Lemma \ref{lemTrans}, $T^{\sharp}$ determined by $f(0)^\sharp$ satisfies
    	\begin{align}\label{Tsharpf}
    	||T^{\sharp}||_{1,\alpha,\n_{f^\sharp}^+}^{(-\alpha,\Gam_{w,f^\sharp}^+)}\le C(M_1+1)^2\sigma^2+C\sigma.
    	\end{align} 
    	Substitute this $T^{\sharp}$ 
    	into the place of $T$ in \eqref{Psieqnsubsti} and \eqref{Psiboundarysubsti}. Then we obtain 
    	\begin{align}\label{Psi1eqn}
    	&\grad\times\left(\frac{1}{\rho_0^+}(1+\frac{{u_0^+}\be_r\otimes u_0^+\be_r}{{c_0^+}^2-{u_0^+}^2})\grad\times(\Psi\be_{\vphi})\right) \\&\qd\qd\qd\qd\nonumber=\frac{{\rho_0^+}^{\gam-1}}{(\gam-1)u_0^+}(1+\frac{\gam {u_0^+}^2}{{c_0^+}^2-{u_0^+}^2})\frac{\pt_\theta T^\sharp}{r}\be_{\vphi}+{\bm F}_1(\Psi^{\sharp,*}\be_{\vphi},A^{\sharp,*},T^{\sharp,*})\qdin \n_{f^\sharp}^+,\\
    	&\Psi\be_{\vphi}=\begin{cases}\label{Psi1bc}
    	(\Phi_--\Phi_0^-)\be_{\vphi}\qd\tx{on}\qd \Gam_{f^\sharp},\\
    	\frac{r_0(\Phi_--\Phi_0^-)(r_0,\theta_1)}{r}\be_{\vphi}\qdon \Gam_{w,f^\sharp}^+,\\
    	\left(\frac{1}{r_1\sin\theta}\int_0^\theta \left(\mf f_0(T^{\sharp,*},p_{ex})\right.\right.\\
    	\qd\qd\left.\left.-\frac{\rho_0^+((\gam-1){u_0^+}^2+{c_0^+}^2)}{\gam(\gam-1)u_0^+S_0^+}T^\sharp+\mf f_1(\Psi^{\sharp,*}\be_{\vphi},A^{\sharp,*},T^{\sharp,*}) \right)r_1^2\sin\xi d\xi\right)\be_{\vphi}\qdon \Gam_{ex}.
    	\end{cases}
    	\end{align}
    	Since $f(0)^\sharp$ is chosen for $T=T^\sharp$ to satisfy \eqref{Psicompcsubsti}, \eqref{Psi1bc} is a continuous boundary condition.
    	We apply Lemma \ref{lemSE} to \eqref{Psi1eqn}, \eqref{Psi1bc} with \eqref{supesti}, \eqref{pexesti}, \eqref{fsestimate}, \eqref{Psisharpstare} and \eqref{Tsharpf}. 
    	Then we obtain that \eqref{Psi1eqn}, \eqref{Psi1bc} has a unique $C^{2,\alpha}_{(-1-\alpha,\Gam_{f^\sharp})}(\n_{f^\sharp}^+)$ solution $\Psi^\sharp 
    	\be_{\vphi}$ 
    	and this solution satisfies the estimate 
    	\begin{align}\label{Psistaresti}
    	||\Psi^\sharp
    	\be_{\vphi}||_{2,\alpha,\n_{f^\sharp}^+}^{(-1-\alpha,\Gam_{w,f^\sharp}^+)}\le C(M_1+1)^2\sigma^2+C\sigma. 
    	\end{align}
    	
    	5. 
    	Using $\Pi_{r_s+f_sf^\sharp}$, transform $\Psi^\sharp\be_{\vphi}$ 
    	into a function in $\n_{r_s+f_s}^+$: 
    	$$\til \Psi^\sharp\be_{\vphi}=\frac{\til W^\sharp}{2\pi r \sin\theta}\be_{\vphi}$$
    	where $\til W^\sharp=W^\sharp(\Pi_{r_s+f_sf^\sharp})$ with $W^\sharp=2\pi r \sin\theta \Psi^\sharp$. By \eqref{f0estimate} and \eqref{Psistaresti}, 
    	\begin{align*}
    	||\til \Psi^\sharp\be_{\vphi}||_{2,\alpha,\n_{r_s+f_s}^+}^{(-1-\alpha,\Gam_{w,r_s+f_s}^+)}\le C(M_1+1)^2\sigma^2+C\sigma.
    	\end{align*} 
    	Combining this with \eqref{f0estimate}, we have
    	\begin{align*}
    	|f(0)^\sharp-r_s|+||\til \Psi^\sharp\be_{\vphi}||_{2,\alpha,\n_{r_s+f_s}^+}^{(-1-\alpha,\Gam_{w,r_s+f_s}^+)}\le C_7(M_1+1)^2\sigma^2+C_8\sigma.
    	\end{align*} 
    	Take $M_1=2C_8$ and $\ol{\sigma}_3= \min(\ol{\sigma}_3^{(5)},\frac{C_8}{C_7(M_1+1)^2})(=:\ol{\sigma}_3^{(6)})$. 
    	And then define a map $\mc J$ from $\R\times C^{2,\frac{\alpha}{2}}_{(-1-\frac{\alpha}{2},\Gam_{w,r_s+f_s}^+)}(\n_{r_s+f_s}^+)$ to $\R\times C^{2,\frac{\alpha}{2}}_{(-1-\frac{\alpha}{2},\Gam_{w,r_s+f_s}^+)}(\n_{r_s+f_s}^+)$ by
    	$$\mc J(f(0)^*,\til \Psi^*\be_{\vphi})=(f(0)^\sharp, \til \Psi^\sharp\be_{\vphi}).$$
    	By the choice of $M_1$ and $\ol\sigma_3$, $\mc J$ is a map  of $\mc P(M_1)$ into itself. Using the standard argument, one can easily check that $\mc J$ is continuous. Thus, the Schauder fixed point theorem can be applied to $\mc J$. 
        We apply the Schauder fixed point theorem to $\mc J$. Then we obtain that there exists a fixed point $(f^\flat(0),\til\Psi^\flat\be_{\vphi})\in \mc P(M_1)$ of $\mc J$.

        One can see that if $(f(0)^*,\til \Psi^*\be_{\vphi})=(f(0)^\sharp, \til \Psi^\sharp\be_{\vphi})$, then $\Psi^*\be_{\vphi} =\Psi^{\sharp,*}\be_{\vphi}=\Psi^\sharp\be_{\vphi}$, 
    	$A^{\sharp,*}=A^*$ and $T^*=T^{\sharp,*}=T^\sharp$. 
    	From this fact, we see that $(f(0),\Phi\be_{\vphi},L,S)=(f(0)^\flat,(\Phi_0^++\Psi^\flat)\be_{\vphi},A^\flat, S_0^++T^\flat)$, where $\Psi^\flat\be_{\vphi}:=\frac{(2\pi r \sin\theta \til\Psi^\flat)(\Pi_{f(0)^\flat+f_s r_s+f_s})}{2\pi r \sin\theta}\be_{\vphi}$ and $A^\flat$ and $T^\flat$ are solutions of \eqref{trans1} and \eqref{trans2} for given $(f(0)^*,\Psi^*)=(f(0)^\flat,\Psi^\flat)$, respectively, 
    	is a solution of (A), (B).  
    	By $(f(0)^\flat, \til \Psi^\flat\be_{\vphi})\in \mc P(M_1)$, \eqref{Tstarestimate} and \eqref{Astarestimate}, 
    	\begin{align}\label{CC}
    	|f(0)^\flat-r_s|+||\grad\times(\Psi^\flat\be_{\vphi})||_{1,\alpha,\n_{f^\flat}^+}^{(-\alpha,\Gam_{w,f^\flat}^+)}+||\frac{A^\flat}{2\pi r \sin\theta}||_{1,\alpha,\n_{f^\flat}^+}
    	+||T^\flat||_{1,\alpha,\n_{f^\flat}^+}^{(-\alpha,\Gam_{w,f^\flat}^+)}\le C\sigma
    	\end{align} 
    	where $f^\flat:=f(0)^\flat+f_s$. 
    	One can easily see that there exists a positive constant $\delta_7$ such that for any $\n_f^+\subset\n_{r_s-\delta_1}^+$, if 
    	$(\grad\times(\Phi\be_{\vphi}),\frac{L}{2\pi r \sin\theta}\be_{\vphi},S,B_0)\in B^{(1)}_{\delta_7,\n_f^+}$ where $B^{(1)}_{\delta,\Om}$ for $\delta>0$ and $\Om\subset \R^3$ is a neighborhood of $(\grad\times(\Phi_0^+\be_\vphi),0,S_0^+,B_0)$ defined in Lemma \ref{lemRho},  then 
    	\eqref{subdef} holds in $\n_f^+$. Let $\delta_7$ be such a constant. 
    	Take $\ol{\sigma}_3=\min(\ol{\sigma}_3^{(6)},\frac{\delta_7}{C})$ where $C$ is $C$ in \eqref{CC}. Then $(f(0)^\flat,(\Phi_0^++\Psi^\flat)\be_{\vphi},A^\flat, S_0^++T^\flat)$ is a subsonic solution of  \eqref{eqV}-\eqref{eqS} in $\n_{f^\flat}^+$. Choose 
    	$\delta_1$, $\delta_2$, $\delta_4$, $\delta_5$, $\delta_6$ and $\delta_7$ so that they can depend on the data. Then 
    	$M_1$ and $\ol{\sigma}_3$ depend on the data. This finishes the proof. 
    \end{proof}
    %
    \begin{proof}[Proof of Proposition \ref{proPseudofree} (Uniqueness)] 
    	Let $\alpha\in (\frac 2 3,1)$. Let $\sigma_3$ be a positive constant  $\le \ol\sigma_3$ and to be determined later.
    	Suppose that 
    	there exist two solutions 
    	$(f_i(0), \Phi_i\be_{\vphi},L_i,S_i)$ for $i=1,2$ 
    	of Problem \ref{pb3} for $\sigma\le \sigma_3$ 
    	satisfying the estimate \eqref{Pseudoestimate}. 
    	
    	Let $(\Psi_i,A_i,T_i):=(\Phi_i-\Phi_0^+,L_i,S_i-S_0^+)$ for $i=1,2$.  
    	We will prove that there exists a positive constant $\ul\sigma_3$ such that if $\sigma_3=\ul\sigma_3$, then
    	$$(f_1(0), \Psi_1\be_{\vphi},A_1,T_1)=(f_2(0),\Psi_2\be_{\vphi}, A_2,T_2)$$ by constructing a contraction map in a low regularity space.

    	In this proof, $C$s denote positive constants depending on the data unless otherwise specified. Each $C$ in different situations differs from each other. 
    	
    	1. By subtracting (A$^\p$) satisfied by $(f_1(0), \Psi_1\be_{\vphi},A_1,T_1)$ from (A$^\p$) satisfied by $(f_2(0), \Psi_2\be_{\vphi},$ $A_2,T_2)$, obtain the equations that will give a contraction map. 
    	
    	Let $f_i:=f_i(0)+f_s$ for $i=1,2$. From  $$(\ngrad a)(\Pi_{f_2f_1})=N\ngrad (a(\Pi_{f_2f_1}))\qdand \grad\times (\frac{a\be_{\vphi}}{2\pi r \sin\theta})=\ngrad a$$
    	for an axisymmetric scalar function $a$, we can obtain 
    	\begin{multline}\label{transform}
    	(\grad\times(\Psi_1\be_{\vphi}))(\Pi_{f_2f_1})=N\grad \times( \til \Psi_1\be_{\vphi})\\
    	\tx{and}\qd
    	\left(\grad\times \left(\frac{A_1}{2\pi r \sin\theta}\be_{\vphi}\right)\right)(\Pi_{f_2f_1})=
    	N\grad\times \left(\frac{\til A_1}{2\pi r \sin\theta}\be_{\vphi}\right),
    	\end{multline}
    	where $\til \Psi_1:=\frac{\til W_1}{2\pi r \sin\theta}$ with $\til W_1:=W_1(\Pi_{f_2f_1})$ and $W_1:=2\pi r \sin\theta \Psi_1$, $\til A_1:=A_1(\Pi_{f_2f_1})$ and
    	$$N=\frac{r^2}{(\Pi_{f_2f_1}^r)^2}\be_r\otimes\be_r-\frac{\left(\pt_{\til \theta}\Pi_{f_1f_2}^{r}\right)(\Pi_{f_2f_1}) }{(\Pi_{f_2f_1}^r)^2}r\be_r\otimes\be_\theta+\frac{\left(\pt_{\til r}\Pi_{f_1f_2}^{r}\right)(\Pi_{f_2f_1}) }{\Pi_{f_2f_1}^r}r\be_\theta\otimes\be_\theta$$
    	with $\Pi_{f_2f_1}^r$ and $\Pi_{f_1f_2}^{r}$, $r$-components of $\Pi_{f_2f_1}^*$ and $\Pi_{f_1f_2}^*$, respectively (see the definition of $\Pi_{ab}^*$ in \eqref{iota}), $(\til r,\til \theta)$, $(r,\theta)$ coordinates for the cartesian coordinate for $\n_{f_1(0)+f_s}^+$, and $(r,\theta)=\Pi_{f_1f_2}^*(\til r,\til \theta)$. 
    	Using $\Pi_{f_2f_1}$ and \eqref{transform}, transform (A$^\p$) satisfied by $(f_1(0), \Psi_1\be_{\vphi},A_1,T_1)$. 
    	And then subtract the resultant equations from (A$^\p$) satisfied by $(f_2(0), \Psi_2\be_{\vphi},A_2,T_2)$. Then we obtain  
    	\begin{multline}\label{uniqueeqn}
    	\grad\times\left(\frac{1}{\rho_0^+}(1+\frac{{u_0^+}\be_r\otimes u_0^+\be_r}{{c_0^+}^2-{u_0^+}^2})\grad\times((\Psi_2-\til \Psi_1)\be_{\vphi})\right)\\ =\frac{{\rho_0^+}^{\gam-1}}{(\gam-1)u_0^+}(1+\frac{\gam {u_0^+}^2}{{c_0^+}^2-{u_0^+}^2})\frac{\pt_\theta (T_2-\til T_1)}{r}\be_{\vphi}+{\bm F}_3\qdin \n_{f_2(0)+f_s}^+
    	\end{multline}
    	\begin{align}\label{uniquebc}
    	(\Psi_2-\til\Psi_1)\be_{\vphi}=
    	\begin{cases}
    	(\Phi_--\Phi_0^-)\be_{\vphi}-\frac{\Pi_{f_2f_1}^r(\Phi_--\Phi_0^-)(\Pi_{f_2f_1})}{r}\be_{\vphi}(=:\ol{{\bm h}}_1)\qdon \Gam_{f_2(0)+f_s},\\
    	{\bm 0}\qdon\Gam_{w,f_2(0)+f_s}^+:=\Gam_w\cap \{r>f_2(0)+f_s\},\\
    	\bigg(\frac{1}{r_1\sin\theta} \int_0^\theta \bigg(\mf f_0(T_2,p_{ex})-\mf f_0(\til T_1,p_{ex})
    	-\frac{\rho_0^+((\gam-1){u_0^+}^2+{c_0^+}^2)}{\gam(\gam-1)u_0^+S_0^+}(T_2-\til T_1)
    	\\+\mf f_1(\Psi_2\be_{\vphi},A_2,T_2) -\mf f_1(\til \Psi_1\be_{\vphi},\til A_1,\til T_1)\bigg)r_1^2\sin\xi d\xi\bigg)\be_{\vphi}(=:\ol{{\bm h}}_2)\qdon \Gam_{ex},
    	\end{cases}
    	\end{align}
    	\begin{multline}\label{uniquecpt}
    	0=\frac{1}{r_1\sin\theta_1} \int_0^{\theta_1} \bigg(\mf f_0(T_2,p_{ex})-\mf f_0(\til T_1,p_{ex})
    	-\frac{\rho_0^+((\gam-1){u_0^+}^2+{c_0^+}^2)}{\gam(\gam-1)u_0^+S_0^+}(T_2-\til T_1)
    	\\
    	\left.+\mf f_1(\Psi_2\be_{\vphi},A_2,T_2) -\mf f_1(\til \Psi_1\be_{\vphi},\til A_1,\til T_1)\bigg)\right|_{r=r_1}r_1^2\sin\xi d\xi
    	\end{multline}
    	where 
    	\begin{align*}
    	{\bm F}_3=
    	&-\grad\times\left(\frac{1}{\rho_0^+}(1+\frac{{u_0^+}\be_r\otimes u_0^+\be_r}{{c_0^+}^2-{u_0^+}^2})\grad\times (\til \Psi_1\be_{\vphi})\right)\\
    	&+M\grad \times\left(\left(\frac{1}{\rho_0^+}(1+\frac{{u_0^+}\be_r\otimes u_0^+\be_r}{{c_0^+}^2-{u_0^+}^2})\right)(\Pi_{f_2f_1})N\grad\times( \til \Psi_1\be_{\vphi})\right)\\
    	&+\frac{{\rho_0^+}^{\gam-1}}{(\gam-1)u_0^+}(1+\frac{\gam {u_0^+}^2}{{c_0^+}^2-{u_0^+}^2})\frac{1}{r}\pt_\theta \til T_1  \be_{\vphi}\\
    	&-\left(\frac{{\rho_0^+}^{\gam-1}}{(\gam-1)u_0^+}(1+\frac{\gam {u_0^+}^2}{{c_0^+}^2-{u_0^+}^2})\frac{1}{r}\right)(\Pi_{f_2f_1})\left(\frac{\pt\Pi_{f_1f_2}^{r}}{\pt\til\theta}(\Pi_{f_2f_1})\pt_r  + \pt_\theta \right)\til T_1\be_{\vphi}\\
    	&+{\bm F}_1(\Psi_2\be_{\vphi},A_2,T_2)-\til {\bm F}_1(\til \Psi_1\be_{\vphi},\til A_1,\til T_1),
    	\end{align*}
    	$\til T_1=T_1(\Pi_{f_2f_1})$, 
    	$$M=\left(\pt_{\til r}\Pi_{f_1f_2}^{r}\right)(\Pi_{f_2f_1})\be_r\otimes\be_r+\frac{\left(\pt_{\til \theta}\Pi_{f_1f_2}^{r}\right)(\Pi_{f_2f_1})}{\Pi_{f_2f_1}^r}\be_\theta\otimes\be_r+ \frac{r}{\Pi_{f_2f_1}^r}\be_\theta\otimes\be_\theta+
    	\frac{r}{\Pi_{f_2f_1}^r}\be_{\vphi}\otimes\be_{\vphi}$$
    	and 
    	$\til{\bm F}_1$ is ${\bm F}_1$ changed by using the transformation $\Pi_{f_2f_1}$.
    	We will construct a contraction map using \eqref{uniqueeqn}-\eqref{uniquecpt}. For this, we estimate $||T_2-\til T_1||_{0,\beta,\n_{f_2}^+}$ for 
    	$\beta=1-\frac{3}{q}\in (0,\alpha)$ where $q\in (3,\frac{1}{1-\alpha})$ and $||(\frac{A_2}{2\pi r\sin\theta}-\frac{\til A_1}{2\pi r \sin\theta})\be_{\vphi}||_{1,0,\n_{f_2}^+}$.
    	
    	2. Estimate $||T_2-\til T_1||_{0,\beta,\n_{f_2}^+}$. 
    	
    	Since $T_i$ for $i=1,2$ are solutions of 
    	$$\grad \times((\Phi_0^++\Psi_i)\be_{\vphi})\cdot \grad T=0\qdin \n_{f_i(0)+f_s}^+,\qd T=T_{en,f_i(0)+f_s}\qdon \Gam_{f_i(0)+f_s}$$
    	for $i=1,2$, respectively, where $T_{en,f_i(0)+f_s}$ is $T_{en,f(0)+f_s}$ given in \eqref{Tentdef} for $f(0)=f_i(0)$, 
    	by Lemma \ref{lemTrans},
    	\begin{align}\label{Ti}
    	T_i=T_{en,f_i(0)+f_s}(\mc L_i)
    	\end{align}
    	for $i=1,2$ where $\mc L_i$ are $\mc L$ given in Lemma \ref{lemTrans} for $V=V_0^++ W_i(=:V_i)$ and $f=f_i(0)+f_s$ with $W_i:=2\pi r \sin\theta \Psi_i$. 
    	By \eqref{Tentdef}, \eqref{Ti} and the definition of $\til T_1$, $T_2-\til T_1$ can be written as 
    	\begin{align*}
    	\left(g\left(\left(\frac{\bu_-\cdot{\bm\nu}_{f_2}(\mc L_2)}{c_-}\right)^2\right)S_- \right)(f_2(\mc L_2),{\mc L}_2)
    	-\left(g\left(\left(\frac{\bu_-\cdot{\bm \nu}_{f_1}(\til{\mc L}_1)}{c_-}\right)^2\right)S_- \right)(f_1(\til {\mc L}_1),\til {\mc L}_1)
    	\end{align*}
    	where  
    	${\bm \nu}_{f_i}$ for $i=1,2$ are the unit normal vectors on $\Gam_{f_i}$ pointing toward $\n_{f_i }^+$, respectively, and $\til {\mc L}_1:=\mc L_1(\Pi_{f_2f_1})$. 
        This can be decomposed into 
        \begin{multline*}
        \left(g\left(\left(\frac{\bu_-\cdot{\bm\nu}_{f_2}(\mc L_2)}{c_-}\right)^2\right)S_- \right)(f_2(\mc L_2),{\mc L}_2)-\left(g\left(\left(\frac{\bu_-\cdot{\bm\nu}_{f_1}(\til{\mc L}_1)}{c_-}\right)^2\right)S_- \right)(f_2(\mc L_2),{\mc L}_2)\\
        +\left(g\left(\left(\frac{\bu_-\cdot{\bm\nu}_{f_1}(\til{\mc L}_1)}{c_-}\right)^2\right)S_- \right)(f_2(\mc L_2),{\mc L}_2)-\left(g\left(\left(\frac{\bu_-\cdot{\bm \nu}_{f_1}(\til{\mc L}_1)}{c_-}\right)^2\right)S_- \right)(f_1(\til {\mc L}_1),\til {\mc L}_1)\\
        =:(a)+(b).
        \end{multline*}
    	To estimate $T_2-\til T_1$ in $C^\beta(\ol{\n_{f_2}^+})$, we estimate $(a)$ and $(b)$ in $C^\beta(\ol{\n_{f_2}^+})$, respectively. 
    	To obtain an estimate of $(a)$ in $C^\beta(\ol{\n_{f_2}^+})$ and later to obtain an estimate of $||(\frac{A_2}{2\pi r\sin\theta}-\frac{\til A_1}{2\pi r \sin\theta})\be_{\vphi}||_{1,0,\n_{f_2}^+}$, 
    	we prove the following claim. 
    	
    	We take $\sigma_3=\min(\ol{\sigma}_3,\frac{\delta_6}{C})(=:\sigma_3^{(1)})$ where $C$ is $C$ in \eqref{Pseudoestimate} so that $\Psi\be_\vphi=\Psi_i\be_\vphi$ for $i=1,2$ satisfy \eqref{transcondition} for $f=f_i(0)+f_s$.  
    	
    	{\bf\textit{Claim.}} \textit{Let $h:[0,\theta_1]\ra\R$ be a function in $C^{1,\alpha}_{(-\alpha,\{\theta=\theta_1\})}((0,\theta_1))$.
    		There holds
    		\begin{align*}
    		||\int_0^1 h^\p(t\mc L_2+(1-t)\til{\mc L}_1)dt(\mc L_2-\til {\mc L}_1)||_{0,\beta,\n_{f_2}^+}
    		\le C||h||_{1,\beta,(0,\theta_1)}^{(-\alpha,\{\theta=\theta_1\})}||(\Psi_2-\til \Psi_1)\be_{\vphi}||_{1,\beta,\n_{f_2}^+}.
    		\end{align*}}
    		
    		\begin{proof}[Proof of Claim]
    			By the definitions of $\mc L_2$ and $\til{\mc L}_1$,
    			\begin{align*}
    			\mc L_2=k_2^{-1}\circ V_2\qdand \til{\mc L}_1=(k_1^{-1}\circ V_1)(\Pi_{f_2f_1})
    			\end{align*}
    			where $k_i(\theta)=V_i(f_i(\theta),\theta)$ for $i=1,2$. 
    			By $V_0^+(\Pi_{f_2f_1})=V_0^+$ and $\til W_1(f_2(\theta),\theta)=W_1(f_1(\theta),\theta)$ where $\til W_1$ is defined below \eqref{transform},  
    			$\til{\mc L}_1$ can be written as
    			$$\til{\mc L}_1=\til k_1^{-1}\circ \til V_1$$
    			where $\til V_1=V_0^++\til W_1$ and $\til k_1=\til V_1(f_2(\theta),\theta)$. Since $\til V_1(r,\theta)\in [0,V_-(r_0,\theta_1)]$ for $(r,\theta)\in [f_2(\theta),r_1]\times[0,\theta_1]$ and $V_2(f_2(\theta),\theta)\in [0,V_-(r_0,\theta_1)]$ for $\theta\in [0,\theta_1]$ where $V_-=2\pi r \sin\theta \Phi_-$, $k_2^{-1}\circ\til V_1$ is well-defined in $\ol{\n_{f_2}^+}$. With this fact, we write $\mc L_2-\til{\mc L}_1$ 
    			as
    			\begin{align*}
    			\mc L_2-\til{\mc L}_1&=(k_2^{-1}\circ V_2-k_2^{-1}\circ\til V_1)+(k_2^{-1}\circ\til V_1-\til k_1^{-1}\circ \til V_1)\\
    			&\nonumber=\int_0^1\frac{1}{k_2^\p(k_2^{-1}\circ (tV_2+(1-t)\til V_1))}dt (V_2-\til V_1)\\
    			&\nonumber\qd\qd\qd\qd+\int_0^1 \frac{1}{\til k_1^\p(\til k_1^{-1}\circ (t \til k_1(\vartheta)+(1-t)k_2(\vartheta)))} dt
    			(\til k_1(\vartheta)-k_2(\vartheta))
    			\end{align*}
    			where $\vartheta=k_2^{-1}\circ \til V_1$ and we used the fact that $k_2^{-1}\circ\til V_1-\til k_1^{-1}\circ \til V_1=\til k_1^{-1}\circ \til k_1(\vartheta)-\til k_1^{-1}\circ k_2(\vartheta)$. 
    			Substitute the above expression of $\mc L_2-\til{\mc L}_1$ 
    			into the place of $\mc L_2-\til{\mc L}_1$ in $\int_0^1 h^{\p}(t\mc L_2+(1-t)\til{\mc L}_1)dt(\mc L_2-\til{\mc L}_1)$.
    		    Then we have 
    			\begin{align}\nonumber
    			&\int_0^1 h^{\p}(t\mc L_2+(1-t)\til{\mc L}_1)dt(\mc L_2-\til{\mc L}_1)\\
    			&=\int_0^1 h^{\p}(t\mc L_2+(1-t)\til{\mc L}_1)dt\int_0^1\frac{1}{k_2^\p(k_2^{-1}\circ (tV_2+(1-t)\til V_1))}dt (V_2-\til V_1)\nonumber\\
    			&+\int_0^1 h^{\p}(t\mc L_2+(1-t)\til{\mc L}_1)dt
    			\int_0^1\frac{1}{\til k_1^\p(\til k_1^{-1}\circ (t \til k_1(\vartheta)+(1-t)k_2(\vartheta)))}dt (\til k_1(\vartheta)-k_2(\vartheta))\nonumber\\
    			&=:(c)+(d).\nonumber
    			\end{align}
    			To estimate $||\int_0^1 h^\p(t\mc L_2+(1-t)\til{\mc L}_1)dt(\mc L_2-\til {\mc L}_1)||_{0,\beta,\n_{f_2}^+}$,
    			we estimate $(c)$ and $(d)$ in $C^\beta(\ol{\n_{f_2}^+})$, respectively. 
    			
    			Estimate $(c)$ in $C^\beta(\ol{\n_{f_2}^+})$: 
    			With the definitions of $k_2$, 
    			$\Psi_2$ and $\til\Psi_1$, write $(c)$ as
    			\begin{multline}\label{e1}
    			(c)=\int_0^1 (\theta-\theta_1) h^{\p}(t\mc L_2+(1-t)\til{\mc L}_1)dt\\
    			\int_0^1\frac{r \sin\theta}{J\sin(k_2^{-1}\circ (tV_2+(1-t)\til V_1))}dt\int_0^1 \pt_{\theta}(\Psi_2-\til \Psi_1)(r,t\theta+(1-t)\theta_1) dt
    			\end{multline}
    			where $$
    			J:=\left.\left( f_2^2\sqrt{1+\left(\frac{f_2^\p}{f_2}\right)^2}\right)(\cdot)(\grad\times((\Phi_0^++\Psi_2)\be_{\vphi}))(f(\cdot),\cdot)\cdot{\bm \nu}_{f_2}(\cdot)\right|_{\cdot=k_2^{-1}\circ (tV_2+(1-t)\til V_1)}.\nonumber$$
    			By the choice of $\sigma_3$,  $\Psi\be_\vphi=\Psi_i\be_\vphi$ for $i=1,2$ satisfy \eqref{transcondition} for $f=f_i(0)+f_s$. 
    			By this fact and $||f_2-r_s||_{2,\alpha,\Lambda}^{(-1-\alpha,\pt\Lambda)}\le \delta_1$ and the fact that $k_2^{-1}\circ (tV_2+(1-t)\til V_1)$ maps $\ol{\n_{f_2}^+}$ to $[0,\theta_1]$, 
    			\begin{align}\label{j}
    			J>\til c^*\qdin \n_{f_2}^+
    			\end{align}
    			 for all $t\in [0,1]$ for some positive constant $\til c^*$. 
    			Using arguments similar to the ones in the proof of Claim in Lemam \ref{lemTrans}, we can obtain
    			\begin{align}
    			\label{Lthetadist1}
    			||\frac{\theta-\theta_1}{(t\mc L_2+(1-t)\til{\mc L}_1)-\theta_1}||_{0,\beta,(f_2(\theta),r_1)\times(0,\theta_1)}\le C
    			\end{align}
    			and
    			\begin{align}\label{k2alphabound}
    			||\frac{\theta}{k_2^{-1}\circ (tV_2+(1-t)\til V_1)}||_{0,\beta,(f_2(\theta),r_1)\times(0,\theta_1)}\le C
    			\end{align}
    			for any $t\in [0,1]$. 
    			With \eqref{j}-\eqref{k2alphabound}, 
    			$||ab||_{0,\beta,\Om}\le ||a||_{0,\beta,\Om}||b||_{0,0,\Om}+ ||a||_{0,0,\Om}||b||_{0,\beta,\Om}$, \eqref{Pseudoestimate} satisfied by $(f_i(0), \Phi_i\be_{\vphi},L_i,S_i)$ for $i=1,2$, 
    			$h^{\p}\in C^\alpha_{(1-\alpha,\{\theta=\theta_1\})}((0,\theta_1))$, 
    			\begin{align}\label{L2L1}
    			||\mc L_2||_{1,0,(f_2(\theta),r_1)\times (0,\theta_1)}\le C,\;  ||\til{\mc L}_1||_{1,0,(f_2(\theta),r_1)\times (0,\theta_1)}\le C
    			\end{align}
    			and $||k_2^{-1}\circ (tV_2+(1-t)\til V_1)||_{1,0,(f_2(\theta),r_1)\times (0,\theta_1)}\le C$, 
    			we estimate the right-hand side of \eqref{e1} in $C^\beta(\ol{\n_{f_2}^+})$. Then we obtain
    			\begin{align}\label{e2}
    			||(c)||_{0,\beta,\n_{f_2}^+}\le C||h||_{1,\beta,(0,\theta_1)}^{(-\alpha,\{\theta=\theta_1\})}||(\Psi_2-\til\Psi_1)\be_{\vphi}||_{1,\beta,\n_{f_2}^+}.
    			\end{align}

    			$(d)$ can be estimated in $C^\beta(\ol{\n_{f_2}^+})$ in a similar way. 
    			As we do this, $\til k_1$ and $k_2$ play the role of $V_2$ and $\til V_1$ in the estimate of $(e)$ in $C^\beta(\ol{\n_{f_2}^+})$ and $\vartheta$ is regarded as the argument of $\til k_1$ and $k_2$. 
    			We have
    			\begin{align}\label{f2}
    			||(d)||_{0,\beta,\n_{f_2}^+}\le C||h||_{1,\beta,(0,\theta_1)}^{(-\alpha,\{\theta=\theta_1\})}||(\Psi_2-\til\Psi_1)\be_{\vphi}||_{1,\beta,\n_{f_2}^+}.
    			\end{align}
    			
    			Combining \eqref{e2} and \eqref{f2}, we obtain the desired result. 
    		\end{proof}
        Note that if we change $\mc L_2-\til{\mc L}_1$ in the way that we changed $\mc L_2-\til{\mc L}_1$ in the estimate of $\int_0^1 h^{\p}(t\mc L_2+(1-t)\til{\mc L}_1)dt(\mc L_2-\til{\mc L}_1)$ in the proof of Claim and estimate the resultant terms in $C^\beta(\ol{\n_{f_2}^+})$ without changing
        $V_2-\til V_1$ and $\til k_1(\vartheta)-k_2(\vartheta)$ into $2\pi r \sin\theta(\theta-\theta_1)\int_0^1\pt_{\theta}(\Psi_2-\til \Psi_1)(r,t\theta+(1-t)\theta_1)dt$ and 
        $2\pi f_2(\vtheta)\sin\vtheta(\vartheta-\theta_1)\int_0^1\pt_{\theta}((\til \Psi_1-\Psi_2)(f_2(\theta),\theta))|_{\theta=t\vartheta+(1-t)\theta_1}dt$,
        then we obtain
        \begin{align}
         \label{L2mL1}
         ||\mc L_2-\til {\mc L}_1||_{0,\beta,\n_{f_2}^+}\le C ||(\Psi_2-\til \Psi_1)\be_{\vphi}||_{0,\beta,\n_{f_2}^+}.
        \end{align}

        With the above Claim, we estimate $(a)$ in $C^\beta(\ol{\n_{f_2}^+})$. Write $(a)$ as
    	\begin{multline}\label{(a)exp}
    	\int_0^1 \left(g^\p \left(\left(\frac{\bu_-}{c_-}\cdot (t{\bm\nu}_{f_2}(\mc L_2)+(1-t){\bm\nu}_{f_1}(\til{\mc L}_1))\right)^2\right)\right.\\
    	\left.2\left( \frac{\bu_-}{c_-}\cdot (t{\bm\nu}_{f_2}(\mc L_2)+(1-t){\bm\nu}_{f_1}(\til{\mc L}_1))\right)\frac{\bu_-}{c_-}S_-\right)
    	(f_2(\mc L_2),\mc L_2)dt
    	\cdot ({\bm\nu}_{f_2}(\mc L_2)-{\bm\nu}_{f_1}(\til{\mc L}_1)).
    	\end{multline}
    	By ${\bm\nu}_{f_i}=\frac{\be_r-\frac{f_s^\p}{f_i(0)+f_s}\be_\theta}{\sqrt{1+(\frac{f_s^\p}{f_i(0)+f_s})^2}}$ for $i=1,2$, ${\bm\nu}_{f_2}(\mc L_2)-{\bm\nu}_{f_1}(\til{\mc L}_1)$ can written as
    	\begin{align}\label{t2t1}
    	t_1\be_r+t_2\be_\theta
    	\end{align}
    	where 
    	\begin{multline*}
    	t_1=\int_0^1 \left.\grad_{(f(0),f_s,f_s^\p)}\left(\frac{1}{\sqrt{1+(\frac{f_s^\p}{f(0)+f_s})^2}}\right)\right|_{(f(0),f_s,\fsp)=\chi}dt\\\cdot(f_2(0)-f_1(0),f_s(\mc L_2)-f_s(\til{\mc L}_1),f_s^\p(\mc L_2)-f_s^\p(\til{\mc L}_1))
    	\end{multline*}
    	and
    	\begin{multline*}
    	t_2=\int_0^1 \left.\grad_{(f(0),f_s,f_s^\p)}
    	\left(\frac{-\frac{f_s^\p}{f(0)+f_s}}{\sqrt{1+(\frac{f_s^\p}{f(0)+f_s})^2}}\right)\right|_{(f(0),f_s,\fsp)=\chi}dt\\\cdot(f_2(0)-f_1(0),f_s(\mc L_2)-f_s(\til{\mc L}_1),f_s^\p(\mc L_2)-f_s^\p(\til{\mc L}_1))
    	\end{multline*}
    	with $\chi=(tf_2(0)+(1-t)f_1(0),t f_s(\mc L_2)+(1-t)f_s(\til{\mc L}_1),tf_s^\p(\mc L_2)+(1-t)f_s^\p(\til{\mc L}_1))$. 
    	To estimate \eqref{(a)exp} in $C^\beta(\ol{\n_{f_2}^+})$, we estimate $f_s(\mc L_2)-f_s(\til {\mc L}_1)$ and $f_s^\p(\mc L_2)-f_s^\p(\til{\mc L}_1)$ in $C^\beta(\ol{\n_{f_2}^+})$, respectively. 
        
        Express $f_s(\mc L_2)-f_s(\til {\mc L}_1)$ and $f_s^\p(\mc L_2)-f_s^\p(\til{\mc L}_1)$
    	as 
    	\begin{align}
    	\label{fffm}\int_0^1 f_s^\p(t \mc L_2+(1-t)\mc L_1)dt (\mc L_2-\til {\mc L}_1)
    	\end{align}
    	and
    	\begin{align}\label{fffp}
    	\int_0^1 f_s^{\p\p}(t\mc L_2+(1-t)\til{\mc L}_1)dt(\mc L_2-\til{\mc L}_1),
    	\end{align} 
    	respectively.
    	Since $\fsp \in C^\alpha([0,\theta_1])$, \eqref{fffm} can be estimated in $C^\beta(\ol{\n_{f_2}^+})$ directly. 
        With \eqref{fsestimate}, \eqref{L2L1} and 
        \eqref{L2mL1}, 
        we estimate \eqref{fffm} in $C^\beta(\ol{\n_{f_2}^+})$ directly. Then we obtain
    	\begin{align}\label{fff}
    	||f_s(\mc L_2)-f_s(\til {\mc L}_1)||_{0,\beta,\n_{f_2}^+}\le C \sigma ||(\Psi_2-\til \Psi_1)\be_{\vphi}||_{0,\beta,\n_{f_2}^+}.
    	\end{align}
    	Since $f_s^{\p\p}\in C^{\alpha}_{(1-\alpha,\{\theta=\theta_1\})}((0,\theta_1))$, we cannot estimate \eqref{fffp} in $C^\beta(\ol{\n_{f_2}^+})$  directly. 
    	With Claim and \eqref{fsestimate}, 
    	we estimate \eqref{fffp} in $C^\beta(\ol{\n_{f_2}^+})$. Then we obtain
        \begin{align}\label{ffff}
        ||f_s^\p(\mc L_2)-f_s^\p(\til{\mc L}_1)||_{0,\beta,\n_{f_2}^+}\le C\sigma||(\Psi_2-\til\Psi_1)\be_{\vphi}||_{1,\beta,\n_{f_2}^+}.
        \end{align}
        
        With \eqref{supesti}, \eqref{fsestimate}, \eqref{L2L1}, the expression of ${\bm\nu}_{f_2}(\mc L_2)-{\bm\nu}_{f_1}(\til{\mc L}_1)$ given in \eqref{t2t1}, \eqref{fff} and \eqref{ffff}, we estimate \eqref{(a)exp} in $C^{\beta}(\ol{\n_{f_2}^+})$. Then we have
        \begin{align}\label{(a)}
        ||(a)||_{0,\beta,\n_{f_2}^+}\le C\sigma|f_2(0)-f_1(0)|+C\sigma ||(\Psi_2-\til\Psi_1)\be_{\vphi}||_{1,\beta,\n_{f_2}^+}.
        \end{align}

    	Next, we estimate $(b)$ in $C^\beta(\ol{\n_{f_2}^+})$.
    	Divide $(b)$ into two parts:
    	\begin{align*}
    	&\left(g\left(\left(\frac{\bu_-\cdot{\bm\nu}_{f_1}(\til{\mc L}_1)}{c_-}\right)^2\right)S_- \right)(f_2(\mc L_2),{\mc L}_2)-\left(g\left(\left(\frac{\bu_-\cdot{\bm\nu}_{f_1}(\til{\mc L}_1)}{c_-}\right)^2\right)S_- \right)(f_2(\til{\mc L}_1),\til{\mc L}_1)\\&=:(b)_1
    	\end{align*}
    	and
    	\begin{align*}
    	&\left(g\left(\left(\frac{\bu_-\cdot{\bm\nu}_{f_1}(\til{\mc L}_1)}{c_-}\right)^2\right)S_- \right)(f_2(\til{\mc L}_1),\til{\mc L}_1)-\left(g\left(\left(\frac{\bu_-\cdot{\bm \nu}_{f_1}(\til{\mc L}_1)}{c_-}\right)^2\right)S_- \right)(f_1(\til {\mc L}_1),\til {\mc L}_1)\\
    	&=:(b)_2.
    	\end{align*}
    	Write $(b)_1$ and $(b)_2$ as
    	\begin{align*}
    	\int_0^1\pt_\theta\left. \left(g\left(\left(\frac{\bu_-\cdot{\bm\nu}_{f_1}(\til{\mc L}_1)}{c_-}\right)^2\right)S_- \right)(f_2(\theta),\theta)\right|_{\theta=t\mc L_2+(1-t)\til{\mc L}_1}dt (\mc L_2-\til {\mc L}_1)
    	\end{align*}
    	and
    	\begin{align*}
    	\int_0^1\pt_r \left.\left(g\left(\left(\frac{\bu_-\cdot{\bm\nu}_{f_1}(\til{\mc L}_1)}{c_-}\right)^2\right)S_- \right)(r,\til{\mc L}_1)\right|_{r=tf_2(\til{\mc L}_1)+(1-t)f_1(\til{\mc L}_1)}dt (f_2 (0)-f_1(0)).
    	\end{align*}
    	Integrands in both the expressions 
    	are in $C^\beta(\ol{\n_{f_2}^+})$. Thus, $(b)_1$ and $(b)_2$ can be estimated in $C^\beta(\ol{\n_{f_2}^+})$ directly. With \eqref{supesti}, \eqref{fsestimate}, \eqref{L2L1} and \eqref{L2mL1}, we estiamte $(b)_1$ and $(b)_2$ in $C^\beta(\ol{\n_{f_2}^+})$, respectively. Then we obtain
    	\begin{align}\label{(b)1}
    	||(b)_1||_{0,\beta,\n_{f_2}^+}\le C\sigma||(\Psi_2-\til \Psi_1)\be_{\vphi}||_{0,\beta,\n_{f_2}^+} 
    	\end{align} 
    	and
    	\begin{align}\label{(b)2}
    	||(b)_2||_{0,\beta,\n_{f_2}^+}\le C|f_2(0)-f_1(0)|.
    	\end{align}
    	Combining \eqref{(b)1} and \eqref{(b)2}, we have
    	\begin{align}\label{(b)}
    	||(b)||_{0,\beta,\n_{f_2}^+}\le C|f_2(0)-f_1(0)|+C\sigma ||(\Psi_2-\til \Psi_1)\be_{\vphi}||_{0,\beta,\n_{f_2}^+}.
    	\end{align}
    	
    	Finally, combine \eqref{(a)} and \eqref{(b)}. Then we have
    	\begin{align}\label{T2mtilT1}
    	||T_2-\til T_1||_{0,\beta,\n_{f_2}^+}\le C|f_2(0)-f_1(0)|+C\sigma ||(\Psi_2-\til \Psi_1)\be_{\vphi}||_{1,\beta,\n_{f_2}^+}.
    	\end{align}
    	
    	3. Estimate $||(\frac{A_2}{2\pi r\sin\theta}-\frac{\til A_1}{2\pi r \sin\theta})\be_{\vphi}||_{1,0,\n_{f_2}^+}$. 
    	
    	Since $A_i$ for $i=1,2$ are solutions of 
    	\begin{align*}
    	\grad \times((\Phi_0^++\Psi_i)\be_{\vphi})\cdot \grad A=0\qdin \n_{f_i(0)+f_s}^+,\qd A=A_{en,f_i(0)+f_s}\qdon \Gam_{f_i(0)+f_s}
    	\end{align*}
    	for $i=1,2$, respectively, where $A_{en,f_i}$ is $A_{en,f(0)+f_s}$ given in \eqref{Aentdef} for $f(0)=f_i(0)$, 
    	by Lemma \ref{lemTrans}, 
    	\begin{align*}
    	A_i=A_{en,f_i}(\mc L_i)
    	\end{align*}
    	for $i=1,2$. 
    	With these solution expressions, express $(\frac{A_2}{2\pi r\sin\theta}-\frac{\til A_1}{2\pi r \sin\theta})\be_{\vphi}$ as
    	\begin{align}\label{A2tilA1}
    	\frac{f_2(\mc L_2)\sin(\mc L_2)u_{-,\vphi}(f_2(\mc L_2),\mc L_2)-f_1(\til{\mc L}_1)\sin(\til{\mc L}_1)u_{-,\vphi}(f_1(\til{\mc L}_1),\til{\mc L}_1)}{r\sin\theta}\be_{\vphi}
    	\end{align}
    	where $u_{-,\vphi}=\bu_-\cdot \be_{\vphi}$. 
    	With \eqref{supesti}, \eqref{fsestimate}, $||\frac{\mc L_2}{\theta}||_{0,\beta,\n_{f_2}^+}\le C$ and $||\frac{\til{\mc L}_1}{\theta}||_{0,\beta,\n_{f_2}^+}\le C$ obtained using arguments similar to the ones in the proof of Claim in Lemma \ref{lemTrans},
    	$||\mc L_2||_{1,\beta,(f_2(\theta),r_1)\times(0,\theta_1)}\le C$, $||\til{ \mc L}_1||_{1,\beta,(f_2(\theta),r_1)\times(0,\theta_1)}\le C$ and Claim, we estimate \eqref{A2tilA1} in $C^\beta(\ol{\n_{f_2}^+})$. Then we obtain
    	\begin{multline}\label{A2mtilA1}
    	||(\frac{A_2}{2\pi r\sin\theta}-\frac{\til A_1}{2\pi r \sin\theta})\be_{\vphi}||_{1,0,\n_{f_2}^+}\le C\sigma|f_2(0)-f_1(0)|+C\sigma||(\Psi_2-\til\Psi_1)\be_{\vphi}||_{1,\beta,\n_{f_2}^+}.
    	\end{multline}	
    	
    	4. Estimate $|f_2(0)-f_1(0)|$.
    	
    	
    	Substitute $T_2-\til T_1=(a)+(b)_1+(b)_2$ 
    	into \eqref{uniquecpt}. Then we have
    	\begin{align}\label{substiuniquecpt}
    	&\frac{1}{r_1\sin\theta_1}\int_0^{\theta_1}\left.\frac{\rho_0^+((\gam-1){u_0^+}^2+{c_0^+}^2)}{\gam(\gam-1)u_0^+S_0^+}(b)_2 
    	\right|_{r=r_1}r_1^2\sin\xi d\xi\\
    	&\qd\qd\nonumber=\frac{1}{r_1\sin\theta_1}\int_0^{\theta_1}\left(\mf f_0(T_2,p_{ex})-\mf f_0(\til T_1,p_{ex})\right.
    	-\frac{\rho_0^+((\gam-1){u_0^+}^2+{c_0^+}^2)}{\gam(\gam-1)u_0^+S_0^+}
    	\left((a)+(b)_1\right)\\
    	&\qd\qd\qd\nonumber\left.\left.+\mf f_1(\Psi_2\be_{\vphi},A_2,T_2) -\mf f_1(\til \Psi_1\be_{\vphi},\til A_1,\til T_1)\right)\right|_{r=r_1}r_1^2\sin\xi d\xi.
    	\end{align}
    	Using \eqref{Pseudoestimate} satisfied by $(f_i(0), \Phi_i\be_{\vphi},L_i,S_i)$ for $i=1,2$, we estimate $\mf f_1(\Psi_2\be_{\vphi},A_2,T_2) -\mf f_1(\til \Psi_1\be_{\vphi},\til A_1,\til T_1)$ in $C^\beta(\ol{\Gam_{ex}})$. Then we obtain
    	\begin{multline*}
    	||\mf f_1(\Psi_2\be_{\vphi},A_2,T_2) -\mf f_1(\til \Psi_1\be_{\vphi},\til A_1,\til T_1)||_{0,\beta,\Gam_{ex}}\\\le C\sigma (||(\Psi_2-\til\Psi_1)\be_{\vphi}||_{1,\beta,\Gam_{ex}}+||(\frac{A_2}{2\pi r\sin\theta}-\frac{\til A_1}{2\pi r \sin\theta})\be_{\vphi}||_{0,\beta,\Gam_{ex}}+||T_2-\til T_1||_{0,\beta,\Gam_{ex}}).
    	\end{multline*}
    	With this estimate, \eqref{supesti}, \eqref{pexesti}, \eqref{fsestimate} and \eqref{Pseudoestimate} satisfied by $(f_i(0), \Phi_i\be_{\vphi},L_i,S_i)$ for $i=1,2$, \eqref{(a)}, \eqref{(b)1}, \eqref{T2mtilT1}, \eqref{A2mtilA1} and the fact that $(g({M_0^-}^2))^\p S_{in}$ is strictly positive in $[r_0,r_1]$ (see Lemma \ref{Slem}), we estimate $f_2(0)-f_1(0)$ in \eqref{substiuniquecpt}. Then we get 
    	\begin{align*}
    	|f_2(0)-f_1(0)|\le C_1\sigma|f_2(0)-f_1(0)|+ C\sigma ||(\Psi_2-\til\Psi_1)\be_{\vphi}||_{1,\beta,\n_{f_2}^+}
    	\end{align*}
    	where $C_1$ is a positive constant depending on the data. 
        Take $\sigma_3=\min (\frac{1}{2C_1},\sigma_3^{(1)})(=:\sigma_3^{(2)})$.
        Then we have
        \begin{align}\label{f20mf10}
        |f_2(0)-f_1(0)|\le  C\sigma ||(\Psi_2-\til\Psi_1)\be_{\vphi}||_{1,\beta,\n_{f_2}^+}.
        \end{align}

    	5. With the fact that $\Psi_2\be_{\vphi}$ and $\til \Psi_1\be_{\vphi}$ are in $C^{2,\alpha}_{(-1-\alpha,\Gam_{w,f_2}^+)}(\n_{f_2}^+)$, transform \eqref{uniqueeqn} into an elliptic system form (see \eqref{Psielliptic}). 
    	And then transform the 
    	resultant equation with \eqref{uniquebc} into the following ${\bm 0}$ boundary value problem
    	\begin{align}\label{uniqueeq1}
    	&\Div \left({\bm A}D((\Psi_2-\til \Psi_1)\be_{\vphi}-\ol{\bm h})\right)-d((\Psi_2-\til \Psi_1)\be_{\vphi}-\ol{\bm h})\\
    	&=\underbrace{-\frac{{\rho_0^+}^{\gam-1}}{(\gam-1)u_0^+}(1+\frac{\gam {u_0^+}^2}{{c_0^+}^2-{u_0^+}^2})\frac{\pt_\theta (T_2-\til T_1)}{r}\be_{\vphi}-{\bm F}_3}_{\ol{\bm F}}
    	-\Div \left({\bm A}D\ol{\bm h}\right)+d\ol{\bm h}
    	\qdin \n_{f_2(0)+f_s}^+,\nonumber\\
    	\label{uniquebc1}&(\Psi_2-\til \Psi_1)\be_{\vphi}-\ol{\bm h}=
    	{\bm 0}\qd\tx{on} \qd \pt \n_{f_2(0)+f_s}^+
    	\end{align}
    	where ${\bm A}$ and $d$ are a $(2,2)$ tensor and scalar function defined below \eqref{vbdry} and
    	$$\ol{\bm h}:=\frac{(r-f(\theta))\frac{r_1}{r} \ol{{\bm h}}_2+(r_1-r)\frac{f(\theta)}{r} \ol{{\bm h}}_1}{r_1-f(\theta)}.$$ 
    	Here, $\ol{\bm F}$ is of the form 
    	\begin{align*}
    	&\sum_i (A^i_2-A^i_1)\pt_r B_2^i\be_{\vphi}+\sum_i A_1^i\pt_r (B_2^i-B_1^i)\be_{\vphi}\\
    	&+\sum_i C_1^i \pt_{\theta}(D_2^i-D_1^i)\be_{\vphi}+(E_2-E_1)\frac{\pt_{\theta}(F_2\sin\theta)}{\sin\theta}\be_{\vphi}+E_1\frac{\pt_{\theta}((F_2-F_1)\sin\theta)}{\sin\theta}\be_{\vphi}
    	\end{align*}
    	where
    	\begin{align*}
    	&A^i_j\in C^{1,\alpha}_{(-\alpha,\Gam_w^+)}(\n_f^+),\; B^i_j\be_\theta\in C^{1,\alpha}_{(-\alpha,\Gam_w^+)}(\n_f^+), \;
    	C^i_j\in C^{1,\alpha}_{(-\alpha,\Gam_w^+)}(\n_f^+),
    	\\& D^i_j\in C^{1,\alpha}_{(-\alpha,\Gam_w^+)}(\n_f^+),\; E_j\be_{\vphi}\in C^{2,\alpha}_{(-1-\alpha,\Gam_w^+)}(\n_f^+)
    	\;\tx{and}\; F_j\be_{\vphi}\in C^{2,\alpha}_{(-1-\alpha,\Gam_w^+)}
    	(\n_f^+)\nonumber
    	\end{align*}
        for $j=1,2$.
    	With arguments similar to the ones in the proof of Lemma \ref{lem1alpha} using \eqref{supesti}, \eqref{pexesti}, \eqref{fsestimate}, \eqref{Pseudoestimate} satisfied by $(f_i(0), \Phi_i\be_{\vphi},L_i,S_i)$ for $i=1,2$, \eqref{f20mf10} and
    	\begin{align}\label{T2mtilT1F}
    	||T_2-\til T_1||_{0,\beta,\n_{f_2}^+}\le C\sigma||(\Psi_2-\til\Psi_1)\be_\vphi||_{1,\beta,\n_{f_2}^+}
    	\end{align}
    	and
    	\begin{align}
    	\label{A2mtilA1F}||(\frac{A_2}{2\pi r\sin\theta}-\frac{\til A_1}{2\pi r \sin\theta})\be_\vphi||_{1,0,\n_{f_2}^+}\le C\sigma||(\Psi_2-\til\Psi_1)\be_\vphi||_{1,\beta,\n_{f_2^+}}
    	\end{align} 
    	obtained from \eqref{T2mtilT1} and \eqref{A2mtilA1} using \eqref{f20mf10}, we estimate $(\Psi_2-\til \Psi_1)\be_\vphi$
    	in \eqref{uniqueeq1}, \eqref{uniquebc1} in $C^{1,\beta}(\ol{\n_{f_2}^+})$ (in this argument, we only change $\int_{\n_{f_2}^+}\sum_i A_1^i\pt_r (B_2^i-B_1^i)\be_{\vphi}{\bm \xi}$, 
    	$\int_{\n_{f_2}^+} \sum_iC_1^i\pt_{\theta}(D_2^i-D_1^i)\be_\vphi{\bm \xi}$ 
    	and $\int_{\n_{f_2}^+}\Div ({\bm A}D{\ol{\bm h}}){\bm \xi}$ 
    	using integration by parts). Then we obtain  	
    	\begin{align*}
    	||(\Psi_2-\til \Psi_1)\be_\vphi||_{1,\beta,\n_{f_2}^+}\le C\sigma ||(\Psi_2-\til \Psi_1)\be_\vphi||_{1,\beta,\n_{f_2}^+}.
    	\end{align*}
    	Take $\sigma_3=\min (\frac{1}{2C},\sigma_3^{(2)})(=:\ul\sigma_3)
    	$. Then we have $\Psi_2\be_\vphi=\til \Psi_1\be_\vphi$. 
    	Using this fact, we obtain from \eqref{f20mf10}, \eqref{T2mtilT1F} and \eqref{A2mtilA1F} $f_2(0)=f_1(0)$, $T_2=\til T_1$ and $A_2=\til A_1$. One can see that $\ul\sigma_3$ depends on the data. This finishes the proof.
    \end{proof}
    
    \section{Determination of a shape of a shock location}\label{secdetermineshock}
    In the previous section, for given $(\rho_-,\bu_-,p_-,p_{ex},\fsp)$ in a small perturbation of $(\rho_0^-,u_0^-\be_r,p_0^-$ $,p_c,0)$, we found  $(f(0),\Phi\be_\vphi,L,S)$ satisfying all the conditions in Problem \ref{pb2} except \eqref{rhtau}. 
    In this section, to finish the proof of Theorem \ref{mainthm}, 
    for given $(\rho_-,\bu_-,p_-,p_{ex})$ in a small perturbation of $(\rho_0^-,u_0^-\be_r,p_0^-,p_c)$ as in the previous section or in a much small perturbation of $(\rho_0^-,u_0^-\be_r,p_0^-,p_c)$ if necessary, we find 
    $\fsp$ in a small perturbation of $0$ as in the previous section such that a solution of Problem \ref{pb3} 
    for given $(\rho_-,\bu_-,p_-,p_{ex},\fsp)=(\rho_0^-,u_0^-\be_r,p_0^-,p_c,\fsp)$ 
    satisfies \eqref{rhtau}. 
    
    \subsection{Proof of Theorem \ref{mainthm} (Existence)}\label{subshockexist}
    For a constant $\sigma>0$, we define
    \begin{align*}
    &\mc B^{(1)}_{\sigma}:=\{(\rho_-,\bu_-,p_-)\in  (C^{2,\alpha}
    (\ol\n))^3\;|\;\\
    &\qd\qd\qd\qd||\rho_- -\rho_0^-||_{2,\alpha,\n}
    +||\bu_- -u_0^-\be_r||_{2,\alpha,\n}+||p_- -p_0^-||_{2,\alpha,\n}\le \sigma\} \\
    &\mc B^{(2)}_{\sigma}:=\{p_{ex}\in C^{1,\alpha}_{(-\alpha,\pt\Gam_{ex})}(\Gam_{ex})\;|\;	||p_{ex}-p_c||_{1,\alpha,\Gam_{ex}}^{(-\alpha,\pt\Gam_{ex})}\le \sigma \},\\
    &\mc B^{(3)}_{\sigma}:=\{h\in C^{1,\alpha}_{(-\alpha,\{\theta=\theta_1\}),0}((0,\theta_1))\;|\; ||h||_{1,\alpha,(0,\theta_1)}^{(-\alpha,\{\theta=\theta_1\})}\le\sigma \},\\
    &\mc B^{(4)}_{\sigma}:=\mc B^{(1)}_{\sigma}\times\mc B^{(2)}_{\sigma}\times \mc B^{(3)}_{\sigma}.
    \end{align*}
    For given $(\rho_-,\bu_-,p_-,p_{ex},\fsp)\in \mc B_{\sigma}^{(4)}$ for $\sigma\le \sigma_3$, 
    let $(f(0),\Phi\be_{\vphi},L,S)$ be a solution of Problem \ref{pb3} satisfying \eqref{Pseudoestimate} given in Proposition \ref{proPseudofree}. 
    We define 
    \begin{multline}\label{mcA}
    \mc A(\rho_-,\bu_-,p_-,p_{ex},\fsp)\\
    :=\left.\left(\frac{1}{\varrho(\grad\times (\Phi\be_\vphi),\frac{L}{2\pi r\sin\theta}\be_\vphi,S)}\grad\times (\Phi\be_\vphi)\cdot {\bm \tau}_{f(0)+f_s}-
    \bu_-\cdot {\bm \tau}_{f(0)+f_s}\right)\right|_{\Gam_{f(0)+f_s}}\\
    \circ \Pi_{r_s f(0)+f_s}
    \end{multline}
    where ${\bm \tau}_{f(0)+f_s}$ is the unit tangent vector on $\Gam_{f(0)+f_s}$ perpendicular to $\be_\vphi$ and satisfying $({\bm \tau}_{f(0)+f_s}\times \be_\vphi)\cdot {\bm \nu}_{f(0)+f_s}>0$ and ${\bm \nu}_{f(0)+f_s}$ is the unit normal vector field on $\Gam_{f(0)+f_s}$. Then $\mc A$ satisfies $\mc A(\rho_0^-,u_0^-\be_r,p_0^-,p_c,0)=0$ and 
    $\mc A$ is a map from $\mc B^{(4)}_{\sigma}$ to $\mc B^{(3)}_{C\sigma}$ where $C$ is a positive constant depending on the data. 
    If for given $(\rho_-,\bu_-,p_-,p_{ex})\in \mc B^{(1)}_\sigma\times \mc B^{(2)}_\sigma$ for $\sigma\le\sigma_3$, we find $\fsp\in \mc B^{(3)}_{C\sigma}$ for $C\sigma\le \sigma_3$ such that $\mc A(\rho_-,\bu_-,p_-,p_{ex},\fsp)=0$, then $(f(0)+f_s,\Phi\be_{\vphi},L,S)$ satisfies all the conditions in Problem \ref{pb2} and thus the existence part of Theorem \ref{mainthm} is proved. 
    We will find such $\fsp$ 
    using the weak implicit function theorem introduced in \cite{MR2824466}.
    To apply the weak implicit function theorem, we need to prove that $\mc A$ is continuous, 
    $\mc A$ is Fr\'echet differentiable at $(\rho_0^-,u_0^-\be_r,p_0^-,p_c,0)(=:\zeta_0)$ and the partial Fr\'echet derivative of $\mc A$ with respect to $\fsp$ at $\zeta_0$ is invertible. We will prove these in the following lemmas. 
    
    We first prove that $\mc A$ is continuous. 
    \begin{lem}\label{lemconti}
    	$\mc A$ is continuous in $\mc B^{(4)}_{\ul{\sigma}_3}$  for a positive constant $\ul{\sigma}_3\le \sigma_3$  in the sense that if $\zeta^{(k)}:=(\rho_-^{(k)},\bu_-^{(k)},p_-^{(k)},p_{ex}^{(k)},(\fsp)^{(k)})\in \mc B^{(4)}_{\ul{\sigma}_3}$ 
    	converges to $\zeta^{(\infty)}:=(\rho_-^{(\infty)},\bu_-^{(\infty)},p_-^{(\infty)},p_{ex}^{(\infty)},(\fsp)^{(\infty)})\in \mc B^{(4)}_{\ul{\sigma}_3}$ in $(C^{2,\frac{\alpha}{2}}(\ol{\n}))^3\times C^{1,\frac{\alpha}{2}}_{(-\frac{\alpha}{2},\pt \Gam_{ex})}(\Gam_{ex})\times C^{1,\frac{\alpha}{2}}_{(-\frac{\alpha}{2},\{\theta=\theta_1\}),0}((0,\theta_1))$, then $\mc A(\zeta^{(k)})$ converges to $\mc A(\zeta^{(\infty)})$ in $C^{1,\frac{\alpha}{2}}_{(-\frac{\alpha}{2},\{\theta=\theta_1\}),0}((0,\theta_1))$.
    \end{lem}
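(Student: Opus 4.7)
The plan is to prove continuity by combining the uniform a priori estimate \eqref{Pseudoestimate} with the uniqueness statement of Proposition \ref{proPseudofree} via a subsequence--uniqueness argument. Given a sequence $\zeta^{(k)}\to \zeta^{(\infty)}$ in the weaker topology inside $\mc B^{(4)}_{\ul\sigma_3}$, let $(f^{(k)}(0),\Phi^{(k)}\be_\vphi,L^{(k)},S^{(k)})$ denote the solution of Problem \ref{pb3} for data $\zeta^{(k)}$ produced by Proposition \ref{proPseudofree}; by \eqref{Pseudoestimate} the estimate holds with a $k$-independent constant. To compare these solutions across the moving domains $\n_{f^{(k)}(0)+f_s^{(k)}}^+$, I would pull each one back to the common reference domain $\n_{r_s}^+$ via the transformation $\Pi_{r_s,\,f^{(k)}(0)+f_s^{(k)}}$ used throughout \S\ref{secPseudoFree}. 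The pulled-back tuples, together with the scalars $f^{(k)}(0)$, are then uniformly bounded in a fixed weighted H\"older space on $\n_{r_s}^+$.

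Next, the compact embedding $C^{m,\alpha}_{(\cdot,\cdot)}\Subset C^{m,\alpha/2}_{(\cdot,\cdot)}$ yields a subsequence (still indexed by $k$) converging in the $\alpha/2$-topology to some limit tuple $(\bar f(0),\bar\Psi\be_\vphi,\bar L,\bar S)$. I would then pass to the limit in (A) and (B), together with the Rankine--Hugoniot and boundary conditions, after pushing everything forward to the limiting domain $\n_{\bar f(0)+f_s^{(\infty)}}^+$; the elliptic system in the equivalent form \eqref{Psielliptic}, the transport equations, and all algebraic traces depend continuously on the data and on the shock surface in the $\alpha/2$-topology because the nonlinearities are smooth in their arguments (Lemma \ref{lemRho}, the explicit formulas in \S\ref{sublinear}). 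Lower semicontinuity of weighted H\"older seminorms places the limit inside the uniqueness class of \eqref{Pseudoestimate}, so by Proposition \ref{proPseudofree} the limit must coincide with $(f^{(\infty)}(0),\Phi^{(\infty)}\be_\vphi,L^{(\infty)},S^{(\infty)})$. Since the identification is independent of the extracted subsequence, the full pulled-back sequence converges.

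Once convergence of the solutions is in hand, continuity of $\mc A$ reduces to checking that the explicit expression \eqref{mcA} is continuous in its arguments under the $\alpha/2$-topology. The density reconstruction $\varrho$ is smooth by Lemma \ref{lemRho}; the normal ${\bm\nu}_{f(0)+f_s}$ and tangent ${\bm\tau}_{f(0)+f_s}$ on $\Gam_{f(0)+f_s}$ are smooth functions of $(f(0),\fsp)$ in the relevant norms; and the final pullback $\circ\,\Pi_{r_s,\,f(0)+f_s}$ depends smoothly on $(f(0),f_s)$. Combined with the $C^{2,\alpha/2}$-convergence of $\bu_-^{(k)}$ these observations yield $\mc A(\zeta^{(k)})\to \mc A(\zeta^{(\infty)})$ in $C^{1,\alpha/2}_{(-\alpha/2,\{\theta=\theta_1\})}((0,\theta_1))$. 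The vanishing of $\mc A(\zeta^{(k)})$ at $\theta=0,\theta_1$ is preserved in the limit because it already holds along the sequence, since axisymmetry forces vanishing at $\theta=0$, while the slip boundary condition \eqref{slipc} together with $\fsp(\theta_1)=0$ forces it at $\theta=\theta_1$; hence the limit lies in the correct $0$-trace subspace.

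The main obstacle I anticipate is the identification of the subsequential limit as a solution of Problem \ref{pb3} for $\zeta^{(\infty)}$ at the reduced $\alpha/2$-regularity. The transport equations in (B$^\prime$) have coefficient field $\grad\times((\Phi_0^++\Psi^{(k)})\be_\vphi)$ that converges only in $C^{\alpha/2}$, so to pass to the limit in the solution formula $Q=Q_{en}(\mc L)$ of Lemma \ref{lemTrans} I would need stability of the level-set function $\mc L$ and its inverse $k^{-1}$ under such perturbations; this relies on the uniform nondegeneracy $\pt_\theta V^{(k)}>0$ supplied by \eqref{transcondition}, which is valid throughout the sequence provided $\ul\sigma_3$ is chosen small enough. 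A secondary technical point is the behavior at the corners $\ol{\Gam_f}\cap\ol{\Gam_w^+}$ and $\ol{\Gam_w^+}\cap\ol{\Gam_{ex}}$, where the weighted norms exhibit singular weights; the uniform weighted bounds in \eqref{Pseudoestimate} survive the limit by lower semicontinuity, so this does not create an obstruction to the identification step.
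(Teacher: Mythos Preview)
Your proposal is correct and follows essentially the same route as the paper's proof: pull back the solutions to the fixed reference domain $\n_{r_s}^+$ via $\Pi_{r_s,\,f^{(k)}(0)+f_s^{(k)}}$, use the uniform bound \eqref{Pseudoestimate} together with the compact embedding into the $\alpha/2$-weighted spaces to extract convergent subsequences, pass to the limit in the transformed systems (A), (B), invoke lower semicontinuity to place the limit in the uniqueness class, and then apply the uniqueness part of Proposition \ref{proPseudofree} to identify the limit; the subsequence--uniqueness argument then upgrades to full convergence, from which continuity of \eqref{mcA} follows by smooth dependence of $\varrho$, ${\bm\tau}_f$, and $\Pi_{r_s,f}$ on their arguments. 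The paper handles the choice of $\ul\sigma_3$ exactly as you anticipate, shrinking it so that the pulled-back limit still satisfies \eqref{Pseudoestimate} with the original constant $C$ (not an inflated one), which is needed for the uniqueness statement to apply.
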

    \begin{proof}
    	The result is obtained by using the standard argument. 
    	
    	Let $\ul{\sigma}_3$ be a positive constant $\le\sigma_3$ and to be determined later. 
    	Let $\zeta^{(k)}$ for $k=1,2,\ldots$ be a sequence in $\mc B^{(4)}_{\ul{\sigma}_3}$ that converges to $\zeta^{(\infty)}\in \mc B^{(4)}_{\ul{\sigma}_3}$ in $(C^{2,\frac{\alpha}{2}}(\ol{\n}))^3\times C^{1,\frac{\alpha}{2}}_{(-\frac{\alpha}{2},\pt \Gam_{ex})}(\Gam_{ex})\times C^{1,\frac{\alpha}{2}}_{(-\frac{\alpha}{2},\{\theta=\theta_1\}),0}((0,\theta_1))$.  
    	By Proposition \ref{proPseudofree}, for each given $(\rho_-,\bu_-,p_-,p_{ex},\fsp)=\zeta^{(k)}$ for $k=1,2,\dots$ and $\infty$,  there exists a unique $\mc U_k:=(f(0)^{(k)}, \Phi^{(k)}\be_{\vphi},L^{(k)},S^{(k)})$ satisfying (A), (B) and the estimate \eqref{Pseudoestimate} with $\sigma$ replaced by $\ul{\sigma}_3$.  
    	Let $(\til\Phi^{(k)},\til L^{(k)},\til S^{(k)}):=(\Phi^{(k)},L^{(k)},S^{(k)})(\Pi_{r_sf(0)^{(k)}+f_s^{(k)}})$ for $k=1,2,\ldots$ and $\infty$ and
    	$\til {\mc U}_k:=(f(0)^{(k)},\til\Phi^{(k)}\be_{\vphi},\til L^{(k)},\til S^{(k)})$ for $k=1,2,\ldots$ and $\infty$. To prove that $\mc A(\zeta^{(k)})$ converges to $\mc A(\zeta^{(\infty)})$ in $C^{1,\frac{\alpha}{2}}_{(-\frac{\alpha}{2},\{\theta=\theta_1\}),0}((0,\theta_1))$, 
    	we show that 
    	$\til {\mc U}_k$ converges to $\til {\mc U}_{\infty}$ in $\R\times C^{2,\frac {\alpha}{2}}_{(-1-\frac{\alpha}{2},\Gam_{w,r_s}^+)}(\n_{r_s}^+)\times (C^{1,\frac{\alpha}{2}}_{(-\frac{\alpha}{2},\Gam_{w,r_s}^+)}(\n_{r_s}^+))^2$. 
    		
    	To do so, we show that if a subsequence of $\til {\mc U}_k$ converges in $\R\times C^{2,\frac {\alpha}{2}}_{(-1-\frac{\alpha}{2},\Gam_{w,r_s}^+)}(\n_{r_s}^+)\times (C^{1,\frac{\alpha}{2}}_{(-\frac{\alpha}{2},\Gam_{w,r_s}^+)}(\n_{r_s}^+))^2$, then it converges to $\til {\mc U}_{\infty}$. 
    	Assume that a subsequence $\til {\mc U}_{k_j}$ of $\til {\mc U}_k$ 
    	converges to a function
    	$\til {\mc U}^*=(f(0)^*,\til\Phi^*\be_{\vphi},\til L^*,\til S^*)$ in $\R\times C^{2,\frac {\alpha}{2}}_{(-1-\frac{\alpha}{2},\Gam_{w,r_s}^+)}(\n_{r_s}^+)\times (C^{1,\frac{\alpha}{2}}_{(-\frac{\alpha}{2},\Gam_{w,r_s}^+)}(\n_{r_s}^+))^2$. 
    	Let (A)$^{(k)}$ and (B)$^{(k)}$ be (A) and (B) satisfied by $\mc U_k$, respectively,  and let $\tilde{\tx{(A)}}^{(k)}$ and $\tilde{\tx{(B)}}^{(k)}$ be (A)$^{(k)}$ and (B)$^{(k)}$ transformed by using $\Pi_{r_sf(0)^{(k)}+f_s^{(k)}}$, respectively. Since
    	${\mc U}_{k_j}$ satisfies (A)$^{(k_j)}$, (B)$^{(k_j)}$, 
    	$\til {\mc U}_{k_j}$ satisfies  $\tilde{\tx{(A)}}^{(k_j)}$, $\tilde{\tx{(B)}}^{(k_j)}$. 
    	Take $j\ra \infty$ to $\tilde{\tx{(A)}}^{(k_j)}$, $\tilde{\tx{(B)}}^{(k_j)}$. Then since $\zeta^{(k)}\ra \zeta^{(\infty)}$ in $(C^{2,\frac{\alpha}{2}}(\ol{\n}))^3\times C^{1,\frac{\alpha}{2}}_{(-\frac{\alpha}{2},\pt \Gam_{ex})}(\Gam_{ex})\times C^{1,\frac{\alpha}{2}}_{(-\frac{\alpha}{2},\{\theta=\theta_1\}),0}((0,\theta_1))$ and $\til {\mc U}_{k_j}\ra\til {\mc U}^*$ in  $\R\times C^{2,\frac {\alpha}{2}}_{(-1-\frac{\alpha}{2},\Gam_{w,r_s}^+)}(\n_{r_s}^+)\times (C^{1,\frac{\alpha}{2}}_{(-\frac{\alpha}{2},\Gam_{w,r_s}^+)}(\n_{r_s}^+))^2$, 
    	we obtain  1) $\tilde{\tx{(A)}}^{(\infty)}$, $\tilde{\tx{(B)}}^{(\infty)}$ with $\til {\mc U}_{\infty}$ replaced by $\til {\mc U}^*$.
    	By the facts that ${\mc U}_{k}$ for $k=1,2,\ldots$ satisfy \eqref{Pseudoestimate} with $\sigma$ replaced by $\ul{\sigma}_3$ 
    	and 
    	$(\fsp)^{(k)}$ for $k=1,2,\ldots$ are in $\mc B^{(3)}_{\ul{\sigma}_3}$,
    	\begin{multline*}
    	|f(0)^{(k)}-r_s|+||\grad\times((\til \Phi^{(k)}-\til \Phi_0^{+,(k)})\be_{\vphi})||_{1,\alpha,\n_{r_s}^+}^{(-\alpha,\Gam_{w,r_s}^+)}\\+||\frac{\til L^{(k)}}{2\pi  \Pi_{r_sf(0)^{(k)}+f_s^{(k)}}^r 
    		\sin\theta}\be_{\vphi}||_{1,\alpha,\n_{r_s}^+}^{(-\alpha,\Gam_{w,r_s}^+)}+||\til S^{(k)}-\til S_0^{+,(k)}||_{1,\alpha,\n_{r_s}^+}^{(-\alpha,\Gam_{w,r_s}^+)}\le C\ul{\sigma}_3
    	\end{multline*}
    	where $(\til \Phi_0^{+,(k)},\til S_0^{+,(k)}):=(\Phi_0^+,S_0^+)(\Pi_{r_s f(0)^{(k)}+f_s^{(k)}})$ for $k=1,2,\ldots$  and $C$ is a positive constant depending on the data and independent of $k$. 
    	Using the facts that $(\fsp)^{(k)}\ra (\fsp)^{(\infty)}$ in  $C^{1,\frac{\alpha}{2}}_{(-\frac{\alpha}{2},\{\theta=\theta_1\}),0}((0,\theta_1))$ and $\til {\mc U}_{k_j}\ra\til {\mc U}^*$ in $\R\times C^{2,\frac {\alpha}{2}}_{(-1-\frac{\alpha}{2},\Gam_{w,r_s}^+)}(\n_{r_s}^+)\times (C^{1,\frac{\alpha}{2}}_{(-\frac{\alpha}{2},\Gam_{w,r_s}^+)}(\n_{r_s}^+))^2$, 
    	%
    	we obtain from this inequality
    	\begin{multline*}
    	|f(0)^{*}-r_s|+||\grad\times((\til \Phi^{*}-\til \Phi_0^{+,(\infty)})\be_{\vphi})||_{1,\alpha,\n_{r_s}^+}^{(-\alpha,\Gam_{w,r_s}^+)}\\+||\frac{\til L^{*}}{2\pi  \Pi_{r_sf(0)^{(\infty)}+f_s^{(\infty)}}^r 
    		\sin\theta}\be_{\vphi}||_{1,\alpha,\n_{r_s}^+}^{(-\alpha,\Gam_{w,r_s}^+)}+||\til S^{*}-\til S_0^{+,(\infty)}||_{1,\alpha,\n_{r_s}^+}^{(-\alpha,\Gam_{w,r_s}^+)}\le C\ul{\sigma}_3
    	\end{multline*}
    	where $(\til \Phi_0^{+,(\infty)},\til S_0^{+,(\infty)}):=(\Phi_0^+,S_0^+)(\Pi_{r_s f(0)^{(\infty)}+f_s^{(\infty)}})$.
    	From this inequality, we have 
    	\begin{multline*}
    	|f(0)^*-r_s|+||\grad\times( (\Phi^*-\Phi_0^+)\be_{\vphi})||_{1,\alpha,\n_{f(0)^*+f_s^{(\infty)}}^+}^{(-\alpha,\Gam_{w,f(0)^*+f_s^{(\infty)}}^+)}\\+||\frac{L^*}{2\pi r 
    		\sin\theta}\be_{\vphi}||_{1,\alpha,\n_{f(0)^*+f_s^{(\infty)}}^+}^{(-\alpha,\Gam_{w,f(0)^*+f_s^{(\infty)}}^+)}+|| S^*-S_0^+||_{1,\alpha,\n_{f(0)^*+f_s^{(\infty)}}^+}^{(-\alpha,\Gam_{w,f(0)^*+f_s^{(\infty)}}^+)}\le C_1\ul{\sigma}_3
    	\end{multline*}
    	where $(\Phi^*,L^*,S^*):=(\til \Phi^*,\til L^*,\til S^*)(\Pi_{f(0)^*+f_s^{(\infty)} r_s})$ and $C_1$ is a positive constant depending on the data. 
    	Take $\ul{\sigma}_3=\min(\frac{C\sigma_3}{C_1},\sigma_3)$ where $C$ is $C$ in \eqref{Pseudoestimate}. 
    	Then 2) $(f(0)^*,\Phi^*\be_\vphi,L^*,S^*)$ satisfies \eqref{Pseudoestimate} with $\sigma$ replaced by $\sigma_3$. By 1) and 2),  $(f(0)^*,\Phi^*\be_\vphi,L^*,S^*)$ satisfies (A), (B) for $(\rho_-,\bu_-,p_-,\fsp)=\zeta^{(\infty)}$ and \eqref{Pseudoestimate} with $\sigma$ replaced by $\sigma_3$. 
    	By Proposition \ref{proPseudofree},  $(f(0),\Phi\be_{\vphi},L,S)$ satisfying (A), (B) for $(\rho_-,\bu_-,p_-,\fsp)=\zeta^{(\infty)}$ and \eqref{Pseudoestimate} with $\sigma$ replaced by $\sigma_3$ is unique. Therefore, $(f(0)^*,\Phi^*\be_\vphi,L^*,S^*)=(f(0)^{(\infty)},\Phi^{(\infty)}\be_\vphi,L^{(\infty)},S^{(\infty)})$. From this, we have that  $\til {\mc U}^*=\til {\mc U}_\infty$. 
    	
    	
    	Using the fact that we showed above, we prove that $\til {\mc U}_k$ converges to $\til {\mc U}_{\infty}$ in $\R\times C^{2,\frac {\alpha}{2}}_{(-1-\frac{\alpha}{2},\Gam_{w,r_s}^+)}(\n_{r_s}^+)\times (C^{1,\frac{\alpha}{2}}_{(-\frac{\alpha}{2},\Gam_{w,r_s}^+)}(\n_{r_s}^+))^2$. 
    	By $\R\times C^{2,\alpha}_{(-1-\alpha,\Gam_{w,r_s}^+)}(\n_{r_s}^+)\times (C^{1,\alpha}_{(-\alpha,\Gam_{w,r_s}^+)}(\n_{r_s}^+))^2\Subset\R\times C^{2,\frac {\alpha}{2}}_{(-1-\frac{\alpha}{2},\Gam_{w,r_s}^+)}(\n_{r_s}^+)\times (C^{1,\frac{\alpha}{2}}_{(-\frac{\alpha}{2},\Gam_{w,r_s}^+)}(\n_{r_s}^+))^2$, every subsequence of $\til {\mc U}_k$ has a convergent subsequence in $\R\times C^{2,\frac {\alpha}{2}}_{(-1-\frac{\alpha}{2},\Gam_{w,r_s}^+)}(\n_{r_s}^+)\times (C^{1,\frac{\alpha}{2}}_{(-\frac{\alpha}{2},\Gam_{w,r_s}^+)}(\n_{r_s}^+))^2$. 
    	By the fact that we showed above, this convergent subsequence must  converge to $\til{\mc U}_\infty$. Thus, we have  $\til {\mc U}_k\ra \til {\mc U}_\infty$ in $\R\times C^{2,\frac {\alpha}{2}}_{(-1-\frac{\alpha}{2},\Gam_{w,r_s}^+)}(\n_{r_s}^+)\times (C^{1,\frac{\alpha}{2}}_{(-\frac{\alpha}{2},\Gam_{w,r_s}^+)}(\n_{r_s}^+))^2$. 
    	
    	Using this fact, we can conclude that $\mc A(\zeta^{(k)})\ra \mc A(\zeta^{(\infty)})$ in $C^{1,\frac{\alpha}{2}}_{(-\frac{\alpha}{2},\{\theta=\theta_1\}),0}((0,\theta_1))$ as $\zeta^{(k)}\in \mc B^{(4)}_{\ul{\sigma}_3}$ converges to $\zeta^{(\infty)}\in \mc B^{(4)}_{\ul{\sigma}_3}$ in $(C^{2,\frac{\alpha}{2}}(\ol{\n}))^3\times C^{1,\frac{\alpha}{2}}_{(-\frac{\alpha}{2},\pt \Gam_{ex})}(\Gam_{ex})\times C^{1,\frac{\alpha}{2}}_{(-\frac{\alpha}{2},\{\theta=\theta_1\}),0}((0,\theta_1))$.
    	This finishes the proof. 
    \end{proof}
    Next, we prove that $\mc A$ is Fr\'echet differentiable. 
    \begin{lem}\label{lemFrechetdiff}
    	(i) The mapping $\mc A$ defined in \eqref{mcA} is Fr\'echet differentiable at $\zeta_0:=(\rho_0^-,\bu_0^-,p_0^-,p_c,0)$ as a map from $(C^{1,\alpha}_{(-\alpha,\Gam_w)}(\n))^3\times C^{1,\alpha}_{(-\alpha,\pt \Gam_{ex})}(\Gam_{ex})\times C^{1,\alpha}_{(-\alpha,\{\theta=\theta_1\}),0}((0,\theta_1))$ to $C^{1,\alpha}_{(-\alpha,\{\theta=\theta_1\}),0}((0,\theta_1))$.\\
    	(ii) The partial Fr\'echet derivative of $\mc A$ with respect to $\fsp$ at $\zeta_0$  is 
    	given by
    	\begin{align}\label{Dfsp}
    	D_{\fsp}\mc A(\zeta_0) \til \fsp =
    	\left.\frac{1}{\rho_0^+}\grad\times(\til \Psi^{(\til \fsp)}\be_\vphi)\cdot \be_\theta\right|_{r=r_s} 
    	-(u_0^- -u_0^+)(r_s)\frac{\til \fsp}{r_s}
    	\end{align}
    	for $\til \fsp\in C^{1,\alpha}_{(-\alpha,\{\theta=\theta_1\}),0}((0,\theta_1))$ where $\til \Psi^{(\til \fsp)}\be_\vphi$ is a solution of 
    	\eqref{Wtileqnstar}, \eqref{Wtileqnstarbc} for given $\til \fsp$. 
    \end{lem}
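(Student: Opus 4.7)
The plan is to realize $\mc A(\zeta)$ as the composition of the solution map from Proposition \ref{proPseudofree} with the trace functional in \eqref{mcA}, and to read off its linearization at $\zeta_0$ from the structure of (A$'$) and (B$'$). Observe first that $\mc A(\zeta_0)=0$: at $\zeta_0$ the shock is radial, $\grad\times(\Phi_0^+\be_\vphi)=\rho_0^+u_0^+\be_r$ and ${\bm\tau}_{r_s}=\be_\theta$, so both the subsonic and supersonic tangential velocities vanish.

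For part (i), given a small perturbation $\delta\zeta=(\delta\rho_-,\delta\bu_-,\delta p_-,\delta p_{ex},\delta\fsp)$, I would let $(f(0),\Phi\be_\vphi,L,S)$ be the corresponding solution provided by Proposition \ref{proPseudofree}, set $(\Psi,A,T)=(\Phi-\Phi_0^+,L,S-S_0^+)$, and introduce the candidate derivative $(f^L(0),\Psi^L\be_\vphi,A^L,T^L)$ obtained by solving the purely linear version of (A$'$), (B$'$): drop the quadratic residuals ${\bm F}_1$, $\mf f_1$, $g_1$ of Lemma \ref{F1f1estimate} and the quadratic part of $T_{en,f(0)+f_s}$ identified in Lemma \ref{LemTenestimate}, and linearize \eqref{Tentdef}, \eqref{Aentdef} around $\zeta_0$. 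Existence and linear dependence on $\delta\zeta$ of this linearized solution follow from the same fixed-point scheme that proves Proposition \ref{proPseudofree}, simplified by the absence of nonlinear terms. The residual $(\Psi-\Psi^L, A-A^L, T-T^L, f(0)-r_s-f^L(0))$ is then estimated in the weaker H\"older norm of the uniqueness proof of Proposition \ref{proPseudofree}, yielding an $o(\|\delta\zeta\|)$ bound, which can be bootstrapped back to $C^{1,\alpha}$ via the Schauder estimates of \S\ref{subelliptic} and the transport estimates of \S\ref{subtrans}. Since the functional \eqref{mcA} is a smooth (finite-order polynomial and composition) map of the solution data, its Fr\'echet differentiability at $\zeta_0$ follows.

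For part (ii) I would specialize to $\delta\rho_-=\delta\bu_-=\delta p_-=\delta p_{ex}=0$ and perturb only $\fsp$ by $\til\fsp$. In this case $A_{en,r_s+f_s}$ vanishes (it equals $L_-$, which is unperturbed from $0$), so the linear (B$'$) forces $\til A\equiv 0$; similarly $\Phi_--\Phi_0^-\equiv 0$ kills the Dirichlet data in (A$'$), and the inhomogeneity reduces to the $\pt_\theta\til T/r$ term together with the shock-shift contribution. The resulting linear elliptic system is exactly \eqref{Wtileqnstar}, \eqref{Wtileqnstarbc}, defining $\til\Psi^{(\til\fsp)}\be_\vphi$. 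Next I would expand the integrand of \eqref{mcA}: from ${\bm\nu}_f=(\be_r-(f'/f)\be_\theta)/\sqrt{1+(f'/f)^2}$ one gets ${\bm\tau}_f=\be_\theta+(\til\fsp/r_s)\be_r+O(\til\fsp^2)$ at $r=r_s$; from $\rho=\rho_0^++O(\til\fsp)$ and $\grad\times(\Phi\be_\vphi)=\rho_0^+u_0^+\be_r+\grad\times(\til\Psi\be_\vphi)+O(\til\fsp^2)$ the subsonic contribution becomes $u_0^+(r_s)\til\fsp/r_s+\frac{1}{\rho_0^+}\grad\times(\til\Psi\be_\vphi)\cdot\be_\theta|_{r_s}$, while the unperturbed supersonic side contributes $u_0^-(r_s)\til\fsp/r_s$. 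The pullback by $\Pi_{r_s,f(0)+f_s}$ moves evaluation to $r=r_s$, but since the leading-order integrand already vanishes at $\zeta_0$ this introduces only quadratic corrections. Subtracting produces exactly \eqref{Dfsp}.

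The main obstacle will be the implicit dependence $f(0)=f(0)(\zeta)$ coming from the solvability condition \eqref{PsiTcompc}: a generic $\delta f(0)$ both shifts the evaluation surface and could enter the tangent vector. However, a purely radial shift of the surface produces no $\be_\theta$-component in the variation of ${\bm\tau}_f$, and the radial evaluation shift contributes $\pt_r u_0^\pm|_{r_s}\,\delta f(0)\,\be_r$, which is annihilated on taking the $\be_\theta$-component; thus \eqref{Dfsp} is independent of $\delta f(0)$ as claimed. The other delicate point is closing the remainder estimate in (i) in the norm $C^{1,\alpha}_{(-\alpha,\{\theta=\theta_1\}),0}((0,\theta_1))$ with the quadratic gain $o(\|\delta\zeta\|)$ rather than merely the Lipschitz bound $O(\|\delta\zeta\|)$; this requires a two-step argument (first contraction in a weaker norm, then regularity upgrade) analogous to what was done in the existence and uniqueness proofs of Proposition \ref{proPseudofree}.
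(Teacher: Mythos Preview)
Your outline matches the paper's proof, which also linearizes (A$'$), (B$'$) at $\zeta_0$, solves the resulting decoupled linear system explicitly (obtaining \eqref{Atil}, \eqref{gatFAT}, \eqref{Tlinear} and then $\til\Psi^{(\til\fsp)}$ from \eqref{Wtileqnstar}--\eqref{Wtileqnstarbc}, with no fixed-point argument needed since the system is triangular), and derives \eqref{Dfsp} by expanding ${\bm\tau}_f$ exactly as you indicate. The only difference is that the paper bypasses your weak-norm-plus-bootstrap step: since ${\bm F}_1$ and $\mf f_1$ are already quadratic in $(\Psi,A,T)$ by Lemma~\ref{F1f1estimate} and $(\Psi_\eps,A_\eps,T_\eps)$ are $O(\eps)$ by \eqref{Pseudoestimate}, applying Lemma~\ref{lemSE} to the residual equation yields the $O(\eps^2)$ bound \eqref{PsiepsPsi0epsPsi0} directly in the full $C^{2,\alpha}_{(-1-\alpha,\Gam_w^+)}$ norm.
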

    \begin{proof}
    	In this proof, we prove that ${\mc A}$ is Fr\'echet differentiable as a function of $\fsp$ at $0$ with the other variables fixed at $(\rho_0^-,u_0^-\be_r,p_0^-,p_c)$. 
    	The Fr\'echet differentiability of ${\mc A}$ as a function of $(\rho_-,\bu_-,p_-,p_{ex},\fsp)$ at $\zeta_0$ can be proved in a similar way. 

    	In this proof, $C$s denote positive constants depending on the data. Each $C$ in different inequalities differs from each other. 
    	
    	1. Let $(f(0),\Phi\be_\vphi,L,S)$ be a solution of Problem \ref{pb3} for $\sigma\in (0,\sigma_3]$ satisfying \eqref{Pseudoestimate}.
    	Let $(\Psi,A,T)=(\Phi-\Phi_0^+,L,S-S_0^+)$ and $(\til \Psi,\til A,\til T):=(\Psi,A,T)(\Pi_{r_s f(0)+f_s})$.
        Find the partial Fr\'echet derivatives of 
    	$f(0)$, $\til\Psi\be_{\vphi}$, $\til A$ and $\til T$ with respect to $\fsp$ at $\zeta_0$. 
    	
    	Let $\til f_s^\p$ be a function in $C^{1,\alpha}_{(-\alpha,\{\theta=\theta_1\}),0}((0,\theta_1))$ satisfying $||\til f_s^\p||_{1,\alpha,(0,\theta_1)}^{(-\alpha,\{\theta=\theta_1\})}=1$.
    	By Proposition \ref{proPseudofree}, for each $(\rho_-,\bu_-,p_-,p_{ex},f_s^\p)=(\rho_0^-,u_0^-\be_r,p_0^-,p_c,\eps\til f_s^\p)$ for $\eps\in[0,\sigma_3]$, 
    	there exists a unique $(f_\eps(0),\Phi_\eps\be_{\vphi},L_\eps,S_\eps)$ satisfying (A), (B) and the estimate \eqref{Pseudoestimate}. Define
    	$(\Psi_\eps,A_\eps,T_\eps):=(\Phi_\eps-\Phi_0^+,L_\eps,S_\eps-S_0^+)$. Since each $(f_\eps(0),\Phi_\eps\be_{\vphi},L_\eps,S_\eps)$ satisfies (A), (B) for $(\rho_-,\bu_-,p_-,p_{ex},f_s^\p)=(\rho_0^-,u_0^-\be_r,p_0^-,p_c,\eps\til f_s^\p)$, 
    	each  $(f_\eps(0),\Psi_\eps\be_{\vphi},A_\eps,T_\eps)$ satisfies (A$^\p$), (B$^\p$) for $(\rho_-,\bu_-,p_-,p_{ex},f_s^\p)=(\rho_0^-,u_0^-\be_r,p_0^-,p_c,\eps\til f_s^\p)$. 
    	Let 
    	\begin{align}
    	\label{Gatauf0}
    	&\til f(0)^{(\til f_s^\p)}:=\lim_{\eps\ra 0^+}\frac{f_\eps(0)-f_0(0)}{\eps},\qd
    	\tilde{\Psi}^{(\til f_s^\p)}\be_{\vphi}=\lim_{\eps\ra 0^+}\frac{\til \Psi_\eps\be_\vphi-\Psi_0\be_{\vphi}}{\eps},\\
    	&\nonumber\til A^{(\til f_s^\p)}:=\lim_{\eps\ra 0^+}\frac{\til A_\eps-A_0}{\eps}\qdand
    	\til T^{(\til f_s^\p)}:=\lim_{\eps\ra 0^+}\frac{\til T_\eps-T_0}{\eps}
    	\end{align}
    	where $\til f(0)^{(\til f_s^\p)}$, $\tilde{\Psi}^{(\til f_s^\p)}\be_{\vphi}$, $\til A^{(\til f_s^\p)}$ and $\til T^{(\til f_s^\p)}$ represent the G\^ateaux derivatives of $f(0)$, $\til\Psi\be_{\vphi}$, $\til A$ and $\til T$ in the direction of $\til \fsp$ at $\zeta_0$. 
    	To find the Fr\'echet derivative of $\mc A$ with respect to $\fsp$ at $\zeta_0$, we find $\til f(0)^{(\til f_s^\p)}$, $\tilde{\Psi}^{(\til f_s^\p)}\be_{\vphi}$, $\til A^{(\til f_s^\p)}$ and $\til T^{(\til f_s^\p)}$. 
    	
    	Transform (A$^\p$), (B$^\p$) satisfied by $(f_\eps (0),\Psi_\eps \be_{\vphi},A_\eps, T_\eps)$ into the equations in $\n_{r_s}^+$ or on a part of $\pt\n_{r_s}^+$ by using $\Pi_{r_sf_\eps}$ where $f_\eps:=f_\eps(0)+\eps \til f_s$ with $\til f_s=\int_0^\theta \til \fsp$. And then subtract the resultant equations from the same equations satisfied by $(f_0 (0),\Psi_0 \be_{\vphi},A_0, T_0)$. Then we obtain 
    	\begin{align}
    	&\label{Wlineareps0}\grad\times \left(\frac{1}{\rho_0^+}(1+\frac{{u_0^+}\be_r\otimes u_0^+\be_r}{{c_0^+}^2-{u_0^+}^2})\grad\times ((\til \Psi_\eps-\Psi_0)\be_{\vphi})\right)\\
    	&\nonumber\qd\qd\qd=\frac{{\rho_0^+}^{\gam-1}}{(\gam-1)u_0^+}(1+\frac{\gam {u_0^+}^2}{{c_0^+}^2-{u_0^+}^2})\frac{\pt_\theta(\til T_\eps-T_0)}{r}\be_\vphi+F_0+\til {\bm F}_1(\til \Psi_\eps\be_{\vphi},\til A_\eps, \til T_\eps)\qdin \n_{r_s}^+,\\
    	&\label{Wfeps0}(\til \Psi_\eps-\Psi_0)\be_{\vphi}=
    	\begin{cases}
    	{\bm 0}\qdon\Gam_{r_s},\Gam_{w,r_s}^+,\\
    	\frac{1}{r_1\sin\theta}\int_0^\theta \bigg(- \frac{\rho_0^+((\gam-1){u_0^+}^2+{c_0^+}^2)}{\gam(\gam-1)u_0^+S_0^+}(\til T_\eps-T_0)\\
    	\qd\qd\qd\qd+\mf f_1(\til \Psi_\eps\be_{\vphi},\til A_\eps, \til T_\eps)\bigg)r_1^2\sin\xi d\xi\be_{\vphi}\qdon \Gam_{ex},
    	\end{cases}
    	\end{align}
    	\begin{align}
    	\label{compceps0}\frac{1}{r_1\sin\theta_1}\int_0^{\theta_1} \left.\left(- \frac{\rho_0^+((\gam-1){u_0^+}^2+{c_0^+}^2)}{\gam(\gam-1)u_0^+S_0^+}(\til T_\eps-T_0)
    	+\mf f_1(\til \Psi_\eps\be_{\vphi},\til A_\eps, \til T_\eps)\right)\right|_{r=r_1}r_1^2\sin\xi d\xi\be_{\vphi}={\bm 0},
    	\end{align}
    	\begin{align}\nonumber
    	\begin{cases}
    	(M_\eps^TM_\eps\grad\times((\til \Phi_0^++\til \Psi_\eps)\be_{\vphi})-\grad\times(\Phi_0^+\be_{\vphi}))\cdot\grad \til A_\eps+\grad\times(\Phi_0^+\be_{\vphi})\cdot\grad(\til A_\eps-A_0)=0\qdin\n_{r_s}^+,\\
    	\til A_\eps-A_0=0\qdon \Gam_{r_s}
    	\end{cases}
    	\end{align}
    	\begin{align}\nonumber
    	\begin{cases}
    	(M_\eps^TM_\eps\grad\times((\til \Phi_0^++\til \Psi_\eps)\be_{\vphi})-\grad\times(\Phi_0^+\be_{\vphi}))\cdot\grad \til T_\eps+\grad\times(\Phi_0^+\be_{\vphi})\cdot\grad(\til T_\eps-T_0)=0\qdin\n_{r_s}^+,\\
    	\til T_\eps-T_0=\left.g\left((M_0^-\be_r\cdot{\bm \nu_{f_\eps}})^2\right)\right|_{\Gam_{f_\eps}}S_{in}\circ \Pi_{r_sf_\eps}-(g({M_0^-}^2))(r_s)S_{in}\qdon \Gam_{r_s}
    	\end{cases}
    	\end{align}
    	where 
    	\begin{align*}
    	F_0=&\grad\times \left(\frac{1}{\rho_0^+}(1+\frac{{u_0^+}\be_r\otimes u_0^+\be_r}{{c_0^+}^2-{u_0^+}^2})\grad\times (\til \Psi_\eps\be_{\vphi})\right)\\
    	&-M_\eps\grad\times \left(\left(\frac{1}{\rho_0^+}(1+\frac{{u_0^+}\be_r\otimes u_0^+\be_r}{{c_0^+}^2-{u_0^+}^2})\right)(\Pi_{r_sf_\eps})M_\eps\grad\times (\til \Psi_\eps\be_{\vphi})\right)\\
    	&-\frac{{\rho_0^+}^{\gam-1}}{(\gam-1)u_0^+}(1+\frac{\gam {u_0^+}^2}{{c_0^+}^2-{u_0^+}^2})\frac{\pt_\theta\til T_\eps}{r}\be_\vphi\\
    	&+\left(
    	\frac{{\rho_0^+}^{\gam-1}}{(\gam-1)u_0^+}(1+\frac{\gam {u_0^+}^2}{{c_0^+}^2-{u_0^+}^2})\frac{1}{r}\right)(\Pi_{r_sf_\eps})\left(\pt_{\til\theta}\Pi_{f_\eps r_s}^r(\Pi_{r_sf_\eps})\pt_r\til T_\eps+\pt_\theta\til T_\eps\right)\be_\vphi,
    	\end{align*}
    	$(\til\Psi_\eps,\til A_\eps, \til T_\eps):=(\Psi_\eps,A_\eps,T_\eps)\circ\Pi_{r_sf_\eps}$,  $\til \Phi_0^+=\Phi_0^+\circ\Pi_{r_sf_\eps}$, $M_\eps=\left(\frac{\pt \Pi_{f_\eps r_s}}{\pt {\rm y}}\right)(\Pi_{r_sf_\eps})$, ${\rm y}$ is the cartesian coordinate system representing $\n_{f_\eps}^+$, 
    	$\til \theta$ is  $\theta$ coordinate for ${\rm y}$, 
    	$(r,\theta)$ is $(r,\theta)$ coordinates for  $\n_{r_s}^+$, 
    	and
    	$\Pi_{f_\eps r_s}^r$ is the $r$-component of  $\Pi_{f_\eps r_s}^*$, ${\bm \nu}_{f_\eps}$ is the unit normal vector field on $\Gam_{f_\eps}$ pointing toward $\n_{f_\eps}^+$ 
    	and $\til{\bm F}_1$ is ${\bm F}_1$ changed by using the transformation $\Pi_{f_\eps r_s}$. 
    	Divide the above equations by $\eps$ and formally take $\eps\ra 0^+$ using
    	\begin{align}\label{epsapproach}
    	&(f_\eps(0),\til \Psi_\eps,\til A_\eps,\til T_\eps)\ra (r_s,\Psi_0,A_0,T_0)=(r_s,0,0,0)\qd \tx{as}\qd \eps\ra 0^+,\\
    	&\eps\til f_s^\p\ra 0\qd \tx{as}\qd \eps\ra 0^+\nonumber
    	\end{align}
    	and \eqref{Gatauf0}. Then we have
    	\begin{align}
    	\label{Wtileqn}
    	\qd\qd\qd&\grad\times\left(\frac{1}{\rho_0^+}(1+\frac{{u_0^+}\be_r\otimes u_0^+\be_r}{{c_0^+}^2-{u_0^+}^2})\grad\times (\til \Psi^{(\til f_s^\p)}\be_{\vphi})\right) \\
    	&\nonumber\qd\qd\qd\qd\qd\qd\qd\qd\qd=\frac{{\rho_0^+}^{\gam-1}}{(\gam-1)u_0^+}(1+\frac{\gam {u_0^+}^2}{{c_0^+}^2-{u_0^+}^2})\frac{\pt_\theta \til T^{(\til f_s^\p)}}{r}\be_\vphi\qdin \n_{r_s}^+,\\
    	\label{Wtileqnbc}
    	&\til \Psi^{(\til f_s^\p)}\be_{\vphi}=
    	\begin{cases}
    	{\bm 0}\qdon \Gam_{r_s},\;\Gam_{w,r_s}^+,\\
    	-\frac{1}{r_1\sin\theta}\int_0^\theta \frac{\rho_0^+((\gam-1){u_0^+}^2+{c_0^+}^2)}{\gam(\gam-1)u_0^+S_0^+}\til T^{(\til f_s^\p)}r_1^2\sin\xi d\xi\be_{\vphi}\qdon \Gam_{ex},
    	\end{cases}
    	\end{align}
    	\begin{align}
    	\label{comptil}
    	&-\frac{1}{r_1\sin\theta_1}\int_0^{\theta_1}\left. \frac{\rho_0^+((\gam-1){u_0^+}^2+{c_0^+}^2)}{\gam(\gam-1)u_0^+S_0^+}\til T^{(\til f_s^\p)}\right|_{r=r_1}r_1^2\sin\xi d\xi\be_{\vphi}={\bm 0},\\
    	&\begin{cases}\label{Ltileqn}
    	\pt_r \til A^{(\til f_s^\p)}=0\qdin \n_{r_s}^+,\\
    	\til A^{(\til f_s^\p)}=0\qdon \Gam_{r_s},
    	\end{cases}\\
    	\label{Ttileqn}&\begin{cases}
    	\pt_r \til T^{(\til f_s^\p)}=0\qdin \n_{r_s}^+,\\
    	\til T^{(\til f_s^\p)}=(g({M_0^-}^2))^\p (r_s)S_{in}(\til f(0)^{(\til f_s^\p)}+\til f_s)\qdon \Gam_{r_s}.
    	\end{cases}
    	\end{align}
    	We solve this system 
    	for given $\til \fsp\in C^{1,\alpha}_{(-\alpha,\{\theta=\theta_1\}),0}((0,\theta_1))$. 
    	
    	By \eqref{Ltileqn} and \eqref{Ttileqn},
    	\begin{align}\label{Atil}
    	\til A^{(\til f_s^\p)}=0\qdin \n_{r_s}^+
    	\end{align}
    	and
    	\begin{align}\label{TT}\til T^{(\til f_s^\p)}=(g({M_0^-}^2))^\p (r_s)S_{in}(\til f(0)^{(\til f_s^\p)}+\til f_s).\end{align}
    	Substitute \eqref{TT} into \eqref{comptil}. And then find $\til f(0)^{(\til f_s^\p)}$ in the resultant equation. Then we obtain 
    	\begin{align}
    	\label{gatFAT}
    	\til f(0)^{(\til f_s^\p)}=\frac{-\int_0^{\theta_1}\til f_s\sin\zeta d\zeta}{\int_0^{\theta_1}\sin\zeta d\zeta}.
    	\end{align}
    	Substitute this into \eqref{TT} again. Then we have
    	\begin{align}
    	\label{Tlinear}\til T^{(\til f_s^\p)}=(g({M_0^-}^2))^\p(r_s) S_{in}(-\frac{\int_0^{\theta_1}\til f_s\sin\zeta d\zeta}{\int_0^{\theta_1}\sin\zeta d\zeta}+\til f_s)\qdin \n_{r_s}^+.
        \end{align}
    	Substituting $\til T^{(\til f_s^\p)}$ given in \eqref{Tlinear} into \eqref{Wtileqn} and \eqref{Wtileqnbc}, we get
    	\begin{align}\label{Wtileqnstar}
    	&\grad\times\left(\frac{1}{\rho_0^+}(1+\frac{{u_0^+}\be_r\otimes u_0^+\be_r}{{c_0^+}^2-{u_0^+}^2})\grad \times(\til \Psi^{(\til f_s^\p)}\be_{\vphi})\right) \\
    	&\nonumber\qd\qd\qd=\frac{{\rho_0^+}^{\gam-1}}{(\gam-1)u_0^+}(1+\frac{\gam {u_0^+}^2}{{c_0^+}^2-{u_0^+}^2})(g({M_0^-}^2))^\p(r_s) S_{in}\frac{\til f_s^\p}{r}\be_\vphi\qdin \n_{r_s}^+,\\
    	&\label{Wtileqnstarbc}\til \Psi^{(\til f_s^\p)}\be_{\vphi}=
    	\begin{cases}
    	{\bm 0}\qdon \Gam_{r_s},\;\Gam_{w,r_s}^+,\\
    	-\frac{1}{r_1\sin\theta}\int_0^\theta \frac{\rho_0^+((\gam-1){u_0^+}^2+{c_0^+}^2)}{\gam(\gam-1)u_0^+S_0^+}(g({M_0^-}^2))^\p(r_s) S_{in}\\
    	\qd\qd\qd\qd\qd\qd\qd\qd\qd(-\frac{\int_0^{\theta_1}\til f_s\sin\zeta d\zeta}{\int_0^{\theta_1}\sin\zeta d\zeta}+\til f_s)r_1^2\sin\xi d\xi\be_{\vphi}\qdon \Gam_{ex}.
    	\end{cases}
    	\end{align}
    	Note that since \eqref{Wtileqnstarbc} is a $C^0$ boundary condition, Lemma \ref{lemSE} can be applied to \eqref{Wtileqnstar}, \eqref{Wtileqnstarbc}. 
    	Using Lemma \ref{lemSE} and the arguments used to prove Lemma \ref{lem2alpha} (here we can obtain more higher regularity of solutions of \eqref{Wtileqnstar}, \eqref{Wtileqnstarbc} than that of solutions of \eqref{EE}, \eqref{EEbc} in Lemma \ref{lemSE} because $\til \fsp\in C^{1,\alpha}_{(-\alpha,\{\theta=\theta_1\})}((0,\theta_1))$), we obtain that there exists a unique  $C^{3,\alpha}_{(-1-\alpha,\Gam_{w,r_s}^+)}(\n_{r_s}^+)$ solution $\til \Psi^{(\til f_s^\p)}\be_{\vphi}$ of \eqref{Wtileqnstar}, \eqref{Wtileqnstarbc} and this solution satisfies 
    	\begin{align}\label{tilWestimate}
    	||\til \Psi^{(\til f_s^\p)}\be_{\vphi}||_{3,\alpha,\n_{r_s}^+}^{(-1-\alpha,\Gam_{w,r_s}^+)}\le C
    	\end{align}
    	for a positive constant $C$ independent of $\til f_s^\p$.  

    	Using \eqref{Pseudoestimate} satisfied by $(f_\eps(0),\Phi_\eps\be_{\vphi},L_\eps,S_\eps)$, \eqref{Wlineareps0}-\eqref{compceps0}, \eqref{Wtileqn}-\eqref{comptil}, \eqref{Atil}, \eqref{gatFAT}, \eqref{Tlinear}, the solution expressions of $\til A_\eps$ and $\til T_\eps$ obtained by solving (B$^\p$) for $(f(0),\Psi)=(f_\eps(0),\Psi_\eps)$ and $(\rho_-,\bu_-,p_-,p_{ex},f_s^\p)=(\rho_0^-,u_0^-\be_r,p_0^-,p_c,\eps\til f_s^\p)$ using Lemma \ref{lemTrans},
    	and Lemma \ref{lemSE}, we can obtain 
    	\begin{align}
    	&\nonumber|f_\eps(0)-f_0(0)-\eps\til f(0)^{(\til \fsp)}|\le C\eps^2,\\
    	&\label{PsiepsPsi0epsPsi0} ||\til\Psi_\eps-\Psi_0-\eps\til \Psi^{(\til \fsp)}||_{2,\alpha,\n_{r_s}^+}^{(-1-\alpha,\Gam_{w,r_s}^+)}\le C\eps^2,\\
    	&\nonumber||\til A_\eps-A_0-\eps\til A^{(\til \fsp)}||_{1,\alpha,\n_{r_s}^+}^{(-\alpha,\Gam_{w,r_s}^+)}=0,\\
    	&\nonumber||\til T_\eps-T_0-\eps\til T^{(\til \fsp)}||_{1,\alpha,\n_{r_s}^+}^{(-\alpha,\Gam_{w,r_s}^+)}\le C\eps^2
    	\end{align}
    	for all $\til \fsp\in C^{1,\alpha}_{(-\alpha,\{\theta=\theta_1\}),0}((0,\theta_1))$ satisfying $||\til f_s^\p||_{1,\alpha,(0,\theta_1)}^{(-\alpha,\{\theta=\theta_1\})}=1$ and $\eps \in [0,\sigma_3]$. 
    	By \eqref{Atil} and  \eqref{gatFAT}-\eqref{Wtileqnstarbc}, $\til f(0)^{(\til\fsp)}$, $\til \Psi^{(\til f_s^\p)}\be_{\vphi}$, $\til A^{(\til f_s^\p)}$ and $\til T^{(\til f_s^\p)}$ 
    	are bounded linear maps from $C^{1,\alpha}_{(-\alpha,\{\theta=\theta_1\}),0}((0,\theta_1))$ to $\R$, $C^{2,\alpha}_{(-1-\alpha,\Gam_{w,r_s}^+)}(\n_{r_s}^+)$, $C^{1,\alpha}_{(-\alpha,\Gam_{w,r_s}^+)}(\n_{r_s}^+)$ and $C^{1,\alpha}_{(-\alpha,\Gam_{w,r_s}^+)}(\n_{r_s}^+)$, respectively. From the above inequalities and this fact, we have that $f(0)$, $\til \Psi\be_{\vphi}$, $\til A$ and $\til T$ are Fr\'echet differentiable as a function of $\fsp$ at $0$ with the other variables fixed at $(\rho_0^-,u_0^-\be_r,p_0^-,p_c)$ and $\til f(0)^{(\til\fsp)}$, $\til \Psi^{(\til f_s^\p)}\be_{\vphi}$, $\til A^{(\til f_s^\p)}$ and $\til T^{(\til f_s^\p)}$ are the partial Fr\'echet derivatives of $f(0)$, $\til \Psi\be_{\vphi}$, $\til A$ and $\til T$ with respect to $\fsp$ at $\zeta_0$, respectively. 
    	
    	2. 
    	Show that ${\mc A}$ is Fr\'echet differentiable as a function of $\fsp$ at $0$ with the other variables fixed at $(\rho_0^-,u_0^-\be_r,p_0^-,p_c)$. 
    	
    	By definition, the G\^ateaux derivative of $\mc A$ in the direction $(0,0,0,0,\til f_s^\p)$ at $\zeta_0$ is given by
    	\begin{multline*}
    	\lim_{\eps\ra 0^+} \frac{\mc A(\rho_0^-,u_0^-\be_r,p_0^-,p_c,\eps \til f_s^\p)-\mc A(\rho_0^-,u_0^-\be_r,p_0^-,p_c,0)}{\eps}\\
    	=
    	\lim_{\eps\ra 0^+}\frac{1}{\eps}\left(\left.\frac{1}{\varrho(\grad\times((\Phi_0^++\Psi_\eps)\be_{\vphi}),\frac{A_\eps}{2\pi r\sin\theta}\be_\vphi,S_0^++T_\eps)}\right.
    	\grad\times( (\Phi_0^++\Psi_\eps)\be_{\vphi})\cdot{\bm \tau_{f_\eps}}\right|_{\Gam_{f_\eps}}\circ\Pi_{r_sf_\eps}\\    	
    	\\\left.
    	-\left.u_0^-\be_r\cdot{\bm \tau_{f_\eps}}\right|_{\Gam_{f_\eps}}\circ\Pi_{r_sf_\eps}-\frac{1}{\varrho(\grad \times(\Phi_0^+\be_{\vphi}),0,S_0^+)}\grad \times(\Phi_0^+\be_{\vphi})\cdot \be_\theta \right|_{\Gam_{r_s}}
    	\left.
    	+\left.
    	u_0^-\be_r\cdot \be_\theta\right|_{\Gam_{r_s}}\right)
    	\end{multline*}
    	where ${\bm \tau}_{f_\eps}$ is the unit tangent vector on $\Gam_{f_\eps}$ perpendicular to $\be_\vphi$ and satisfying $({\bm \tau}_{f_\eps}\times \be_\vphi)\cdot {\bm \nu}_{f_\eps}>0$ and ${\bm \nu}_{f_\eps}$ is the unit normal vector field on $\Gam_{f_\eps}$. 
    	As we did in Step 1, formally take $\eps\ra 0^+$ using \eqref{Gatauf0} and \eqref{epsapproach} to the right-hand side of the above equation. Then we obtain
    	\begin{align*}
    	\left.\frac{1}{\rho_0^+}\grad\times (\til \Psi^{(\til f_s^\p)}\be_{\vphi})\cdot \be_\theta\right|_{r=r_s}-(u_0^- -u_0^+)(r_s)\frac{\til f_s^\p}{r_s}.
    	\end{align*}
    	Define a map $L$ by 
    	\begin{align*}
    	L\til f_s^\p:=\left.\frac{1}{\rho_0^+}\grad\times (\til \Psi^{(\til f_s^\p)}\be_{\vphi}) \cdot \be_\theta\right|_{r=r_s}-(u_0^- -u_0^+)(r_s)\frac{\til f_s^\p}{r_s}
    	\end{align*}
    	for $\til \fsp\in C^{1,\alpha}_{(-\alpha,\{\theta=\theta_1\}),0}((0,\theta_1))$ satisfying $||\til f_s^\p||_{1,\alpha,(0,\theta_1)}^{(-\alpha,\{\theta=\theta_1\})}=1$
    	where $\til \Psi^{(\til f_s^\p)}\be_{\vphi}$ is a solution of \eqref{Wtileqnstar}, \eqref{Wtileqnstarbc} for given $\til \fsp \in C^{1,\alpha}_{(-\alpha,\{\theta=\theta_1\}),0}((0,\theta_1))$. 
    	Then $L$ is a bounded linear map from $C^{1,\alpha}_{(-\alpha,\{\theta=\theta_1\}),0}((0,\theta_1))$ to $C^{1,\alpha}_{(-\alpha,\{\theta=\theta_1\}),0}((0,\theta_1))$. 
    	Using \eqref{Pseudoestimate} satisfied by  $(f(0),\Phi_\eps\be_\vphi,L_\eps,S_\eps)$ and \eqref{PsiepsPsi0epsPsi0}, 
    	one can check that 
    	\begin{align}\label{oeestimate}
    	||\mc A(\rho_0^-,u_0^-\be_r,p_0^-,p_c,\eps \til f_s^\p)-\mc A(\rho_0^-,u_0^-\be_r,p_0^-,p_c,0)-\eps L\til f_s^\p||_{1,\alpha,(0,\theta_1)}^{(-\alpha,\{\theta=\theta_1\})}\le C\eps^2
    	\end{align}
    	for any $\til f_s^\p\in C^{1,\alpha}_{(-\alpha,\{\theta=\theta_1\}),0}((0,\theta_1))$ satisfying $||\til f_s^\p||_{1,\alpha,(0,\theta_1)}^{(-\alpha,\{\theta=\theta_1\})}=1$ and $\eps\in [0,\sigma_3]$. 
    	Therefore, ${\mc A}$ is Fr\'echet differentiable as a function of $\fsp$ at $0$ with the other variables fixed at $(\rho_0^-,u_0^-\be_r,p_0^-,p_c)$ and $L$ is the partial Fr\'echet derivative of $\mc A$ with respect to $\fsp$ at $\zeta_0$. This finishes the proof.   
    \end{proof}
    Finally, we prove that the partial Fr\'echet derivative of $\mc A$ with respect to $\fsp$ at $\zeta_0$ is invertible. 
    When we prove the invertibility of the partial Fr\'echet derivative of $\mc A$ with respect to $\fsp$ at $\zeta_0$, we use eigenfunction expansions of $\til\fsp$ and $\til \Psi^{(\til \fsp)}$. The eigenfunctions used to express $\til\fsp$ and $\til \Psi^{(\til \fsp)}$ are eigenfunctions of the following eigenvalue problem 
    \begin{align}\label{eigenprob}
    \begin{cases}\frac{1}{\sin\theta}\pt_{\theta}(\sin\theta\pt_{\theta}q)-\frac{q}{\sin^2\theta}=-\lambda q\qdin \theta\in(0,\theta_1),\\
    q=0\qdon \theta=0,\theta_1
    \end{cases}
    \end{align}
    that arises from $\theta$-part of the spherical coordinate representation of \eqref{Wtileqnstar}, \eqref{Wtileqnstarbc} 
    (note that this is the associated Legendre equation of type $m=1$ with a general domain that is a singular Sturm-Liouville problem). 
    To express $\til\fsp$ and $\til \Psi^{(\til \fsp)}$ using eigenfunctions of \eqref{eigenprob}, we need to prove 
    the orthogonal completeness of the set of eigenfunctions of \eqref{eigenprob}. We prove this in the following lemma.
    \begin{lem}\label{lemlegendre}
    	The eigenvalue problem \eqref{eigenprob} has infinitely countable eigenvalues $\lambda_j$ for $j=1,2,\ldots$ satisfying 
    	$\lambda_j\ra \infty$ as $j\ra \infty$ and $\lambda_j>0$. A set of eigenfunctions of \eqref{eigenprob} forms a complete orthorgonal set in $L^2((0,\theta_1),\sin\theta d\theta)$. 
    \end{lem}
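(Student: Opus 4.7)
The plan is to prove Lemma \ref{lemlegendre} by interpreting the singular one-dimensional problem \eqref{eigenprob} as the axisymmetric, $\be_\vphi$-polarized sector of the (regular) Laplace--Beltrami eigenvalue problem on the spherical cap $\Sigma:=S^{2,\theta_1}$, exactly in the spirit of \S\ref{subelliptic} and of the proof of Lemma \ref{lemform}. The singularity of the coefficient $1/\sin^2\theta$ in \eqref{eigenprob} is not intrinsic: it is the artifact of writing the smooth vector field $q(\theta)\be_\vphi$ on $\Sigma$ in spherical coordinates and disappears once one works intrinsically on the smooth manifold $\Sigma$.

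First, I would establish the lifting identity
\begin{align*}
\int_\Sigma |\grad_\Sigma(q(\theta)\be_\vphi)|^2\,d\sigma
=2\pi\int_0^{\theta_1}\left((q^\p)^2+\frac{q^2}{\sin^2\theta}\right)\sin\theta\,d\theta
\end{align*}
by direct computation in Cartesian coordinates (using $\be_\vphi=-\sin\vphi\,\be_1+\cos\vphi\,\be_2$). Via the map $q\mapsto q(\theta)\be_\vphi$, this identifies the Hilbert space $\mc H$, defined as the completion of $C_c^\infty((0,\theta_1))$ under the norm $\|q\|_{\mc H}^2:=\int_0^{\theta_1}((q^\p)^2+q^2/\sin^2\theta)\sin\theta\,d\theta$, isometrically (up to $\sqrt{2\pi}$) with a closed subspace of $H^1_0(\Sigma;TS^2)$, namely the axisymmetric $\be_\vphi$-polarized tangent vector fields with Dirichlet data at $\theta=\theta_1$. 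By Lemma \ref{lemaxi}, membership $q(\theta)\be_\vphi\in H^1(\Sigma;\R^3)$ automatically forces $q(0)=0$, so the apparent boundary condition at the singular endpoint in \eqref{eigenprob} is already encoded in the function space.

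Second, I would introduce the resolvent $T:L^2((0,\theta_1),\sin\theta\,d\theta)\to L^2((0,\theta_1),\sin\theta\,d\theta)$ sending $f$ to the unique $q\in\mc H$ solving the weak formulation of the equation in \eqref{eigenprob} with right-hand side $-f$ in place of $-\lambda q$; existence and uniqueness follow from the Lax--Milgram theorem on $\mc H$, since the associated bilinear form is coercive and symmetric. The operator $T$ is self-adjoint and positive by symmetry and strict positivity of the form, and it is compact because the embedding $\mc H\hookrightarrow L^2((0,\theta_1),\sin\theta\,d\theta)$ inherits compactness from the classical Rellich embedding $H^1(\Sigma;\R^3)\Subset L^2(\Sigma;\R^3)$ through the lifting $q\mapsto q\be_\vphi$. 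The spectral theorem for compact self-adjoint positive operators then produces an orthonormal basis $\{q_j\}$ of $L^2((0,\theta_1),\sin\theta\,d\theta)$ with $Tq_j=\mu_j q_j$ and $\mu_j\searrow 0^+$; equivalently, $\{q_j\}$ is a complete orthogonal system of eigenfunctions of \eqref{eigenprob} with $\lambda_j=1/\mu_j\to\infty$ and $\lambda_j>0$.

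The main obstacle is justifying the compact embedding $\mc H\Subset L^2((0,\theta_1),\sin\theta\,d\theta)$ in the face of the degenerate weight at $\theta=0$. I would handle this precisely through the vector-field lifting above: once $q$ is identified with $q\be_\vphi\in H^1_0(\Sigma;TS^2)$, compactness reduces to the classical Rellich theorem on the compact Lipschitz manifold with boundary $\Sigma$. The secondary point — that the resolvent of the vector Laplacian leaves the axisymmetric, $\be_\vphi$-polarized sector invariant, so that the one-variable problem is exactly the restriction of the vector problem — I would enforce via the $\vphi$-averaging operator $U^n:=2^{-n}\sum_{k=0}^{2^n-1}U(\,\cdot\,+2\pi k/2^n)$ used in the proof of Lemma \ref{lemform}, together with the analogous reflection $(\theta,\vphi)\mapsto(\theta,-\vphi)$ that isolates the $\be_\vphi$-polarized component.
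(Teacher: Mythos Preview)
Your proposal is correct and shares the essential idea with the paper's proof: resolve the singularity of \eqref{eigenprob} by lifting $q\mapsto q(\theta)\be_\vphi$ to the spherical cap, use Lax--Milgram and Rellich there, and invoke the $\vphi$-averaging symmetry (as in Lemma~\ref{lemform}) to isolate the axisymmetric $\be_\vphi$-polarized sector.

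The organization differs, however, and your version is somewhat more streamlined. You apply Lax--Milgram directly on the one-dimensional weighted space $\mc H$ (coercivity is immediate since the bilinear form \emph{is} the inner product) and inherit compactness of $\mc H\hookrightarrow L^2((0,\theta_1),\sin\theta\,d\theta)$ from the classical Rellich embedding on the cap via the isometric lifting; the symmetry argument then appears only as a consistency check. The paper instead solves the full vector problem on $H^1_0(\mc D)$, upgrades the solution to $H^2(\mc D)$, passes to the strong form, and only then runs the averaging argument of Lemma~\ref{lemform} to conclude that $\bU=u_\vphi(\theta)\be_\vphi$, from which the scalar weak solution is read off. Your route avoids the $H^2$ step entirely. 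You also obtain $\lambda_j>0$ for free from strict positivity of the form (so $\mu_j>0$, $\lambda_j=1/\mu_j$), whereas the paper deduces positivity via the weak maximum principle applied to the lifted vector equation. Both arguments are valid; yours is shorter, the paper's is a more literal reuse of the machinery already assembled in \S\ref{subelliptic}.
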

    \begin{proof}
    	1. For given $f\in L^2((0,\theta_1),\sin\theta d\theta)$, we consider 
    	\begin{align}\label{qfproblem}
    	\begin{cases}\frac{1}{\sin\theta}\pt_{\theta}(\sin\theta\pt_{\theta}q)-\frac{q}{\sin^2\theta}=-f\qdin \theta\in(0,\theta_1),\\
    	q=0\qdon \theta=0,\theta_1.
    	\end{cases}
    	\end{align}
    	Write this equation in the form
    	\begin{align}\label{weakformqfproblem}
    	\int_0^{\theta_1}(\pt_\theta q\pt_\theta \xi+\frac{q\xi}{\sin^2\theta})\sin\theta d\theta=\int_0^{\theta_1}f\xi \sin\theta d\theta\qd
    	\end{align}
    	for all $\xi\in H_0^1((0,\theta_1),\sin\theta d\theta)$. 
    	Assume for a moment that there exists a unique $q\in H^1_0((0,\theta_1),\sin\theta d\theta)$ satisfying \eqref{weakformqfproblem} for all $\xi\in H_0^1((0,\theta_1),\sin\theta d\theta)$ 
    	and 
    	this $q$ satisfies 
    	\begin{align}
    	\label{qestimate}||q||_{H_0^1((0,\theta_1),\sin\theta d\theta)}\le C||f||_{L^2((0,\theta_1),\sin\theta d\theta)}
    	\end{align} 
    	for some positive constant $C$. 
    	Using this $q$, we define a map $\mc S: L^2((0,\theta_1),\sin\theta d\theta)\ra L^2((0,\theta_1),\sin\theta d\theta)$ by
    	$$\mc Sf=q.$$
    	Then $\mc S$ is a self-adjoint and compact linear operator. Hence, by the spectral theorem for compact self-adjoint operators, $\mc S$ has contable infinite eigenvalues $\mu_j$ satisfying $\mu_j\ra 0$ as $j\ra \infty$ and the set of eigenfunctions $q_j$ of $\mc S$ corresponding to $\mu_j$ forms a complete orthorgonal set in $L^2((0,\theta_1),\sin\theta d\theta)$. 
    	From this fact, we obtain that \eqref{eigenprob} has infinitely countable eigenvalues $\lambda_j\ra \infty$ as $j\ra \infty$ and the set of eigenfunctions of  \eqref{eigenprob} forms a complete orthorgonal set in $L^2((0,\theta_1),\sin\theta d\theta)$. 
    	
    	2.  Show that  there exists a unique $q\in H^1_0((0,\theta_1),\sin\theta d\theta)$ satisfying \eqref{weakformqfproblem} for all $\xi\in H_0^1((0,\theta_1),\sin\theta d\theta)$. 
    	
    	For given $f\in L^2((0,\theta_1),\sin\theta d\theta)$, we consider
    	\begin{align}\label{Uprobfvphi}
    	\int_{\mc D} \delta \bU \delta {\bm \xi} =\int_{\mc D} f(\theta)\be_{\vphi}{\bm \xi}
    	\end{align}
    	for all ${\bm \xi}\in H_0^1(\mc D)$
    	where $\bU:\mc D\ra \R^3$, ${\bm \xi}:\mc D\ra \R^3$, $\mc D:=\{(x,y,z)\in S^2:\frac{z}{\sqrt{x^2+y^2+z^2}}\ge \cos\theta_1\}$ and $\delta$ is the covariant derivative on $S^2$. By the Lax-Milgram theorem, there exists a unique $\bU \in H_0^1(\mc D)$ satisfying \eqref{Uprobfvphi} for all ${\bm \xi}\in H_0^1(\mc D)$ and this $\bU$ satisfies 
    	\begin{align}
    	\label{UH1}||\bU||_{H_0^1(\mc D)}\le C||f\be_\vphi||_{L^2(\mc D)}
    	\end{align}
    	for some positive constant $C$ depending on $\mc D$. Let $\bU$ be a function in $H_0^1(\mc D)$ satisfying \eqref{Uprobfvphi} for all ${\bm \xi}\in H_0^1(\mc D)$.
    	Using the standard argument, it can be shown that $\bU$ 
    	satisfies 
    	\begin{align*}
    	||\bU||_{H^2(\mc D)}\le C||f\be_\vphi||_{L^2(\mc D)}.
    	\end{align*}
    	From this fact, we see that $\bU$ 
    	satisfies
    	\begin{align*}
    	\int_{\mc D}\Delta_{S^2}\bU {\bm\xi}=-\int_{\mc D}f(\theta)\be_{\vphi}{\bm \xi}
    	\end{align*}
    	for all ${\bm \xi}\in H_0^1(\mc D)$.
    	Using this fact and the fact that the coefficients of $\Delta_{S^2}\bU$ in the spherical coordinate system are independent of $\vphi$, we apply arguments similar to the ones in the proof of Lemma \ref{lemform} to $\bU$ (here, we use the facts that a bounded sequence in $H^2(\mc D)$ contains a weakly convergent subsequence
    	and a $H^1_0(\mc D)$ function 
    	satisfying \eqref{Uprobfvphi} for all ${\bm \xi}\in H_0^1(\mc D)$ is unique). 
    	Then we have that $\bU$ 
    	only has the form $u_\vphi(\theta)\be_{\vphi}$. 
    	
    	One can see that  if $\bU=u_\vphi(\theta)\be_{\vphi}\in H^1_0(\mc D)$ satisfies \eqref{Uprobfvphi} for all ${\bm \xi}\in H_0^1(\mc D)$, then $u_\vphi\in H^1_0((0,\theta_1),\sin\theta d\theta)$ satisfies \eqref{weakformqfproblem} for all $\xi\in H_0^1((0,\theta_1),\sin\theta d\theta)$ and that if $u_\vphi\in H^1_0((0,\theta_1),\sin\theta d\theta)$ satisfies \eqref{weakformqfproblem} for all $\xi\in H_0^1((0,\theta_1),\sin\theta d\theta)$, then $\bU=u_\vphi(\theta)\be_{\vphi}\in H^1_0(\mc D)$ satisfies \eqref{Uprobfvphi} for all ${\bm \xi}\in H_0^1(\mc D)$ having the form $\xi(\theta) \be_{\vphi}$.
    	Using this fact, 
    	we can deduce that there exists a unique $q\in H^1_0((0,\theta_1),\sin\theta d\theta)$ satisfying \eqref{weakformqfproblem} for all $\xi\in H_0^1((0,\theta_1),\sin\theta d\theta)$. 
    	By \eqref{UH1}, this solution satisfies \eqref{qestimate}. 
    	
    	3. Show that eigenvalues $\lambda_j$ of \eqref{eigenprob} are positive. 
    	
    	If $q$ is an eigenfunction of $\mc S$ corresponding to an eigenvalue $\mu=\frac 1 \lambda$, then there holds
    	\begin{align}\label{qeigen}\int_{\mc D}\delta(q\be_\vphi)\delta{\bm \xi}=\int_{\mc D}\lambda q \be_\vphi {\bm \xi}
    	\end{align}
    	for all ${\bm \xi}\in H_0^1(\mc D)$ having the form $\xi(\theta) \be_{\vphi}$. 
    	Using the weak maximum principle, 
    	we can have that 
    	for $q\be_\vphi\in H_0^1(\mc D)$ to be a nonzero function satisfying \eqref{qeigen} for all ${\bm \xi}\in H_0^1(\mc D)$ having the form $\xi(\theta) \be_{\vphi}$, $\lambda$ must be positive. 
    	Hence, eigenvalues of \eqref{eigenprob} are positive.
    	This finishes the proof. 
    \end{proof}
    Then we prove the invertibility of the partial Fr\'echet derivative of $\mc A$ with respect to $\fsp$ at $\zeta_0$. 
    \begin{lem}\label{leminvertible}
    	The partial Fr\'echet derivative of $\mc A$ with respect to $\fsp$ at $\zeta_0$ 
    	given by \eqref{Dfsp} 
    	is an invertible map from $C^{1,\alpha}_{(-\alpha,\{\theta=\theta_1\})}((0,\theta_1))$ to $C^{1,\alpha}_{(-\alpha,\{\theta=\theta_1\})}((0,\theta_1))$. 
    \end{lem}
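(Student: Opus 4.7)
The plan is to diagonalize $L := D_{\fsp}\mc A(\zeta_0)$ via the eigenbasis $\{q_j\}_{j\ge 1}$ of the singular Sturm-Liouville problem \eqref{eigenprob} supplied by Lemma~\ref{lemlegendre}, and then to conclude via the Fredholm alternative combined with an explicit sign computation. The crucial structural fact, which becomes visible once \eqref{Wtileqnstar} is rewritten in the scalar form \eqref{curlformsig}, is that the $\theta$-part of the principal elliptic operator is precisely $\mc L_\theta q := \tfrac{1}{\sin\theta}\pt_\theta(\sin\theta\pt_\theta q) - \tfrac{q}{\sin^2\theta}$ appearing in \eqref{eigenprob}, while the source has the separated form $\tfrac{\gamma(r)\til\fsp(\theta)}{r}\be_\vphi$. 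Thus, expanding $\til\fsp = \sum_j c_j q_j$ in $L^2((0,\theta_1),\sin\theta\,d\theta)$ and testing \eqref{curlformsig} (after multiplication by $r^2\sin\theta$) against $q_j(\theta)$ nearly separates the boundary value problem \eqref{Wtileqnstar}--\eqref{Wtileqnstarbc} into a countable family of two-point linear ODEs on $(r_s,r_1)$ for the Fourier coefficients $\hat\Psi_j(r) := \int_0^{\theta_1}\til\Psi^{(\til\fsp)}(r,\theta)q_j(\theta)\sin\theta\,d\theta$; integration by parts in $\theta$ is legitimate because $\til\Psi^{(\til\fsp)}\be_\vphi\in C^{2,\alpha}_{(-1-\alpha,\Gam_w^+)}$ and $q_j(0)=q_j(\theta_1)=0$.

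Using $\til\Psi^{(\til\fsp)}|_{r=r_s}=0$, I first reduce \eqref{Dfsp} to $L\til\fsp = -\rho_0^+(r_s)^{-1}\pt_r\til\Psi^{(\til\fsp)}(r_s,\cdot) - \tfrac{(u_0^--u_0^+)(r_s)}{r_s}\til\fsp$. Each modal ODE carries Dirichlet data $\hat\Psi_j(r_s)=0$, exit data $\hat\Psi_j(r_1) = c_j\beta_j + C\nu_j$ with $C := \tfrac{\int_0^{\theta_1}\til f_s\sin\zeta\,d\zeta}{\int_0^{\theta_1}\sin\zeta\,d\zeta}$ a scalar rank-one functional of $\til\fsp$ coming from the compatibility subtraction in \eqref{Wtileqnstarbc}, and a source proportional to $c_j$. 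Standard ODE theory (coercivity from subsonicity $c_0^+>u_0^+$ and $\pt_r\rho_0^+>0$ by Remark~\ref{rmkrhop}) yields unique solvability, $\hat\Psi_j(r) = c_j\mc R_j(r) + C\mc R_j^*(r)$, producing the decomposition
\[
L = L_d + K,\qquad L_d : \textstyle\sum c_j q_j \longmapsto \sum \kappa_j c_j q_j,\qquad \kappa_j := -\tfrac{\mc R_j^\p(r_s)}{\rho_0^+(r_s)\|q_j\|^2} - \tfrac{(u_0^--u_0^+)(r_s)}{r_s},
\]
where $K$ is finite-rank (hence compact). A maximum/energy argument for the ODE governing $\mc R_j$ gives $\mc R_j^\p(r_s)\ge 0$ together with $\mc R_j^\p(r_s)/\|q_j\|^2\to 0$ as $\lambda_j\to\infty$; combined with the entropy sign $(u_0^--u_0^+)(r_s)>0$, this yields $\kappa_j\le-\tfrac{(u_0^--u_0^+)(r_s)}{r_s}<0$ uniformly in $j$. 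Hence $L_d$ is an isomorphism, so $L$ is Fredholm of index $0$.

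Invertibility of $L$ then reduces to its injectivity. If $L\til\fsp=0$, the diagonal relations give $c_j = -C\mu_j^*/\kappa_j$ for explicit $\mu_j^*$, and substituting into the definition of $C$ produces a scalar identity $(1+\Lambda)C = 0$ with $\Lambda$ an explicit convergent series; the same sign analysis forces $1+\Lambda\neq 0$, hence $C=0$, $c_j = 0$ for all $j$, and $\til\fsp=0$. The final lift from $L^2$- to $C^{1,\alpha}_{(-\alpha,\{\theta=\theta_1\}),0}$-invertibility is supplied by the Schauder-type estimate of Lemma~\ref{lemSE} applied to \eqref{Wtileqnstar}--\eqref{Wtileqnstarbc}. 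The main obstacle is the modal analysis of the ODE for $\mc R_j$, specifically establishing the sign $\mc R_j^\p(r_s)\ge 0$, the asymptotic $\mc R_j^\p(r_s)/\|q_j\|^2\to 0$ as $\lambda_j\to\infty$, and the nonvanishing of the single scalar $1+\Lambda$; all three rest critically on the entropy inequality $u_0^->u_0^+$ at the shock, together with the sign $\pt_r\rho_0^+>0$ ensured by the divergent nozzle geometry.
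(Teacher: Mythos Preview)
Your core mechanism---the modal decomposition via the eigenbasis of \eqref{eigenprob}, the derivation of the two-point ODEs \eqref{rode}--\eqref{rodebc}, and the sign conclusion $\mc R_j'(r_s)\ge 0$ from the maximum principle combined with $(u_0^--u_0^+)(r_s)>0$---is exactly the paper's engine. Two parts of your framework, however, are more complicated than necessary and one of them hides an oversight.

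First, your rank-one perturbation is phantom. Since $q_j(\theta_1)=0$, one checks (cf.\ \eqref{fsm}) that the weighted average $C=\tfrac{\int_0^{\theta_1}\til f_s\sin\zeta\,d\zeta}{\int_0^{\theta_1}\sin\zeta\,d\zeta}$ coincides exactly with the constant part of the representation $\til f_s=\sum_j c_j\bigl(-\lambda_j^{-1}\tfrac{\pt_\theta(q_j\sin\theta)}{\sin\theta}+\text{const}_j\bigr)$; the subtraction $-C+\til f_s$ in the exit data \eqref{Wtileqnstarbc} therefore removes all inter-modal coupling, the modal boundary value $\hat\Psi_j(r_1)$ depends only on $c_j$, and your $\mc R_j^{*}\equiv 0$. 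The scalar $(1+\Lambda)C=0$ step thus never arises; injectivity follows directly from $\kappa_j<0$ mode by mode, which is precisely how the paper closes its Step~3 (by testing against a single $q_k$). Second, your route to Fredholm index zero through ``$L_d$ is an isomorphism'' is not supported: diagonal eigenvalue bounds give an $L^2$ isomorphism, but promoting this to the weighted H\"older space $C^{1,\alpha}_{(-\alpha,\{\theta=\theta_1\}),0}$ is not automatic, and Lemma~\ref{lemSE} does not furnish that step. The paper avoids this entirely by observing at the outset that $L=cI-K$ with $c=-\tfrac{(u_0^--u_0^+)(r_s)}{r_s}$ and $K:\til\fsp\mapsto \rho_0^+(r_s)^{-1}\grad\times(\til\Psi^{(\til\fsp)}\be_\vphi)\cdot\be_\theta|_{r=r_s}$ compact on the H\"older space itself, via the regularity gain \eqref{tilWestimate} and the compact embedding $C^{2,\alpha}_{(-1-\alpha)}\Subset C^{1,\alpha}_{(-\alpha)}$; Fredholm and the reduction to injectivity then take place directly in the target space. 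Your asymptotic $\mc R_j'(r_s)/\|q_j\|^2\to 0$ is likewise unneeded once this structure is in hand. On the positive side, your direct projection $\hat\Psi_j(r)=\int_0^{\theta_1}\til\Psi^{(\til\fsp)}q_j\sin\theta\,d\theta$ is cleaner than the paper's truncation-plus-weak-convergence argument (its Step~2) for linking the full solution to the modal ODEs.
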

    \begin{proof}
    	By \eqref{tilWestimate} 
    	and $C^{2,\alpha}_{(-1-\alpha,\{\theta=\theta_1\})}((0,\theta_1))\Subset C^{1,\alpha}_{(-\alpha,\{\theta=\theta_1\})}((0,\theta_1))$, 
    	$D_{\fsp} {\mc A}(\zeta_0)$ given by \eqref{Dfsp} is of the form $c I-K$ where $c$ is a constant and $K$ is a compact linear map from $C^{1,\alpha}_{(-\alpha,\{\theta=\theta_1\}),0}((0,\theta_1))$ to $C^{1,\alpha}_{(-\alpha,\{\theta=\theta_1\}),0}((0,\theta_1))$. 
    	By the Fredholm alternative, this implies that if $\ker D_{\fsp}\mc A(\zeta_0)=\{0\}$, then $D_{\fsp}\mc A(\zeta_0)$  is invertible. 
    	In this proof, we show that $\ker D_{\fsp}\mc A(\zeta_0)=\{0\}$. Since it is obvious that $D_{\fsp}\mc A(\zeta_0)0=0$, we show that $D_{\fsp}\mc A(\zeta_0)\til \fsp=0$ only if $\til \fsp=0$. 
    	
    	1. 	Assume that for a nonzero $\til\fsp\in C^{1,\alpha}_{(-\alpha,\{\theta=\theta_1\}),0}((0,\theta_1))$, $D_{\fsp}\mc A(\zeta_0)\til \fsp=0$. 
    	Then there holds 
    	\begin{align}\label{dfsp}
    	\left.\frac{1}{\rho_0^+}
    	\grad\times(\Psi^{(\til \fsp)}\be_\vphi)\cdot\be_\theta\right|_{r=r_s}
    	-(u_0^- -u_0^+)(r_s)\frac{\til \fsp}{r_s}=0
    	\end{align}
        where $\til \Psi^{(\til \fsp)}\be_{\vphi}$ is the $C^{3,\alpha}_{(-1-\alpha,\Gam_{w,r_s}^+)}(\n_{r_s}^+)$ solution of 
    	\begin{align}
    	\label{Wtileqnspherical}
    	&-\frac{1}{\rho_0^+ r^2}\pt_r (r^2\pt_r \til\Psi^{(\til f_s^\p)})+\frac{\pt_r\rho_0^+}{{\rho_0^+}^2r}\pt_r(r\til\Psi^{(\til f_s^\p)})
    	-\frac{1}{\rho_0^+ r^2}\left(\frac{ {c_0^+}^2}{ {c_0^+}^2-{u_0^+}^2}\right)
    	\left(\frac{1}{\sin\theta}\pt_{\theta}(\sin\theta\pt_{\theta}\til\Psi^{(\til f_s^\p)})-\frac{\til\Psi^{(\til f_s^\p)}}{\sin^2\theta}\right)
    	\\&\nonumber
    	\qd\qd=\frac{{\rho_0^+}^{\gam-1}}{(\gam-1)u_0^+}\left(1+\frac{\gam {u_0^+}^2}{{c_0^+}^2-{u_0^+}^2}\right)\frac{\left(g( {M_0^-}^2)\right)^\p(r_s) S_{in}}{r}\til f_s^\p\qdin \n_{r_s}^{+,*},\\
    	\label{Wtileqnsphericalbc}
    	&\til \Psi^{(\til f_s^\p)}=\begin{cases}
    	0\qdon \Gam_{r_s}^*,\;\Gam_{w,r_s}^{+,*},\\
    	-\frac{1}{r_1\sin\theta}\int_0^\theta \frac{\rho_0^+((\gam-1){u_0^+}^2+{c_0^+}^2)}{\gam(\gam-1)u_0^+S_0^+}\left(g({M_0^-}^2)\right)^\p(r_s) S_{in}\\
    	\qd\qd\qd\qd\qd\qd\qd\qd\qd\qd\qd\qd\qd\qd\qd\qd
    	\left(-\frac{\int_0^{\theta_1}\til f_s\sin\zeta d\zeta}{\int_0^{\theta_1}\sin\zeta d\zeta}+\til f_s\right)r_1^2\sin\xi d\xi\qdon \Gam_{ex}^*,
    	\end{cases}
    	\end{align}
    	which is 
    	the spherical coordinate representation of \eqref{Wtileqnstar}, \eqref{Wtileqnstarbc}, 
    	for given $\til \fsp$. 
    	Using $
    	C^{1,\alpha}_{(-\alpha,\{\theta=\theta_1\})}((0,\theta_1)) 
    	\subset L^2((0,\theta_1),\sin\theta d\theta)$ and Lemma \ref{lemlegendre}, 
    	we express $\til \fsp$ as 
    	\begin{align}\label{fspexp}
    	\til \fsp=\sum_{j=1}^\infty c_j q_j
    	\end{align}
    	where $c_j$ are constants and $q_j$ are eigenfunctions of \eqref{eigenprob} corresponding to eigenvalues $\lambda_j$ of \eqref{eigenprob}. 
    	Define 
    	\begin{align}\label{fpsm}
    	\til f^\p_{s,m}:=\sum_{j=1}^m c_j q_j 
    	\end{align}
    	and $\til f_{s,m}:=\int_0^\theta \til f_{s,m}^\p$.
    	We consider  \eqref{Wtileqnspherical}, \eqref{Wtileqnsphericalbc} for given $\til \fsp=\til f_{s,m}^\p$ and $\til f_s=\til f_{s,m}$. 
    	
    	Using the fact that $q_j\in C^\infty([0,\theta_1])$ and $\frac{1}{\sin\theta}\pt_{\theta}(\sin\theta\pt_{\theta}q)-\frac{q}{\sin^2\theta}=\pt_{\theta}\left(\frac{\pt_{\theta}(q\sin\theta)}{\sin\theta}\right)$, we can have  
    	\begin{align}\label{fsm}
    	\til f_{s,m}=\sum_{j=1}^m -\frac{c_j}{\lambda_j}\left(\frac{\pt_{\theta}(q_j\sin\theta)}{\sin\theta}-\left.\frac{\pt_{\theta}(q_j\sin\theta)}{\sin\theta}\right|_{\theta=0}\right).
    	\end{align}
    	Here, $\left.\frac{\pt_{\theta}(q_j\sin\theta)}{\sin\theta}\right|_{\theta=0}$ is bounded because $q_j=0$ on $\theta=0$ and $q_j\in C^\infty([0,\theta_1])$. 
    	Substitute $\til \Psi_m=\sum_{j =1}^m p_j(r)q_j(\theta)$,
    	$\til f_{s,m}^\p$ given in \eqref{fpsm} and $\til f_{s,m}$ given in \eqref{fsm} into the places of $\til \Psi^{(\til \fsp)}$, $\til \fsp$ and $\til f_s$ in \eqref{Wtileqnspherical}, \eqref{Wtileqnsphericalbc}, respectively. Then we obtain 
    	\begin{align}
    	\label{rode}
    	(L_1:=)&-\frac{1}{r^2}(\frac{1}{\rho_0^+}r^2p_j^\p)^\p
    	+(\frac{1}{\rho_0^+r^2}(1+\frac{ {u_0^+}^2}{ {c_0^+}^2-{u_0^+}^2})\lambda_j+\frac{\pt_r\rho_0^+}{{\rho_0^+}^2r})p_j
    	\\
    	&\qd\qd\qd\qd\qd\qd=\frac{{\rho_0^+}^{\gam-1}}{(\gam-1)u_0^+}(1+\frac{\gam {u_0^+}^2}{{c_0^+}^2-{u_0^+}^2})\frac{\left(g( {M_0^-})^2\right)^\p(r_s) S_{in}}{r}c_j\qdin (r_s,r_1),\nonumber
    	\\
    	\label{rodebc}
    	&p_j=\begin{cases}
    	0\qdon r=r_s,\\
    	\frac{\rho_0^+((\gam-1){u_0^+}^2+{c_0^+}^2)(g( {M_0^-}^2))^\p(r_s)S_{in}r_1}{\gam(\gam-1)u_0^+S_0^+\lambda_j}c_j\qdon r=r_1
    	\end{cases}
    	\end{align}
    	for $j=1,\ldots,m$. One can see that $L_1$ is of the form  $L_0-c_0I$ where $L_0(=\frac{1}{r^2}(\frac{1}{\rho_0^+}r^2p_j^\p)^\p)$ is an invertible operator and  $c_0$ is a positive constant. Using the Fredholm alternative and the maximum principle, we can obtain that 
    	\eqref{rode}, \eqref{rodebc} for each $j$ has a unique $C^\infty([r_s,r_1])$ solution. 
    	Let $p_j$ be the solution of \eqref{rode}, \eqref{rodebc}. Then by the fact that $q_j\be_\vphi\in  C^\infty(\ol{\mc D})$ where $\mc D$ is a domain defined in the proof of Lemma \ref{lemlegendre}, we have that $\til \Psi_m \be_\vphi=\sum_{j=1}^m p_jq_j\be_\vphi$ is a $C^\infty(\ol{\n_{r_s}^+})$ solution of \eqref{Wtileqnstar}, \eqref{Wtileqnstarbc} for given 
    	$\til\fsp=\til f_{s,m}^\p$ and $\til f_s=\til f_{s,m}$. 
    	
    	2. Show that there exists a subsequence  $\til \Psi_{m_l}\be_{\vphi}$ of $\til \Psi_m\be_{\vphi}$ such that $\til\Psi_{m_l}\be_{\vphi}$ and $D(\til \Psi_{m_l}\be_{\vphi})$ weakly converge to $\til \Psi^{(\til \fsp)}\be_\vphi$ and $D(\til \Psi^{(\til \fsp)}\be_{\vphi})$ in $L^2(\n_{r_s}^+)$, respectively.

    	Since $\til \Psi_m \be_\vphi$ is a $C^\infty(\ol{\n_{r_s}^+})$ solution of \eqref{Wtileqnstar}, \eqref{Wtileqnstarbc} for given $\til\fsp=\til f_{s,m}^\p$ and $\til f_s=\til f_{s,m}$, \eqref{Wtileqnstar}, \eqref{Wtileqnstarbc} satisfied by $\til \Psi_m \be_\vphi$ can be transformed into the following boundary value problem for an elliptic system
    	\begin{multline}
    	\label{DivW}
    	\Div \left({\bm A}D(\til \Psi_m\be_{\vphi})\right)-d(\til \Psi_m\be_{\vphi})\\
    	=-\frac{{\rho_0^+}^{\gam-1}}{(\gam-1)u_0^+}(1+\frac{\gam {u_0^+}^2}{{c_0^+}^2-{u_0^+}^2})(g({M_0^-}^2))^\p(r_s)S_{in}\frac{\til f_{s,m}^\p}{r}\be_\vphi\qdin \n_{r_s}^+,
    	\end{multline}
    	where ${\bm A}$ and $d$ are a $(2,2)$-tensor and scalar function defined below \eqref{vbdry},
    	\begin{align}\label{DivWbc}
    	\til \Psi_m\be_{\vphi}=\begin{cases}
    	{\bm 0}\qdon \Gam_{r_s},\;\Gam_{w,r_s}^+,\\
    	-\frac{1}{r_1\sin\theta}\int_0^\theta \frac{\rho_0^+((\gam-1){u_0^+}^2+{c_0^+}^2)}{\gam(\gam-1)u_0^+S_0^+}(g({M_0^-}^2))^\p(r_s)S_{in}\\
    	\qd\qd\qd\qd\qd\qd(-\frac{\int_0^{\theta_1}\til f_{s,m}\sin\zeta d\zeta}{\int_0^{\theta_1}\sin\zeta d\zeta}+\til f_{s,m})r_1^2\sin\xi d\xi\be_{\vphi}(=:h_m\be_{\vphi})\qdon \Gam_{ex}.
    	\end{cases}
    	\end{align} 
    	Set ${\bm h}_m:=\frac{(r-r_s)r_1}{(r_1-r_s)r}h_m\be_\vphi.$ Transform \eqref{DivW}, \eqref{DivWbc} into a ${\bm 0}$ boundary value problem by substituting $\til \Psi_m^*\be_{\vphi}+{\bm h}_m$ into the place of $\til\Psi_m\be_{\vphi}$ in \eqref{DivW}, \eqref{DivWbc}. 
    	Write 
    	the resultant problem in the following form
    	\begin{multline}\label{weakW}
    	\int_{\n_{r_s}^+}{\bm A}D(\til \Psi_m^*\be_{\vphi}) D{\bm \xi}+d\til \Psi_m^*\be_{\vphi} {\bm \xi}\\
    	=\int_{\n_{r_s}^+}\frac{{\rho_0^+}^{\gam-1}}{(\gam-1)u_0^+}(1+\frac{\gam {u_0^+}^2}{{c_0^+}^2-{u_0^+}^2})(g({M_0^-}^2))^\p(r_s)S_{in}\frac{\til f_{s,m}^\p}{r}\be_\vphi{\bm \xi}\\
    	-{\bm A}D{\bm h}_m D{\bm \xi}-d{\bm h}_m{\bm \xi}
    	\end{multline}
    	for all ${\bm \xi}\in H_0^1(\n_{r_s}^+)$ where $\til \Psi_m^*\be_{\vphi}:=\til\Psi_m\be_{\vphi}-{\bm h}_m$. 
    	Using Lemma \ref{lemH1}, we obtain from \eqref{weakW}  
    	$$||\til \Psi_m^* \be_\vphi||_{H_0^1(\n_{r_s}^+)}\le C||\til f^\p_{s,m}\be_\vphi||_{L^2(\n_{r_s}^+)}.$$
    	Since $||\til f^\p_{s,m}\be_\vphi||_{L^2(\n_{r_s}^+)}\le ||\til \fsp\be_\vphi||_{L^2(\n_{r_s}^+)}$, $\til \Psi_m^*\be_{\vphi}$ is a bounded sequence in $H_0^1(\n_{r_s}^+)$.
    	Hence, there exists a subsequence $\til \Psi_{m_l}^*\be_{\vphi}$ of $\til \Psi_m^*\be_{\vphi}$ and some function $\til \Psi^*\be_{\vphi}\in H_0^1(\n_{r_s}^+)$ such that $\til \Psi_{m_l}^*\be_{\vphi}$ and $D(\til \Psi_{m_l}^*\be_{\vphi})$ weakly converge to  $\til \Psi^*\be_{\vphi}$ and $D(\til \Psi^*\be_{\vphi})$ in $L^2(\n_{r_s}^+)$, respectively.
    	Take $l\ra \infty$ to \eqref{weakW} for $m=m_l$. 
    	Then by 
    	$\til \Psi_{m_l}^*\be_{\vphi}\rightharpoonup\til \Psi^*\be_{\vphi}$  in $L^2((0,\theta_1),\sin\theta d\theta)$, $D(\til \Psi_{m_l}^*\be_{\vphi})\rightharpoonup  D(\til \Psi^*\be_{\vphi})$ in $L^2((0,\theta_1),\sin\theta d\theta)$ and $\til f_{s,m}^\p\ra \til f_s^\p$ in $L^2((0,\theta_1),\sin\theta d\theta)$, one has \eqref{weakW} with $\til f_{s,m}^\p$, $\til f_{s,m}$ and $\til \Psi_m^*$ replaced by $\til f_s^\p$, $\til f_s$ and $\til \Psi^*\be_{\vphi}$, respectively. 
    	Thus, $\til \Psi^*\be_{\vphi}$ is a $H_0^1(\n_{r_s}^+)$ function satisfying \eqref{weakW} with $\til f_{s,m}^\p$ and $\til f_{s,m}$ replaced by $\til f_s^\p$ and $\til f_s$, respectively, for all ${\bm \xi}\in H_0^1(\n_{r_s}^+)$. 
    	Let  ${\bm h}$ be ${\bm h}_m$ with $\til f_{s,m}^\p$ and $\til f_{s,m}$ replaced by $\til f_s^\p$ and $\til f_s$, respectively. 
    	One can see that $\til \Psi^{(\til \fsp)}\be_{\vphi}-{\bm h}$ is a $H_0^1(\n_{r_s}^+)$ function satisfying \eqref{weakW} with $\til f_{s,m}^\p$ and $\til f_{s,m}$ replaced by $\til f_s^\p$ and $\til f_s$, respectively, for all ${\bm \xi}\in H_0^1(\n_{r_s}^+)$. 
    	By Lemma \ref{lemH1}, 
    	a $H_0^1(\n_{r_s}^+)$ function satisfying \eqref{weakW} with $\til f_{s,m}^\p$ and $\til f_{s,m}$ replaced by $\til f_s^\p$ and $\til f_s$, respectively, for all  ${\bm \xi}\in H_0^1(\n_{r_s}^+)$ is unique. Hence, $\til \Psi^*\be_{\vphi}=\til \Psi^{(\til \fsp)}\be_{\vphi}-{\bm h}$. From this, we can conclude that $\til\Psi_{m_l}\be_{\vphi}$ and $D(\til \Psi_{m_l}\be_{\vphi})$ weakly converge to $\til \Psi^{(\til \fsp)}\be_\vphi$ and $D(\til \Psi^{(\til \fsp)}\be_{\vphi})$ in $L^2(\n_{r_s}^+)$, respectively.

    	3. Show that $D_{\fsp}\mc A(\zeta_0)\til \fsp=0$ only if $\til \fsp=0$. 
    	
    	Since $\til f_s^\p\neq 0$ by the assumption, there exists $k\in {\mathbb N}$ such that $c_k>0$ or $c_k<0$ in the expression of $\til\fsp$ in \eqref{fspexp}. Without loss of generality, assume that $c_k>0$ for some $k\in {\mathbb N}$. Then since $\lambda_k>0$ by Lemma \ref{lemlegendre}, $p_k$, that is, the solution of \eqref{rode}, \eqref{rodebc} for $j=k$,
    	satisfies $p_k(r_1)>0$. Using this fact, $p_k(r_s)=0$ and 
    	the form of \eqref{rode}, we can deduce that $p_k \ge 0$ in $[r_s,r_1]$. Thus, $p_k^\p(r_s)\ge 0$. 
    	
    	Write \eqref{dfsp} 
    	in the form 
    	\begin{align*}
    	\int_0^{\theta_1}\left(\left.-\frac{1}{\rho_0^+}\frac{1}{r}\pt_r (r\til \Psi^{(\til \fsp)})\right|_{r=r_s}-(u_0^- -u_0^+)(r_s)\frac{\til f_s^\p}{r_s}\right)\xi\sin\theta d \theta=0
    	\end{align*}
    	for all $\xi\in L^2((0,\theta_1),\sin\theta d\theta)$. 
    	Rewrite this as
    	\begin{multline}\label{L2}
    	\int_0^{\theta_1}\left(-\frac{1}{\rho_0^+}\sum_{j=1}^{m_l}\frac{\pt_r(rp_j)}{r}q_j-(u_0^- -u_0^+)(r_s)\sum_{j=1}^{m_l}\frac{c_jq_j}{r_s}\right.\\
    	\left.-\frac{1}{\rho_0^+}\frac{1}{r}\pt_r (r\til \Psi^{(\til \fsp)})\right|_{r=r_s}
    	-(u_0^- -u_0^+)(r_s)\frac{\til f_s^\p}{r_s}
    	\\
    	\left.-\left(-\frac{1}{\rho_0^+}\sum_{j=1}^{m_l}\frac{\pt_r(rp_j)}{r}q_j-(u_0^- -u_0^+)(r_s)\sum_{j=1}^{m_l}\frac{c_jq_j}{r_s}\right)\right)\xi\sin\theta d\theta=0
    	\end{multline}
    	for all $\xi\in L^2((0,\theta_1),\sin\theta d\theta)$. 
    	Since $p_k^\p(r_s)\ge 0$, $p_k(r_s)=0$, $(u_0^--u_0^+)(r_s)>0$, $\til\Psi_{m_l}\be_{\vphi}\rightharpoonup\til \Psi^{(\til \fsp)}\be_\vphi$ in $L^2(\n_{r_s}^+)$ and $D(\til \Psi_{m_l}\be_{\vphi})\rightharpoonup D(\til \Psi^{(\til \fsp)}\be_{\vphi})$ in $L^2(\n_{r_s}^+)$, for a sufficiently large $l$ such that $m_l\ge k$, if we take $\xi=q_k$, then the left-hand side of \eqref{L2} becomes a negative number (here we used the trace theorem). 
    	This contradicts to the assumption that $D_{\fsp}\mc A(\zeta_0)\til\fsp=0$. 
    	This finishes our proof. 
    \end{proof}
    Applying the weak implicit function theorem introduced in \cite{MR2824466} with the results in Lemma \ref{lemconti}, Lemma \ref{lemFrechetdiff} and Lemma \ref{leminvertible}, we obtain the result of the existence part of Theorem \ref{mainthm}. 
    \subsection{Proof of Theorem \ref{mainthm} (Uniqueness)}\label{subunique}
    Finally, we prove the uniqueness part of Theorem \ref{mainthm}. 
    \begin{proof}[Proof of Theorem \ref{mainthm} (Uniqueness)]
    	Let $\ol{\sigma}_1$ be a positive constant $\le \sigma_3$ obtained in the previous subsection such that if $\sigma\le \ol{\sigma}_1$, then Problem \ref{pb2} has a solution satisfying 
    	\begin{multline}
    	\label{mainestimate2}
    	|f(0)-r_s|+||f_s^\p||_{1,\alpha,(0,\theta_1)}^{(-\alpha,\{\theta=\theta_1\})}\\
    	+||\grad\times((\Phi-\Phi_0^+)\be_\vphi)||_{1,\alpha,\n_f^+}^{(-\alpha,\Gam_w^+)}+||\frac{L}{2\pi r \sin\theta}\be_\vphi||_{1,\alpha,\n_f^+}^{(-\alpha,\Gam_w^+)}+||S-S_0^+||_{1,\alpha,\n_f^+}^{(-\alpha,\Gam_w^+)}\le C  \sigma
    	\end{multline}	
        where $C$ is a positive constant depending on the data. 
    	Let $\sigma_1$ be a positive constant  
    	$\le\ol{\sigma}_1$ and to be determined later. 
    	Suppose that 
    	there exist two solutions 
    	$(f_i,\Phi_i\be_\vphi,L_i,S_i)$ for $i=1,2$ of Problem \ref{pb2} for $\sigma \le \sigma_1$ 
    	satisfing \eqref{mainestimate2}. 
    	
    	We will prove that there exists a positive constant $\ul{\sigma_1}\le \ol{\sigma}_1$ such that if $\sigma_1=\ul\sigma_1$, then 
    	\begin{align}\label{uniqueness}
    	(f_1,\Phi_1\be_\vphi,L_1,S_1)=(f_2,\Phi_2\be_\vphi,L_2,S_2).
    	\end{align}
    	In this proof, $C$s and $C_i$ for $i=1,2,\ldots$ denote positive constants depending on the data. Each $C$ in different inequalities differs from each other. In this proof,  when we estimate quantities, we will use all or a part of the conditions $(\rho_-,\bu_-,p_-)\in \mc B_{\sigma}^{(1)}$, $p_{ex}\in \mc B^{(3)}_\sigma$, \eqref{mainestimate2} satisfied by $(f_i,\Phi_i\be_\vphi,L_i,S_i)$ for $i=1,2$ for $\sigma\le \sigma_3$ without mentioning that we use these conditions.

    	Case 1. $f_{s,1}=f_{s,2}$ where $f_{s,i}:=f_i-f_i(0)$ for $i=1,2$. 
    	
    	By \eqref{mainestimate2} satisfied by $(f_i,\Phi_i\be_\vphi,L_i,S_i)$,
    	$$||f^\p_{s,i}||_{1,\alpha,(0,\theta_1)}^{(-\alpha,\{\theta=\theta_1\})}\le C_1\sigma$$
    	for $i=1,2$. 
    	Choose $\sigma_1=\min(\ol{\sigma}_1,\frac{C_3\sigma_3}{C_2},\frac{\sigma_3}{C_1})(=:\sigma_1^{(1)})$ where $C_2$ and $C_3$ are $C$ in \eqref{mainestimate2} and \eqref{Pseudoestimate}, respectively. 
    	Then $(\rho_-,\bu_-,p_-,p_{ex},\fsp)\in \mc B^{(4)}_{\sigma}$ for $\sigma\in (0,\sigma_3]$ and $(f_i(0),\Phi_i\be_{\vphi},L_i,S_i)$ satisfies \eqref{Pseudoestimate} for $f_s=f_{s,i}$, so the hypothesis in Proposition \ref{proPseudofree} is satisfied. If $f_{s,1}=f_{s,2}$, then by Proposition \ref{proPseudofree}, $(f_1(0),\Phi_1\be_\vphi,L_1,S_1)=(f_2(0),\Phi_2\be_\vphi,L_2,S_2)$. From this, we have \eqref{uniqueness}. 

    	Case 2. 
    	General case.
    	
    	1.  Let $(\Psi_i,A_i,T_i):=(\Phi_i-\Phi_0^+,L_i,S_i-S_0^+)$ and $(\til \Psi_i,\til A_i, \til T_i):=(\frac{\til W_i}{2\pi r \sin\theta},A_i(\Pi_{r_s f_i}),T_i(\Pi_{r_s f_i}))$ for $i=1,2$ with $\til W_i:=W_i(\Pi_{r_s f_i})$ and $W_i:=2\pi r \sin\theta \Psi_i$.
    	Show that 
    	\begin{multline}\label{substapri}
    	|f_2(0)-f_1(0)|+||(\til \Psi_2-\til \Psi_1)\be_\vphi||_{1,\beta,\n_{r_s}^+}\\+||\frac{\til A_2}{2\pi 
    		r\sin\theta}-\frac{\til A_1}{2\pi 
    		r\sin\theta}||_{0,\beta,\n_{r_s}^+}+||\til T_2-\til T_1||_{0,\beta,\n_{r_s}^+}\le C||f_{s,2}^\p-f_{s,1}^\p||_{0,\beta,(0,\theta_1)}
    	\end{multline}
    	where $\beta$ is a positive constant given in the proof of the uniqueness part of Proposition \ref{proPseudofree}.  
        
        By the assumption, 
        $(f(0),f_s,\Psi\be_{\vphi},L,T)=
        (f_i(0),f_{s,i},\Psi_i\be_{\vphi},A_i,T_i)$ for $i=1,2$ satisfy (A$^\p$). 
    	Transform (A$^\p$) satisfied by $(f_i(0),f_{s,i},\Psi_i\be_{\vphi},A_i,T_i)$ for $i=1,2$ 
    	into equations in $\n_{r_s}^+$ or on a part of $\pt\n_{r_s}^+$
    	by using $\Pi_{r_s f_i}$ in 
    	the way that we transformed (A$^\p$) satisfied by $(f_1(0),\Psi_1\be_{\vphi},A_1,T_1)$ 
    	in the proof of the uniqueness part of Proposition \ref{proPseudofree}. Then we obtain
    	\begin{align}\label{uniq1}
    	&M_i \grad\times\left(\frac{1}{\rho_0^+}(1+\frac{{u_0^+}\be_r\otimes u_0^+\be_r}{{c_0^+}^2-{u_0^+}^2})(\Pi_{r_s f_i})N_i\grad\times(\til\Psi_i\be_\vphi)\right)\\
    	&\qd\qd\qd\qd=\left(\frac{{\rho_0^+}^{\gam-1}}{(\gam-1)u_0^+}(1+\frac{\gam {u_0^+}^2}{{c_0^+}^2-{u_0^+}^2})\right)(\Pi_{r_s f_i})\frac{(\pt_{\til\theta_i}\Pi_{f_i r_s}^{*,r})(\Pi_{r_s f_i})\pt_r \til T_i+\pt_\theta\til T_i}{\Pi_{r_s f_i}^{*,r}}\be_\vphi\nonumber\\
    	&\qd\qd\qd\qd\qd\qd\qd\qd\qd\qd\qd\qd\qd\qd\qd\qd\qd\qd\qd\qd+\til {\bm F}_{1i}(\til\Psi_i\be_\vphi,\til A_i,\til T_i)\qdin \n_{r_s}^+,\nonumber
    	\\\label{uniq2}
    	&\til \Psi_i\be_\vphi=
    	\begin{cases}\frac{\Pi_{r_s f_i}^{*,r}
    		(\Phi_--\Phi_0^-)(\Pi_{r_s f_i})}{r}\be_{\vphi}\qdon \Gam_{r_s},\\
    	\frac{r_0(\Phi_--\Phi_0^-)(r_0,\theta_1)}{r 
    	}\be_{\vphi}\qdon \Gam_{w,r_s}^+:=\Gam_w\cap \{r>r_s\},\\
    	\left(\left.\frac{1}{r\sin\theta}\int_0^\theta \left(\mf f_0(\til T_i,p_{ex})\right.\right.\right.\\
    	\qd\left.\left.-\frac{\rho_0^+((\gam-1){u_0^+}^2+{c_0^+}^2)}{\gam(\gam-1)u_0^+S_0^+}\til T_i+\mf f_1(\til \Psi_i\be_\vphi,\til A_i,\til T_i) \right)r^2\sin\xi d\xi\right)\be_\vphi\qdon \Gam_{ex},
    	\end{cases}
    	\end{align}
    	\begin{multline}\label{uniq3}
    	\frac{1}{r\sin\theta}\int_0^\theta \left(\mf f_0(\til T_i,p_{ex})-\frac{\rho_0^+((\gam-1){u_0^+}^2+{c_0^+}^2)}{\gam(\gam-1)u_0^+S_0^+}\til T_i+\mf f_1(\til \Psi_i\be_\vphi,\til A_i,\til T_i) \right)r^2\sin\xi d\xi\\
    	=\frac{r_0(\Phi_--\Phi_0^-)(r_0,\theta_1)}{r_1}
    	\end{multline}
    	for $i=1,2$ where  $M_i=\left(\frac{\pt \Pi_{f_i r_s}}{\pt {\rm y}_i}\right)(\Pi_{r_s f_i})$ for $i=1,2$, 
    	$N_i=\frac{r^2}{(\Pi_{r_sf_i}^{*,r})^2}\be_r\otimes\be_r-\frac{\left(\pt_{\til \theta_i}\Pi_{f_ir_s}^{*,r}\right)(\Pi_{r_sf_i}) }{(\Pi_{r_sf_i}^{*,r})^2}r\be_r\otimes\be_\theta+\frac{\left(\pt_{\til r_i}\Pi_{f_i r_s}^{*,r}\right)(\Pi_{r_sf_i}) }{\Pi_{r_sf_i}^{*,r}}r\be_\theta\otimes\be_\theta$, 
    	${\rm y}_i$ for $i=1,2$ are the cartesian coordinate systems representing $\n_{f_i}^+$, respectively, $(\til r_i,\til \theta_i)$ are $(r,\theta)$ coordinates for ${\rm y}_i$, respectively,
    	$(r,\theta)=\Pi_{f_i r_s}^*(\til r_i,\til \theta_i)$,  
    	$\Pi_{r_s f_i}^{*,r}$ and $\Pi_{f_i r_s}^{*,r}$ are the $r$-components of $\Pi_{r_s f_i}^{*}$ and $\Pi_{f_i r_s}^{*}$, respectively, and $\til{\bm F}_{1i}$ for $i=1,2$ are ${\bm F}_1$ changed by using the transformation $\Pi_{r_sf_i}$ for $i=1,2$ using the relations
    	$
    	(\grad\times (\Psi_i\be_{\vphi}))(\Pi_{r_s f_i})=N_i\grad\times (\til \Psi_i
    	\be_{\vphi})$ and $\left(\grad\times\left(\frac{A}{2\pi r \sin\theta}\be_{\vphi}\right)\right)(\Pi_{r_s f_i})=N_i\grad\times \left(\frac{\til A_i}{2\pi r \sin\theta}\be_{\vphi}\right)$.
    	Subtract \eqref{uniq1}-\eqref{uniq3} for $i=1$ from the same equations for $i=2$. Then we have
    	\begin{align}\label{un1}
    	&\grad\times\left(\frac{1}{\rho_0^+}(1+\frac{{u_0^+}\be_r\otimes u_0^+\be_r}{{c_0^+}^2-{u_0^+}^2})\grad\times((\til\Psi_2-\til\Psi_1)\be_\vphi)\right)\\
    	&\nonumber\qd\qd\qd\qd\qd=
    	\frac{{\rho_0^+}^{\gam-1}}{(\gam-1)u_0^+}(1+\frac{\gam {u_0^+}^2}{{c_0^+}^2-{u_0^+}^2})\frac{\pt_\theta (\til T_2-\til T_1)}{r}\be_\vphi+A_2-A_1+B_2-B_1\\
    	\nonumber&\qd\qd\qd\qd\qd\qd\qd\qd\qd\qd\qd\qd+\til {\bm F}_{12}(\til\Psi_2\be_\vphi,\til A_2,\til T_2)-\til {\bm F}_{11}(\til\Psi_1\be_\vphi,\til A_1,\til T_1)(=:\til{\bm F})\qdin \n_{r_s}^+,\\
    	\label{un2}
    	&(\til \Psi_2-\til\Psi_1)\be_\vphi=
    	\begin{cases}
    	\left(\frac{\Pi_{r_s f_2}^{*,r}
    		(\Phi_--\Phi_0^-)(\Pi_{r_s f_2})-\Pi_{r_s f_1}^{*,r}
    		(\Phi_--\Phi_0^-)(\Pi_{r_s f_1})}{r}
    	\right)\be_{\vphi}(=:\til{\bm h}_1)\qdon \Gam_{r_s},\\
    	{\bm 0}
    	\qdon \Gam_{w,r_s}^+,\\
    	\left(\left.\frac{1}{r\sin\theta}\int_0^\theta \left(\mf f_0(\til T_2,p_{ex})-\mf f_0(\til T_1,p_{ex})\right.\right.\right.
    	-\frac{\rho_0^+((\gam-1){u_0^+}^2+{c_0^+}^2)}{\gam(\gam-1)u_0^+S_0^+}(\til T_2-\til T_1)\\
    	\qd\qd\left.\left.+\mf f_1(\til \Psi_2\be_\vphi,\til A_2,\til T_2)-\mf f_1(\til \Psi_1\be_\vphi,\til A_1,\til T_1) \right)r^2\sin\xi d\xi\right)\be_\vphi(=:\til{\bm h}_2)\qdon \Gam_{ex},
    	\end{cases}
    	\end{align}
    	\begin{multline}\label{un3}
    	\left.\frac{1}{r\sin\theta}\int_0^\theta \left(\mf f_0(\til T_2,p_{ex})-\mf f_0(\til T_1,p_{ex})\right.\right.-\frac{\rho_0^+((\gam-1){u_0^+}^2+{c_0^+}^2)}{\gam(\gam-1)u_0^+S_0^+}(\til T_2-\til T_1)\\
    	\left.+\mf f_1(\til \Psi_2\be_\vphi,\til A_2,\til T_2)-\mf f_1(\til \Psi_1\be_\vphi,\til A_1,\til T_1) \right)r^2\sin\xi d\xi=0
    	\end{multline}
    	where 
    	\begin{align*}
    	A_i=&\grad\times\left(\frac{1}{\rho_0^+}(1+\frac{{u_0^+}\be_r\otimes u_0^+\be_r}{{c_0^+}^2-{u_0^+}^2})\grad\times(\til\Psi_i\be_\vphi)\right)\\
    	&\qd\qd\qd\qd\qd\qd-M_i \grad\times\left(\frac{1}{\rho_0^+}(1+\frac{{u_0^+}\be_r\otimes u_0^+\be_r}{{c_0^+}^2-{u_0^+}^2})(\Pi_{r_sf_i})N_i\grad\times(\til\Psi_i\be_\vphi)\right)
    	\end{align*}
    	and 
    	\begin{align*}
    	B_i=&-\frac{{\rho_0^+}^{\gam-1}}{(\gam-1)u_0^+}(1+\frac{\gam {u_0^+}^2}{{c_0^+}^2-{u_0^+}^2})\frac{\pt_\theta \til T_i}{r}\be_\vphi\\
    	&+\left(\frac{{\rho_0^+}^{\gam-1}}{(\gam-1)u_0^+}(1+\frac{\gam {u_0^+}^2}{{c_0^+}^2-{u_0^+}^2})\right)(\Pi_{r_sf_i})\frac{(\pt_{\til\theta_i}\Pi_{f_i r_s}^{*,r})(\Pi_{r_sf_i})\pt_r \til T_i+\pt_\theta\til T_i}{\Pi_{r_sf_i}^r}\be_\vphi
    	\end{align*}
    	for $i=1,2$. 
    	We will estimate $||(\til \Psi_2-\til \Psi_1)\be_{\vphi}||_{1,\beta,\n_{r_s}^+}$ using \eqref{un1}-\eqref{un3}. 
    	For this, we estimate $	||\frac{\til A_2}{2\pi r\sin\theta}-\frac{\til A_1}{2\pi r\sin\theta}||_{1,0,\n_{r_s}^+}$ and $||\til T_2-\til T_1||_{0,\beta,\n_{r_s}^+}$. 
    	
    	Estimate $||\frac{\til A_2}{2\pi r\sin\theta}-\frac{\til A_1}{2\pi r\sin\theta}||_{0,\beta,\n_{r_s}^+}$ and $||\til T_2-\til T_1||_{0,\beta,\n_{r_s}^+}$: 
    	
    	Since $A_i$ and $T_i$ for $i=1,2$ are solutions of (B$^\p$) for $(\Psi,f(0),f_s)=(\Psi_i,f_i(0),f_{s,i})$, by Lemma \ref{lemTrans}, $A_i$ and $T_i$
    	are represented as 
    	$$A_i=2\pi f_i(\mc L_i) \sin\mc L_i u_{-,\vphi}(f_i(\mc L_i),\mc L_i)$$ and $$T_i=\left(g\left(\left(\frac{\bu_-\cdot{\bm \nu}_{f_i}(\mc L_i)
    	}{c_-}\right)^2\right)S_-\right)(f_i(\mc L_i),\mc L_i)-(g({M_0^-}^2))(r_s)S_{in}$$ where $u_{-,\vphi}=\bu_-\cdot \be_{\vphi}$, $\mc L_i$ for $i=1,2$ are $\mc L$ given in Lemma \ref{lemTrans} for $V=2\pi r \sin\theta (\Phi_0^++\Psi_i)$ and $f=f_i$, respectively,  and ${\bm \nu}_{f_i}$ for $i=1,2$ are the unit normal vectors on $\Gam_{f_i}$ pointing toward $\n_{f_i}^+$, respectively. Using these solution expressions, express $(\frac{\til A_2}{2\pi r\sin\theta}-\frac{\til A_1}{2\pi r\sin\theta})\be_\vphi$ and $\til T_2-\til T_1$ as  
    	\begin{align}\label{A2A1}
    	\left(\frac{2\pi f_2(\til{\mc L}_2) \sin\til{\mc L}_2 u_{\vphi,-}(f_2(\til{\mc L}_2),\til{\mc L}_2)}{2\pi r\sin\theta}-\frac{2\pi f_1(\til{\mc L}_1) \sin\til{\mc L}_1 u_{\vphi,-}(f_1(\til{\mc L}_1),\til{\mc L}_1)}{2\pi r\sin\theta}\right)\be_\vphi
    	\end{align}
    	and
    	\begin{align}\label{T2T1}
    	\left(g\left(\left(\frac{\bu_-\cdot{\bm \nu}_{f_2}(\til {\mc L}_2)
    	}{c_-}\right)^2\right)S_-\right)(f_2(\til {\mc L}_2),\til {\mc L}_2)-\left(g\left(\left(\frac{\bu_-\cdot{\bm \nu}_{f_1}(\til {\mc L}_1)
        }{c_-}\right)^2\right)S_-\right)(f_1(\til {\mc L}_1),\til {\mc L}_1),
    	\end{align}
    	respectively, where $\til{\mc L}_i:=\mc L_i(\Pi_{r_s f_i})$ for $i=1,2$.
    	Using arguments similar to the ones used to estimate $||(\frac{A_2}{2\pi r \sin\theta}-\frac{\til A_1}{2\pi r \sin\theta})\be_{\vphi}||_{1,0,\n_{f_2}^+}$  and $||T_2-\til T_1||_{0,\beta,\n_{f_2}^+}$ in the proof of the uniqueness part of Proposition \ref{proPseudofree}, we estimate \eqref{A2A1} and \eqref{T2T1} in $C^\beta(\ol{\n_{r_s}^+})$. Then we obtain
    	\begin{align}\label{A2A1esti}
    	||(\frac{\til A_2}{2\pi r\sin\theta}-\frac{\til A_1}{2\pi r\sin\theta})\be_{\vphi}||_{1,0,\n_{r_s}^+}\le C\sigma\left(||f_2-f_1||_{1,\beta,\Lambda}+||(\til \Psi_2-\til \Psi_1)\be_{\vphi}||_{1,\beta,\n_{r_s}^+}\right)
    	\end{align}
    	and 
    	\begin{align}\label{T2T1esti}
    	||\til T_2-\til T_1||_{0,\beta,\n_{r_s}^+}
    	\le C||f_2-f_1||_{1,\beta,\Lambda}+ C\sigma
    	||(\til \Psi_2-\til \Psi_1)\be_{\vphi}||_{1,\beta,\n_{r_s}^+}.
    	\end{align}
    	
    	Using these estimates, we estimate $||(\til \Psi_2-\til \Psi_1)\be_{\vphi}||_{1,\beta,\n_{r_s}^+}.$
    	Substitute \eqref{T2T1} into \eqref{un3}. And then using \eqref{A2A1esti} and \eqref{T2T1esti}, estimate $f_2(0)-f_1(0)$ in the resultant equation (see Step 4 in the proof of the uniqueness part of Proposition \ref{proPseudofree}). Then we obtain 
    	\begin{align*}
    	|f_2(0)-f_1(0)|\le C_4\sigma|f_2(0)-f_1(0)|+ C||f_{s,2}^\p-f_{s,1}^\p||_{0,\beta,(0,\theta_1)}
    	+C\sigma ||(\til \Psi_2-\til \Psi_1)\be_{\vphi}||_{1,\beta,\n_{r_s}^+}.
    	\end{align*}
    	Take $\sigma_1=\min(\sigma_1^{(1)}, \frac{1}{2C_4})(=:\sigma_1^{(2)})$.
    	Then we have
    	\begin{align}\label{f2f1esti}
    	|f_2(0)-f_1(0)|\le C||f_{s,2}^\p-f_{s,1}^\p||_{0,\beta,(0,\theta_1)}
    	+C\sigma ||(\til \Psi_2-\til \Psi_1)\be_{\vphi}||_{1,\beta,\n_{r_s}^+}.
    	\end{align}
    	Using \eqref{f2f1esti}, we obtain from \eqref{A2A1esti} and \eqref{T2T1esti}
    	\begin{align}\label{A2A1esti1}
    	||\frac{\til A_2}{2\pi r\sin\theta}-\frac{\til A_1}{2\pi r\sin\theta}||_{0,\beta,\n_{r_s}^+}\le C\sigma\left(||f_{s,2}^\p-f_{s,1}^\p||_{0,\beta,(0,\theta_1)}+||(\til \Psi_2-\til \Psi_1)\be_{\vphi}||_{0,\beta,\n_{r_s}^+}\right)
    	\end{align}
    	and 
    	\begin{align}\label{T2T1esti1}
    	||\til T_2-\til T_1||_{0,\beta,\n_{r_s}^+}
    	\le C||f_{s,2}^\p-f_{s,1}^\p||_{0,\beta,(0,\theta_1)}+ C\sigma
    	||(\til \Psi_2-\til \Psi_1)\be_{\vphi}||_{1,\beta,\n_{r_s}^+}.
    	\end{align}
    	Using these two estimates, \eqref{f2f1esti} and arguments similar to the ones in Step 5 in the proof of the uniqueness part of Proposition \ref{proPseudofree}, 
        we estimate $(\til \Psi_2-\til \Psi_1)\be_{\vphi}$ in \eqref{un1}, \eqref{un2} in $C^{1,\beta}(\ol{\n_{r_s}^+})$. Then we obtain 
    	\begin{align*}
    	||(\til \Psi_2-\til \Psi_1)\be_{\vphi}||_{1,\beta,\n_{r_s}^+}\le C_5\sigma||(\til \Psi_2-\til \Psi_1)\be_{\vphi}||_{1,\beta,\n_{r_s}^+}+C||f_{s,2}^\p-f_{s,1}^\p||_{0,\beta,(0,\theta_1)}.
    	\end{align*}
    	Take $\sigma_1=\min (\sigma_1^{(2)}, \frac{1}{2C_5})(=:\sigma_1^{(3)})$. 
    	Then we have
    	\begin{align}\label{Psi2Psi1esti}
    	||(\til \Psi_2-\til \Psi_1)\be_{\vphi}||_{1,\beta,\n_{r_s}^+}\le C||f_{s,2}^\p-f_{s,1}^\p||_{0,\beta,(0,\theta_1)}
    	\end{align}
    	Combining  \eqref{f2f1esti}, \eqref{A2A1esti1}, \eqref{T2T1esti1} and \eqref{Psi2Psi1esti}, we obtain \eqref{substapri}. 
    	
    	2. 
    	Using the arguments in Step 1 in the proof of Lemma \ref{lemFrechetdiff}, we can see that the system \eqref{Wtileqn}-\eqref{Ttileqn} has a unique solution for given $\til f_s^\p=f_{s,2}^\p-f_{s,1}^\p\in C_0^\beta([0,\theta_1]):=\{f\in C^\beta([0,\theta_1])\;|\; f^\p(0)=f^\p(\theta_1)=0\}$: 
    	\begin{align}
    	&\label{f0tilsol}f(0)^{(f_{s,2}^\p-f_{s,1}^\p)}=\frac{-\int_0^{\theta_1} (f_{s,2}-f_{s,1})\sin\zeta d\zeta}{\int_0^{\theta_1}\sin\zeta d\zeta},\\
    	&\label{Atilsol}\til A^{(f_{s,2}^\p-f_{s,1}^\p)}=0,\\
    	&\label{Ttilsol}\til T^{(f_{s,2}^\p-f_{s,1}^\p)}=(g({M_0^-}^2))^\p(r_s)S_{in}\left(\frac{-\int_0^{\theta_1} (f_{s,2}-f_{s,1})\sin\zeta d\zeta}{\int_0^{\theta_1}\sin\zeta d\zeta}+f_{s,2}- f_{s,1}\right)
    	\end{align}
    	and $\til \Psi^{(f_{s,2}^\p-f_{s,1}^\p)}\be_{\vphi}$, the unique $C^{2,\beta}_{(-1-\alpha,\Gam_{w,r_s}^+)}(\n_{r_s}^+)$ solution of \eqref{Wtileqnstar}, \eqref{Wtileqnstarbc} for given $\til \fsp=f_{s,2}^\p-f_{s,1}^\p\in C^\beta_0([0,\theta_1])$ (here we had  $\til \Psi^{(f_{s,2}^\p-f_{s,1}^\p)}\be_{\vphi}\in C^{2,\beta}_{(-1-\alpha,\Gam_{w,r_s}^+)}(\n_{r_s}^+)$ because $f_{s,2}^\p-f_{s,1}^\p\in C_0^\beta([0,\theta_1])$). 
    	Subtract \eqref{Wtileqn}-\eqref{comptil} for given $\til f_s^\p=f_{s,2}^\p-f_{s,1}^\p\in C_0^\beta([0,\theta_1])$ from \eqref{un1}-\eqref{un3}. Then we obtain 
    	\begin{multline}\label{unni1}
    	\grad\times\left(\frac{1}{\rho_0^+}(1+\frac{{u_0^+}\be_r\otimes u_0^+\be_r}{{c_0^+}^2-{u_0^+}^2})\grad\times((\til\Psi_2-\til\Psi_1-\til \Psi^{(f_{s,2}^\p-f_{s,1}^\p)})\be_\vphi)\right)\\
    	=
    	\frac{{\rho_0^+}^{\gam-1}}{(\gam-1)u_0^+}(1+\frac{\gam {u_0^+}^2}{{c_0^+}^2-{u_0^+}^2})\frac{\pt_\theta (\til T_2-\til T_1-\til T^{(f_{s,2}^\p-f_{s,1}^\p)})}{r}\be_\vphi\\
    	+A_2-A_1+B_2-B_1+\til {\bm F}_{12}(\til\Psi_2\be_\vphi,\til A_2,\til T_2)-\til {\bm F}_{11}(\til\Psi_1\be_\vphi,\til A_1,\til T_1)\qdin \n_{r_s}^+,
    	\end{multline}
    	\begin{multline}\label{unni2}
    	(\til \Psi_2-\til\Psi_1-\til \Psi^{(f_{s,2}^\p-f_{s,1}^\p)})\be_\vphi=
    	\begin{cases}
    	\left(\frac{\Pi_{r_s f_2}^{*,r}
    		(\Phi_--\Phi_0^-)(\Pi_{r_s f_2})-\Pi_{r_s f_1}^{*,r}
    		(\Phi_--\Phi_0^-)(\Pi_{r_s f_1})}{r}
    	\right)\be_{\vphi}\qdon \Gam_{r_s},\\
        {\bm 0} \qdon \Gam_{w,r_s}^+,\\
    	\Bigg(\frac{1}{r\sin\theta}\int_0^\theta \bigg(\mf f_0(\til T_2,p_{ex})-\mf f_0(\til T_1,p_{ex})\\
    	\qd\qd\qd\qd\qd\qd\qd
    	-\frac{\rho_0^+((\gam-1){u_0^+}^2+{c_0^+}^2)}{\gam(\gam-1)u_0^+S_0^+}(\til T_2-\til T_1-\til T^{(f_{s,2}^\p-f_{s,1}^\p)})\\
    	+\mf f_1(\til \Psi_2\be_\vphi,\til A_2,\til T_2)-\mf f_1(\til \Psi_1\be_\vphi,\til A_1,\til T_1) \bigg)r^2\sin\xi d\xi\Bigg)\be_\vphi\qdon \Gam_{ex},
    	\end{cases}
    	\end{multline}
    	\begin{multline}\label{unni3}
    	\frac{1}{r\sin\theta}\int_0^\theta \bigg(\mf f_0(\til T_2,p_{ex})-\mf f_0(\til T_1,p_{ex})
    	\\-\frac{\rho_0^+((\gam-1){u_0^+}^2+{c_0^+}^2)}{\gam(\gam-1)u_0^+S_0^+}(\til T_2-\til T_1-\til T^{(f_{s,2}^\p-f_{s,1}^\p)})\\
    	+\mf f_1(\til \Psi_2\be_\vphi,\til A_2,\til T_2)-\mf f_1(\til \Psi_1\be_\vphi,\til A_1,\til T_1) \bigg)r^2\sin\xi d\xi=0.
    	\end{multline}
    	Estimate $||\til \Psi_2-\til \Psi_1-\til \Psi^{(f_{s,2}^\p-f_{s,1}^\p)}||_{1,\beta,\n_{r_s}^+}$ using \eqref{unni1}-\eqref{unni3}.

    	Write $\til T^{(f_{s,2}^\p-f_{s,1}^\p)}$ as
    	\begin{align}
    	\label{T00}(g({M_0^-}^2))^\p(r_s)S_{in}\left(f(0)^{(f_{s,2}^\p-f_{s,1}^\p)}+f_{s,2}- f_{s,1}\right).
    	\end{align}
    	Substitute this $\til T^{(f_{s,2}^\p-f_{s,1}^\p)}$ and \eqref{T2T1} into the places of $\til T^{(f_{s,2}^\p-f_{s,1}^\p)}$ and $\til T_2-\til T_1$ in \eqref{unni3}, respectively. And then using \eqref{substapri}, estimate $f_2(0)-f_1(0)-f(0)^{(f_{s,2}^\p-f_{s,1}^\p)}$ in the resultant equation.
    	Then we obtain
    	\begin{align*}
    	|f_2(0)-f_1(0)-f(0)^{(f_{s,2}^\p-f_{s,1}^\p)}|\le C\sigma ||f_{s,2}^\p-f_{s,1}^\p||_{0,\beta,(0,\theta_1)}.
    	\end{align*}
    	Using this estimate and \eqref{substapri}, we estimate $\til T_2-\til T_1-\til T^{(f_{s,2}^\p-f_{s,1}^\p)}$ given by \eqref{T2T1} and \eqref{T00} in $C^{\beta}(\ol{\n_{r_s}^+})$. 
    	Then we obtain
    	\begin{align}\label{T1T2mmm}
    	||\til T_2-\til T_1-\til T^{(f_{s,2}^\p-f_{s,1}^\p)}||_{0,\beta,\n_{r_s}^+}\le C\sigma||f_{s,2}^\p-f_{s,1}^\p||_{0,\beta,(0,\theta_1)}. 
    	\end{align}
    	Using arguments similar to the ones in Step 5 in the proof of the uniqueness part of Proposition \ref{proPseudofree}  
    	with \eqref{substapri} and \eqref{T1T2mmm}, we estimate $\til \Psi_2-\til \Psi_1-\til \Psi^{(f_{s,2}^\p-f_{s,1}^\p)}$ in \eqref{unni1}, \eqref{unni2} in $C^{1,\beta}(\ol{\n_{r_s}^+})$. 
    	Then we have
    	\begin{align}\label{P2P1tilP}
    	||\til \Psi_2-\til \Psi_1-\til \Psi^{(f_{s,2}^\p-f_{s,1}^\p)}||_{1,\beta,\n_{r_s}^+}
    	\le C\sigma||f_{s,2}^\p-f_{s,1}^\p||_{0,\beta,(0,\theta_1)}.
    	\end{align}
    	
    	3. By the assumption, $(f_i,\Phi_i\be_\vphi,L_i,S_i)$ for $i=1,2$ satisfy \eqref{rhtau}. This implies that
    	$\mc A(\rho_-,\bu_-,p_-,p_{ex},f_{s,i}^\p)=0$
    	for $i=1,2$ where $\mc A$ is a map defined in \eqref{mcA}. 
    	Subtract these two equations. Then we have
    	$$\mc A(\rho_-,\bu_-,p_-,p_{ex},f_{s,2}^\p)-\mc A(\rho_-,\bu_-,p_-,p_{ex},f_{s,1}^\p)=0.$$
    	Write this as 
    	\begin{align}\label{Alinearize}
    	0= D_{\fsp} \mc A(\zeta_0)(f_{s,2}^\p-f_{s,1}^\p)+R
    	\end{align}
    	where
    	\begin{align*}
    	R=\mc A(\rho_-,\bu_-,p_-,p_{ex},f_{s,2}^\p)
    	-\mc A(\rho_-,\bu_-,p_-,p_{ex},f_{s,1}^\p) 
    	-D_{\fsp} \mc A(\zeta_0)(f_{s,2}^\p-f_{s,1}^\p)
    	\end{align*}
    	and $D_{\fsp} \mc A(\zeta_0)$ is a map given in \eqref{Dfsp} as a map from $C_0^\beta([0,\theta_1])$ to $C_0^\beta([0,\theta_1])$ (here $D_{\fsp} \mc A(\zeta_0)$ takes $C^\beta_0([0,\theta_1])$ functions and $\til \Psi^{(f_{s,2}^\p-f_{s,1}^\p)}\be_{\vphi}$ in $D_{\fsp} \mc A(\zeta_0)(f_{s,2}^\p-f_{s,1}^\p)$ is the $C^{2,\beta}_{(-1-\alpha,\Gam_{w,r_s}^+)}(\n_{r_s}^+)$ solution of \eqref{Wtileqnstar}, \eqref{Wtileqnstarbc} for given $\til \fsp=f_{s,2}^\p-f_{s,1}^\p\in C^\beta_0([0,\theta_1])$).

    	Using \eqref{substapri} and \eqref{P2P1tilP}, we can obtain
    	\begin{align}\label{Restimate}
    	||R||_{0,\beta,(0,\theta_1)}\le C\sigma ||f_{s,2}^\p-f_{s,1}^\p||_{0,\beta,(0,\theta_1)}.
    	\end{align}
    	In the way that we proved the invertiblity of $D_{\fsp} \mc A(\zeta_0)$ as a map from $C^{1,\alpha}_{(-\alpha,\{\theta=\theta_1\})}((0,\theta))$ to $C^{1,\alpha}_{(-\alpha,\{\theta=\theta_1\})}((0,\theta))$ in the the proof of Lemma \ref{leminvertible}, 
    	we can prove that  $D_{\fsp} \mc A(\zeta_0)$ is an invertible map as a map from $C_0^\beta([0,\theta_1])$ to $C_0^\beta([0,\theta_1])$. Using this fact and \eqref{Restimate}, we obtain from \eqref{Alinearize} 
    	\begin{align*}
    	||f_{s,2}^\p-f_{s,1}^\p||_{0,\beta,(0,\theta_1)}\le C_6\sigma ||f_{s,2}^\p-f_{s,1}^\p||_{0,\beta,(0,\theta_1)}. 
    	\end{align*}
    	Take $\sigma_1=\min(\sigma_1^{(3)},\frac{1}{2C_6})(=:\ul{\sigma}_1)$. Then we have 
    	$f_{s,2}^\p=f_{s,1}^\p$. One can see that $\ul\sigma_1$ depends on the data. This finishes the proof. 
    \end{proof}
    \section{Appendix}\label{secappen}
    In this section, we present some computations done by using the tensor notation given in \S \ref{subelliptic}. We explain how we transformed an elliptic system in the cartesian coordinate system into a system in the spherical coordinate system in the proof of Lemma \ref{lemalpha} and Lemma \ref{lem1alpha}, 
    and show that \eqref{Psielliptic} is equivalent to \eqref{EE}. 
    
    Let $(q_1,q_2,q_3)$ be an orthogonal coordinate system in $\R^3$.
    The unit vectors in this coordinate system in the direction of $q_i$ for $i=1,2,3$ are given as $\frac{1}{h_i}\frac{\pt {\bm x}}{\pt q_i}(=:\be_{q_i})$ for $i=1,2,3$ where ${\bm x}=x\be_1+y\be_2+z\be_3$ and $h_i:=|\frac{\pt {\bm x}}{\pt q_i}|$.  By  $\grad=\sum_{i=1}^3 \frac{\be_{q_i}}{h_i}\pt_{q_i}$,
    $\grad \bU$ where ${\bm U}: \R^3\ra \R^3$ can be written as
    \begin{align}\label{Ugra}
    \grad {\bm U}=\sum_{i=1}^3\frac{\pt_{q_i}{\bm U}}{h_i}\otimes \be_{q_i}.
    \end{align}
    Here, ${\bm a}\otimes {\bm b}$ for ${\bm a}, {\bm b}\in \R^3$ denotes ${\bm a}{\bm b}^T $. 
    By 
    \eqref{abcddef}, the multiplication of the tensor notation defined in  \S \ref{subelliptic} is defined by 
    \begin{align}
    \label{tensorproduct}
    ({\bm a}\otimes {\bm b}\otimes {\bm c}\otimes {\bm d}) ({\bm e}\otimes {\bm f}\otimes {\bm g}\otimes {\bm h})=({\bm d}\cdot{\bm e})({\bm c}\cdot {\bm f}) ({\bm a}\otimes {\bm b}\otimes {\bm g}\otimes {\bm h}) 
    \end{align}
    where ${\bm a}$, ${\bm b}$, ${\bm c}$, ${\bm d}$, ${\bm e}$, ${\bm f}$, ${\bm g}$, ${\bm h}\in \R^3$. 
    By \eqref{Ugra} and \eqref{tensorproduct}, $M$ in 
    $$\grad {\bm U}=M\grad_{(q_1,q_2,q_3)}{\bm U},$$
    where $\grad_{(q_1,q_2,q_3)}{\bm U}=\sum_{i=1}^3\pt_{q_i}{\bm U} \otimes \be_i$
    and $M=\frac{\pt (q_1,q_2,q_3)}{\pt (x,y,z) }$, can be expressed as 
    \begin{align}
    \label{M}M=\sum_{i=1}^3\frac{1}{h_i}\mc I\otimes \be_{q_i}\otimes \be_i\otimes \mc I.
    \end{align} 
    Here,  $\mc I\otimes {\bm a}\otimes {\bm b}\otimes \mc I$ for ${\bm a}$, ${\bm b}\in R^3$ is a linear map from $\R^{3\times3}$ to $\R^{3\times3}$ defined by
    \begin{align}
    \label{IItensor}\mc I\otimes {\bm a}\otimes {\bm b}\otimes \mc I:=\sum_{i=1}^3\be_i\otimes{\bm a}\otimes{\bm b}\otimes \be_i.
    \end{align}
    (Note that $\mc I\otimes {\bm a}\otimes {\bm b}\otimes \mc I$ satisfies
    \begin{align}\label{22tensoract}
    (\mc I\otimes {\bm a}\otimes {\bm b}\otimes \mc I)({\bm c}\otimes{\bm d})=({\bm b}\cdot {\bm d}){\bm c}\otimes {\bm a}
    \end{align}
    for ${\bm c}$, ${\bm d}\in \R^3$.)
    In the standard way, 
    we transform $\Div({\bm B}\grad{\bm U})=0$ in the cartesian coordinate system where ${\bm B}$ is a linear map from $\R^{3\times3}$ to $\R^{3\times3}$ into an equation in $(q_1,q_2,q_3)$-coordinate system.  
    Then we obtain 
    \begin{align}\label{systemtrans}
    \Div_{(q_1,q_2,q_3)}\left(\frac{1}{\det M}M^T {\bm B}\circ \chi^{-1}M\grad_{(q_1,q_2,q_3)}(\bU\circ \chi^{-1})\right)=0
    \end{align} 
    where $\Div_{(q_1,q_2,q_3)}:=\grad_{(q_1,q_2,q_3)}\cdot$, $\grad_{(q_1,q_2,q_3)}$ and $M$ are as above, 
    $\chi$ is the map from $(x,y,z)$ to $(q_1,q_2,q_3)$ and $\det M=\frac{1}{h_1h_2h_3}$.
    Using \eqref{tensorproduct} and 
    \eqref{M}, 
    we 
    compute $M^T{\bm B}\circ \chi^{-1} M$. 
    Then we have
    the explicit form of \eqref{systemtrans}. In this way, we obtain the explicit form of the spherical coordinate representation of \eqref{yeqn}.

    
    Next, we show that \eqref{Psielliptic} is equivalent to \eqref{EE}. 
    
    By \eqref{Ugra}, 
    \begin{align}\label{DPsi}
    \grad(\Psi\be_{\vphi})=\pt_r \Psi\be_{\vphi}\otimes \be_r+\frac{\pt_{\theta}\Psi}{r}\be_{\vphi}\otimes\be_{\theta}-\frac{\Psi}{r}\be_r\otimes\be_{\vphi}-\frac{\cos\theta}{r\sin\theta}\Psi\be_{\theta}\otimes\be_{\vphi}.
    \end{align}
    Using \eqref{22tensoract} and \eqref{DPsi},  we have 
    \begin{multline*}
    \frac{{c_0^+}^2}{\rho_0^+({c_0^+}^2-{u_0^+}^2)}\left({\bm I}-\frac{ {u_0^+}^2}{{c_0^+}^2}(\mc I\otimes \be_r\otimes\be_r \otimes \mc I )\right)\grad(\Psi \be_\vphi)
    \\=\frac{ {c_0^+}^2}{\rho_0^+({c_0^+}^2-{u_0^+}^2)}\grad(\Psi\be_{\vphi})-\frac{ {u_0^+}^2}{\rho_0^+({c_0^+}^2-{u_0^+}^2)}\pt_r\Psi\be_{\vphi}\otimes \be_r.
    \end{multline*}
    Using this relation and the relation 
    $\Div({\bm a}\otimes{\bm b})=\grad{\bm a}{\bm b}+{\bm a}\Div {\bm b}$, we compute the left-hand side of  \eqref{Psielliptic}. Then we get
    \begin{align}\label{compute}
    a(r)\Delta(\Psi\be_{\vphi})+\grad(\Psi\be_{\vphi})\grad a(r)-\grad\be_{\vphi}b(r)\pt_r\Psi\be_r-\be_{\vphi}\Div(b(r)\pt_r\Psi\be_r)-\frac{\pt_r\rho_0^+}{{\rho_0^+}^2r}\Psi \be_\vphi
    \end{align}
    where $a(r):=\frac{ {c_0^+}^2}{\rho_0^+({c_0^+}^2-{u_0^+}^2)}$ and $b(r):=\frac{ {u_0^+}^2}{\rho_0^+({c_0^+}^2-{u_0^+}^2)}$.  
    By direct computation, one can check that 
    $\grad\be_{\vphi}b(r)\pt_r\Psi\be_r=0$. Using this fact and \eqref{DPsi}, 
    we compute \eqref{compute}. Then we have
    \begin{align}\label{ab}
    \left(a(r)(\Delta \Psi -\frac{\Psi}{r^2\sin^2\theta})+a^\p(r)\pt_r\Psi -b(r)(\frac{1}{r^2}\pt_r(r^2\pt_r\Psi))-b^\p(r)\pt_r\Psi\right)\be_{\vphi}
    -\frac{\pt_r\rho_0^+}{{\rho_0^+}^2r}\Psi \be_\vphi.
    \end{align}
    By $a-b=\frac{1}{\rho_0^+}$, $(a-b)^\p=-\frac{\pt_r\rho_0^+}{{\rho_0^+}^2}$. Substitute this relation into \eqref{ab} and then express the resultant equation in the spherical coordinate system. Then we obtain
    \begin{align*}
    \left(\frac{1}{\rho_0^+r^2}\pt_r(r^2\pt_r \Psi)-\frac{\pt_r\rho_0^+}{{\rho_0^+}^2r}\pt_r(r\Psi)\right.
    \left.+\frac{ {c_0^+}^2}{\rho_0^+({c_0^+}^2-{u_0^+}^2)r^2}\left(\frac{1}{\sin\theta}\pt_{\theta}(\sin\theta\pt_{\theta}\Psi)-\frac{\Psi}{\sin^2\theta}\right)\right)\be_{\vphi}. 
    \end{align*}
    This after multiplied by $(-1)$ is equal to the left-hand side of \eqref{curlformsig}. 
    By this fact, \eqref{Psielliptic} is equivalent to \eqref{EE}. 

    \vspace{.25in}
    \noindent
    {\bf Acknowledgements:}
    The author would like to thank Myoungjean Bae for her advice in doing this work. 
    The research of Yong Park was supported in part by Samsung Science and Technology Foundation
    under Project Number SSTF-BA1502-02. 
This is a preprint of an article published in Archive for Rational Mechanics and Analysis. The final authenticated version is
available online at: https://doi.org/10.1007/s00205-021-01618-7
    
    \bibliography{transonicnozzles}
    \bibliographystyle{amsplain}
\end{document}